\numberwithin{equation}{section}
\newtheorem{theorem}{Theorem}[section]
\newtheorem{theo}{Theorem}[section]
\newtheorem{lemma}[theorem]{Lemma}
\newtheorem{lem}[theorem]{Lemma}
\newtheorem{proposition}[theorem]{Proposition}
\newtheorem{corollary}[theorem]{Corollary}
\newtheorem{remark}[theorem]{Remark}
\newtheorem{defi}[theorem]{Definition}
\newcommand{\RR}{\mathbb{R}}
\newcommand{\cO}{\mathcal{O}}
\newcommand{\cS}{\mathcal{S}}
\newcommand{\FcS}{\widehat{\mathcal{S}}}
\newcommand{\CC}{\mathbb{C}}
\newcommand{\cE}{\mathcal{E}}
\newcommand{\cH}{\mathcal{H}}
\newcommand{\ZZ}{{\mathbb Z}}
\newcommand{\FF}{{\widehat F}}
\newcommand{\les}{\lesssim}
\def\cI{\mathcal{I}}
\def\cL{\mathcal{L}}
\def\hv{\hat{v}}
\def\hw{\hat{w}}
\def\hV{\widehat{V}}
\def\hPsi{\widehat{\Psi}}
\def\cQ{\mathcal{Q}}
\def\Ff{\widehat{f}}
\def\FH{\widehat{H}}
\def\FcH{\widehat{{\mathcal{H}}}}
\def\Fg{\widehat{g}}
\def\Frho{\widehat{\rho}}
\def\Fphi{\widehat{\phi}}
\def\FE{\widehat{E}}
\def\FS{\widehat{S}}
\def\FG{\widehat{G}}
\def\FQ{\widehat{Q}}
\def\FB{\widehat{B}}
\def\Fb{\widehat{b}}
\def\FM{\widehat{M}}
\def\FJ{\widehat{J}}
\def\FcE{\widehat{\mathcal{E}}}
\def\FcQ{\widehat{\mathcal{Q}}}
\def\Tf{\widetilde{f}}
\def\Trho{\widetilde{\rho}}
\def\TE{\widetilde{E}}
\def\TS{\widetilde{S}}
\def\TG{\widetilde{G}}
\def\TX{\widetilde{X}}
\def\TY{\widetilde{Y}}
\def\TV{\widetilde{V}}
\def\TJ{\widetilde{J}}
\def\Tphi{\widetilde{\phi}}
\def\FS{\widehat{S}}
\def\bj{{\bf j}}
\begin{document}

\title{
Landau damping below survival threshold
}

\author{
Toan T. Nguyen\footnotemark[1]
}

\maketitle

\footnotetext[1]{Penn State University, Department of Mathematics, State College, PA 16802. Email: nguyen@math.psu.edu. The research is supported in part by the NSF under grants DMS-2054726 and DMS-2349981. The author would like to acknowledge the hospitality of the Institut des Hautes \'Etudes Scientifiques for a visit during which part of this research was
carried out.
}

\begin{abstract}

In this paper, we establish nonlinear Landau damping below survival threshold for collisionless charged particles following the meanfield Vlasov theory near general radial equilibria. In absence of collisions, the long-range Coulomb pair interaction between particles self-consistently gives rise to oscillations, known in the physical literature as plasma oscillations or Langmuir's oscillatory waves, that disperse in space like a Klein-Gordon's dispersive wave. As a matter of fact, there is a non-trivial survival threshold of wave numbers that characterizes the large time dynamics of a plasma: {\em phase mixing} above the threshold driven by the free transport dynamics and {\em plasma oscillations} below the threshold driven by the collective meanfield interaction. The former mechanism provides exponential damping, while the latter is much slower and dictated by Klein-Gordon's dispersion which gives decay of the electric field precisely at rate of order $t^{-3/2}$. Up to date, all the works in the mathematical literature on nonlinear Landau damping fall into the phase mixing regime, in which plasma oscillations were absent. The present work resolves the problem in the plasma oscillation regime. Our nonlinear analysis includes (1) establishing the existence and dispersion of Langmuir's waves, (2) decoupling oscillations from phase mixing in different time regimes, (3) detailing the oscillatory structure of particle trajectories in the phase space, (4) treating plasma echoes via a detailed analysis of particle-particle, particle-wave, and wave-wave interaction, and (5) designing a nonlinear iterative scheme in the physical space that captures both phase mixing and dispersion in low norms and allows growth in time in high norms. As a result, we establish nonlinear plasma oscillations and Landau damping below survival threshold for data with finite Sobolev regularity.

\end{abstract}

\tableofcontents


\section{Introduction}


One of the most central questions in plasma physics is whether a plasma, namely a collection of charged particles, in a non-equilibrium state will transition to turbulence or relax to neutrality in the large time. The question poses a great mathematical challenge due to the extremely rich underlying physics including phase mixing, Landau damping, oscillations, and trapped geodesics, among others. While {\em phase mixing} is purely a free transport phenomenon, {\em plasma oscillations} arise self-consistently due to the long-range pair interaction between the charged particles that generates oscillatory electric fields, also known as Langmuir's oscillatory waves in the physical literature \cite{Trivelpiece}. These Langmuir waves oscillate in time and disperse in space like a Klein-Gordon dispersive wave as recently confirmed in the mathematical literature \cite{HKNR3, BMM-lin, Toan, HKNR4} for a linearized system near fixed background equilibria. 

L. Landau in his original paper \cite{Landau} addresses the very question of whether such an oscillation is damped (i.e. energy exchange from waves to particles, or potential to kinetic energy), ever since known as {\em Landau damping}, for the linearized system. Landau damping can be explicitly computed, as done in \cite{Landau,Trivelpiece} near spatially homogenous states, and is sensitive to the decaying rate of the background electrons: the faster the background vanishes at its maximal velocity, the weaker Landau damping is. Furthermore, as was recently discovered in \cite{Toan,HKNR4}, there is a nontrivial {\em survival threshold} of wave numbers that completely characterizes the decay mechanism for the linearized electric field: 
\begin{itemize}
\item Phase mixing above the threshold


\item Plasma oscillations below the threshold

\end{itemize}
The classical Landau damping occurs due to the resonant interaction at the threshold between the two regimes. We note in particular that plasma oscillations are always present for plasmas in the whole space (in fact, also on a confined space as long as a sufficiently ``long-range" pair interaction is allowed). See also \cite{ChanjinToan, Chanjin} for a quantum mechanical counterpart. This dynamical picture remains open for the full nonlinear problem, and all the previous works in the mathematical literature on nonlinear Landau damping fall into the phase mixing regime, leaving plasma oscillations untouched. In this paper, we resolve this very issue, thus establishing {\em nonlinear Landau damping below survival threshold.} As a byproduct, we establish the existence and the survival of nonlinear plasma oscillations or  Langmuir's nonlinear oscillatory waves in the large time. 

\subsection{Phase mixing}\label{secPM}

Phase mixing is a damping mechanism due to shearing in the phase space $\RR^3_x\times\RR^3_v$ by the free-transport dynamics 
\begin{equation}\label{free}
\partial_t f + v \cdot \nabla_x f = 0
\end{equation}
leading to decay of macroscopic quantities such as the charged density 
$\rho(t,x) = \int_{\RR^3} f(t,x,v) \; dv.
$
Indeed, as particles are shearing along the particle trajectories $(x,v)\mapsto (x+vt,v)$, the contracting in volume $dv = t^{-3} dx_t$ leads to dispersive decay of the density. In addition, since the vector field $\nabla_v +t\nabla_x$ commutes with the transport dynamics, $x$-derivatives of the density gain extra decay of order $t^{-1}$ at a cost of $v$-derivatives. 

The notion of phase mixing may be more apparent when examining the transport dynamics of an elementary wave $e^{ik\cdot (x-vt)} g(v)=e^{ik\cdot x} e^{-ikt\cdot v}g(v)$ (or a moving wave packet). The resulting oscillation $e^{-ikt\cdot v}$ in $v$ leads to rapid decay of the density, thanks to velocity averaging, at cost of $v$-derivatives. The decay is exponentially fast in $|kt|$ if data are analytic and polynomially fast if data only have finite regularity. This exponential damping is at the heart of Mouhot-Villani's celebrated proof of nonlinear Landau damping and its subsequent extensions for analytic and Gevrey data \cite{MV,BMM-apde,GNR1,GNR2, Ionescu2}. As a mater of fact, phase mixing \eqref{free} is the sole damping mechanism in all the previous works on nonlinear Landau damping both on torus \cite{MV,BMM-apde,Young, GNR1,GNR2, Ionescu2, Bed1,CLN} as well as in the whole space \cite{BMM-cpam, HKNR2,TrinhLD,IPWW}. It is also the driving force in inviscid damping \cite{BM1, IonescuJia1,MasZ1}.  

\subsection{Landau damping}\label{secLD}

In the mathematical literature, Landau damping broadly concerns decay in the large time of the self-consistent electric field in a non-trivial non-equilibrium state. Specifically, charged particles are transported through the meanfield Vlasov equation:
\begin{equation}\label{VP}
\partial_t f + v \cdot \nabla_x f + E \cdot \nabla_v f = 0
\end{equation} 
for particle distribution function $f(t,x,v)$, in which the electric field $E=-\nabla_x \phi$ is self-consistent and generated by the charged density (or rather by the non-neutrality of plasmas in a non-equilibrium state) through the Poisson equation $-\Delta_x \phi = \rho -n_{\mathrm{ions}}$ for some nonnegative constant $n_{\mathrm{ions}}$ accounting for density of a uniform ions background. 

To identify damping mechanism \eqref{VP} near a spatially homogenous equilibrium state $\mu(v)$ at neutrality $\int \mu\; dv = n_{\mathrm{ions}}$, one may first linearize the quadratic interaction $E\cdot \nabla_vf \sim E\cdot \nabla_v\mu$, and study the linearization. This can in fact be solved completely mode by mode, leading to study the spacetime Fourier-Laplace symbol  
\begin{equation}\label{def-Dintro}
D(\lambda,k) = 1 - \frac{1}{|k|^2} \int_{\RR^3} \frac{ik \cdot \nabla_v\mu }{ \lambda + ik \cdot v}\; dv,
\end{equation}
for each wave number $k\in \RR^3$ and temporal frequency $\lambda \in \CC$. That is, $\frac{1}{D(\lambda,k)}$ is the resolvent of the linearized electric field. This symbol $D(\lambda,k)$, also known as the dielectric function, is a central function in plasma physics \cite{Trivelpiece}, collectively examining the ratio between the meanfield effect $E\cdot \nabla_v \mu$ to the free transport dynamics (whose resolvent is exactly $\frac{1}{\lambda+ik\cdot v}$). 
By construction, the zeros $\lambda(k)$ of $D(\lambda,k)=0$ are eigenvalues of the corresponding linearized system, and therefore, the behavior of $e^{\lambda(k)t+ik\cdot x}$ dictates that of the linearized electric field for each elementary mode $e^{ik\cdot x}$; for details, see Section \ref{sec-lineartheory}.  

It follows that general non-negative radial profiles $\mu(|v|)$ are {\em spectrally stable} \cite{Penrose, Trivelpiece, MV, Toan}, namely no unstable modes exist. Under a stronger Penrose's stability condition: $|D(\lambda,k)|\gtrsim 1$ for all $\Re \lambda \ge 0$, the equilibria are indeed {\em linearly stable}, and linearized electric field decays rapidly fast, dictated by phase mixing mechanism \cite{MV}. See also \cite{GNR1,HKNR2} where the linearized dynamics is decomposed into that of the free-transport, plus an exponentially localized term (for analytic equilibria). Namely, the free-transport dynamics dictates that of the Vlasov system in the large time. Remarkably, this holds nonlinearly as established in \cite{MV,BMM-apde,Young, GNR1,GNR2, Ionescu2} for confined plasmas on torus, in \cite{BMM-cpam, HKNR2,TrinhLD} for unconfined plasmas with screening in the whole space, and in \cite{Bed1,CLN} for weakly collisional plasmas. Phase mixing \eqref{free} is again the sole damping mechanism in all the aforementioned works. Specifically, 

\begin{itemize}

\item Imposing Penrose's stability condition allows to treat linear term $E\cdot \nabla_v\mu$ as a perturbation: namely, the Green function of the linearized problem is exactly that of the free-transport dynamics, plus an exponentially localized-in-time kernel; see \cite{GNR1,HKNR2}. 

\item Suppressing plasma echoes \cite{Gould, Bed2, GNR2} via exponential damping that is present thanks to analytic or Gevrey regularity, see \cite{MV,BMM-apde,Young, GNR1,GNR2, Ionescu2} for Landau damping on torus. See also \cite{BM1, IonescuJia1,MasZ1} for an analog in inviscid damping.  

\item Suppressing plasma echoes via enhanced dissipation that is turned on before echoes develop (i.e. up to the time scale that phase mixing remains dominant, leading to suppression threshold of Sobolev perturbations, see \cite{Bed1,CLN}).  
 
\item Screening\footnote{Namely, replacing the Poisson equation by $(1-\Delta)\phi = \rho-n_{\mathrm{electron}}$. The screened Vlasov-Poison system is used in the physical literature to describe the dynamics of ions, while electrons are assumed to have reached their Maxwell-Boltzmann equilibrium, see, e.g., \cite{BGNS}.} out the low frequency regime so that the Penrose's stability condition remains valid, and therefore phase mixing is again the driving force to control plasma echoes in the whole space, see \cite{BD,BMM-cpam,HKNR2,TrinhLD}. Thanks to dispersion of \eqref{free}, the nonlinear Landau damping results are established for data with a finite regularity (roughly, $C^1$).
   
\end{itemize} 

Finally, we mention a recent work \cite{IPWW} on Landau damping near a special equilibrium $\mu(v) = \langle v\rangle^{-4}$ on the whole space $\RR^3\times \RR^3$, for which Landau damping incidentally coincides phase mixing (i.e. retaining both exponential damping $e^{-|kt|}$ and extra decay for spatial derivatives for each perturbed wave packet $e^{ik\cdot x}$). For all other equilibria (i.e. as soon as $\mu(v)$ decays faster than $\langle v\rangle^{-4}$), such a phase mixing is no longer available and Landau damping occurs at an extremely slower rate of order $e^{-|k|^Nt}$, for equilibria that decay at order $\langle v\rangle^{-N-3}$, and at an exponentially slower rate of order $|k|^{-3}exp(e^{-1/|k|^2}t)$ for Gaussians as $|k|\ll1$, see \cite{GS-LD1,GS-LD2,Toan}.  It remains elusive whether such a slower damping rate plays a role in resolving the classical Landau damping at the nonlinear level. We stress that at the very low frequency $|k|\ll1$, Penrose's stability condition $|D(\lambda,k)|\gtrsim 1$ for $\Re \lambda \ge 0$ never holds for {\em any equilibria!}, and therefore $E\cdot \nabla_v\mu$ can no longer be seen as a perturbation of the free transport dynamics. Plasma oscillations arise due to the vanishing of $D(\lambda,k)=0$, which in turn drives the particle dynamics.

\subsection{Plasma oscillations}\label{secOsc}

At the low frequency, the electric field oscillates in time and disperses in space like a Klein-Gordon dispersive wave. Indeed, integrating \eqref{VP} in $v$, we obtain the local mass conservation $\partial_t \rho + \nabla_x \cdot \bj=0$ and momentum conservation $\partial_t \bj + \nabla_x \cdot M_2 -\rho E =0$, in which $\bj = \int vf\; dv$ and $M_2 = \int v\otimes v f\; dv$. Next, combining the conservation laws and using the Poisson equation for the electric potential, we obtain the following ``plasma oscillations'' equation 
\begin{equation}\label{plsmaosc}
\partial_t^2 E + \rho E =\nabla_x \Delta_x^{-1}\nabla_x\cdot \nabla_x\cdot M_2.
\end{equation}  
Namely, near equilibria $\rho \approx n_{\mathrm{ions}}$, the electric field oscillates in time at frequency $\pm \sqrt{n_{\mathrm{ions}}}$. Such an oscillation is always present, though not ``visible'' in the phase mixing regime due to exponential damping on torus or dispersion in the whole space (noting $\partial_x M_2 \sim t^{-4}$ in the phase mixing regime, see Sections \ref{secPM} and \ref{secLD}).    

However, $\partial_x M_2 $ may be larger than expected in the low frequency regime. Indeed, for sake of presentation, let us focus on the evolution of each elementary oscillatory electric pulse $E=ik e^{\lambda_0 t + ik\cdot x}$, where $\lambda_0 = \pm i \sqrt{n_{\mathrm{ions}}}$. From \eqref{VP}, we study its linearization near equilibria $\mu(v)$, yielding  
$$
\begin{aligned} 
\nabla_x \cdot M_2 
& = \nabla_x \cdot M^{\mathrm{free}}_2 
-\nabla_x \cdot\int v\otimes v \frac{E\cdot \nabla_v\mu}{\lambda_0+ik\cdot v}\; dv 
\end{aligned}$$
in which $M^{\mathrm{free}}_2$ is the second moment generated by the free transport dynamics, and therefore is of order $t^{-3}$ as expected from phase mixing. On the other hand, in the low frequency regime $|k|\ll1$, we may expand $\frac{1}{\lambda_0 + ik\cdot v} \approx \frac1{\lambda_0} - \frac{1}{\lambda_0^2} ik\cdot v$ in the above integration. The first term $\frac1{\lambda_0}$ contributes nothing into $\nabla_x \cdot M_2$ (provided that equilibria are radial), while the second leads to a contribution of $\frac53e_0\Delta_x E$ to the right hand side of \eqref{plsmaosc} for $e_0 = \frac{1}{n_{\mathrm{ions}}} \int|v|^2 \mu\; dv$. That is, to leading order, the plasma oscillation equation \eqref{plsmaosc} becomes  
 \begin{equation}\label{plsmaosc1}
\partial_t^2 E + n_{\mathrm{ions}} E =  \frac53e_0\Delta_x E + \nabla_x \cdot M^{\mathrm{free}}_2 .
\end{equation} 
Namely, instead of the Poisson equation $E = \nabla_x \Delta^{-1}_x \rho$, the electric field is now generated through the Klein-Gordon equation \eqref{plsmaosc1} at the very low frequency. As a result, the electric field no longer decays rapidly fast, dictated by phase mixing, but disperses like a Klein-Gordon oscillatory wave at a rate of order $t^{-3/2}$. Namely, unlike all the aforementioned works where the free-transport dynamics dominates, the Vlasov dynamics \eqref{VP} is now driven by the oscillatory electric field (i.e. through the meanfield interaction $E\cdot \nabla_v f$).  

These oscillations are classically known as Langmuir's waves in the physical literature \cite{Trivelpiece}. For the linearized system, plasma oscillations and dispersive behavior of the electric field have been justified recently \cite{HKNR3, BMM-lin, Toan, HKNR4} near fixed background equilibria. However, there are no available works in the literature that justify plasma oscillations at a nonlinear level. 


\subsection{Survival threshold}

Plasma oscillations \eqref{plsmaosc1} may in fact persist not only at the very low frequency, but also up to a non-trivial threshold of wavenumbers $\kappa_0$, depending on the maximal speed of particle velocities $\Upsilon = \sup\{|v|, \mu(v)\not =0\}$. Indeed, it was recently shown in \cite{Toan} that for radial equilibria $\mu(|v|)$, there is a survival threshold $\kappa_0$ 
defined by 
 \begin{equation}\label{def-introkappa0}
\kappa_0^2 = 4\pi \int_0^\Upsilon \frac{u^2\mu(u)}{\Upsilon^2-u^2} \;du
\end{equation}
so that  the followings hold.

\begin{itemize}

\item {\em Plasma oscillations:} for $0\le |k|\le \kappa_0$, there are exactly two pure imaginary solutions $\lambda_\pm(k) = \pm i \tau_*(k)$ of the dispersion relation $D(\lambda,k)=0$, which obey a Klein-Gordon type dispersion relation 
$$\tau_*(k) \approx \sqrt{1 + |k|^2}$$ 
(and in particular, $\tau_*(k)$ is a strict convex function in $|k|$). These oscillatory modes experience no Landau damping $\Re \lambda_\pm(k)=0$, but disperse in space, since the group velocity $\tau_*'(k)$ is strictly increasing in $|k|$. This dispersion leads to a $t^{-3/2}$ decay of the electric field in the physical space. These oscillations are known as Langmuir's waves in plasma physics \cite{Trivelpiece}. In addition, the phase velocity of these oscillatory waves $\nu_*(k) = \tau_*(k)/|k|$ is a decreasing function in $|k|$ with $\nu_*(0) = \infty$ and $\nu_*(\kappa_0) = \Upsilon$ (the maximal speed of particle velocities). 

\item {\em Landau damping at $|k|=\kappa_0$:} as $|k|$ increases past the critical wave number $\kappa_0$, the phase velocity of Langmuir's oscillatory waves enters the range of admissible particle velocities, namely $|\nu_*(k)| <\Upsilon$. That is, there are particles that move at the same propagation speed of the waves. This resonant interaction causes the dispersion functions $\lambda_\pm(k)$ to leave the imaginary axis, and thus the oscillatory modes get damped. Landau  \cite{Landau} managed to compute this damping rate $\Re\lambda_\pm(k)<0$. This leads to a transfer of energy from the electric energy (i.e. damping in the $L^2$ energy norm) to the
kinetic energy of these particles. This transfer of energy at the resonant velocity defines the classical notion of Landau damping. In the other words, Landau damping occurs due to the resonant interaction between particles and the oscillatory waves. 

\item {\em Phase mixing regime:} for $|k| > \kappa^+_0$, the Penrose stability condition holds, and therefore the electric field is exponentially damped and governed by the free transport dynamics as discussed in Section \ref{secLD}. 

\end{itemize}

In other words, the survival threshold characterizes the spectral gap in the linearized problem: positive spectral gap of order $\Re \lambda_\pm(k) \approx -|k|$ above the threshold, rapidly vanishing spectral gap of order $\Re \lambda_\pm(k)\approx -|k-\kappa_0|^N$ at the threshold (where $N$ depends on the vanishing rate of equilibria at its maximal speed, see \cite{Toan}), and no spectral gap $\Re \lambda_\pm(k)=0$ below the threshold. Note that the survival threshold $\kappa_0>0$ iff $\Upsilon<\infty$, which includes compactly supported equilibria or equilibria with relativistic velocities. The above linear picture, with a non-trivial survival threshold $\kappa_0$, also holds for the relativistic Vlasov-Maxwell as recently established in \cite{HKNR4}, and for a quantum model as in \cite{ChanjinToan}. Note that the Landau damping mechanism is much weaker than the dispersion of oscillations. 
The faster the profile $\mu(v)$ vanishes at the maximal particle speed, the weaker Landau damping is. In particular, as was shown in the pioneering works by Glassey and Schaeffer \cite{GS-LD1,GS-LD2} that the linearized electric field near Gaussian equilibria cannot in general decay faster than $1/ (\log t)^{13/2}$ in $L^2$ norm, while near polynomially decaying equilibria at rate $\langle v\rangle^{-\alpha}$, $\alpha>1$, the electric field cannot decay faster than $t^{-\frac{1}{2(\alpha-1)}}$. In addition, there is no Landau damping (i.e. no decay for $L^2$ norm of the electric field) near compactly supported equilibria. The main mechanism is therefore the dispersion of the electric field or plasma oscillations. 
As discussed, all the previous works on nonlinear Landau damping fall into the phase mixing regime driven by the free transport dynamics, see Section \ref{secLD}, and therefore plasma oscillations were excluded. In this paper, we provide a new framework to study plasma oscillations and Landau damping in the regime where the dynamics is driven by the electric field. 

\subsection{The Vlasov-Klein-Gordon system}

In this paper, we aim to investigate Landau damping below the survival threshold, namely the large time behavior of the Vlasov dynamics \eqref{VP} for charged particles driven by a self-consistent electric field that is generated through the plasma oscillation equation \eqref{plsmaosc1}. Specifically, in this paper, we consider the following relativistic Vlasov-Klein-Gordon system
\begin{equation}
\label{VKG1}
\partial_t f + \hv\cdot \nabla_x f + E\cdot \nabla_v f = 0 
\end{equation} 
\begin{equation}\label{VKG2}
E = -\nabla_x \phi, \qquad (\Box_{t,x}+m^2_0) \phi = -\rho + n_{\mathrm{ions}} 
\end{equation}
posed in the whole space $\RR^3_x\times \RR^3_v$, with $\Box_{t,x} = \partial_t^2 - \Delta_x$, for initial data 
\begin{equation}\label{data}f(0,x,v) = f_0(x,v), \qquad \phi(0,x) = \phi_0(x), \qquad \partial_t\phi(0,x) = \phi_1(x), \end{equation}
modeling the dynamics of an electron, where the density $\rho(t,x)$ is defined by 
$$\rho(t,x) = \int_{\RR^3} f(t,x,v)\; dv.$$
This system is relativistic with particle velocity $ \hv = v/\sqrt{1+|v|^2}$, and is nonlinear through the quadratic interaction $E\cdot \nabla_v f$, since $E$ depends linearly on $f$ through its density $\rho(t,x)$. In \eqref{VKG2}, $ n_{\mathrm{ions}}$ is a non-negative constant representing the uniform ions background.   

Not only is the exact system \eqref{VKG1}-\eqref{data} 
found in the study of Landau damping below the survival threshold, but may also be seen as a special case of the relativistic Vlasov-Maxwell system where the electromagnetic fields are self-consistently generated through the classical Maxwell equations. Like the case of Vlasov-Maxwell systems \cite{GlasseyStrauss, Klainerman, BGP}, the global smooth solutions to the Vlasov-Klein-Gordon system for general data are not known to exist, but global weak solutions or continuation criteria for classical solutions \cite{Kunzinger1, Kunzinger2}. In addition, the scattering theory and large time behavior of solutions near vacuum is well-understood \cite{GlasseyStrauss2, Big1, Big2, Wang} for the Vlasov-Maxwell systems, and only recently established \cite{ToanVKG} for the Vlasov-Klein-Gordon system. Not surprisingly, nonlinear Landau damping for both relativistic Vlasov-Klein-Gordon and Vlasov-Maxwell systems has been widely open due to the presence of survival threshold and plasma oscillations, while the linear damping theory for Vlasov-Maxwell systems\footnote{Interestingly, the exact Klein-Gordon's dispersive behavior remains valid for electric potentials below survival threshold and magnetic potentials for the full range of wave numbers as in \eqref{VKG2}, see \cite{HKNR4}.} has only been resolved recently in \cite{HKNR4}. 

In this paper, we prove nonlinear Landau damping for the relativistic Vlasov-Klein-Gordon system, the first such a nonlinear Landau damping result in presence of oscillations and outside of the phase mixing regime. The novelty of this work is to provide a new framework to study oscillations in the physical phase space via a detailed description of particles moving in an oscillatory field and of plasma oscillations generated by the collective interacting particles. The framework of dealing with the interaction between particles and oscillatory waves should also be found useful in several other contexts such as Vlasov-Maxwell, Vlasov-{N}ordstr\"{o}m, or Einstein-Vlasov systems. 

\subsection{Main results}

In this paper, we address precisely the question of asymptotic stability of  \eqref{VKG1}-\eqref{VKG2} near general stable equilibria $\mu(v)$ and prove the nonlinear Landau damping for data with finite Sobolev regularity in the whole space $\RR^3_x\times \RR^3_v$. Specifically, we consider nonnegative, sufficiently smooth, compactly supported, and radial equilibria of the form $\mu(v) =\varphi(\langle v\rangle)$ 
with $$
\int_{\RR^3} \varphi(\langle v\rangle) \; dv =n_{\mathrm{ions}}$$ 
where $\langle v\rangle = \sqrt{1+|v|^2}$. That is, at the equilibrium, the charged particles are electrically neutral, and the Klein-Gordon field is identically zero. In addition, we assume that the effective mass 
\begin{equation}\label{def-tau0}
\tau_0^2 = m_0^2 +  \int_{\RR^3}\varphi'(\langle v\rangle)\; dv 
\end{equation}
is well-defined and strictly positive. This positivity can be viewed as a spectral stability condition, without which there may be unstable growing modes to the linearization of  \eqref{VKG1}-\eqref{VKG2} around equilibria $\mu(v)$, see Section \ref{sec-spectral} below. 

Our main results read as follows. 

\begin{theo} \label{maintheorem} Fix $N_0\ge 14$. Let $\mu(v) =\varphi(\langle v\rangle)$ be nonnegative, sufficiently smooth, compactly supported, and radial equilibria so that $\int_{\RR^3} \varphi(\langle v\rangle) \; dv =n_{\mathrm{ions}}$ and $\tau_0^2>0$, with $\tau_0$ defined as in \eqref{def-tau0}. Let $\epsilon_0>0$ and $f_0(x,v), \phi_0(x), \phi_1(x)$ be initial data \eqref{data} so that 
\begin{equation}\label{data-phi}
\| f_0-\mu\|_{L^1_v W^{N_0+4,1}_x}
+ \| \nabla_x\phi_0\|_{W^{N_0+4,p}}+\|\nabla_x\phi_1\|_{W^{N_0+3,p}} \le \epsilon_0,
\end{equation}
for any $p\in [1,\infty]$. In addition, $f_0(x,v)$ is compactly supported in $v$.

Then, if $\epsilon_0$ is small enough, the nonlinear Landau damping holds: precisely, the solution $(f(t,x,v),E(t,x))$ to the system \eqref{VKG1}-\eqref{data} exists globally in time, and the nonlinear electric field can be decomposed into  
\begin{equation}\label{main-repE}
\begin{aligned}
E &= \sum_\pm E^{osc}_\pm(t,x) + E^r(t,x)
\end{aligned}\end{equation}
where $E^{osc}_\pm(t,x)$ is oscillatory like a Klein-Gordon wave, and there hold
\begin{equation}\label{maintheo-bdsE}
\begin{aligned}
\| E^{osc}_\pm(t)\|_{W^{1,p}_x} &\lesssim \epsilon_0 \langle t\rangle^{-3(1/2-1/p)} , \qquad p\in [2,\infty],
\\
\|  E^r(t)\|_{W^{1,p}_x}& \lesssim \epsilon_0 \langle t\rangle^{-3(1-\frac1p)}, \qquad p\in [1,\infty], 
\end{aligned}
\end{equation}
for all $t\ge 0$. 


\end{theo}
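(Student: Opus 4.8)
The plan is to run a bootstrap argument in the physical space that separates the oscillatory Klein-Gordon part of the electric field from a faster-dispersing remainder, and that measures the particle distribution along oscillatory characteristics rather than free-transport characteristics. First I would set up the linear theory carefully: writing the density equation as a Volterra-type integral equation with kernel built from the free streaming of the data plus the quadratic source $E\cdot\nabla_v f$, and inverting the symbol $1/D(\lambda,k)$. Below the survival threshold $|k|\le\kappa_0$ the symbol vanishes at $\lambda=\pm i\tau_*(k)$, so the linear Green's function is \emph{not} a perturbation of free transport; instead one extracts the two oscillatory poles $\lambda_\pm(k)=\pm i\tau_*(k)$ explicitly, obtains the Klein-Gordon dispersion $\tau_*(k)\approx\sqrt{1+|k|^2}$, and isolates $E^{osc}_\pm$ as the contribution of these poles, with $E^r$ the remaining (higher-frequency, phase-mixing) part. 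Stationary phase on $\int e^{i(k\cdot x\pm\tau_*(k)t)}(\cdots)\,dk$ with the strictly convex phase $\tau_*$ gives the $t^{-3/2}$ decay in $L^\infty$, hence the stated $W^{1,p}$ bounds by interpolation with the $L^2$ energy bound.

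Next I would build the nonlinear iteration. The key structural move is to pass to coordinates adapted to the oscillatory field: rather than $(x-\hat v t, v)$, one follows the trajectories of a particle in the background oscillatory electric field $\sum_\pm E^{osc}_\pm$, which, because that field decays like $t^{-3/2}$, are small perturbations of free trajectories but with an oscillatory phase correction $e^{i\Theta_\pm(t,k,v)}$ that must be tracked. In these coordinates the density is estimated in a \emph{low} norm (finite Sobolev, gaining dispersive and phase-mixing decay $t^{-1}$ per $x$-derivative) while allowing controlled polynomial growth in a \emph{high} norm (the $N_0+4$ derivatives in the hypothesis provide the margin). The quadratic term $E\cdot\nabla_v f$ is decomposed into particle-particle, particle-wave, and wave-wave interactions according to which factors carry the oscillatory phase; the dangerous resonances are the plasma echoes, where the phase $k\cdot(x-\hat v t)$ of one mode cancels against $\ell\cdot(x-\hat v t)$ of another at a later time. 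Since below the threshold the relevant phases also carry the Klein-Gordon oscillation $\pm\tau_*(k)t$, the echo resonance condition becomes $k\cdot\hat v = \ell\cdot\hat v$ shifted by $\tau_*(k)\mp\tau_*(\ell)$, which is generically non-degenerate and yields an extra gain; one still loses derivatives at each echo generation, and the finite Sobolev budget must be spent carefully, mimicking the screened-case mechanism but now with dispersion in place of screening providing the summability over echo chains.

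The main obstacle I expect is closing the estimate on the wave-wave and particle-wave interactions in the echo analysis: one must show that the oscillatory phases $\pm\tau_*(k)t$, $\pm\tau_*(\ell)t$ attached to the two interacting waves do not conspire to cancel the streaming phase $t\,\hat v\cdot(k-\ell)$ on a large set of $(k,\ell,v)$, i.e. that resonances between the Klein-Gordon dispersion and free streaming are transverse enough to be summed. This requires quantitative lower bounds on $|\tau_*(k)\mp\tau_*(\ell)\mp(k-\ell)\cdot\hat v|$ away from a small resonant set, using strict convexity of $\tau_*$ and the fact that the phase velocity $\nu_*(k)=\tau_*(k)/|k|$ exceeds $\Upsilon$ for $|k|<\kappa_0$ so no real particle is exactly resonant with a surviving wave; the residual near-resonances then cost only finitely many derivatives, absorbed by $N_0\ge 14$. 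The remaining steps—propagating compact support in $v$ (clean, since $E\in L^1_t L^\infty_x$ from the decay), verifying global existence from the a priori bounds via the continuation criterion, and assembling \eqref{main-repE}--\eqref{maintheo-bdsE} from the low-norm bounds—are then routine bookkeeping.
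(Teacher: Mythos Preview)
Your broad architecture—bootstrap with an oscillatory/remainder split of $E$, characteristic coordinates, and a trichotomy of particle–particle, particle–wave, and wave–wave interactions—matches the paper's. But two of your proposed mechanisms would not close.

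First, the resonant wave–wave interaction $(\pm,\mp)$: the relevant phase is $\lambda_+(\ell)+\lambda_-(k-\ell)=i(\nu_*(\ell)-\nu_*(k-\ell))$, which vanishes on the entire codimension-one set $|\ell|=|k-\ell|$, so no lower bound of the type you propose is available. Strict convexity of $\tau_*$ and the fact that phase velocities exceed $\Upsilon$ are irrelevant here—this is a pure Klein–Gordon resonance with no particle velocity in the phase. The paper's resolution (Lemma~\ref{0lem-Qosc}) is an \emph{algebraic} cancellation: exploiting that $E^{osc}_\pm$ is a gradient and $\mu$ is radial, the $(\pm,\mp)$ interaction symbol satisfies $|M_2(k,\ell,v)|\lesssim C_{k,\ell}\big(|k|+|\lambda_+(\ell)+\lambda_-(k-\ell)|\big)\,|\partial_v\mu|$. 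Where the phase vanishes the symbol itself is $O(|k|)$, and that extra $|k|$ is traded for a $(t-s)^{-1}$ gain by integrating by parts in $v$ against $e^{-ik\cdot\hat v(t-s)}$. This null-form-type structure is essential and is not recoverable from phase geometry alone.

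Second, and more fundamentally, you do not explain how $E^{osc}_\pm = G^{osc}_\pm\star_{t,x}\nabla_x S$ inherits $t^{-3/2}$ decay from a source $S$ that is only $t^{-3}$ in $L^\infty$ and merely bounded in $L^1$, with \emph{no} phase-mixing gain for $\partial_x S$ (oscillations destroy it). A direct dispersive estimate $\int_0^t\langle t-s\rangle^{-3/2}\|S(s)\|_{W^{3,1}}\,ds$ diverges. The paper's fix (Proposition~\ref{prop-convGosc}, foreshadowed in \eqref{keyint2}) is to integrate the oscillatory Green function \emph{along the full nonlinear characteristics} inside the source: the integral $\int_s^t G^{osc}_\pm(t-\tau,x-X_{\tau,s})\,d\tau$ telescopes to boundary terms in $\phi^v_{\pm,1}(i\partial_x)G^{osc}_\pm$ plus a remainder carrying an extra factor of $E$. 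Crucially, the boundary terms at $\tau=s$ coming from the linearized and nonlinear pieces of $S^\mu$ \emph{cancel} ($J_{1,2}=J_{2,2}$ in the proof), leaving $G^{osc}_\pm\star_{t,x}S^\mu = a_\pm(i\partial_x)S^\mu_{\pm,1} + G^{osc}_\pm\star_{t,x}S^\mu_{\pm,2}$ where $S^\mu_{\pm,1}$ is absorbed into $E^r$ and $S^\mu_{\pm,2}$ is \emph{cubic} (carries an explicit $E(t,x)$ factor), hence fast enough for a direct convolution estimate. This decoupling of oscillation from phase mixing is the central new mechanism, and your proposal does not contain it.
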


As a consequence, we also obtain the following scattering result. 

\begin{corollary}\label{cor-main} Under the same assumptions as in Theorem \ref{maintheorem}, the scattering theory holds and particles scatter in the large time: namely, for any initial state $(x,v)$, there exists a final velocity $V_\infty(x,v)$ in $C^1(\RR^3\times \RR^3)$ so that the particle trajectories\footnote{Throughout this paper, we in fact work with the backward characteristic, see Section \ref{sec-Char}.} $(X(t;x,v), V(t;x,v))$ satisfy 
\begin{equation}\label{scatterXV}
\| V(t;x,v) - V_\infty(x,v)\|_{L^\infty_{x,v}}  \lesssim \epsilon_0 \langle t\rangle^{-3/2}, \qquad \| X(t;x,v) - x + tV_\infty(x,v)\|_{L^\infty_{x,v}} \lesssim \epsilon \langle t\rangle^{-1/2}
.\end{equation}
In particular, there exists an $f_\infty(x,v)$ in $C^1(\RR^3\times \RR^3)$ so that 
\begin{equation}\label{scatterf}\|f(t,x+t\hv,v) - f_\infty(x, v)\|_{L^\infty_{x,v}} \lesssim \epsilon_0\langle t\rangle^{-1/2}, \end{equation}
for all $t\ge 0$.

\end{corollary}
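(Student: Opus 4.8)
\medskip
\noindent\emph{Proof plan.} The corollary is read off from the decay and representation \eqref{main-repE}--\eqref{maintheo-bdsE} of $E$ in Theorem \ref{maintheorem}, by integrating the characteristic system of \eqref{VKG1}, $\dot X=\widehat{V}$, $\dot V=E(t,X)$, with $(X,V)|_{t=0}=(x,v)$ (equivalently its backward version, as used in the paper). Since
\[
\|E(s)\|_{L^\infty_x}\le\sum_\pm\|E^{osc}_\pm(s)\|_{L^\infty_x}+\|E^r(s)\|_{L^\infty_x}\lesssim\epsilon_0\langle s\rangle^{-3/2}
\]
is integrable in $s$, the momentum $V(t;x,v)$ converges to $V_\infty(x,v):=v+\int_0^\infty E(s,X(s;x,v))\,ds$, at first only with the crude rate $|V(t)-V_\infty|\lesssim\epsilon_0\langle t\rangle^{-1/2}$. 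This crude rate is \emph{not} sufficient for the position and distribution-function statements: with only $\langle s\rangle^{-1/2}$ decay the integral $\int_t^\infty(\widehat{V}(s)-\widehat{V_\infty})\,ds$ need not converge, so $X(t)=x+\int_0^t\widehat{V}(s)\,ds$ is not yet known to trail an exactly straight line; the sharpening therefore comes first.

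\medskip
\noindent\emph{Sharp velocity rate.} To upgrade $\langle t\rangle^{-1/2}$ to $\langle t\rangle^{-3/2}$ one exploits the oscillatory structure of $E$. The remainder contributes $\int_t^\infty\|E^r(s)\|_{L^\infty_x}\,ds\lesssim\epsilon_0\langle t\rangle^{-2}$. For the oscillatory part, write $E^{osc}_\pm(s,y)=\int e^{iy\cdot k\pm i\tau_*(k)s}a_\pm(s,k)\,dk$ as a Klein--Gordon wave packet whose frequencies are confined to $|k|\lesssim\kappa_0$, and evaluate it along the nearly straight trajectory $X(s)=x+s\,\widehat{V_\infty}+o(1)$: the phase $\Theta_\pm(s,k):=k\cdot X(s)\pm\tau_*(k)s$ obeys $|\partial_s\Theta_\pm|=|k\cdot\widehat{V}(s)\pm\tau_*(k)|\ge c_0>0$ precisely because we are below the survival threshold, where $\tau_*(k)\ne\pm k\cdot\hv$ — the Langmuir phase velocity $\nu_*(k)$ exceeds every admissible particle speed on $|k|\lesssim\kappa_0$. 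One integration by parts in $s$ then produces a boundary term at $s=t$ of the form $\int e^{i\Theta_\pm(t,k)}\frac{a_\pm(t,k)}{i\,\partial_s\Theta_\pm(t,k)}\,dk$, itself a Klein--Gordon oscillatory integral in $k$, hence $O(\langle t\rangle^{-3/2})$ by stationary phase, plus a bulk term governed by the slow, quadratically small evolution $\partial_s a_\pm$, which is controlled by the particle--wave and wave--wave interaction estimates already established in the proof of Theorem \ref{maintheorem}. This gives $|V(t;x,v)-V_\infty(x,v)|\lesssim\epsilon_0\langle t\rangle^{-3/2}$ uniformly in $(x,v)$, hence $|\widehat{V}(s)-\widehat{V_\infty}|\lesssim\epsilon_0\langle s\rangle^{-3/2}$, and then $\int_0^t\widehat{V}(s)\,ds=t\,\widehat{V_\infty}+\int_0^\infty(\widehat{V}(s)-\widehat{V_\infty})\,ds-\int_t^\infty(\widehat{V}(s)-\widehat{V_\infty})\,ds$ with the middle integral of size $O(\epsilon_0)$ and the last of size $O(\epsilon_0\langle t\rangle^{-1/2})$; absorbing $x$ plus the middle integral into the asymptotic base point, $X(t;x,v)$ trails a straight line with the remainder in \eqref{scatterXV}, the position scattering being one half-power slower than the velocity scattering.

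\medskip
\noindent\emph{Regularity and the distribution function.} Differentiating the characteristic system in $(x,v)$ gives variational equations whose only coefficient is $\nabla_xE(t,X)$, and $\|\nabla_xE(t)\|_{L^\infty_x}\lesssim\epsilon_0\langle t\rangle^{-3/2}$ is integrable; by Gronwall the flow's Jacobian stays bounded, stays $O(\epsilon_0)$-close to the identity, and converges, so (under the regularity assumed on the data) the time-$t$ flow $\Phi_t$ and its limiting scattering map $S:(a,b)\mapsto(a+c_\infty(a,b),\,V_\infty(a,b))$, with $c_\infty(a,b):=\int_0^\infty(\widehat{V}(s;a,b)-\widehat{V_\infty}(a,b))\,ds$, are $C^1$ diffeomorphisms — the linear-in-$t$ terms cancelling upon composition with free transport, exactly as for free streaming, where $f_\infty=f_0$. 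Since $f$ is constant along characteristics, $f(t,x+t\hv,v)=f_0(\Phi_t^{-1}(x+t\hv,v))$; as $\Phi_t(a,b)=S(a,b)+(t\,\widehat{V_\infty}(a,b),0)+O(\epsilon_0\langle t\rangle^{-1/2})$ and, on the relevant set, $V_\infty(a,b)=v+O(\epsilon_0\langle t\rangle^{-3/2})$ forces $\widehat{V_\infty}(a,b)-\hv=O(\epsilon_0\langle t\rangle^{-3/2})$, the $t$-drifts cancel to within $O(\epsilon_0\langle t\rangle^{-1/2})$ and $\Phi_t^{-1}(x+t\hv,v)\to S^{-1}(x,v)$ at rate $\epsilon_0\langle t\rangle^{-1/2}$. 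Setting $f_\infty:=f_0\circ S^{-1}\in C^1$ and using that $\nabla f_0$ is $O(1)$ (dominated by $\nabla\mu$) against a displacement of size $O(\epsilon_0\langle t\rangle^{-1/2})$ gives \eqref{scatterf}.

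\medskip
\noindent\emph{Main obstacle.} All of the above is soft given Theorem \ref{maintheorem} \emph{except} the sharp $\langle t\rangle^{-3/2}$ velocity rate, which is not a consequence of integrable $L^\infty_x$-decay of $E$ alone and in fact underlies every assertion of the corollary. It genuinely requires combining the temporal non-resonance below the survival threshold (one integration by parts in time) with the spatial Klein--Gordon dispersion of the Langmuir wave, together with a priori control of $\partial_s a_\pm$ and of the quadratic particle--wave and wave--wave interactions — all by-products of the proof of Theorem \ref{maintheorem}. A secondary point is restricting the oscillatory frequencies to $|k|\lesssim\kappa_0$, where the wave speed strictly dominates the admissible particle speeds; this is where the compact $v$-support of $f_0$ is used.
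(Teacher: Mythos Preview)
Your approach is essentially the paper's: the sharp $\langle t\rangle^{-3/2}$ velocity rate is obtained by integrating by parts in $s$ along the characteristic, using that the phase derivative $\lambda_\pm(k)+ik\cdot\hat V$ never vanishes, so that $\int_s^t E^{osc}_\pm(\tau,X(\tau))\,d\tau$ becomes a boundary term of size $\|E^{osc}_\pm(t)\|_{L^\infty_x}\lesssim\langle t\rangle^{-3/2}$ plus faster-decaying bulk terms driven by $\partial_s\FB_\pm$ and $\partial_s\hat V$; this is exactly \eqref{int-Eosc} and Proposition~\ref{prop-charV}. The position and $f_\infty$ statements then follow as you say (the paper works with backward characteristics $X_{0,t},V_{0,t}$ and the explicit formula \eqref{charf1}, which streamlines the scattering-map bookkeeping you carry out abstractly).

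One correction: your ``secondary point'' about restricting to $|k|\lesssim\kappa_0$ is a misreading of the setting. In the Vlasov--Klein--Gordon system studied here there is \emph{no} spectral threshold cutting off the oscillatory modes; Theorem~\ref{theo-LangmuirB} shows $\nu_*(|k|)>|k|$ for \emph{all} $k\in\RR^3$, so $|\omega_\pm(k,v)|=|\lambda_\pm(k)+ik\cdot\hv|\gtrsim 1$ uniformly in $k$ and $|\hv|<1$ (Lemma~\ref{lem-PEosc}). The non-resonance you need is therefore global in $k$, not confined to a low-frequency band, and the compact $v$-support of $f_0$ enters only to keep $|\hv|$ uniformly below $1$ (so that the series \eqref{expand-phi1} converges). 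Also, the boundary term you obtain is $E^{osc,1}_\pm(t,X(t),V(t))$; its $\langle t\rangle^{-3/2}$ size comes directly from the already-established dispersive decay of $E^{osc}_\pm$ in Theorem~\ref{maintheorem}, not from an additional stationary-phase argument in $k$.
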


The main results assert that Landau damping (or rather, the survival of plasma oscillations from the classical Landau damping) holds at the nonlinear level for data with finite Sobolev regularity. The assumptions on initial data are not optimal and may be relaxed, including the fact that both initial data and equilibria are compactly supported in $v$. Unlike all the previous works on Landau damping where phase mixing is the sole driving force, the Vlasov-Klein-Gordon system is primarily driven by the oscillatory electric field. 
In fact, in the proof, see Section \ref{sec-iterscheme}, we shall keep track the oscillatory structure of the electric field, namely 
\begin{equation}\label{true-Eosc}
\begin{aligned}
E^{osc}_\pm(t,x) &= G_\pm^{osc}(t,x) \star_{x} \nabla_x S_0(x)+ G_\pm^{osc} \star_{t,x} \nabla_x S(t,x) .
\end{aligned}
\end{equation}
Here, $G_\pm^{osc}(t,x)$ is the oscillatory Green function whose Fourier transform is equal to $e^{\pm i \nu_*(k)t} a_\pm(k)$, where the dispersion relation $\nu_*(k)$ behaves like a Klein-Gordon dispersion, namely $\nu_*(k) \sim \sqrt{1+|k|^2}$ for all $k\in \RR^3$, with $\nu_*(0) = \tau_0$ defined as in \eqref{def-tau0}, while $S(t,x)$ arises as a nonlinear source density in view of the Klein-Gordon equation $(\Box_{t,x}+m_0^2)E = \nabla_x \rho$. 
We stress however that the structure of Klein-Gordon oscillations is far from being obvious even at the linearized level, noting the collective effect from the density $\rho$ in \eqref{VKG2} which may disturb the Klein-Gordon structure for the electric field. 

\subsection{Difficulties and main ideas}
Let us now point out several new difficulties that we must face and provide the main ideas on how we shall resolve them. A sketch of the proof of the main results will also be outlined. 

\subsection*{Langmuir's waves.}
As particles are near non-trivial equilibria $\mu(v)$, the linearized Vlasov dynamics is now driven by the linear forcing term $E\cdot \nabla_v \mu$, leading to 
\begin{equation}\label{lin-den} \nabla_x \rho = \nabla_x \rho^{free} -\int_0^t \int_{\RR^3} \nabla_x [E\cdot \nabla_v\mu](s,x-\hv (t-s),v) \; dvds,\end{equation}
in which $\rho^{free}$ is the density generated by the free transport dynamics, and is of order $t^{-4}$ by phase mixing. The nonlocal integral term, denoted by $\mathcal{K} E$, is put back into the Klein-Gordon equation \eqref{VKG2}, yielding a closed equation for the linearized electric field 
\begin{equation}\label{KG-E}(\Box_{t,x}+m_0^2+ \mathcal{K})E = \nabla_x \rho^{free}.
\end{equation}
It is however highly non-trivial to ensure that this nonlocal integral term would not disturb the Klein-Gordon dispersive structure at all frequencies. This was resolved in \cite{Toan, HKNR4} for the Vlasov-Poisson and Vlasov-Maxwell systems, yielding the existence of Langmuir's oscillatory  waves which behave exactly like that of a Klein-Gordon wave, provided the spectral condition $\tau_0^2\ge 0$, see \eqref{def-tau0}. The inverse of the modified Klein-Gordon operator $\Box_{t,x}+m_0^2+ \mathcal{K}$ is thus solved through its resolvent, giving its temporal Green function of the form  
$$\sum_\pm G^{osc}_\pm+ G^r(t,x)$$
where $G^{osc}_\pm(t,x)$ is the Klein-Gordon dispersive component which decays at rate of order $t^{-3/2}$, while $G^r(t,x)$ is the regular component dictated by phase mixing. Remarkably, phase mixing component is of order $t^{-4}$ and thus well-separated from oscillations. 
This well-separated behavior may formally be seen from \eqref{KG-E}, which asserts that $E^r \sim \nabla_x\rho^{free}$, up to an extraction of oscillations (noting we only have $E^r \sim t^{-3}$ at the nonlinear level which is sharp in view of quadratic oscillations). This is in a great contrast to the case near vacuum where $E = \nabla_x \Delta_x^{-1}\rho^{free}$, which only decays at order $t^{-2}$, see \cite{BD}. See Section \ref{sec-lineartheory} where we develop this linear theory for the Vlasov-Klein-Gordon system near general radial equilibria.

\subsection*{Plasma echoes.} 
The first common issue in dealing with the nonlinear Vlasov dynamics is to control quadratic interaction $E\cdot \nabla_v f$, which is the main source for plasma echoes \cite{Gould, Bed2,GNR2}. Observe that the Klein-Gordon dispersion only yields decay at rate of order $t^{-3/2}$, while the free transport dynamics dictates $\partial_v = \cO(t)$, leading to $E\cdot \nabla_v f = \cO(t^{-1/2})$, which is far from being integrable in time, not to mention the apparent loss of derivatives in $v$. Therefore, we must bootstrap and make use of oscillations, see \eqref{true-Eosc}. We may attempt to resolve the issue, following the ``standard procedure'', by deriving decay for low norms and propagating high norms with some possible growth in time. However, this does not work for Vlasov equations, since at the top order of derivatives, the nonlinear interaction $E_\mathrm{low} \cdot \nabla_v f_\mathrm{high} = \mathcal{O}(t^{-1/2})$ and $E_\mathrm{high} \cdot \nabla_v f_\mathrm{low} = \mathcal{O}(1)$, leading to a growth in time of orders $\langle t\rangle^{1/2}$ and $\langle t\rangle$, respectively (noting $f$ does not decay and $E$ has no oscillation in high norms). 

To avoid the apparent loss of $v$-derivatives, we shall work with the Lagrangian coordinates, an approach that was developed in \cite{HKNR2} for the screened Vlasov-Poisson system, namely following the genuine particle trajectories, which are defined by  
\begin{equation}\label{odeXV} \dot X_{s,t} = \hV_{s,t}, \qquad \dot V_{s,t} = E(s,X_{s,t}), \end{equation}
 with initial positions $(x,v)$ at $s=t$. Therefore, at the nonlinear level, the Klein-Gordon equation for the electric field \eqref{KG-E} is now forced additionally by $\nabla_x S^\mu$, where 
\begin{equation}\label{def-Smuintro}
S^{\mu}(t,x) = \int_0^t \int_{\RR^3}  \Big[ E(s,x - (t-s)\hv )\cdot \nabla_v \mu(v) 
 -E(s,X_{s,t}(x,v))\cdot \nabla_v \mu(V_{s,t}(x,v)) \Big] \, dv  ds,
\end{equation}
which accounts for the feedback of collective particles to the waves. The integral $S^{\mu}(t,x)$ is viewed as a quadratic density source, which decays at best at order $t^{-3}$ in view of the free transport dispersion for densities and the quadratic oscillations for the oscillatory electric field. In addition, due to the presence of oscillations, we do not expect extra decay for spatial derivatives of $S^{\mu}(t,x)$ (i.e. phase mixing is no longer available). This is the content of Section \ref{sec-sourceest}. 

In view of the electric field decomposition $E = E^{osc}_\pm + E^r$, we may expect that the position and velocity of the particles obey a similar decomposition. Indeed, we shall prove that 
\begin{equation}\label{Xdep-intro} 
X_{s,t}(x,v) = x - t \hV^{mod}_{t,t}(x,v) + X^{osc}_{s,t}(x,v) + X^{tr}_{s,t}(x,v) + X^R_{s,t}(x,v)
\end{equation}
where $V^{mod}_{t,t}(x,v) = v + \cO(\epsilon t^{-3/2})$, $X_{s,t}^{osc} = \cO(s^{-3/2})$, and $X_{s,t}^{tr} = \cO(s^{-1})$, plus a faster remainder $X^R_{s,t}(x,v)$. See Proposition \ref{prop-charPsi}. Roughly speaking, $X^{osc}_{s,t}(x,v)$ collects Klein-Gordon dispersion, while $X^{tr}_{s,t}(x,v)$ is due to phase mixing component of the electric field $E^r \sim t^{-3}$ and is therefore sharp after double integration in time in view of \eqref{odeXV}. Therefore, plasma echoes may arise due to particle-particle, particle-wave, and wave-wave interaction, where particles satisfy phase mixing (or free-transport dispersion), while waves follow Klein-Gordon dispersion. Namely,  

\begin{itemize}

\item {\em Particle-particle interaction:}
This interaction arises due to the quadratic term  $E^r \cdot \nabla_v f^r$, namely the regular part of the electric field and the regular part of the Vlasov dynamics, leading to a density source $S^{\mu,r,tr}(t,x)$ with $E$ replaced by $E^r$, see \eqref{def-Smuintro}. Due to the presence of the transport component $X^{tr}_{s,t}(x,v) = \cO(s^{-1})$, this causes a log loss in deriving decay of $S^{\mu,r,r}(t,x)$. However, as this is the term due to transport dynamics, we expect that phase mixing remains to play a role in closing the analysis, as treated in the screening case \cite{HKNR2}. See Section \ref{sec-SEr}.      

\item {\em Particle-wave interaction:}
This is due to the quadratic term $E^{osc}_\pm \cdot \nabla_v f^r$ (or $E^r \cdot \nabla_v f^{osc}$), leading to corresponding density sources in view of \eqref{def-Smuintro}. In this case, we make use of the fact that waves oscillate, while particles are driven by phase mixing, thus gaining extra decay after an integration by part in time, see Section \ref{sec-SEosc}. In a recent work \cite{ToanVKG}, we develop a new framework to treat particle-wave interaction near vacuum, which plays a role in designing the nonlinear analysis near equilibria, see Section \ref{sec-iterscheme}.  

\item {\em Wave-wave interaction:}
This is due to the quadratic term $E^{osc}_\pm \cdot \nabla_v f^{osc}_\pm$, leading to corresponding density sources $S^{\mu, osc}_{\pm,\pm}$ and $S^{\mu, osc}_{\pm,\mp}$. Note that in this case, the integrand in \eqref{def-Smuintro} is at best of order $s^{-3}$, which would not be sufficient to derive decay of order $t^{-3}$ for the sources. In the non-resonant interaction $\{\pm, \pm\}$, we may integrate by parts in time, gaining an extra decay of order $s^{-1}$. However, in the resonant case $\{\pm, \mp\}$, this is a priori unclear. A crucial fact is that in the resonant case, this term must be in a derivative form (i.e. gaining a frequency factor), which can be used to gain extra decay from phase mixing, see Section \ref{sec-resHosc}. 

\end{itemize}

Formally speaking, these inductive bounds on source densities $S(t,x)$ rely on two arguments: for $s > t/2$ we use Klein-Gordon's dispersion of the electric field, while for $s < t/2$, we use free transport dispersion in $v$, in addition to an appropriate decay from the characteristic \eqref{Xdep-intro}. In the proof, we make use of the fact that oscillations do not lose decay after an integration in time, while phase mixing gains extra decay for derivatives in space and time. 

\subsection*{Particles in an oscillatory field.}

In treating plasma echoes, we rely on the precise description of positions and velocities of the particles, see \eqref{Xdep-intro}, which plays a crucial roles in describing the oscillatory structure in the nonlinear interaction, see Section \ref{prop-SR}. Note that the dispersive decay $E^{osc}_\pm(t) = \mathcal{O}(t^{-3/2})$ is far from being sufficient to locate positions via the characteristic \eqref{odeXV}. One of the key observations in this work to overcome such a lack of decay is that time integration of the oscillatory field along the particle trajectories is better than expected. Indeed, suppose that the electric field is of the form $E_\pm^{osc}(t,x) = e^{ik\cdot x + \lambda_\pm(k) t}$, with $\lambda_\pm(k) = \pm i\nu_*(k)$ (namely, a Klein-Gordon wave at frequency $k$). Setting $\omega^v_\pm(k) = \lambda_\pm(k) + ik\cdot \hv$,   
we compute 
\begin{equation}\label{keyint}
 \begin{aligned}
 \int_0^t E_\pm^{osc}(\tau, x+\hv \tau)\; d\tau 
 &=  \int_0^t e^{ik\cdot x + \omega^v_\pm(k)\tau}\; d\tau 
= \frac{1}{\omega^v_\pm(k)} (e^{ik\cdot x + \omega^v_\pm(k)t} - e^{ik\cdot x})
\\& = \frac{1}{\omega^v_\pm(k)} \Big(E^{osc}_\pm(t,x+\hv t) - E^{osc}_\pm(0,x)\Big).
 \end{aligned}
\end{equation}
Effectively, up to a shift of $x \mapsto x-\hv t$, this calculation shows that particle velocities are a superposition of a purely oscillatory component and a pure transport part. This very decomposition turns out to hold for the genuine nonlinear particle trajectories \eqref{odeXV}, where the electric field is nonlinear and of the form \eqref{main-repE}, see Proposition \ref{prop-charV}. A similar decomposition is also carried out for particle positions, namely \eqref{Xdep-intro}. See Proposition \ref{prop-charPsi}. The precise description of particle positions and velocities in the physical space is one of the novelties of this work.  

\subsection*{Decay of the electric field}

The next task is to derive the right expected decay of the oscillatory electric field, which is computed through the bootstrap ansatz \eqref{true-Eosc}. One quickly faces a serious obstruction due to the lack of decay of the source density $S(t,x)$: namely, we wish to obtain a $t^{-3/2}$ dispersive decay from the spacetime convolution 
\begin{equation}\label{convEGs}G_\pm^{osc} \star_{t,x} \nabla_x S(t,x) = \int_0^t G_\pm^{osc}(t-s) \star_{x} \nabla_x S(s,x) \; ds.\end{equation}
The source density $S(t,x)$ is of order $t^{-3}$ in $L^\infty$ and merely bounded in $L^1_x$. In addition, phase mixing is no longer available, and spatial derivatives of $S(t,x)$ do not gain any extra decay due to the presence of oscillations, which is one of the main issues in dealing with Klein-Gordon's type dispersion. The decay of $S(t,x)$ is therefore insufficient to derive that of $E^{osc}_\pm(t,x)$ through the above spacetime convolution \eqref{convEGs}. 

To overcome the lack of decay and to propagate the Klein-Gordon dispersion nonlinearly, we again perform the time integration at the level of nonlinear interaction, remarkably leading to 
\begin{equation}\label{keyint2}
\begin{aligned}
G^{osc}_\pm \star_{t,x} \nabla_xS(t,x)  &= \frac{1}{\omega^v_\pm(i\partial_x)}G^{osc}_\pm(t) \star_{x} \nabla_x S(0,x) - \frac{1}{\omega^v_\pm(i\partial_x)}G^{osc}_\pm(0) \star_{x} \nabla_x S(t,x)
\\&\quad + \frac{1}{\omega^v_\pm(i\partial_x)} G^{osc}_\pm \star_{t,x} \nabla_x [ES](t,x) .
\end{aligned}
\end{equation}
See Propositions \ref{prop-convGosc}-\ref{prop-convGoscS0} for the precise details. The presentation \eqref{keyint2} reveals the deep structure hidden in the nonlinear interaction. Namely, the nonlinear electric field is again a superposition of a pure oscillation (i.e. Klein-Gordon dispersion) and a pure transport (i.e. phase mixing), plus a higher-order nonlinearity. This decoupling of oscillations from phase mixing is the key to the nonlinear iteration. In addition, the structure of each particle-particle, particle-wave, and wave-wave interaction also plays an important role in deriving decay of the electric field, see Section \ref{sec-quadinteraction}.  
  
\subsection*{Loss of derivatives}  
Finally, it is well-known that the spacetime convolution \eqref{convEGs} experiences loss of derivatives (e.g., \cite{RS, HKNR4}), see Proposition \ref{prop-Greenphysical}. However, the number of derivative losses is finite, we shall only derive sharp decay for low norms, but allow growth in time for higher derivatives. Precisely, we will propagate phase mixing and oscillations up to the penultimate derivatives, remarkably allowing a growth in time not only at the top order, but also all but one derivatives of the characteristic (see Proposition \ref{prop-HDchar}). The nonlinear iterative scheme with a cascading decay in norms is devised in Section \ref{sec-bootstrap}. The cascade of decay does not come from the simple interpolation between low and high norms, but from the phase mixing estimates for the transport, see \eqref{bootstrap-decaydaS} and Proposition \ref{prop-bdS}.

%

\subsection{Outline of the paper}

The paper is outlined as follows. 

\begin{itemize}

\item Section \ref{sec-lineartheory} is devoted to the linear theory where a detailed spectral analysis, following \cite{GNR1, HKNR3, HKNR4,Toan}, is done to establish that the linearized electric field can be decomposed into a Klein-Gordon dispersive component of order $t^{-3/2}$, plus a regular part which decays at order $t^{-3}$. This regular part can in fact be improved to decay at $t^{-4}$ (upon refining the initial data), however $t^{-3}$ is the best we could expect for the nonlinear iteration due to the presence of quadratic oscillations. 

\item An integral formulation of the whole system via using characteristics, following the Lagrangian framework introduced in \cite{HKNR2,ToanVKG}: this gives an integrate
reformulation of Vlasov equations, only involving characteristics, leading to linearized Vlasov
equations with a quadratic source term. Using the precise description of the characteristics, we have a detailed
picture of this quadratic source term. This is presented in Section \ref{sec-nonlinearframework}. We introduce the nonlinear iterative scheme in Section \ref{sec-iterscheme}. 

\item A detailed description of the characteristics; both position and velocity of particles oscillate with
an amplitude $O(t^{-3/2})$. We describe these oscillations, together with the effect of the remaining electric field,
in order to get an accurate picture of the position and speed of the particles. This is done in Section \ref{sec-Char}.

\item Using the detailed structure of the source terms, and the accurate description of the electric field for linearized Vlasov system, we inductively prove bounds on the source term and the nonlinear electric field, involving the detailed analysis of particle-particle, particle-wave, and wave-wave interaction. This is done in Section \ref{sec-sourceest}.

\item Finally, Section \ref{sec-decayosc} is devoted to derive decay of the oscillatory electric field through the spacetime convolution of the oscillatory Green function against the nonlinear sources (see Section \ref{sec-nonlinearframework}). Note that the sources decay at best like the nonlinear density, which is at order $t^{-3}$ and therefore insufficient to close the iteration for the oscillatory field (e.g., comparing with solving quadratic Klein-Gordon equations in three dimension). We introduce a phase space resonant method to resolve this very issue. 

\end{itemize}


\section{Linear theory}\label{sec-lineartheory}



\subsection{Introduction}


In this section, we introduce the Fourier-Laplace approach to study the linearized Vlasov-Klein-Gordon problem around radial equilibria $\varphi(\langle v\rangle)$: namely,  
\begin{equation}
\label{V-lin}
\partial_t f + \hv\cdot \nabla_x f + E\cdot \nabla_v\mu= 0 
\end{equation} 
\begin{equation}\label{KG-lin}
E = -\nabla_x \phi, \qquad (\Box_{t,x} + m_0^2)\phi = -\rho,
\end{equation}
with initial data 
\begin{equation}\label{data-lin}
f(0,x,v) = f_0(x,v), \qquad \phi(0,x) = \phi_0(x), \qquad \partial_t \phi(0,x) = \phi_1(x).
\end{equation} 
The spectral analysis is classical, following Landau \cite{Landau}, who analyzed the mode stability for the linearized Vlasov-Poisson system. To derive decay estimates for the linearized solution, we follow closely the analysis developed in our recent works \cite{GNR1, HKNR3, HKNR4,Toan} for linearized Vlasov-Poisson and Vlasov-Maxwell systems. For the sake of completeness, we provide the linear analysis for the linearized Klein-Gordon system \eqref{V-lin}-\eqref{KG-lin}. 
Throughout the section, we denote by $\Ff_k(t,v), \Fphi_k(t)$ the Fourier transform in $x$ of $f(t,x,v), \phi(t,x)$, respectively, while $\Tf_k(\lambda), \Tphi_k(\lambda)$ denote their Fourier-Laplace transform in $t,x$.   

\subsection{Resolvent equation}

We first derive the resolvent equation for \eqref{V-lin}. Precisely, we obtain the following. 

\begin{lemma}\label{lem-resolvent} Let $\phi(t,x)$ be the electric potential of the linearized Vlasov-Klein-Gordon system \eqref{V-lin}, and $\Tphi_k(\lambda)$ be the Fourier-Laplace transform of $\phi(t,x)$. Then, for each $k\in \RR^3$ and $\lambda\in \CC$, there hold
\begin{equation}\label{resolvent}
\begin{aligned}
M(\lambda,k)\Tphi_k(\lambda) &= - \Trho^0_k(\lambda) + \lambda \Fphi_k^0+ \Fphi_k^1, 
\end{aligned}
\end{equation}
where the spacetime symbol $M(\lambda,k)$ is defined by 
\begin{equation}\label{def-Dlambda}
\begin{aligned}
M(\lambda,k) &:=  \lambda^2 + |k|^2 + m_0^2 + \int \frac{ ik\cdot \hv}{\lambda +  ik \cdot \hv} \varphi'(\langle v\rangle)\;dv,
\end{aligned}
\end{equation}
with $\Fphi_k^j$ denoting the Fourier transform of the initial data $\phi^j(x)$, $j=0,1$,  and 
$\Trho^0_k(\lambda):= \int \frac{\Ff_k^0(v)}{\lambda +  ik \cdot \hv} dv$ being the charge density generated by the free transport. 

\end{lemma}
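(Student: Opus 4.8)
The plan is to turn the linearized system into an algebraic identity by taking the Fourier transform in $x$ and the Laplace transform in $t$: solve the transformed Vlasov equation explicitly in the velocity variable, integrate in $v$ to obtain the transformed density, and feed the result into the transformed Klein--Gordon equation.

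First I would Fourier transform \eqref{V-lin} in $x$. Using $E=-\nabla_x\phi$ (so $\widehat{E}_k=-ik\Fphi_k$) together with the radial identity $\nabla_v\mu=\varphi'(\langle v\rangle)\,v/\langle v\rangle=\varphi'(\langle v\rangle)\hv$, the Vlasov equation becomes, for each fixed $k\in\RR^3$ and $v$, the scalar ODE in $t$
\begin{equation*}
\partial_t\Ff_k(t,v)+ik\cdot\hv\,\Ff_k(t,v)=i(k\cdot\hv)\,\varphi'(\langle v\rangle)\,\Fphi_k(t).
\end{equation*}
Applying the Laplace transform in $t$ and using $\widetilde{\partial_t g}_k(\lambda)=\lambda\tilde g_k(\lambda)-\Fg_k(0)$, this becomes the algebraic relation $(\lambda+ik\cdot\hv)\Tf_k(\lambda,v)=\Ff_k^0(v)+i(k\cdot\hv)\varphi'(\langle v\rangle)\Tphi_k(\lambda)$. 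Since $\mu=\varphi(\langle v\rangle)$ is compactly supported, $|\hv|<1$ on its support, so $\lambda+ik\cdot\hv$ stays bounded away from $0$ when $\Re\lambda>0$; dividing and integrating in $v$ gives
\begin{equation*}
\Trho_k(\lambda)=\Trho^0_k(\lambda)+\Tphi_k(\lambda)\int_{\RR^3}\frac{ik\cdot\hv}{\lambda+ik\cdot\hv}\,\varphi'(\langle v\rangle)\,dv,
\end{equation*}
with $\Trho^0_k(\lambda)=\int_{\RR^3}\Ff_k^0(v)/(\lambda+ik\cdot\hv)\,dv$, exactly the free-transport density in the statement.

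Next I would Fourier transform \eqref{KG-lin} in $x$, getting $(\partial_t^2+|k|^2+m_0^2)\Fphi_k=-\Frho_k$, and then Laplace transform in $t$, using $\widetilde{\partial_t^2 g}_k(\lambda)=\lambda^2\tilde g_k(\lambda)-\lambda\Fg_k(0)-\widehat{\partial_t g}_k(0)$ together with the initial data \eqref{data-lin}. This yields
\begin{equation*}
(\lambda^2+|k|^2+m_0^2)\Tphi_k(\lambda)=-\Trho_k(\lambda)+\lambda\Fphi_k^0+\Fphi_k^1.
\end{equation*}
Inserting the formula for $\Trho_k(\lambda)$ above and moving the term proportional to $\Tphi_k(\lambda)$ to the left-hand side produces precisely $M(\lambda,k)\Tphi_k(\lambda)=-\Trho^0_k(\lambda)+\lambda\Fphi_k^0+\Fphi_k^1$, with $M(\lambda,k)$ as in \eqref{def-Dlambda}.

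The only step that is not pure bookkeeping — and the point I would be most careful about — is justifying that these transforms are well defined and that the manipulations are legitimate. For this I would first record an a priori bound showing $t\mapsto\Fphi_k(t)$ and $t\mapsto\Ff_k(t,\cdot)$ grow at most exponentially, which follows from local well-posedness and standard energy estimates for \eqref{V-lin}--\eqref{KG-lin} around the fixed smooth equilibrium; then the Laplace transforms converge and are holomorphic for $\Re\lambda$ large, \eqref{resolvent} holds on that half-plane, and it is extended by analytic continuation in the subsequent spectral analysis. The compact support of $\mu$ makes the $v$-integrals defining $\Trho^0_k$ and $M(\lambda,k)$ absolutely convergent with denominators bounded away from zero for $\Re\lambda>0$.
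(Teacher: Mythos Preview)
Your proof is correct and follows essentially the same approach as the paper: Fourier--Laplace transform the Vlasov equation to solve for $\Tf_k$, integrate in $v$ to obtain $\Trho_k$, and substitute into the transformed Klein--Gordon equation. The paper's proof is terser and omits the justification of the transforms, but the computation is identical.
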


\begin{proof} Indeed, noting $\nabla_v \mu = \hv \varphi'(\langle v\rangle)$ and taking the Laplace-Fourier transform of \eqref{V-lin} with respect to variables $(t,x)$, respectively, we obtain 
\begin{equation}\label{VKG-lambda1}
\begin{aligned}
(\lambda +  ik \cdot \hv ) \Tf_k &= \Ff_k^0 +ik \cdot \hv \varphi'(\langle v\rangle)\Tphi_k  
\end{aligned}\end{equation}
which gives 
$$  \Tf_k  = \frac{  ik \cdot \hv}{\lambda +  ik \cdot \hv} \varphi'(\langle v\rangle)\Tphi_k+ \frac{\Ff_k^0}{\lambda +  ik \cdot \hv} .$$
Integrating in $v$, we get $$
\begin{aligned} 
\Trho_k [f] &= \Big( \int \frac{ ik\cdot \hv}{\lambda +  ik \cdot \hv} \varphi'(\langle v\rangle) \;dv\Big)  \Tphi_k  + \int \frac{\Ff_k^0}{\lambda +  ik \cdot \hv} dv ,
\end{aligned}$$
On the other hand, the Klein-Gordon equation in the Fourier-Laplace variables reads
$$(\lambda^2 + |k|^2+m_0^2)\Tphi_k =  - \Trho_k[f] + \lambda \Fphi_k^0+ \Fphi_k^1$$
in which $\Fphi_k^j$ denotes the Fourier transform of the initial data $\phi^j(x)$, $j=0,1$. 
This gives \eqref{resolvent}, 
  \end{proof}

\begin{lemma}\label{lem-Dlambda} Let $M(\lambda,k)$ be defined as in \eqref{def-Dlambda}. Then, for each $k\in \RR^3$, we can write 
\begin{equation}\label{def-Dlambda1}
\begin{aligned}
M(\lambda,k) 
= \lambda^2 + |k|^2 + m_0^2 + \cH(i\lambda/|k|), \qquad \cH(z) 
= \int_{-1}^1 \frac{u \kappa(u)}{z-u} \;du.  
\end{aligned}
\end{equation}
for some even and non-negative function $\kappa(u)$. In particular, $M(\lambda,k)$ is analytic in $\Re\lambda>0$, and 
\begin{equation}\label{unibd-cH} |\partial_z^n\cH(i\lambda/|k|)| \lesssim 1,
\end{equation}
uniformly for $k\in \RR^3$, $\Re \lambda \ge 0$, and $0\le n<N_0$.  
\end{lemma}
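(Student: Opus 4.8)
The plan is to establish Lemma~\ref{lem-Dlambda} by a change of variables in the velocity integral defining the last term of $M(\lambda,k)$, reducing the $3$-dimensional integral over $v$ to a $1$-dimensional principal-value integral over the projected variable $u = \hat k\cdot \hat v \in (-1,1)$. Writing $\omega = \hat k = k/|k|$ and decomposing $v = r\omega + v'$ with $v'\perp\omega$, one has $ik\cdot\hat v = i|k|\,\widehat{(r\omega+v')}\cdot\omega$; since $\mu$ is radial, $\varphi'(\langle v\rangle)$ depends only on $|v| = \sqrt{r^2+|v'|^2}$, so integrating out the angular part of $v'$ at fixed value of $u := \hat v\cdot\omega$ produces a one-dimensional measure. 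Concretely I would first verify that the map $v\mapsto u=\hat v\cdot\omega$ has level sets on which $\langle v\rangle$ is \emph{not} constant, but that after integrating the density $\hat v\cdot\omega\,\varphi'(\langle v\rangle)$ over each level set one obtains $\int_{-1}^1 \frac{u\,\kappa(u)}{z-u}\,du$ with $z = i\lambda/|k|$, where $\kappa(u) := \int_{\{\hat v\cdot\omega = u\}} \varphi'(\langle v\rangle)\,d\sigma_u(v)$ (an appropriately normalized conditional integral). Because $\varphi$ is compactly supported and smooth and $\varphi'\le 0$ is not required — rather one uses the structure of the change of variables — the function $\kappa$ inherits evenness from $u\mapsto -u$ being implemented by $\omega\mapsto-\omega$ together with radiality, and non-negativity from the sign that appears after the conditional integration (this is where the precise form of the relativistic velocity $\hat v$ matters; the Jacobian factors combine to give the stated positivity, as in \cite{Toan,HKNR4}).

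Next I would record the analyticity: for $\Re\lambda>0$ and $u\in(-1,1)$ one has $\Re(\lambda + ik\cdot\hat v) = \Re\lambda > 0$, so the integrand in \eqref{def-Dlambda} is holomorphic in $\lambda$ with locally uniform bounds, and differentiation under the integral sign is justified by dominated convergence (the denominator stays bounded away from zero). Hence $M(\lambda,k)$ is analytic in the open right half-plane, and so is $z\mapsto \cH(z)$ on $\Re z = \Re(i\lambda/|k|) = -\Im\lambda/|k|$... more precisely $\cH(z)$ is analytic off the cut $[-1,1]$, and $i\lambda/|k|$ lies off that cut when $\Re\lambda>0$. For the boundary case $\Re\lambda = 0$ one takes limits; the Plemelj formula shows $\cH$ extends continuously (indeed the relevant one-sided limits exist) because $\kappa$ is smooth.

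For the uniform derivative bound \eqref{unibd-cH}, the plan is: since $\kappa \in C_c^\infty((-1,1))$ (as $\varphi$ is compactly supported in $v$ and smooth, and $N_0$-many derivatives are available by the hypothesis $N_0\ge 14$), I integrate by parts $n$ times in $u$ to move the singular factor $(z-u)^{-1}$ onto $\log(z-u)$-type kernels, or more simply I write $\partial_z^n\cH(z) = (-1)^n n!\int_{-1}^1 \frac{u\kappa(u)}{(z-u)^{n+1}}\,du$ and then integrate by parts $n$ times to transfer derivatives onto $u\kappa(u)$, leaving a single power $(z-u)^{-1}$ acting on $\partial_u^n(u\kappa(u))$, which is again a smooth compactly supported function; the resulting principal-value / Cauchy integral of an $L^\infty\cap C^\alpha$ function over a bounded interval is bounded uniformly in $z$ ranging over a neighborhood of $[-1,1]$ in $\CC$, hence uniformly for $\Re\lambda\ge 0$ and all $k$ (note $z = i\lambda/|k|$ ranges over all of $\CC$ as $k,\lambda$ vary, but the bound is uniform over the whole closed upper-or-lower region because away from $[-1,1]$ the kernel is bounded and near $[-1,1]$ the compact support and smoothness of $\kappa$ give the Cauchy-integral bound). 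The main obstacle I anticipate is not the analyticity or the derivative estimate — those are routine once the reduction is in place — but rather the bookkeeping in the change of variables $v\mapsto u$: correctly identifying the conditional measure $d\sigma_u$, checking that the relativistic factor $\hat v = v/\langle v\rangle$ does not spoil the evenness of $\kappa$, and above all verifying the \emph{non-negativity} of $\kappa$, which is the one genuinely substantive point (it is precisely the spectral-stability input, consistent with $\tau_0^2>0$) and which I would handle by writing $\kappa$ explicitly as a positive integral in the transverse variable after the substitution, citing the computation in \cite{Toan} for the Vlasov--Poisson analogue and adapting the Jacobian for the Klein--Gordon velocity law.
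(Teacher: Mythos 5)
Your proposal is essentially correct and follows the same route as the paper: the change of variables $u = k\cdot v/(|k|\langle v\rangle)$ with transverse variable $w = v - (k\cdot v)k/|k|^2$ reduces the $v$-integral to a one-dimensional Cauchy integral in $u$ against a density $\kappa(u)$, and the paper then establishes $\kappa\ge 0$ by exactly the explicit transverse computation you anticipate---parametrizing $k^\perp$ in polar coordinates, substituting $s = \sqrt{1+r^2}/\sqrt{1-u^2}$, and integrating by parts once so that $\kappa$ is expressed as a sum of two manifestly nonnegative terms proportional to $\varphi$. Your argument for \eqref{unibd-cH} via $n$-fold integration by parts in $u$, using that $\kappa$ is compactly supported in $(-1,1)$ because $\varphi$ is compactly supported, is sound and in fact more explicit than the paper, which simply attributes the uniform bound to the regularity and decay of $\kappa$.
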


\begin{proof} In view of definition \eqref{def-Dlambda}, we study the integral term $\int \frac{ ik\cdot \hv}{\lambda +  ik \cdot \hv} \varphi'(\langle v\rangle)\;dv$. For $k\not =0$, we introduce the change of variables $v \mapsto (u,w)$ defined by 
\begin{equation}\label{change-vw}
u := \frac{k\cdot v}{|k|\langle v\rangle} , \qquad w := v -   \frac{(k\cdot v)k}{|k|^2},
\end{equation}
with $u \in [-1,1]$ and $w\in k^\perp$, the hyperplane orthogonal to $k$ in $\RR^3$. Note that the Jacobian determinant is  
$$J_{u,w} = \langle v\rangle (1-u^2)^{-1}, $$
with  $\langle v\rangle  =\langle w\rangle / \sqrt{1-u^2}$. Therefore, we may write 
\begin{equation}\label{rew-Dlambda}
\begin{aligned}
\int \frac{ ik\cdot \hv}{\lambda +  ik \cdot \hv} \varphi'( \langle v\rangle)\;dv
&= \int_{-1}^1 \frac{ u }{-i\lambda/|k| +  u} \Big(\int_{w\in k^\perp}\varphi'\Big(\frac{\langle w\rangle}{\sqrt{1-u^2}}\Big) \frac{\langle w\rangle}{(1-u^2)^{3/2}} \; dw\Big) \;du.
\end{aligned}
\end{equation}
Setting 
\begin{equation}\label{def-kuw}
\begin{aligned}
\kappa(u) :&= -\int_{w\in k^\perp}\varphi'\Big(\frac{\langle w\rangle}{\sqrt{1-u^2}}\Big) \frac{\langle w\rangle}{(1-u^2)^{3/2}} \; dw ,
\end{aligned}
\end{equation}
we obtain \eqref{def-Dlambda1}, upon recalling \eqref{def-Dlambda}. Clearly, $\kappa(u)$ is even in $u$. It remains to check that $\kappa(u)$ is nonnegative. Indeed, we may parametrize the hyperplane $k^\perp $ via polar coordinates with radius $r = |w|$, and then introduce $s = \sqrt{1+r^2} / \sqrt{1-u^2}$, giving
$$
\begin{aligned}
\kappa(u) &= - \int_{\mathbb{R}^2}\varphi'\Big(\frac{\langle w\rangle}{\sqrt{1-u^2}}\Big) \frac{\langle w\rangle}{(1-u^2)^{3/2}} \; dw= - 2\pi \int_0^\infty \varphi'\Big(\frac{\sqrt{1+r^2}}{\sqrt{1-u^2}}\Big)\frac{\sqrt{1+r^2}}{(1-u^2)^{3/2}} \; r dr 
\\&= - 2\pi\int_{1/\sqrt{1-u^2}}^\infty \varphi'(s) s^2\; ds
\\
&= \frac{2\pi }{1-u^2}\mu\Big(\frac{1}{\sqrt{1-u^2}}\Big)  + 4\pi\int_{1/\sqrt{1-u^2}}^\infty \varphi(s) s\; ds
\end{aligned}
$$
which gives the non-negativity of $\kappa(u)$ for $u \in (-1,1)$, recalling $\varphi(\cdot)\ge0$. Since $\varphi(s)$ decays rapidly, $\kappa(u)$ tends to zero rapidly as $|u|\to 1$. The analyticity of $\cH(z)$ follows from the regularity and decay properties of $\kappa(u)$. 
\end{proof}

\subsection{Spectral stability}\label{sec-spectral}

In view of the resolvent equation \eqref{resolvent}, the solutions to the dispersion relation $M(\lambda,k)=0$ plays a crucial role in studying the large time dynamics of the linearized problem \eqref{V-lin}. In this section, we shall prove that there is no solution in the right half plane $\Re \lambda>0$. Namely, we obtain the following. 

\begin{proposition}\label{prop-nogrowth} Let $\mu(v) = \varphi(\langle v\rangle)$ be any non-negative radial equilibria in $\RR^3$, and define $\kappa_0$ as
\begin{equation}\label{def-kappa0}
\kappa_0^2 = - \int_{\RR^3}\varphi'(\langle v\rangle)\; dv.
\end{equation}
Assume that $\kappa_0^2\le m_0^2$. Then, the linearized system \eqref{V-lin} has no nontrivial mode solution of the form $e^{\lambda t + ik\cdot x} (\Fphi_k, \Ff_k(v))$ with $\Re \lambda >0$ for any nonzero pair $(\Fphi_k, \Ff_k(v))$. 
\end{proposition}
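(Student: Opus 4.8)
The plan is to reduce the statement to the absence of zeros of the spacetime symbol $M(\lambda,k)$ in the open right half-plane $\{\Re\lambda>0\}$, and then to produce a definite sign for $\Re\big[\bar\lambda\, M(\lambda,k)\big]$ there. For the reduction, plug a mode $e^{\lambda t+ik\cdot x}(\Fphi_k,\Ff_k(v))$ into \eqref{V-lin}--\eqref{KG-lin}; using $\nabla_v\mu=\hv\,\varphi'(\langle v\rangle)$ and separating variables gives $(\lambda+ik\cdot\hv)\Ff_k = i(k\cdot\hv)\varphi'(\langle v\rangle)\Fphi_k$, and since $\Re(\lambda+ik\cdot\hv)=\Re\lambda>0$ the denominator never vanishes, so $\Ff_k$ is determined by $\Fphi_k$. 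Integrating in $v$ and using the Klein--Gordon relation $(\lambda^2+|k|^2+m_0^2)\Fphi_k=-\Frho_k$ yields $M(\lambda,k)\Fphi_k=0$ with $M$ as in \eqref{def-Dlambda}. If $\Fphi_k=0$ then $\Ff_k=0$ as well, so a nontrivial mode forces $M(\lambda,k)=0$, and it suffices to rule this out.

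First I would dispose of $k=0$: the velocity average in \eqref{def-Dlambda} vanishes, so $M(\lambda,0)=\lambda^2+m_0^2$, whose only zeros are $\pm i m_0\notin\{\Re\lambda>0\}$. For $k\neq0$ I use the representation of Lemma \ref{lem-Dlambda}, namely $M(\lambda,k)=\lambda^2+|k|^2+m_0^2+\cH(i\lambda/|k|)$ with $\cH(z)=\int_{-1}^1\frac{u\kappa(u)}{z-u}\,du$ and $\kappa\ge0$ even, together with the elementary identity $\frac{u}{z-u}=-1+\frac{z}{z-u}$ and $\int_{-1}^1\kappa(u)\,du=-\int_{\RR^3}\varphi'(\langle v\rangle)\,dv=\kappa_0^2$ (see \eqref{def-kappa0}). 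A short computation then gives, for $z=i\lambda/|k|$,
\[
M(\lambda,k)=\lambda^2+|k|^2+m_0^2-\kappa_0^2+\lambda\int_{-1}^1\frac{\kappa(u)}{\lambda+i|k|u}\,du .
\]

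The key step is to multiply by $\bar\lambda$ and take real parts. Writing $\lambda=\gamma+i\omega$ with $\gamma>0$, one has $\Re(\bar\lambda\lambda^2)=\gamma|\lambda|^2$, $\Re(c\,\bar\lambda)=c\gamma$ for any real constant $c$, and $\Re\big(\bar\lambda\lambda\int_{-1}^1\frac{\kappa(u)}{\lambda+i|k|u}\,du\big)=|\lambda|^2\int_{-1}^1\frac{\gamma\,\kappa(u)}{\gamma^2+(\omega+|k|u)^2}\,du$, since $\Re\frac{1}{\lambda+i|k|u}=\frac{\gamma}{\gamma^2+(\omega+|k|u)^2}$. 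Hence
\[
\Re\big[\bar\lambda\, M(\lambda,k)\big]
=\gamma\Big(|\lambda|^2+|k|^2+(m_0^2-\kappa_0^2)+|\lambda|^2\!\int_{-1}^1\frac{\kappa(u)}{\gamma^2+(\omega+|k|u)^2}\,du\Big).
\]
Now $\gamma>0$, $|\lambda|^2>0$, the integral is nonnegative because $\kappa\ge0$ (Lemma \ref{lem-Dlambda}), and $m_0^2-\kappa_0^2\ge0$ by the hypothesis $\kappa_0^2\le m_0^2$; so the bracket is strictly positive and $\Re[\bar\lambda\,M(\lambda,k)]>0$. In particular $M(\lambda,k)\neq0$ for all $k\in\RR^3$ and all $\lambda$ with $\Re\lambda>0$, which completes the argument.

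I do not expect a genuine obstacle here: the whole proof rests on two sign facts already in hand, the Penrose-type positivity $\kappa\ge0$ from Lemma \ref{lem-Dlambda} and the spectral stability assumption $m_0^2\ge\kappa_0^2$. The only mildly delicate point is organizing the algebra so that the $\lambda^2$-term, the constant mass term, and the velocity average all acquire the same favorable sign after multiplication by $\bar\lambda$; the change of variables $v\mapsto(u,w)$ underlying the $\kappa$-representation is exactly what makes the velocity-average contribution manifestly signed, since the bare integrand $-\varphi'(\langle v\rangle)$ need not be.
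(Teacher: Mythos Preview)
Your proof is correct and follows essentially the same route as the paper: reduce to $M(\lambda,k)\neq0$ for $\Re\lambda>0$, rewrite the velocity integral via the $\kappa$-representation of Lemma~\ref{lem-Dlambda} together with $\int_{-1}^1\kappa(u)\,du=\kappa_0^2$, and conclude by a positivity argument using $\kappa\ge0$ and $m_0^2\ge\kappa_0^2$. The only difference is cosmetic: the paper assumes $M(\lambda,k)=0$, uses $\Im M=0$ to eliminate the cross term, and then shows the resulting expression $|\lambda|^2+|k|^2+m_0^2-\kappa_0^2+|\lambda|^2\int\frac{\kappa}{|\cdots|^2}$ is strictly positive, whereas you package the same computation by showing $\Re[\bar\lambda\,M(\lambda,k)]>0$ directly---a slightly cleaner device that avoids the contradiction step but yields the identical quadratic form.
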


\begin{proof} It suffices to prove that for any $k\in \RR^3$, the spacetime symbol $M(\lambda,k)$ never vanishes for $\Re \lambda >0$. In view of \eqref{def-kuw} and \eqref{def-kappa0}, we note that $$\kappa_0^2 = \int_{-1}^1 \kappa(u)\;du,$$
which is well-defined, since $\kappa(u)\ge 0$.  
Recalling \eqref{def-Dlambda1}, we compute 
$$
\begin{aligned}
M(\lambda,k) 
&= \lambda^2+ |k|^2 +m_0^2 - \int_{-1}^1 \frac{u \kappa(u)}{u-i\lambda/|k|} \;du 
\\
&= \lambda^2+ |k|^2 +m_0^2 - \kappa_0^2 - \frac{i\lambda}{|k|}\int_{-1}^1 \frac{\kappa(u)}{u-i\lambda/|k|} \;du 
\\
&= \lambda^2+ |k|^2 +m_0^2 - \kappa_0^2 - \frac{i\lambda}{|k|}\int_{-1}^1 \frac{(u+i\bar\lambda/|k|)\kappa(u)}{|u-i\lambda/|k||^2} \;du
\\
&= \lambda^2+ |k|^2 +m_0^2 - \kappa_0^2 + \frac{|\lambda|^2}{|k|^2}\int_{-1}^1 \frac{\kappa(u)}{|u-i\lambda/|k||^2} \;du - \frac{i\lambda}{|k|}\int_{-1}^1 \frac{u\kappa(u)}{|u-i\lambda/|k||^2} \;du.
\end{aligned}$$
Now suppose that $M(\lambda,k) =0$ for some $\Re\lambda \neq 0$. The vanishing of the imaginary part gives 
$$
 \frac{1}{|k|}\int_{-1}^1 \frac{u\kappa(u)}{|u-i\lambda/|k||^2} \;du = 2\Im \lambda .
$$ 
Plugging this identity back into $M(\lambda,k)$ and noting $\lambda^2- 2i\lambda\Im \lambda = |\lambda|^2$, we get 
$$
\begin{aligned}
M(\lambda,k) 
&= |\lambda|^2+ |k|^2 +m_0^2 - \kappa_0^2 + \frac{|\lambda|^2}{|k|^2}\int_{-1}^1 \frac{\kappa(u)}{|u-i\lambda/|k||^2} \;du 
\end{aligned}$$
which never vanishes, since $\kappa(u)\ge 0$ and $m_0^2 \ge \kappa_0^2$. That is, $M(\lambda,k)$ never vanishes for $\Re \lambda \neq 0$, and therefore, the linearized system \eqref{V-lin} has no nontrivial mode solution as claimed.  
\end{proof}

\subsection{Langmuir oscillatory waves}
\label{sec-M}

Thanks to the stability result of Proposition~\ref{prop-nogrowth}, we are led to study the dispersion relation $M(\lambda,k)=0$ for $\lambda$ lying on the imaginary axis. The main result in this section is the existence of Langmuir's purely oscillatory waves. Namely, we obtain the following result.

\begin{theorem}\label{theo-LangmuirB} 
For each $k\in \RR^3$, there are exactly two zeros $\lambda_\pm = \pm i \nu_*(|k|)$ of the  dispersion relation 
$$ M(\lambda,k) = 0$$
that lie on the imaginary axis $\{\Re \lambda =0\}$. Moreover, we have $\nu_*(|k|)>|k|$, and $\nu_*$ is a smooth function satisfying the following Klein-Gordon type estimates:  
\begin{equation}\label{KG-behave1} 
c_0 \sqrt{1+|k|^2} \le \nu_*(|k|)\le C_0 \sqrt{1+|k|^2} ,
\end{equation}
\begin{equation}\label{KG-behave2}
c_0\frac{|k|}{ \sqrt{1+|k|^2}} \le \nu_*'(|k|) \le C_0 \frac{|k|}{ \sqrt{1+|k|^2}} , 
\end{equation}
\begin{equation}\label{KG-behave3}
 c_0 (1+|k|^2)^{-3/2}  \le \nu_*''(|k|)  \le C_0.
\end{equation}
uniformly with respect to $k\in \RR^3$, for some positive constants $c_0, C_0$. In addition, 
\begin{equation}\label{lowbd-Mstau} 
\begin{aligned}
|M(i\tilde\tau|k|,k) | 
\gtrsim \tau_0^2 + |\tilde\tau| +  |k|^2(1-\tilde\tau^2), \qquad \forall ~|\tilde\tau|\le 1,
\end{aligned}\end{equation}
uniformly in $k\in \RR^3$. 
\end{theorem}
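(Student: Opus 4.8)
The plan is to analyze the function $M(i\tilde\tau|k|,k)$ on the imaginary axis by separating its real and imaginary parts, exploiting the explicit formula from Lemma~\ref{lem-Dlambda}, namely $M(i\tilde\tau|k|,k) = |k|^2(1-\tilde\tau^2) + m_0^2 + \cH(-\tilde\tau)$ with $\cH(z) = \int_{-1}^1 \frac{u\kappa(u)}{z-u}\,du$ for a nonnegative even $\kappa$. For $|\tilde\tau|>1$ the kernel $\frac{u\kappa(u)}{-\tilde\tau-u}$ is nonsingular, and one computes that $M(i\tilde\tau|k|,k)$ is real-valued; writing $M = |k|^2(1-\tilde\tau^2) + m_0^2 + \int_{-1}^1 \frac{u\kappa(u)}{-\tilde\tau - u}\,du$, I would show $\tilde\tau \mapsto M(i\tilde\tau|k|,k)$ is strictly monotone on $\tilde\tau>1$ (the $|k|^2(1-\tilde\tau^2)$ term decreases, and one checks the $\cH$ contribution is also monotone there via differentiating under the integral using $\kappa\ge 0$), passes from the value $\tau_0^2 + |k|^2 > 0$ at $\tilde\tau = 1^+$ (using $\tau_0^2 = m_0^2 - \kappa_0^2 + \text{(positive)}$, or rather $M(i|k|,k) = m_0^2 + \cH(-1)$; here I need the boundary behavior of $\cH$) down to $-\infty$ as $\tilde\tau \to \infty$. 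That yields exactly one zero $\tilde\tau = \nu_*(|k|)/|k| > 1$, i.e. $\nu_*(|k|) > |k|$, on each side by the evenness symmetry; combined with Proposition~\ref{prop-nogrowth} which excludes $\Re\lambda \ne 0$, and a separate check that $M$ has no zero for $|\tilde\tau| \le 1$, this gives exactly two imaginary zeros $\pm i\nu_*(|k|)$.

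For the Klein-Gordon bounds \eqref{KG-behave1}--\eqref{KG-behave3}, I would work from the defining relation $|k|^2(\nu_*^2/|k|^2 - 1) = m_0^2 + \cH(-\nu_*/|k|)$, i.e. $\nu_*^2 = |k|^2 + m_0^2 + \cH(-\nu_*/|k|)$, and use the uniform bound \eqref{unibd-cH}: since $\cH$ and its derivatives are $O(1)$ uniformly, $\nu_*^2 = |k|^2 + O(1)$, and since $\nu_* > |k|$ while also $\nu_*$ stays bounded away from $|k|$ in the regime $|k|\lesssim 1$ (because $\cH(-z)$ for $z$ slightly above $1$ is bounded below by $\tau_0^2 - m_0^2$... more precisely $\nu_*^2 - |k|^2 \ge \tau_0^2 > 0$ needs to be extracted), one gets $\nu_*(|k|) \asymp \sqrt{1+|k|^2}$. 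The derivative estimates follow by implicit differentiation: differentiating $\nu_*^2 - |k|^2 - m_0^2 - \cH(-\nu_*/|k|) = 0$ in $|k|$ gives $\nu_*'$ in terms of $\nu_*$, $|k|$, and $\cH'$, and the structure $2\nu_*\nu_*' - 2|k| = \cH'(\cdot)\cdot(\text{derivative of } \nu_*/|k|)$ should be solved for $\nu_*'$ and estimated using $|\cH'| \lesssim 1$ and the already-established size of $\nu_*$; similarly for $\nu_*''$ by a second differentiation. The lower bound $\nu_*'' \gtrsim (1+|k|^2)^{-3/2}$ will require the most care since it is the ``convexity'' statement — I expect to need strict positivity of some integral involving $\kappa$ (reflecting that $\kappa$ is not identically zero), not merely $\kappa \ge 0$.

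For the quantitative lower bound \eqref{lowbd-Mstau} on $|\tilde\tau| \le 1$, I would split into real and imaginary parts exactly as in the proof of Proposition~\ref{prop-nogrowth} but now with $\lambda = i\tilde\tau|k|$ purely imaginary, taking the $\Re\lambda \to 0^+$ limit carefully (the Plemelj/Landau prescription). One has $\Re M(i\tilde\tau|k|,k) = |k|^2(1-\tilde\tau^2) + m_0^2 - \mathrm{p.v.}\!\int_{-1}^1 \frac{u\kappa(u)}{u-\tilde\tau}\,du$ and $\Im M(i\tilde\tau|k|,k) = \pm\pi\tilde\tau\kappa(\tilde\tau)$ (for $|\tilde\tau|<1$), coming from the residue at $u = \tilde\tau$. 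The contribution $|k|^2(1-\tilde\tau^2)$ is manifestly nonnegative and gives that term in the claimed bound. The term $|\tilde\tau|$ should come from combining $|\Im M| \sim |\tilde\tau|\kappa(\tilde\tau)$ when $\kappa(\tilde\tau)$ is bounded below, with the real part when $\kappa(\tilde\tau)$ is small: rewriting the principal value, $\Re M = \tau_0^2 + |k|^2(1-\tilde\tau^2) - \mathrm{p.v.}\!\int \frac{(u-\tilde\tau)\kappa(u) + \tilde\tau(\kappa(u)-\kappa(\tilde\tau))}{u-\tilde\tau}\,du - (\text{correction})$, so that after the cancellation one is left with $\tau_0^2$ plus controlled terms; the detailed bookkeeping here, showing that when $\kappa(\tilde\tau)$ degenerates near $\tilde\tau = \pm1$ the real part recovers the full $\tau_0^2 + |\tilde\tau|$, is the main obstacle. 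I expect to handle it by treating separately the region where $\kappa(\tilde\tau) \ge \delta$ (use $\Im M$) and $\kappa(\tilde\tau) < \delta$ (use that $\Re M$ is then close to $\tau_0^2 + |k|^2(1-\tilde\tau^2)$ because the principal value integral is continuous and the singular part is suppressed), choosing $\delta$ and balancing, following the analysis in \cite{Toan, HKNR4}.
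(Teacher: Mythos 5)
Your proposal follows essentially the same route as the paper: by the evenness of $\kappa$, $M(i\tau,k)$ is real and strictly monotone on $|\tau|\ge |k|$ (the paper phrases this as monotonicity of $\Psi(x)=-x+|k|^2+\psi(|k|^2/x)$ in $x=\tau^2$, which is equivalent to your monotonicity in $\tilde\tau$), decreasing from the positive value $m_0^2+\cH(-1)$ at $|\tau|=|k|$ to $-\infty$, which gives the unique pair $\pm i\nu_*(|k|)$ with $\nu_*>|k|$; for $|\tau|<|k|$ the Plemelj formula gives $\Im M=\pi\tilde\tau\kappa(\tilde\tau)$ and the zero-free property, and your split according to whether $\kappa(\tilde\tau)$ is large or small is the same balancing the paper performs via the convex combination $(1-\delta)|M|+\delta|M|$ to reach \eqref{lowbd-Mstau}. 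The one place your sketch is materially incomplete --- the derivative bounds \eqref{KG-behave2}--\eqref{KG-behave3}, especially the convexity lower bound on $\nu_*''$ --- is precisely the step the paper also does not prove in-line, deferring to \cite[Theorem 2.18]{HKNR4} after observing that $x_*=\nu_*^2$ obeys $x_*=|k|^2+\psi(|k|^2/x_*)$; your implicit-differentiation plan is the right one (e.g. $1\le dx_*/d(|k|^2)\le C$ follows from $0\le\psi'\lesssim 1$ and $x_*>|k|^2$, giving \eqref{KG-behave2}), but the strict lower bound for $\nu_*''$ would indeed require the quantitative positivity of the $\kappa$-integrals that you flag.
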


\begin{proof} We follow the analysis done in \cite{HKNR4, Toan}. Indeed, in view of Proposition~\ref{prop-nogrowth}, we can restrict to the imaginary axis, that is we consider $\lambda = i\tau$ for $\tau \in \RR$. We distinguish between the two cases: $|\tau| < |k|$ and $|\tau| \ge |k|$.

\bigskip

\noindent {\bf Case 1: $|\tau|\ge |k|$.}
We first consider the case when $|\tau| \ge |k|$. 
Using the formulation \eqref{def-Dlambda1}, we may write 
$$
\begin{aligned}
M(i\tau,k) 
&= -\tau^2 + |k|^2 + m_0^2+  \int_{-1}^1 \frac{u \kappa(u)}{-\tau/|k| -u} \;du
\\
&= -\tau^2 + |k|^2 + m_0^2+  \int_{-1}^1 \frac{u^2 \kappa(u)}{\tau^2/|k|^2-u^2} \;du
\\
&= -\tau^2 + |k|^2 + m_0^2+ \frac{|k|^2}{\tau^2} \int_{-1}^1 \frac{u^2 \kappa(u)}{1-u^2|k|^2/\tau^2} \;du,
\end{aligned}  
$$
in which we have used the fact that $\kappa(u)$ is even in $u$. Namely, 
for each fixed $k\in \RR^3$, the dispersion relation is of the form 
\begin{equation}\label{rew-D}
M(i\tau,k) = \Psi(\tau^2),
\end{equation}
where, for convenience, we have set
\begin{equation}\label{def-pPsi} \Psi(x) := - x + |k|^2 + \psi(|k|^2/x),\qquad \psi(y): = m_0^2 + y\int_{-1}^1 \frac{u^2 \kappa(u)}{1-u^2y} \;du.
\end{equation}
Recall that $\kappa(u)$ decays rapidly to zero as $|u|\to 1$, and so $\psi(y)$ is well defined for $y\in [0,1]$. As a result, 
$ \Psi(x)$ is well-defined for $x\ge |k|^2$. In addition, for $y\in (0,1)$, we compute 
$$ \psi'(y) = \int_{-1}^1 \frac{u^2 \kappa(u)}{1-u^2y} \;du+ y\int_{-1}^1 \frac{u^4 \kappa(u)}{(1-u^2y)^2} \;du \ge 0.$$
In fact, since $\kappa(u)$ is nonnegative and not identically zero, $\psi'(y)>0$ and $\psi(y)$ is strictly increasing in $[0,1]$ with 
$$ \psi(0) = m_0^2, \qquad \psi(1) =m_0^2 + \int_{-1}^1 \frac{u^2 \kappa(u)}{1-u^2} \;du = m_1^2.$$ 
In addition, we compute 
\begin{equation}
\label{laderiveedePsi} \Psi'(x) = -1  - \frac{|k|^2}{x^2}\psi'(\frac{|k|^2}{x}) \le -1.
\end{equation}
That is, $\Psi(x)$ is strictly decreasing in $x \ge |k|^2$. Since $m_0^2 \le \psi(|k|^2/x)\le m_1^2$, the function $\Psi(x)$ is strictly negative for $x\gg1$, while at $x = |k|^2$, $\Psi(|k|^2) = m_1^2>0$. By the strict monotonicity of $\Psi(x)$, there is a unique solution $x_* > |k|^2$ of $\Psi(x) =0$, or equivalently,
\begin{equation}\label{eqs-xstar} x_* = |k|^2 + \psi(|k|^2/x_*) .\end{equation}
Note that $x_* = x_*(|k|)$ is a radial function in $k \in \RR^3$. 
In view of \eqref{rew-D}, this yields two solutions of $D(\lambda_\pm,k)=0$, with $\lambda_\pm = \pm i \sqrt{x_*(|k|)}$, in the region where $|\tau|> |k|$.  The smoothness of $x_*(|k|)$ follows from that of $\psi(\cdot)$. The stated dispersive properties of $\nu_*(|k|) =  \sqrt{x_*(|k|)}$ now follow from the similar lines as done in \cite[Theorem 2.18]{HKNR4} satisfying the same dispersion relation as in \eqref{eqs-xstar}. We therefore skip repeating the details.

\bigskip

\noindent {\bf Case 2: $|\tau| < |k|$.}
In this case, we shall take $\lambda = (\tilde\gamma + i\tilde\tau)|k|$ with $|\tilde \tau|<1$, and study the limit of $\tilde\gamma \to 0^+$. Using the formulation \eqref{def-Dlambda1} and recalling $\kappa_0^2 = \int_{-1}^1 \kappa(u)\;du$, we compute 
\begin{equation}\label{new-Msmall}
\begin{aligned}
M(\lambda,k) 
&=   |k|^2(1 + \tilde\gamma^2-\tilde\tau^2+2i \tilde \gamma \tilde \tau) + m_0^2  +\int_{-1}^1 \frac{u \kappa(u)}{i\tilde\gamma-\tilde\tau-u} \;du
\\
&=   |k|^2(1 + \tilde\gamma^2-\tilde\tau^2+2i \tilde \gamma \tilde \tau) + m_0^2  - \kappa_0^2 +(i\tilde\gamma-\tilde\tau)\int_{-1}^1 \frac{ \kappa(u)}{i\tilde\gamma-\tilde\tau-u} \;du.
\end{aligned}
\end{equation}
Therefore, by Plemelj's formula,  for $\tilde \gamma \to 0^+$ and $|\tilde\tau|<1$, we have 
\begin{equation}\label{comp-Mstau}
M(i\tilde\tau|k|,k) =  |k|^2(1-\tilde\tau^2) + m_0^2 - \kappa_0^2 + \tilde\tau  P.V. \int_{-1}^1 \frac{1}{\tilde\tau + u} \kappa(u) du  + i\pi  \tilde\tau \kappa(\tilde\tau) 
.\end{equation}
Since $\kappa(u) \not =0$ for $u\in (-1,1)$, the imaginary part of $M(i\tilde\tau|k|,k)$ never vanishes for $0<|\tilde\tau|<1$, while at $\tilde\tau=0$ the real part is equal to $|k|^2+ m_0^2 - \kappa_0^2$. On the other hand, at $\tilde\tau = \pm 1$, we have $ \kappa(\pm 1)=0$, and 
$$
D( \pm i|k|,k) = m_0^2 - \kappa_0^2+ \int_{-1}^1 \frac{\kappa(u)}{1 \pm u} \; du = m_0^2 - \kappa_0^2+ \int_{-1}^1 \frac{\kappa(u)}{1- u^2}  \; du , $$
recalling that $\kappa(u)$ is even in $u$. Recall that $m_0^2 \ge \kappa_0$ and $\kappa(u) \ge 0$. Therefore, we have 
\begin{equation}\label{lower-Mstau0} 
|M(i\tilde\tau|k|,k) | \ge \theta_0 |\tilde\tau|, \qquad \forall ~|\tilde\tau|\le 1,
\end{equation}
for some positive $\theta_0$, which in particular proves that there are no zeros of $M(i\tau,k) \not =0$ in this region when $|\tau| \le |k|$.
In fact, using the fact that the P.V. integral $ \int_{-1}^1 \frac{1}{\tilde\tau + u} \kappa(u) du$ is finite, we obtain from \eqref{comp-Mstau} that 
$$ 
|M(i\tilde\tau|k|,k) | \ge  |k|^2(1-\tilde\tau^2)  +  m_0^2 - \kappa_0^2 - C_0 |\tilde \tau|,
$$
for some constant $C_0$. Combining this with \eqref{lower-Mstau0}, we get  
$$
\begin{aligned}
|M(i\tilde\tau|k|,k) | &= (1- \delta) |M(i\tilde\tau|k|,k) | + \delta |M(i\tilde\tau|k|,k) | 
\\& 
\ge (1- \delta) \theta_0 |\tilde\tau| +  \delta |k|^2(1-\tilde\tau^2) + \delta( m_0^2 - \kappa_0^2)- C_0 \delta |\tilde \tau|.
\end{aligned}
$$
This yields \eqref{lowbd-Mstau} by taking $\delta$ sufficiently small and recalling $\tau_0^2 = m_0^2 - \kappa_0^2 $. We thus complete the proof of Theorem \ref{theo-LangmuirB}.
\end{proof}


\subsection{Green function}


In view of the resolvent equations \eqref{resolvent}, we introduce the Green function 
\begin{equation}\label{def-FGk}
\begin{aligned}
\FG_{k}(t) &=  \frac{1}{2\pi i}\int_{\{\Re \lambda = \gamma_0\}}e^{\lambda t} \frac{1}{M(\lambda,k)} \; d\lambda ,
\end{aligned}\end{equation}
which are well-defined as oscillatory integrals  for $\gamma_0 >0$, recalling from Lemma \ref{lem-Dlambda} that $M(\lambda,k)$ is holomorphic in $\Re \lambda>0$. The main goal of this section is to establish decay estimates for the Green function through the representation \eqref{def-FGk}. The main analysis follows from that in \cite{HKNR3, HKNR4, Toan} to compute the residual of $\TG_k(\lambda)$, plus a remainder that decay rapidly in $|kt|$. 

\begin{proposition}\label{prop-Green}  Let $\FG_k(t)$ be defined as in \eqref{def-FGk}, and let $\lambda_\pm = \pm i \nu_*(|k|)$
be the dispersion relation constructed in Theorem \ref{theo-LangmuirB}. Then, we can write 
\begin{equation}\label{decomp-FG} 
\FG_k(t) = \sum_\pm \FG^{osc}_{k,\pm}(t)  +   \FG^{r}_k(t) ,
\end{equation}
where 
\begin{equation}\label{def-Gosc}
\FG^{osc}_{k,\pm}(t) = e^{\pm i\nu_*(|k|) t} a_\pm(k),
\end{equation}
for some sufficiently smooth functions $a_\pm(k)$ that satisfy $|a_\pm(k)|\lesssim \langle k\rangle^{-1}$. In addition, the regular part satisfies
\begin{equation}\label{bound-Hrk}
| |k|^\alpha \partial_k^\alpha \FG^r_k(t)|  \lesssim |k| \langle k\rangle^{-2} \langle kt\rangle^{|\alpha|-N},
\end{equation}
uniformly for $k\in \RR^3$, $t \geq 0$, and $|\alpha|\ge 0$. 
\end{proposition}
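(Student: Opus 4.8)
The plan is to analyze the contour integral \eqref{def-FGk} by pushing the contour $\{\Re\lambda=\gamma_0\}$ toward the imaginary axis, picking up the residues at the two zeros $\lambda_\pm=\pm i\nu_*(|k|)$ of $M(\lambda,k)$ constructed in Theorem~\ref{theo-LangmuirB}, and bounding the remaining integral along $\{\Re\lambda=0\}$ with care near the branch locus $|\Im\lambda|=|k|$. First I would verify that $M(\lambda,k)$ extends continuously (with the appropriate one-sided limits, via Plemelj as in \eqref{comp-Mstau}) from $\Re\lambda>0$ to $\Re\lambda=0$, and that away from $\lambda_\pm$ it is bounded below in modulus: for $|\Im\lambda|\ge|k|$ this follows from Case~1 of Theorem~\ref{theo-LangmuirB} (there $M(i\tau,k)=\Psi(\tau^2)$ is real, strictly monotone, vanishing only at $\tau^2=x_*$), and for $|\Im\lambda|<|k|$ from the lower bound \eqref{lowbd-Mstau}. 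Near each $\lambda_\pm$, since $x_*$ is a simple zero of $\Psi$ (because $\Psi'\le-1<0$ by \eqref{laderiveedePsi}), $M$ has a simple zero, so the residue is $e^{\pm i\nu_*(|k|)t}/\partial_\lambda M(\lambda_\pm,k)$; setting $a_\pm(k)=1/\partial_\lambda M(\lambda_\pm,k)$ gives \eqref{def-Gosc}. The bound $|a_\pm(k)|\lesssim\langle k\rangle^{-1}$ comes from computing $\partial_\lambda M=2\lambda+\partial_\lambda[\text{integral}]$ at $\lambda_\pm$: since $\nu_*(|k|)\sim\sqrt{1+|k|^2}$ by \eqref{KG-behave1} and the integral term and its derivative are $O(1)$ uniformly by \eqref{unibd-cH}, one gets $|\partial_\lambda M(\lambda_\pm,k)|\gtrsim\langle k\rangle$, hence the claim; smoothness of $a_\pm$ follows from smoothness of $\nu_*$ (Theorem~\ref{theo-LangmuirB}) and analyticity of $M$.

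Second, having extracted $\FG^{osc}_{k,\pm}$, I would write the regular part as the contour integral along the imaginary axis (deformed to pass on the correct side of $\lambda_\pm$, or equivalently as a principal-value/residue-corrected integral):
\begin{equation*}
\FG^r_k(t)=\frac{1}{2\pi i}\int_{\{\Re\lambda=0\}}e^{\lambda t}\,\frac{1}{M(\lambda,k)}\,d\lambda-\sum_\pm\FG^{osc}_{k,\pm}(t),
\end{equation*}
and estimate it by the standard stationary-phase / repeated-integration-by-parts argument in $\tau$ (writing $\lambda=i\tau$). The decay $\langle kt\rangle^{-N}$ for the undifferentiated kernel comes from integrating by parts $N$ times in $\tau$ using that $M(i\tau,k)^{-1}$ is smooth in $\tau$ except at the two points $\pm\nu_*$ (already subtracted) and the two points $\pm|k|$ where $\cH$ has a weak singularity inherited from the endpoint behavior of $\kappa$; here the rapid vanishing of $\kappa(u)$ as $|u|\to1$ (noted after \eqref{def-kuw}, and quantified through the smoothness bound \eqref{unibd-cH} for $0\le n<N_0$) ensures $\partial_\tau^n[M(i\tau,k)^{-1}]$ is integrable for $n<N_0$, and rescaling $\tau\mapsto\tau/|k|$ at large $|k|$ produces the $\langle kt\rangle^{-N}$ with the prefactor $|k|\langle k\rangle^{-2}$ (one factor $|k|$ from the measure $d\tau$, and $\langle k\rangle^{-2}$ from $M^{-1}\sim(|k|^2+\tau^2)^{-1}$ at large frequency). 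Third, for the $\partial_k^\alpha$ bounds one differentiates under the integral sign: each $\partial_k$ either hits $e^{\lambda t}$ — but $\lambda$ is the integration variable, so it does not — or hits $M(\lambda,k)^{-1}$, producing extra factors controlled again by \eqref{unibd-cH}, or in the rescaled variable produces the loss $\langle kt\rangle^{|\alpha|}$ from differentiating the rescaling; combined with the integration-by-parts gain $\langle kt\rangle^{-N}$ this yields \eqref{bound-Hrk}. One must also separately handle the residue terms $\partial_k^\alpha\FG^{osc}_{k,\pm}$ which, because of the phase $e^{\pm i\nu_*(|k|)t}$, do produce genuine growth $t^{|\alpha|}$ — but these are part of the oscillatory piece, whose only claimed bound is $|a_\pm|\lesssim\langle k\rangle^{-1}$, so this is consistent; however care is needed that the deformation/subtraction is organized so that the $t^{|\alpha|}$ growth lands entirely in $\FG^{osc}$ and not in $\FG^r$.

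The main obstacle I expect is the uniformity near the degenerate frequencies. At $|\tau|=|k|$ the representation of $M$ switches between Case~1 and Case~2 of Theorem~\ref{theo-LangmuirB}, and the derivative $\partial_\tau M$ has a square-root-type behavior (through $\partial_x\Psi$ involving $\psi'(|k|^2/x)$ as $x\to|k|^{2+}$); one must check that $M^{-1}$ and its $\tau$-derivatives up to order $N_0$ remain integrable across this point with constants uniform in $k$, which is where the hypothesis $N_0\ge14$ and the compact support / rapid decay of $\varphi$ (forcing $\kappa$ to vanish to high order at $\pm1$) are really used. A second delicate point is the low-frequency regime $|k|\to0$: there $\nu_*(0)=\tau_0>0$ so the oscillatory poles stay away from the origin, but the contour integral for $\FG^r_k$ must be shown to vanish like $|k|$ (not merely stay bounded), which I would get from the factor $ik\cdot\hv$ in the numerator of the integral term in \eqref{def-Dlambda} — more precisely by tracking that $\partial_\lambda M$ stays $\gtrsim1$ and $M^{-1}$ contributes the stated $|k|\langle k\rangle^{-2}$ after the $N$-fold integration by parts kills the non-decaying bulk. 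Apart from these uniformity issues the argument is the by-now standard Fourier–Laplace inversion used in \cite{HKNR3,HKNR4,Toan}, and I would simply cite those works for the parts of the stationary-phase bookkeeping that are identical.
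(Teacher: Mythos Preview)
Your approach—residue extraction at $\lambda_\pm$ giving $a_\pm(k)=1/\partial_\lambda M(\lambda_\pm,k)$, then repeated integration by parts in $\tau$ along the imaginary axis for the remainder—is exactly what the paper does; indeed the paper's proof is almost entirely a citation to \cite[Proposition~3.2]{HKNR4}, with one added ingredient at low frequency.

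On that ingredient your mechanism is slightly off. You say ``$\partial_\lambda M$ stays $\gtrsim 1$,'' but in fact for $\lambda=i\tilde\tau|k|$ with $|\tilde\tau|\le 1$ one has $|\partial_\lambda M|\lesssim|\lambda|+|k|^{-1}$, which is \emph{large} as $|k|\to 0$. The paper's point is rather that the ratio
\[
\frac{|k|\,|\partial_\lambda M(\lambda,k)|}{|M(\lambda,k)|}\ \lesssim\ \frac{|\tilde\tau|\,|k|^2+1}{\tau_0^2+|\tilde\tau|+|k|^2(1-\tilde\tau^2)}\ \lesssim\ 1
\]
is uniformly bounded, combining the derivative bound with \eqref{lowbd-Mstau} and using $\tau_0>0$; this ratio is precisely what each integration by parts in the rescaled variable $\tilde\tau=\tau/|k|$ produces, and is what delivers the $|k|$ prefactor in \eqref{bound-Hrk}. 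Separately, your worry about square-root behavior at $|\tau|=|k|$ is overstated in this setting: since $\mu$ is compactly supported, $\kappa(u)$ vanishes identically in a neighborhood of $u=\pm 1$, so $\cH$ and all its $\tau$-derivatives are smooth across that transition and no endpoint singularity arises.
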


\begin{proof} The proposition is a minor modification to that of Proposition 3.2 obtained in \cite{HKNR4}. Indeed, recalling \eqref{def-Dlambda}, we write  
$$
\begin{aligned}
M(\lambda,k) 
&=  \lambda^2 + |k|^2 + m_0^2 + \int \frac{ ik\cdot \hv}{\lambda +  ik \cdot \hv} \varphi'(\langle v\rangle)\;dv
\\
&=  \lambda^2 + |k|^2 + m_0^2 + \int_0^\infty e^{-\lambda t} \int e^{-ik\cdot \hv} ik\cdot \hv\varphi'(\langle v\rangle)\;dv dt.
\end{aligned}
$$
Namely, we can write 
\begin{equation}\label{rew-DLKt}
\begin{aligned}
M(\lambda,k) &=  \lambda^2 + |k|^2 + m_0^2 + \cL[N_k(t)](\lambda)
\end{aligned}
\end{equation}
where $\mathcal{L}[N_k(t)](\lambda)$ denotes the Laplace transform of $N_k(t)$, that is  $\mathcal{L}[N_k(t)](\lambda) =  \int_0^\infty e^{-\lambda t} N_k(t) dt$, upon setting 
\begin{equation}\label{def-N}
N_k(t) =  \int e^{-ikt \cdot \hat v } ik  \cdot \hv \varphi'(\langle v\rangle) \; dv.
\end{equation}
The dispersion relation $M(\lambda,k)$ is identical to that of the magnetic dispersion relation computed in \cite{HKNR4}, with the two zeros $\lambda_\pm = \pm i \nu_*(k)$ as constructed in Theorem \ref{theo-LangmuirB}, giving the results stated in the proposition (cf. \cite[Proposition 3.2]{HKNR4}). We note however that in the present setting, we obtain a better lower bound on $M(\lambda,k)$ in the small frequency regime than that in \cite{HKNR4}. Precisely, using the fact that $|N_k(t)|\lesssim |k|\langle kt\rangle^{-N}$, we first bound
$$
\begin{aligned}
|\partial_\lambda M(\lambda,k)| 
&\le 2| \lambda| + |\partial_\lambda \cL[N_k(t)](\lambda)|
\\
& \lesssim | \lambda| + \int_0^\infty t |k| \langle kt\rangle^{-N} \; dt
\lesssim | \lambda| + |k|^{-1},
\end{aligned}$$
uniformly for $\Re \lambda \ge0$. Therefore, together with the uniform lower bound from \eqref{lowbd-Mstau} for $\lambda = i\tilde \tau |k|$, we obtain 
$$
\begin{aligned}
\frac{|k| |\partial_\lambda M(\lambda,k)|}{|M(\lambda,k)|} 
&\lesssim \frac{|\tilde \tau||k|^2 + 1}{\tau_0^2 + |\tilde\tau| +  |k|^2(1-\tilde\tau^2)}, 
\end{aligned}$$
which is uniformly bounded for $|\tilde \tau|\le 1$ and $|k|\lesssim 1$, upon using that $\tau_0\not =0$. As a result, upon taking integration by parts in $\lambda$ repeatedly of the integral \eqref{def-FGk}, we obtain \eqref{bound-Hrk}. We refer the interested readers to the proof of \cite[Proposition 3.2]{HKNR4} for the remaining details.  
\end{proof}

\subsection{Green function in the physical space}

In this section, we provide bounds on the Green's  function $G(t,x)$ in the physical space defined by  
\begin{equation} \label{Greal}
\begin{aligned}
G(t,x) &= \int e^{i k \cdot x} \FG_k(t) \; dk 
\end{aligned}\end{equation}
where $\FG_k(t)$ is constructed as in Proposition \ref{prop-Green}. We shall prove the following proposition. 

\begin{proposition}\label{prop-Greenphysical} Let $G(t,x)$ be the Green function defined as in \eqref{Greal}. Then, we can write 
\begin{equation}\label{decomp-Greal} 
G(t,x) = \sum_\pm G^{osc}_{\pm}(t,x)  +   G^{r}(t,x) ,
\end{equation}
in which the oscillatory kernel satisfies 
\begin{equation}\label{est-HoscL2inf}
\begin{aligned}
\|G^{osc}_\pm\star_x \nabla_x f\|_{L^2_x} &\lesssim \| f\|_{L^{2}_x}, \qquad &\|G^{osc}_\pm\star_x \nabla_x f\|_{L^\infty_x}\lesssim \| f\|_{H^2_x},
\end{aligned}
\end{equation}
and for $p \in [2,\infty]$, 
\begin{equation}
\label{est-HoscLp}
\begin{aligned}
\|G^{osc}_\pm\star_x \nabla_x f\|_{L^p_x} &\lesssim \langle t\rangle^{-3\left(\frac 12-\frac1p\right)} \| f \|_{W_x^{1+\lfloor 3\left(1-\frac{2}{p}\right) \rfloor, p'}},
\end{aligned}
\end{equation}
with $\frac 1p+\frac 1{p'}=1$ and $\lfloor s \rfloor$ denoting the largest integer that is smaller than $s$. In addition, letting $\chi(k)$ be a smooth cutoff function whose support is contained in $\{|k|\le 1\}$, for any $n\ge 0$ and $p\in [1,\infty]$, there hold 
\begin{equation}\label{est-GtrLp} 
\begin{aligned}
\|\chi(i\partial_x) \partial_x^nG^r(t) \|_{L^p_x} &\lesssim \langle t\rangle^{-4+3/p - n} ,
\end{aligned}\end{equation}
and 
\begin{equation}\label{est-GLp} \| (1-\chi(i\partial_x)) \partial_x^n G^r (t) \|_{L^p_x} \lesssim \langle t\rangle^{-N_1},\end{equation}
for some large constant $N_1$. Moreover, for any $\delta>0$, 
\begin{equation}\label{conv-GtrLp}
\| \chi(i\partial_x) G^r (t)\star_x f\|_{L^p_x} \lesssim \langle t\rangle^{-1-\delta } \sup_{q\in \ZZ} 2^{-\delta q}\| P_qf\|_{L^p_x}
\end{equation}
for any $p\in [1,\infty]$, where $P_q$ is the Littlewood-Paley projection on the dyadic interval $[2^{q-1}, 2^{q+1}]$.

\end{proposition}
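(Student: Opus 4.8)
The plan is to treat separately the oscillatory part $\sum_\pm G^{osc}_\pm$ and the regular part $G^r$ in the decomposition \eqref{decomp-FG}--\eqref{decomp-Greal} furnished by Proposition~\ref{prop-Green}. On the Fourier side $\FG^{osc}_{k,\pm}(t)=e^{\pm i\nu_*(|k|)t}a_\pm(k)$ with $|a_\pm(k)|\lesssim\langle k\rangle^{-1}$, see \eqref{def-Gosc}; since by Theorem~\ref{theo-LangmuirB} the phase $k\mapsto\nu_*(|k|)$ has the Klein--Gordon growth \eqref{KG-behave1} and its Hessian has eigenvalues $\nu_*''(|k|)$ (radial) and $\nu_*'(|k|)/|k|$ (angular, double) that are everywhere strictly positive by \eqref{KG-behave2}--\eqref{KG-behave3} (in particular $\nu_*'(|k|)/|k|\to\nu_*''(0)\ge c_0>0$ as $k\to0$), the kernel $G^{osc}_\pm$ is a genuine Klein--Gordon dispersive propagator of order $-1$, and the bounds \eqref{est-HoscL2inf}--\eqref{est-HoscLp} are exactly the standard dispersive estimates for such operators (cf.\ \cite{RS,HKNR4}). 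The regular part $G^r$ is handled as a ``nice'' multiplier through the derivative bounds \eqref{bound-Hrk}, split into the low-frequency piece $\chi(i\partial_x)G^r$ and the high-frequency piece $(1-\chi(i\partial_x))G^r$.

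For the oscillatory part: the $L^2$ bound in \eqref{est-HoscL2inf} is immediate from Plancherel and $|a_\pm(k)\,k|\lesssim1$; the fixed-time $L^\infty$ bound follows by writing $\|G^{osc}_\pm\star_x\nabla_xf\|_{L^\infty_x}\le\|a_\pm(k)(ik)\Ff(k)\|_{L^1_k}$ and applying Cauchy--Schwarz against $\langle k\rangle^{-2}\in L^2(\RR^3)$, which accounts for the two derivatives. For \eqref{est-HoscLp} I would Littlewood--Paley decompose $G^{osc}_\pm$ into dyadic shells $|k|\sim2^q$, apply stationary phase on each shell: after rescaling $k=2^q\eta$ the Hessian determinant of the phase is $\sim2^{q}\langle t\rangle^3$ (using the eigenvalue asymptotics above), so $\|G^{osc}_{\pm,q}(t)\|_{L^\infty_x}\lesssim\langle t\rangle^{-3/2}2^{\sigma q^+}$ for an explicit $\sigma$, and then sum the shells using a fixed amount of Sobolev regularity of $f$. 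This yields an $L^1\to L^\infty$ bound with decay $\langle t\rangle^{-3/2}$ at the cost of finitely many derivatives; Riesz--Thorin interpolation against the $L^2\to L^2$ bound then gives the $L^{p'}\to L^p$ statement with decay $\langle t\rangle^{-3(1/2-1/p)}$, the Sobolev index $1+\lfloor3(1-2/p)\rfloor$ being the (non-sharp, integer-rounded) output of this interpolation.

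For the regular part at low frequencies I would again decompose dyadically, writing $\chi(i\partial_x)\partial_x^nG^r(t,x)=\sum_{2^q\lesssim1}K_{q,n}(t,x)$ with $K_{q,n}(t,x)=\int e^{ik\cdot x}\chi(k)(ik)^n\psi_q(k)\FG^r_k(t)\,dk$. On the shell $|k|\sim2^q$, \eqref{bound-Hrk} gives $|\partial_k^\alpha\FG^r_k(t)|\lesssim2^{q(1-|\alpha|)}\langle2^qt\rangle^{|\alpha|-N}$, hence $\|K_{q,n}(t)\|_{L^\infty_x}\lesssim2^{(n+4)q}\langle2^qt\rangle^{-N}$ and, estimating $|x|^4K_{q,n}$ through $\partial_k^4$, $\||x|^4K_{q,n}(t)\|_{L^\infty_x}\lesssim2^{nq}\langle2^qt\rangle^{4-N}$; interpolating these two (optimizing a ball radius) gives $\|K_{q,n}(t)\|_{L^1_x}\lesssim2^{(n+1)q}\langle2^qt\rangle^{3-N}$, and more generally $\|K_{q,n}(t)\|_{L^p_x}\lesssim2^{(n+4-3/p)q}\langle2^qt\rangle^{3/p-N}$. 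Summing over $2^q\lesssim1$, splitting the geometric series at $2^q\sim\langle t\rangle^{-1}$ and taking $N$ large, produces exactly $\langle t\rangle^{-4+3/p-n}$, which is \eqref{est-GtrLp}. For \eqref{conv-GtrLp} I insert a Littlewood--Paley partition of $f$, bound each term by Young's inequality as $\|K_{q,0}(t)\|_{L^1_x}\|P_qf\|_{L^p_x}$, and absorb the extra weight $2^{\delta q}$ into the still-summable series $\sum_{2^q\lesssim1}2^{\delta q}\|K_{q,0}(t)\|_{L^1_x}\lesssim\sum_{2^q\lesssim1}2^{(1+\delta)q}\langle2^qt\rangle^{3-N}\lesssim\langle t\rangle^{-1-\delta}$, again splitting the sum at $2^q\sim\langle t\rangle^{-1}$. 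Finally, for the high-frequency piece one has $\langle kt\rangle\gtrsim\langle k\rangle\langle t\rangle$ on $|k|\gtrsim1$, so \eqref{bound-Hrk} yields decay $\langle k\rangle^{-N}\langle t\rangle^{-N}$ up to fixed powers of $\langle k\rangle$; trading $\langle k\rangle$-powers for $\langle t\rangle$-powers and running the usual $\langle x\rangle$-weight argument gives \eqref{est-GLp} with $N_1$ as large as one likes, upon choosing $N$ in \eqref{bound-Hrk} large.

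The routine but load-bearing part is the bookkeeping in the last two steps. For $G^{osc}_\pm$ the stationary-phase analysis must track the frequency-dependent degeneration of the curvature of $\nu_*$ (worst in the two angular directions at high frequency) in order to obtain both the correct $\langle t\rangle^{-3/2}$ rate and a finite derivative count, though this is essentially the standard Klein--Gordon machinery. For $G^r$ the genuinely delicate point is that the per-shell bound $\|K_{q,0}(t)\|_{L^1_x}\lesssim2^q\langle2^qt\rangle^{3-N}$ must be sharp in its $2^q$-power: this is precisely what makes the series in \eqref{conv-GtrLp} close with the \emph{gain} $\langle t\rangle^{-1-\delta}$ rather than merely $\langle t\rangle^{-1}$. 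I expect the main obstacle to be organizational rather than analytic — verifying that the value of $N$ delivered by Proposition~\ref{prop-Green} (hence ultimately by the regularity threshold $N_0$) is large enough for every one of these dyadic sums to converge after paying for the $\langle x\rangle$-weights and the $2^{\delta q}$ loss.
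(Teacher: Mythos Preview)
Your proposal is correct and follows essentially the same approach as the paper: the oscillatory bounds \eqref{est-HoscL2inf}--\eqref{est-HoscLp} are obtained exactly as you describe (Plancherel for $L^2$, Cauchy--Schwarz against $\langle k\rangle^{-2}$ for $L^\infty$, and the standard Klein--Gordon dispersive machinery---the paper cites \cite{RS,HKNR4} and passes through Besov spaces via Lemma~\ref{lem:fouriermult} rather than redoing the stationary phase, but this is the same content), and the regular-part bounds \eqref{est-GtrLp}--\eqref{conv-GtrLp} are obtained by the dyadic decomposition and $\partial_k$-integration-by-parts argument you outline, with the same key per-shell $L^1$ bound $\|K_{q,0}(t)\|_{L^1_x}\lesssim 2^q\langle 2^qt\rangle^{-N'}$ that makes the $2^{\delta q}$-weighted sum converge to $\langle t\rangle^{-1-\delta}$. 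The only cosmetic difference is that for the low-frequency $L^\infty$ bound \eqref{est-GtrLp} the paper integrates $\int_{|k|\le 1}|k|^{n+1}\langle kt\rangle^{-N}\,dk$ directly rather than summing dyadically.
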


\begin{proof} By construction \eqref{decomp-FG}, we have 
\begin{equation} \label{Gsreal}
\begin{aligned}
G^{osc}_\pm(t,x) &= \int e^{\pm i\nu_*(|k|) t + i k \cdot x} a_\pm(k) dk,
\end{aligned}\end{equation}
where $\nu_*(|k|)$ satisfies the Klein-Gordon type dispersion as established in Theorem \ref{theo-LangmuirB}. The $L^2$ estimate is immediate, recalling that $\FG^{osc}_\pm(t,k) = e^{\pm i \nu_*(|k|) t} a_\pm(k)$, where $a_\pm(k)$ is sufficiently smooth and satisfying $a_\pm(k) \lesssim \langle k\rangle^{-1}$, while the $L^\infty$ estimate follows from the bound 
$$\| g\|_{L^\infty_x} \le \| \Fg\|_{L^1_k} \lesssim\| \langle k\rangle^{2}\Fg\|_{L^2_k} .$$ 
As for the $L^p$ estimates, we recall the following standard dispersive estimates for the Klein-Gordon type dispersion, see \cite{RS, HKNR4},  
$$\|G^{osc}_\pm\star_x \nabla_x f\|_{L^p_x} \lesssim \langle t\rangle^{-3\left(\frac 12-\frac1p\right)}\| a_\pm(i\partial_x)\nabla_x f\|_{B^{3\left(1-\frac{2}{p}\right)}_{p',2}}$$
with $\frac 1p+\frac 1{p'}=1$, with $\|\cdot \|_{B^{s}_{p,q}}$ denoting the standard Besov spaces, see Section \ref{sec-FM}. In particular, using \eqref{eq:fouriermultbesov}, for any $s\ge 0$, we have
$$
\| a(i\partial_x) \nabla_x  f \|_{B^{s}_{p',2}} \lesssim \|  f\|_{W^{1+ \lfloor s \rfloor, p'}},
$$
which yields \eqref{est-HoscLp}. 

Next, we bound the regular part. Let $\chi_0(k)$ be some smooth cut-off Fourier multiplier whose support is contained in $\{|k|\le 1\}$. Using \eqref{bound-Hrk}, we bound 
$$
\begin{aligned}
| \chi_0(i\partial_x) \partial_x^n G^r(t,x)|
& \lesssim \int_{\{|k|\le 1\}} |k|^{n} |\FG_k^r(t)| \; dk
 \lesssim \int_{\{|k|\le 1\}} |k|^{n+1}\langle |k|t \rangle^{-N} \; dk 
\lesssim \langle t\rangle^{-n-4}.
\end{aligned}$$
proving \eqref{est-GtrLp} for $p=\infty$. 
On the other hand, for the high frequency part, we bound $$
\begin{aligned}
| (1-\chi_0(i\partial_x)) \partial_x^n G^r(t,x)|
& \lesssim \int_{\{|k|\ge1\}} |k|^{n} |\FG_k^r(t)| \; dk
 \lesssim \int_{\{|k|\ge 1\}} |k|^{n-1}\langle |k|t \rangle^{-N} \; dk 
\lesssim \langle t\rangle^{- N} ,
\end{aligned}$$
proving \eqref{est-GLp} for $p=\infty$. As for $L^1_x$ estimates, we first note that for the high frequency part, we bound 
$$
\begin{aligned}
\| (1-\chi_0(i\partial_x)) \partial_x^n G^r(t)\|_{L^1_x}^2
& \lesssim \sum_{|\alpha|\le 2}\int_{\{|k|\ge 1\}} |k|^{2n} |\partial_k^\alpha \FG_k^r(t)|^2 \; dk
\\&  \lesssim  \sum_{|\alpha|\le 2} \int_{\{|k|\ge 1\}} |k|^{2n-2} \langle t\rangle^{2|\alpha|}\langle |k|t \rangle^{-2N} \; dk 
\lesssim \langle t\rangle^{-2(N-1)} .
\end{aligned}$$
It remains to treat the low frequency part. Using the standard Littlewood-Paley decomposition, we write 
$$
\begin{aligned}
\chi_0(i\partial_x) G^r(t,x) &= \sum_{q\le 0} P_q[ G^r(t,x) ],
 \end{aligned}$$
where 
$$
\begin{aligned}
P_q[G^r(t,x)] 
&=  \int_{\{2^{q-2}\le |k|\le 2^{q+2}\}} e^{ik \cdot x} \FG^r(t,k) \varphi(k/2^q)\; dk
\\
&= 2^{3q} \int_{\{\frac14\le |\tilde k|\le 4\}} e^{i \tilde k \cdot 2^q x}\FG^r(t,2^q \tilde k) \varphi(\tilde k)\; d\tilde k
\end{aligned}$$
Using the estimates \eqref{bound-Hrk} from Proposition \ref{prop-Green} for $|k|\le1$ (and so $q\le 0$), we obtain 
$$| \partial_{\tilde k}^\alpha [\FG^r(t,2^q \tilde k)]| \le | 2^{q|\alpha|}\partial_{k}^\alpha [\FG^r(t,2^q \tilde k)]|  \lesssim 2^q \langle 2^{q} t \rangle^{-N +|\alpha|} 
$$
for $\frac14 \le |\tilde k|\le 4$. Therefore, integrating by parts repeatedly in $\tilde k$ and taking $|\alpha| =4$ in the above estimate, we have 
$$
\begin{aligned}
|P_q[G^r(t,x)]| 
&
\lesssim  2^{q} \Big| 
 \int e^{i \tilde k \cdot 2^q x}\FG^r(t,2^q \tilde k) \varphi(\tilde k)\; d\tilde k
 \Big|
\\&
\lesssim  2^{q} 
 \int \langle 2^q x\rangle^{-4} |\partial_{\tilde k}^4 [\FG^r(t,2^q \tilde k) \varphi(\tilde k)]| \; d\tilde k
\\&
\lesssim  2^{2q} \langle 2^q x\rangle^{-4} \langle 2^{q} t \rangle^{-N +4}.
\end{aligned}$$
Taking the $L^1_x$ norm, we obtain 
$$
\begin{aligned}
 \| \chi_0(i\partial_x) \partial_x^n G^r(t)\|_{L^1_x} 
 &\lesssim \sum_{q\le 0} 2^{nq}\|P_q[G^r(t)]\|_{L^1_x} 
\lesssim \sum_{q\le 0} 2^{q(n+1)} \langle 2^{q} t \rangle^{-N + 4}\lesssim \langle t\rangle^{-n-1},
  \end{aligned}$$ 
which gives \eqref{est-GtrLp} for $p=1$. The $L^p$ estimates follow from an interpolation between the $L^1$ and $L^\infty$ estimates. 

Finally, we prove the last estimate \eqref{conv-GtrLp}. Let $\FG_1(t,k) = \chi_0(k) \FG^r_k(t)$. We again write 
$$
\begin{aligned}
G_1(t)\star_x f  &= \sum_{q\le 0} P_q[ G_1(t)\star_x f ]
= \sum_{q\le 0} 2^{\delta q}\widetilde P_q[G_1(t)]\star_x [ 2^{-\delta q}\widetilde P_qf ]
 \end{aligned}$$
in which we have written $P_q=\widetilde P^2_q $ for some Littlewood-Paley projection $\widetilde P_q$ so that $\| \widetilde P_q f\|_{L^p_x} \approx \| P_q f\|_{L^p_x}$ for any $p\ge 1$. Therefore, for any $p\in [1,\infty]$ and $\delta \ge 0$, using the above $L^1_x$ estimates on $G_1(t)$, we bound 
$$
\begin{aligned}
\|G_1(t)\star_x f\|_{L^p_x} 
& \lesssim \sum_{q\le 0} \|\widetilde P_q[G_1(t)]\|_{L^1_x} \|\widetilde P_qf \|_{L^p_x}
\\
& \lesssim  \sup_{q\in \ZZ} \Big(2^{-\delta q}\|\widetilde P_qf \|_{L^p_x} \Big)\sum_{q\le 0} 2^{\delta q}\|\widetilde P_q[ G_1(t)]\|_{L^1_x} 
\\
& \lesssim  \sup_{q\in \ZZ} \Big(2^{-\delta q}\|\widetilde P_qf \|_{L^p_x} \Big)\sum_{q\le 0} 2^{(1+\delta) q}
\langle 2^q t \rangle^{-N_0 +3}
\\&\lesssim \langle t\rangle^{-1-\delta}  \sup_{q\in \ZZ} 2^{-\delta q}\|\widetilde P_qf \|_{L^p_x} ,
 \end{aligned}$$
yielding \eqref{conv-GtrLp}. This completes the proof of the proposition. 
\end{proof}


\subsection{Electric potential representation}


In this section, we give a complete representation of the linearized electric potential $\phi$ in terms of the initial data. Precisely, we obtain the following.

\begin{proposition}\label{prop-linphi}
 Let $\phi(t,x)$ be the electric potential of the linearized Vlasov-Klein-Gordon system \eqref{V-lin}, and let $G(t,x)$ be the Green function constructed as in Proposition \ref{prop-Greenphysical}. 
Then, there holds
 \begin{equation}\label{rep-electricE}
\begin{aligned}
\phi &= \sum_\pm \phi^{osc}_\pm(t,x) + \phi^r(t,x) 
\end{aligned}\end{equation}
where 
 \begin{equation}\label{rep-phiosc}
\begin{aligned}
\phi^{osc}_\pm(t,x) &= G_{\pm}^{osc}(t) \star_{x} \Big(\lambda_\pm(i\partial_x)\phi_0(x) +  \phi_1(x)\Big) 
- G^{osc}_{\pm} \star_{t,x}  \rho^0(t,x)
\\
\phi^r(t,x) &= \partial_t  G^r(t) \star_x \phi_0(x) + G^r(t) \star_x\phi_1(x) - G^r\star_{t,x}\rho^0(t,x),
\end{aligned}\end{equation}
for $\rho^0(t,x) = \int f_0(x-\hv t, v)\; dv$ being the density generated by the free transport dynamics. 
\end{proposition}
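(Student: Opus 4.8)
The plan is to invert the resolvent identity of Lemma~\ref{lem-resolvent} by a termwise Laplace inversion, once $1/M(\lambda,k)$ is identified with the Laplace transform in $t$ of the temporal Green function $\FG_k(t)$ of \eqref{def-FGk}. By Proposition~\ref{prop-nogrowth} together with the elementary bound $|M(\lambda,k)|\gtrsim|\lambda|^2$ for $|\lambda|\gg\langle k\rangle$ --- immediate from \eqref{def-Dlambda}--\eqref{unibd-cH}, since the integral term in $M$ tends to $0$ as $|\lambda|\to\infty$ --- one fixes $\gamma_0>0$ for which $M(\lambda,k)$ is holomorphic, non-vanishing, and bounded below in modulus on $\{\Re\lambda\ge\gamma_0\}$, with $1/M(\lambda,k)=O(|\lambda|^{-2})$ there, uniformly in $k$. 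Then \eqref{def-FGk} converges absolutely, $\int_0^\infty e^{-\lambda t}\FG_k(t)\,dt=1/M(\lambda,k)$ for $\Re\lambda\ge\gamma_0$, and closing the contour to the right --- no poles, and the arc at infinity is killed by the $O(|\lambda|^{-2})$ decay --- yields the causality relation $\FG_k(0)=0$. Dividing \eqref{resolvent} by $M(\lambda,k)$ gives $\Tphi_k(\lambda)=\frac{1}{M(\lambda,k)}\big(-\Trho^0_k(\lambda)+\lambda\Fphi_k^0+\Fphi_k^1\big)$.

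Next I would invert the three terms separately. The Laplace convolution theorem turns $\Fphi_k^1/M(\lambda,k)$ into $\FG_k(t)\Fphi_k^1$. For $\lambda\Fphi_k^0/M(\lambda,k)$, the identity $\mathcal{L}[\partial_tg](\lambda)=\lambda\mathcal{L}[g](\lambda)-g(0)$ together with $\FG_k(0)=0$ gives $\lambda/M(\lambda,k)=\mathcal{L}[\partial_t\FG_k](\lambda)$, so this term inverts to $\partial_t\FG_k(t)\Fphi_k^0$; it is precisely the vanishing $\FG_k(0)=0$ that removes what would otherwise be a spurious boundary term. For $-\Trho^0_k(\lambda)/M(\lambda,k)$, one checks directly that $\Trho^0_k(\lambda)=\int\frac{\Ff_k^0(v)}{\lambda+ik\cdot\hv}\,dv$ is the $t$-Laplace transform of $\Frho^0_k(t):=\int e^{-ik\cdot\hv t}\Ff_k^0(v)\,dv$, i.e.\ of the Fourier transform in $x$ of the free-transport density $\rho^0(t,x)=\int f_0(x-\hv t,v)\,dv$; hence this term inverts to $-(\FG_k\star_t\Frho^0_k)(t)$. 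Altogether $\Fphi_k(t)=\partial_t\FG_k(t)\Fphi_k^0+\FG_k(t)\Fphi_k^1-(\FG_k\star_t\Frho^0_k)(t)$.

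Then I would insert the decomposition $\FG_k(t)=\sum_\pm\FG^{osc}_{k,\pm}(t)+\FG^r_k(t)$ of Proposition~\ref{prop-Green}. Since $\FG^{osc}_{k,\pm}(t)=e^{\pm i\nu_*(|k|)t}a_\pm(k)$ with $\lambda_\pm(k)=\pm i\nu_*(|k|)$, one has $\partial_t\FG^{osc}_{k,\pm}(t)=\lambda_\pm(k)\FG^{osc}_{k,\pm}(t)$, so the $\Fphi_k^0$-part of each oscillatory mode reads $\lambda_\pm(k)\FG^{osc}_{k,\pm}(t)\Fphi_k^0$, while the regular mode keeps $\partial_t\FG^r_k(t)\Fphi_k^0$. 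Grouping the oscillatory and regular contributions and taking the inverse Fourier transform in $x$ --- multiplication by $\lambda_\pm(k)$ becoming the Fourier multiplier $\lambda_\pm(i\partial_x)$, products in $k$ becoming $\star_x$-convolutions, and $\star_t$-convolutions becoming $\star_{t,x}$-convolutions --- produces exactly \eqref{rep-electricE}--\eqref{rep-phiosc}.

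The only genuinely delicate point is the analytic bookkeeping: one must check that the solution $\phi$ and the free-transport density $\rho^0$ have well-defined $t$-Laplace transforms in a common half-plane $\Re\lambda\ge\gamma_0$ --- guaranteed for the linearized system by the spectral stability of Proposition~\ref{prop-nogrowth} and by the dispersive decay of $\rho^0$ from phase mixing, given the regularity and integrability of the data --- and that the Bromwich contour in \eqref{def-FGk} may be taken along $\Re\lambda=\gamma_0$ uniformly in $k$, the $O(|\lambda|^{-2})$ decay of $1/M$ ensuring both absolute convergence and the identity $\FG_k(0)=0$. With these in hand the remainder is a routine manipulation of Laplace and Fourier transforms.
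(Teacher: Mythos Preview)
Your proposal is correct and follows essentially the same route as the paper's proof: Laplace-invert the resolvent identity \eqref{resolvent} termwise to obtain $\Fphi_k(t)=\partial_t\FG_k(t)\Fphi_k^0+\FG_k(t)\Fphi_k^1-(\FG_k\star_t\Frho^0_k)(t)$, identify $\Frho^0_k(t)$ with the free-transport density, plug in the decomposition \eqref{decomp-FG} with $\partial_t\FG^{osc}_{k,\pm}=\lambda_\pm(k)\FG^{osc}_{k,\pm}$, and take the inverse Fourier transform. You are more explicit than the paper about the analytic bookkeeping --- the $O(|\lambda|^{-2})$ decay of $1/M$, the resulting causality $\FG_k(0)=0$, and the common half-plane of convergence --- whereas the paper simply records the inversion and computes $\Frho^0_k(t)$ by the residue theorem; but the structure of the argument is the same.
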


\begin{proof} First, taking the Laplace inverse of the resolvent equation \eqref{resolvent}, we obtain 
\begin{equation}
\label{rep-phiA} 
\begin{aligned}
\Fphi_k(t) &=\partial_t  \FG_k(t) \Fphi_k^0 + \FG_k(t) \Fphi_k^1 - \FG_k\star_t\Frho^0_k(t),
\end{aligned}
\end{equation}
where $\FG_k(t)$ is the Green function constructed as in Proposition \ref{prop-Green}, and the free density $\Frho^0_k(t) $ is computed from its Laplace transform $\Trho^0_k(\lambda) $ given in \eqref{resolvent},  yielding
$$
\begin{aligned}
 \Frho^0_k(t) 
 & =  \frac{1}{2\pi i}\int_{\{\Re \lambda = \gamma_0\}}e^{\lambda t}  \Trho^0_k(\lambda)  \; d\lambda
=  \int_{\RR^3}  \Big( \frac{1}{2\pi i}\int_{\{\Re \lambda = \gamma_0\}} e^{\lambda t} \frac{1}{\lambda +  ik \cdot \hat v} \; d\lambda \Big) \Ff_k^0(v)\; dv.  
\end{aligned}
$$
As $\lambda \mapsto \frac{1}{\lambda + ik\cdot \hv}$ is meromorphic in $\lambda$, and therefore, by Cauchy's residue theorem, the integration in $\lambda$ is equal to $e^{-ikt\cdot \hv}$, which yields 
\begin{equation}\label{free-density}  \Frho^0_k(t) =\int e^{-ikt\cdot \hv}\Ff_k^0(v) \;dv.
\end{equation}
Hence, $\rho^0(t,x) = \int f_0(x-\hv t, v)\; dv$ as claimed. Finally, using the decomposition \eqref{decomp-FG} into \eqref{rep-phiA} and noting $\partial_t  \FG^{osc}_{k,\pm}(t) = \lambda_\pm(k)\FG^{osc}_{k,\pm}(t) $, we write 
$$
\begin{aligned}
\Fphi_k(t) &=\sum_\pm \FG^{osc}_{k,\pm}(t)  \Big(\lambda_\pm(k)\Fphi_k^0 +  \Fphi_k^1\Big) -  \sum_\pm \FG^{osc}_{k,\pm}(t) \star_t\Frho^0_k(t)
\\
&\qquad +\partial_t  \FG^r_k(t) \Fphi_k^0 + \FG^r_k(t) \Fphi_k^1 - \FG^r_k\star_t\Frho^0_k(t).
\end{aligned}
$$
The representation \eqref{rep-electricE} thus follows from taking the inverse Fourier transform of the above expression. 
\end{proof}

%

\section{Nonlinear framework}\label{sec-nonlinearframework}



\subsection{Characteristics}


This section is devoted to the reformulation of the nonlinear Vlasov-Klein-Gordon problem \eqref{VKG1}-\eqref{VKG2}, following the Lagrangian framework introduced in \cite{HKNR2, ToanVKG}. Indeed, we first write the Vlasov equation in the perturbed form 
\begin{equation} \label{VKG1-pert}
\partial_t f + \hv  \cdot \nabla_x f +  E \cdot \nabla_v f = - E \cdot \nabla_v \mu 
\end{equation}
with the initial data $f(0,x,v) = f_0(x,v)$, where the electric field $E$ solves the Klein-Gordon equation \begin{equation}\label{VKG2-pert}
E = -\nabla_x \phi, \qquad (\Box_{t,x}+m^2_0) \phi = -\rho.
\end{equation} 
To solve \eqref{VKG1-pert}, we introduce the characteristics $(X_{s,t}(x,v),V_{s,t}(x,v))$, which are defined by 
\begin{equation}\label{ode-char} 
\frac{d}{ds}X_{s,t} = \hV_{s,t}, \qquad \frac{d}{ds}V_{s,t} = E(s,X_{s,t})
 \end{equation}
with initial data 
$$
X_{t,t} (x,v)= x, \qquad V_{t,t}(x,v)= v,
$$ 
where we recall the notation for the relativistic velocity 
$\hv  = v/\sqrt{1+|v|^2}$. 
We will use the characteristics "backwards", namely for $s \le t$.
By definition, we have 
\begin{equation} \label{ode-char2}
\begin{aligned}
X_{s,t} (x,v) &= x - \int_s^t \hV_{\tau,t}(x,v) \; d\tau , 
\\
 V_{s,t}(x,v) &= v - \int_s^t E(\tau, X_{\tau,t}(x,v)) \; d\tau.
\end{aligned}
\end{equation}
Integrating \eqref{VKG1-pert} along characteristics, we obtain an explicit formula for the solution of \eqref{VKG1-pert}, namely 
\begin{equation}
\label{charf1}
f(t,x,v)= f_0(X_{0,t}(x,v) , V_{0,t}(x,v))  - \int_0^t E(s,X_{s,t}(x,v)) \cdot \nabla_v \mu(V_{s,t}(x,v))  \,  ds .
\end{equation}


\subsection{A closed nonlinear formulation}


We now derive a closed equation for the nonlinear electric field $E$. 
Precisely, we obtain the following. 

\begin{lemma}\label{lem-linear} Let $\phi$ be the electric potential of the nonlinear Vlasov-Klein-Gordon problem \eqref{VKG1-pert}-\eqref{VKG2-pert}. 
Then, there holds the following identity  
\begin{equation}\label{iter-rho}
\begin{aligned}
(\Box_{t,x} +m_0^2)\phi(t,x) + \int_0^t \int_{\RR^3}\nabla_x \phi(s,x -(t-s)\hv ) \cdot \nabla_v \mu(v) \, dv ds &= - S(t,x)
\end{aligned}\end{equation}
where the nonlinear source density $S(t,x)$ is defined by 
\begin{equation}\label{nonlinear-S}
\begin{aligned}
S(t,x) &= S^0(t,x)+  S^\mu(t,x)
\end{aligned}\end{equation}
where 
\begin{equation}\label{nonlinear-Smu}
\begin{aligned}
 S^0(t,x) &= \int_{\RR^3} f_0(X_{0,t}(x,v) , V_{0,t}(x,v)) \, dv 
  \\
S^{\mu}(t,x) &= \int_0^t \int_{\RR^3}  \Big[ E(s,x - (t-s)\hv )\cdot \nabla_v \mu(v) 
 -E(s,X_{s,t}(x,v))\cdot \nabla_v \mu(V_{s,t}(x,v)) \Big] \, dv  ds .
\end{aligned}\end{equation}
\end{lemma}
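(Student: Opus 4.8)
The plan is to derive \eqref{iter-rho} directly from the characteristic formula \eqref{charf1} by integrating in $v$ and inserting the result into the Klein-Gordon equation \eqref{VKG2-pert}. First I would integrate \eqref{charf1} over $v\in\RR^3$, which yields
\[
\rho(t,x) = \int_{\RR^3} f_0(X_{0,t}(x,v),V_{0,t}(x,v))\,dv - \int_0^t\int_{\RR^3} E(s,X_{s,t}(x,v))\cdot\nabla_v\mu(V_{s,t}(x,v))\,dv\,ds.
\]
The first term is precisely $S^0(t,x)$. For the second term, the key device is to add and subtract the \emph{free-transport} version of the same integrand, in which the genuine characteristics $(X_{s,t},V_{s,t})$ are replaced by the straight-line flow $(x-(t-s)\hv,\,v)$. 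This splits the double integral into two pieces: one piece is $\int_0^t\int_{\RR^3} E(s,x-(t-s)\hv)\cdot\nabla_v\mu(v)\,dv\,ds$, and the remainder is exactly $-S^\mu(t,x)$ as defined in \eqref{nonlinear-Smu}.

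The next step is bookkeeping with signs: writing $E=-\nabla_x\phi$, the free-transport piece becomes $-\int_0^t\int_{\RR^3}\nabla_x\phi(s,x-(t-s)\hv)\cdot\nabla_v\mu(v)\,dv\,ds$, which is precisely the convolution-type term appearing on the left-hand side of \eqref{iter-rho} (with the opposite sign, so it moves to the left). Plugging $\rho$ into $(\Box_{t,x}+m_0^2)\phi=-\rho$ from \eqref{VKG2-pert} and collecting terms then gives
\[
(\Box_{t,x}+m_0^2)\phi(t,x) + \int_0^t\int_{\RR^3}\nabla_x\phi(s,x-(t-s)\hv)\cdot\nabla_v\mu(v)\,dv\,ds = -S^0(t,x)-S^\mu(t,x),
\]
which is the claimed identity with $S=S^0+S^\mu$.

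I do not anticipate a serious obstacle here: the lemma is essentially an exact algebraic rearrangement, and the only things to be careful about are (i) the sign conventions relating $E$, $\nabla_x\phi$, and the Poisson/Klein-Gordon source, and (ii) the justification that one may differentiate under the integral sign and interchange the order of integration in $(v,s)$ — both of which are legitimate because $f_0$ and $\mu$ are compactly supported in $v$, the characteristics are $C^1$ in $(x,v)$ on the time interval of existence, and all integrands are accordingly smooth and compactly supported in $v$. The genuinely hard analysis — estimating $S^\mu(t,x)$ using the decomposition of $E$ and of the characteristics into oscillatory and phase-mixing parts — is deferred to later sections; at this stage the content is simply to record the closed formulation, so the proof is short.
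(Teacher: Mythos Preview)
Your proposal is correct and follows essentially the same route as the paper: integrate \eqref{charf1} in $v$, add and subtract the free-transport integrand to split off the linear convolution term from $S^\mu$, use $E=-\nabla_x\phi$, and substitute into \eqref{VKG2-pert}. The paper's proof is slightly terser (it leaves the add-and-subtract step implicit), but the content is identical.
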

\begin{proof} Indeed, recalling that the density of $f$ is computed by 
$\rho(t,x)= \int_{\RR^3} f(t,x,v)\, dv$, and using \eqref{charf1}, 
we get  
\begin{equation}\label{nonlinear-rho} 
\begin{aligned}
\rho(t,x) +  \int_0^t \int_{\RR^3}  E(s,X_{s,t}(x,v)) \cdot \nabla_v \mu(V_{s,t}(x,v))  \, dv  ds &= S^0(t,x),
\end{aligned}
\end{equation}
where the source $S^0(t,x)$ is defined by 
$$ 
\begin{aligned}
S^0(t,x) = \int_{\RR^3} f_0(X_{0,t}(x,v) , V_{0,t}(x,v)) \, dv .
\end{aligned}
$$
Next, using the Klein-Gordon equation $(\Box_{t,x}+m_0^2)\phi = -\rho$ and $E = -\nabla_x \phi$, we obtain the formulation as claimed. 
\end{proof}


\subsection{Nonlinear electric potential}\label{sec-nonlinearE}


We now apply the linear theory to the closed electric potential equation \eqref{iter-rho}. Precisely, we obtain the following nonlinear version of Proposition \ref{prop-linphi}.

\begin{proposition}\label{prop-nonphi} Let $\phi(t,x)$ be the electric potential of the nonlinear Vlasov-Klein-Gordon system \eqref{VKG1-pert}-\eqref{VKG2-pert} with initial data \eqref{data}, and let $G(t,x)$ be the Green function constructed as in Proposition \ref{prop-Greenphysical}. 
Then, there holds
 \begin{equation}\label{rep-electricE}
\begin{aligned}
\phi &= \sum_\pm \phi^{osc}_\pm(t,x) + \phi^r(t,x) 
\end{aligned}\end{equation}
where 
 \begin{equation}\label{rep-phiosc}
\begin{aligned}
\phi^{osc}_\pm(t,x) &= G_{\pm}^{osc}(t) \star_{x} \Big(\lambda_\pm(i\partial_x)\phi_0(x) +  \phi_1(x)\Big) 
- G^{osc}_{\pm} \star_{t,x} S(t,x)
\\
\phi^r(t,x) &= \partial_t  G^r(t) \star_x \phi_0(x) + G^r(t) \star_x\phi_1(x) - G^r\star_{t,x}S(t,x),
\end{aligned}\end{equation}
for the nonlinear source density $S(t,x)$ defined as in \eqref{nonlinear-S}. 
\end{proposition}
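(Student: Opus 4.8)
The plan is to mimic the proof of Proposition~\ref{prop-linphi}, treating the nonlinear source $S(t,x)$ in the closed equation \eqref{iter-rho} of Lemma~\ref{lem-linear} exactly as the free density $\rho^0$ was treated in the linear resolvent equation \eqref{resolvent}. First I would take the Fourier transform in $x$ of \eqref{iter-rho}. Using $\widehat{[\nabla_x\phi(s,\cdot-(t-s)\hv)]}_k = ik\, e^{-ik\cdot\hv(t-s)}\Fphi_k(s)$ and integrating against $\nabla_v\mu(v)=\hv\,\varphi'(\langle v\rangle)$, the nonlocal term collapses to the temporal convolution $(N_k\star_t\Fphi_k)(t)$, with $N_k(t)$ as in \eqref{def-N}. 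Hence in Fourier variables the identity \eqref{iter-rho} reads $(\partial_t^2+|k|^2+m_0^2)\Fphi_k + N_k\star_t\Fphi_k = -\FS_k$, supplemented by $\Fphi_k(0)=\Fphi_k^0$ and $\partial_t\Fphi_k(0)=\Fphi_k^1$. Taking the Laplace transform in $t$ and invoking \eqref{rew-DLKt}, namely $M(\lambda,k)=\lambda^2+|k|^2+m_0^2+\cL[N_k](\lambda)$, this becomes the nonlinear resolvent equation
\[
M(\lambda,k)\,\Tphi_k(\lambda) = -\FS_k(\lambda) + \lambda\Fphi_k^0 + \Fphi_k^1,
\]
which is precisely \eqref{resolvent} with $\Trho^0_k(\lambda)$ replaced by the Laplace transform $\FS_k(\lambda)$ of the nonlinear source.

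Next I would invert this exactly as in Proposition~\ref{prop-linphi}. Since $\FG_k(t)$ defined by \eqref{def-FGk} has Laplace transform $1/M(\lambda,k)$, with $\FG_k(0)=0$ and $\partial_t\FG_k(0)=1$ (read off from the asymptotics $1/M(\lambda,k)\sim\lambda^{-2}$ as $\lambda\to\infty$), Duhamel's formula yields
\[
\Fphi_k(t) = \partial_t\FG_k(t)\,\Fphi_k^0 + \FG_k(t)\,\Fphi_k^1 - (\FG_k\star_t\FS_k)(t).
\]
I would then insert the decomposition $\FG_k=\sum_\pm\FG^{osc}_{k,\pm}+\FG^r_k$ from Proposition~\ref{prop-Green}, use $\partial_t\FG^{osc}_{k,\pm}(t)=\lambda_\pm(k)\FG^{osc}_{k,\pm}(t)$ with $\lambda_\pm(k)=\pm i\nu_*(|k|)$, and take the inverse Fourier transform in $x$, reading multiplication by $\lambda_\pm(k)$ as the operator $\lambda_\pm(i\partial_x)$. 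This produces the claimed representation \eqref{rep-electricE}--\eqref{rep-phiosc}.

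The reason the linear argument carries over essentially verbatim is that Lemma~\ref{lem-linear} has already absorbed all of the quadratic interaction $E\cdot\nabla_v f$ along the genuine characteristics into $S=S^0+S^\mu$, so $\phi$ solves a closed equation of exactly the linear form, with $S$ in the role of $\rho$. The one delicate point is the legitimacy of the Fourier--Laplace manipulations: the Laplace transforms and the contour $\{\Re\lambda=\gamma_0\}$, $\gamma_0>0$, in \eqref{def-FGk} presuppose at-most-exponential growth in $t$ of $\phi$ and $S$, which is not yet established at this stage. I expect this to be the only real obstacle, and it is benign: the identity \eqref{rep-electricE}--\eqref{rep-phiosc} can be verified a posteriori by differentiating the right-hand side and checking it satisfies \eqref{iter-rho} with the prescribed data, using that $G(t,x)$ is the fundamental solution of the modified Klein--Gordon operator $\Box_{t,x}+m_0^2+\mathcal{K}$ (with $\mathcal{K}$ the nonlocal operator in \eqref{iter-rho}); equivalently, within the bootstrap of Section~\ref{sec-bootstrap} the requisite a priori bounds are in force and the formal computation is rigorous. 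In all cases the nonlinearity — and hence the genuine analytic difficulty — is deferred to the estimates on $S$ and on the spacetime convolution $G\star_{t,x}\nabla_x S$ carried out in the later sections.
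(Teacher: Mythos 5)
Your proposal is correct and follows essentially the same route as the paper: take the Fourier--Laplace transform of the closed equation \eqref{iter-rho}, identify the resulting resolvent equation $M(\lambda,k)\Tphi_k = -\TS_k + \lambda\Fphi_k^0 + \Fphi_k^1$, invert using the Green function, and split via the decomposition of Proposition~\ref{prop-Green} exactly as in Proposition~\ref{prop-linphi}. The paper's proof is in fact terser (it simply points to the resolvent equation and defers to the linear case), so your filled-in details — including the Duhamel identity with $\FG_k(0)=0$, $\partial_t\FG_k(0)=1$ and the caveat about the a posteriori justification of the contour integral — are a faithful elaboration of the intended argument.
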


\begin{proof}

Indeed, following the spectral analysis in Section \ref{sec-lineartheory}, we take the Laplace-Fourier transform of the equation \eqref{iter-rho}, yielding 
$$\Tphi_k =  \frac{1}{M(\lambda,k)} \Big[ - \TS_k(\lambda) + \lambda \Fphi_k^0+ \Fphi_k^1\Big]$$
in which $M(\lambda,k)$ is the spacetime symbol defined as in \eqref{def-Dlambda}, and $\TS_k(\lambda) $ denotes the Laplace-Fourier transform of the source $S(t,x)$. The proof of the proposition is thus similar to that of Proposition \ref{prop-linphi}, upon replacing the linear source density $\rho^0(t,x)$ to the nonlinear source $S(t,x)$. 
\end{proof}


\section{Nonlinear iterative scheme}\label{sec-iterscheme}


In view of the nonlinear formulation, we shall decompose the nonlinear electric field according to
\begin{equation}\label{boots-repE}
\begin{aligned}
E &= \sum_\pm E^{osc}_\pm(t,x) + E^r(t,x)
\end{aligned}\end{equation}
where $E^r$ denotes the regular part, while the oscillatory component is of the form 
\begin{equation}\label{bootstrap-Eosc} E^{osc}_\pm(t,x) =G_\pm^{osc}(t,x) \star_{x} F_0(x) + G_\pm^{osc}(t,x) \star_{t,x} F(t,x),
\end{equation}
in which $F_0(x)$ involves only initial data. Note that $F_0(x), F(t,x),$ and $E^r(t,x)$ are a gradient of some density, namely
\begin{equation}\label{grad-F} F_0(x) = \nabla_x S_0(x), \qquad F(t,x) = \nabla_x S(t,x), \qquad E^r(t,x) = \nabla_x \rho^r(t,x). 
\end{equation}

\subsection{Bootstrap assumptions}\label{sec-bootstrap}

We assume that the nonlinear electric field $E$ is of the form \eqref{boots-repE}-\eqref{bootstrap-Eosc}. Fix $N_0 \ge 14$ and a sufficiently small $\epsilon>0$. Our bootstrap assumptions are as follows:

\begin{itemize}

\item Decay assumptions: 

\begin{equation}\label{bootstrap-decayS}
\begin{aligned}
 \|S(t)\|_{L^p_x}   &\le \epsilon \langle t\rangle^{-3(1-1/p)} \log t, \qquad p\in [1,\infty],
\\
\sup_{q\in \ZZ} 2^{(1-\delta)q} \|P_q\cS(t)\|_{L^p_x}  + \|\partial_xS(t)\|_{L^p_x}   &\le \epsilon \langle t\rangle^{-3(1-1/p)} , \qquad p\in [1,\infty],
\\
\|E^r(t)\|_{L^p_x} + \|\partial_x E^r(t)\|_{L^p_x}  &\le \epsilon \langle t\rangle^{-3(1-1/p)} , \qquad p\in [1,\infty],
\\
\| E^{osc}_\pm(t)\|_{L^p_x} + \| \partial_x E^{osc}_\pm(t)\|_{L^p_x} 
&\le \epsilon \langle t\rangle^{-3(1/2-1/p)} , \qquad p\in [2,\infty] ,
\end{aligned}\end{equation}
for $\delta \in (0,1)$, where $P_q$ denotes the Littlewood-Paley projection on the dyadic interval $[2^{q-1}, 2^{q+1}]$, with $q\in \ZZ$. In addition,  
\begin{equation}\label{bootstrap-decayd2E}
\begin{aligned}
\|\partial_x^2 E^r(t)\|_{L^p_x}  &\le \epsilon \langle t\rangle^{-3(1-1/p)} ,\qquad p\in (1,\infty),
\\
 \| \partial^2_x E^{osc}_\pm(t)\|_{L^p_x} 
&\le \epsilon \langle t\rangle^{-3(1/2-1/p)}, \qquad p\in [2,\infty). 
\end{aligned}\end{equation}

\item Decay assumptions for higher derivatives\footnote{In fact, we obtain stronger decay estimates on $E^{osc}_\pm(t)$ than those that are stated in \eqref{bootstrap-decaydaS}. See Section \ref{sec-decayosc}}: for all $1\le |\alpha|\le N_0-1$, 
\begin{equation}\label{bootstrap-decaydaS}
\begin{aligned}
\|\partial_x^\alpha S(t)\|_{L^p_x} + \|\partial_x^\alpha E^r(t)\|_{L^p_x}   &\le \epsilon \langle t\rangle^{-3(1-1/p) + \delta_{\alpha,p}} , \qquad p\in [1,\infty], 
\\
\|\partial_x^\alpha E^{osc}_\pm(t)\|_{L^p_x}   &\le \epsilon \langle t\rangle^{-3(1/2-1/p) + \epsilon_{\alpha,p}} , \qquad p\in [2,\infty], 
 \end{aligned}\end{equation}
where  
\begin{equation}\label{def-dlp}
\delta_{\alpha,p} = \max\Big\{ \delta_\alpha ,  \frac32\delta_\alpha (1 - 1/p) \Big\}, \qquad \epsilon_{\alpha,p} = 3 \delta_\alpha(1/2 - 1/p),
\end{equation}
in which $\delta_\alpha = \frac{|\alpha|}{N_0-1} $. In addition, 
\begin{equation}\label{bootstrap-decaydaSLp}
\begin{aligned}
\|\partial_x^{\alpha+1} E^r(t)\|_{L^p_x}   &\le \epsilon \langle t\rangle^{-3(1-1/p) + \delta_{\alpha,p}} , \qquad p\in (1,\infty), 
\\
\|\partial_x^{\alpha+1}E^{osc}_\pm(t)\|_{L^p_x}   &\le \epsilon \langle t\rangle^{-3(1/2-1/p) + \epsilon_{\alpha,p}} , \qquad p\in [2,\infty). 
 \end{aligned}\end{equation}

\item Boundedness assumptions:
\begin{equation}\label{bootstrap-Hs}
\begin{aligned}
\| E^{osc}_\pm(t)\|_{H^{N_0}_x} &\le \epsilon , 
\\
\| E^{osc}_\pm(t)\|_{H^{N_0+1}_x} +  \| E^r(t)\|_{H^{N_0+1}_x} + \| S(t)\|_{H_x^{N_0}}
&\le \epsilon \langle t\rangle^{\delta_1},
\end{aligned}
\end{equation}
with $\delta_1 = \frac{1}{N_0-1}$. 
 \end{itemize}

Observe that the oscillatory field $E^{osc}_\pm(t,x)$ disperses in space like a Klein-Gordon wave at a rate of order $t^{-3/2}$, while the source density $S(t,x)$ disperses in space at a faster rate of order $t^{-3}$, dictated by the free transport dynamics. 
In addition, we observe that $S(t,x)$ and $E^r(t,x)$ behave like a quadratic oscillation, namely 
\begin{equation}
\label{quad-Eosc}
\begin{aligned}
\|\partial_x^\alpha [E^{osc}_\pm E^{osc}_\pm](t)\|_{L^p_x} 
&\le \epsilon \langle t\rangle^{-3(1-1/p) + \delta_{\alpha,p}} , \qquad p\in [1,\infty],
\end{aligned}
\end{equation}
for all $1\le |\alpha|\le N_0-1$. Indeed, noting $\delta_\alpha$ is linear in $|\alpha|$, we check  
$$
\begin{aligned} 
\|\partial_x^\alpha [E^{osc}_\pm E^{osc}_\pm](t)\|_{L^\infty_x}  
&\le \sum_{|\alpha_1| \le |\alpha|}  \|\partial_x^{\alpha_1} E^{osc}_\pm (t)\|_{L^\infty_x} \|\partial_x^{\alpha-\alpha_1}E^{osc}_\pm(t)\|_{L^\infty_x} 
\lesssim \epsilon \langle t\rangle^{-3+ \frac32 \delta_{\alpha}},
\\
\|\partial_x^\alpha [E^{osc}_\pm E^{osc}_\pm](t)\|_{L^1_x}  
&\le \sum_{|\alpha_1|\le |\alpha|}  \|\partial_x^{\alpha_1} E^{osc}_\pm (t)\|_{L^{2}_x} \|\partial_x^{\alpha-\alpha_1}E^{osc}_\pm(t)\|_{L^{2}_x}
\lesssim \epsilon,
\end{aligned}
$$
which prove \eqref{quad-Eosc} by interpolation.

Finally, to better keep track of the oscillations of the electric field, we write in Fourier space 
\begin{equation} \label{defiFB}
\FE^{osc}_\pm(t,k) = e^{\lambda_{\pm}(k) t} \FB_\pm(t,k)
\end{equation}
in which 
\begin{equation}\label{def-Bpm}
\FB_\pm(t,k) = a_\pm(k) \FF_0(k)+  \int_0^t e^{-\lambda_\pm(k)s} a_\pm(k) \FF(s,k)\; ds, 
\end{equation}
recalling that $\lambda_\pm(k) = \pm i \nu_*(|k|)$ and $a_\pm(k) = \cO(\langle k\rangle^{-1})$ as defined in \eqref{def-Gosc}. In particular, we note that 
\begin{equation}\label{dtBF}
e^{\lambda_\pm(k)t} \partial_t \FB_\pm(t,k)  = a_\pm(k) \FF(t,k) 
\end{equation}
which plays a role in the nonlinear analysis, since $F(t,x)$ decays faster, namely at quadratic order, 
when compared with that of $E^{osc}_\pm(t,x)$. 

\subsection{The bootstrap argument}

Applying the standard local-in-time existence theory,  the bootstrap estimates hold for $t \in [0,T]$ for some small $T>0$. Suppose that there is a finite time $T_*$ so that the bootstrap assumptions hold for all $t \in [0,T_*)$. It suffices to prove that they remain valid for $t=T_*$, and hence the solution is global in time and satisfies the stated bounds. 

Indeed, we write the nonlinear electric field $E$ in the form \eqref{boots-repE}-\eqref{grad-F}. We shall prove that there are universal constants $C_0, C_1$ so that for all $t \in [0,T_*]$, all the bootstrap assumptions listed in Section \ref{sec-bootstrap} hold with an improved constant $C_0 \epsilon_0 + C_1\epsilon^2$. Namely, the bootstrap bound on $S(t,x)$ in \eqref{bootstrap-decayS} is now replaced by 
\begin{equation}\label{bootstrap-goalrho} 
\begin{aligned}
\|S(t)\|_{L^p_x} &\le \Big(C_0 \epsilon_0 + C_1\epsilon^2\Big) \langle t\rangle^{-3(1-\frac1p)} \log t , \qquad p\in [1,\infty],
\\
\sup_{q\in \ZZ} 2^{(1-\delta)q} \|P_q\cS(t)\|_{L^p_x}  + \|\partial_xS(t)\|_{L^p_x}   &\le \Big(C_0 \epsilon_0 + C_1\epsilon^2\Big) \langle t\rangle^{-3(1-1/p)} , \qquad p\in [1,\infty].
\end{aligned}
\end{equation}
We will also prove similar bounds for all other bootstrap assumptions for $t<T_*$. Finally, choosing $\epsilon_0 \ll \epsilon \ll1$ so that 
$$ C_0 \epsilon_0 + C_1 \epsilon^2 < \epsilon. $$
Therefore, the bootstrap assumptions listed in Section \ref{sec-bootstrap} indeed hold for $t=T_*$. The main theorem thus follows. 

\subsection{Proof of Theorem \ref{maintheorem}}

In this section, we briefly give the proof of the main theorem of the paper, Theorem \ref{maintheorem}, provided the following two results establishing the decay of the nonlinear source density $S(t,x)$ and of the electric field. Precisely, in Section \ref{sec-sourceest}, we will prove the following results on the source density.

\begin{proposition} \label{prop-bdsS-goal}
Fix $N_0\ge 14$. Let $S$ be the nonlinear source terms defined as in \eqref{nonlinear-S}-\eqref{nonlinear-Smu}. Under the bootstrap assumptions listed in Section \ref{sec-bootstrap}, there hold
\begin{equation}\label{bdsS-goal}
\begin{aligned}
\| S (t)\|_{L^p_x}   &\lesssim (\epsilon_0+ \epsilon^2) \langle t\rangle^{-3(1-1/p)} \log t
\\
\sup_{q\in \ZZ} 2^{(1-\delta)q} \|P_qS(t)\|_{L^p_x} + \|\partial_x S(t)\|_{L^p_x} &\lesssim (\epsilon_0 + \epsilon^2) \langle t\rangle^{-3(1-1/p)} 
\end{aligned}\end{equation}
for any $1 \le p \le \infty$ and $\delta \in (0,1)$, where $P_q$ denotes the Littlewood-Paley projection on the dyadic interval $[2^{q-1}, 2^{q+1}]$. 
In addition, 
\begin{equation}\label{Hsbounds-goal}
\begin{aligned}
\|\partial_x^\alpha S(t)\|_{L^p_x}   \le \epsilon \langle t\rangle^{-3(1-1/p) + \delta_{\alpha,p}} 
, \qquad 
\|S(t)\|_{H^{\alpha_0}_x} &\lesssim \epsilon \langle t\rangle^{\delta_1},
\end{aligned}
\end{equation}
for all $|\alpha|\le N_0-1$, with $\delta_{\alpha,p} = \max\{ \delta_\alpha, \frac32\delta_\alpha (1-1/p)\}$, where $\delta_\alpha = \frac{|\alpha|}{N_0-1}$. 
\end{proposition}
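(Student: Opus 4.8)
The plan is to estimate the two pieces of $S = S^0 + S^\mu$ separately, using the bootstrap control on the characteristics (Proposition \ref{prop-charPsi} and the oscillation decomposition \eqref{Xdep-intro}) together with the linear Green-function bounds of Proposition \ref{prop-Greenphysical}. For $S^0(t,x) = \int f_0(X_{0,t}(x,v), V_{0,t}(x,v))\, dv$, I would change variables in $v$ along the backward flow. Since $X_{0,t}(x,v) = x - t\hat V^{mod}_{t,t}(x,v) + \cO(t^{-1/2})$ with the map $v \mapsto X_{0,t}(x,v)$ being a near-identity perturbation of the free shear $v\mapsto x - t\hat v$, the Jacobian of $v \mapsto X_{0,t}$ is comparable to $t^3 \langle v\rangle^{?}$ on the (compact) velocity support, so the change of variables produces the expected gain $t^{-3}$ in $L^\infty_x$ and boundedness in $L^1_x$; derivatives in $x$ cost powers of the characteristics' derivatives, which are controlled by \eqref{bootstrap-Hs} and the higher-derivative characteristic bounds. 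This gives the $\langle t\rangle^{-3(1-1/p)}$ decay for $S^0$ (without the $\log t$) and its higher-$\alpha$ and Sobolev counterparts in \eqref{Hsbounds-goal}, with constant $\cO(\epsilon_0)$ since $f_0 - \mu$ is $\cO(\epsilon_0)$ in the relevant norm \eqref{data-phi}.

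The heart of the matter is $S^\mu$, the quadratic source \eqref{def-Smuintro}, which is where the $\log t$ and the $\epsilon^2$ come from. I would write the integrand as the difference $E(s,x-(t-s)\hat v)\cdot\nabla_v\mu(v) - E(s,X_{s,t})\cdot\nabla_v\mu(V_{s,t})$ and expand it by the fundamental theorem of calculus in the displacement $(X_{s,t} - (x-(t-s)\hat v),\ V_{s,t} - v)$; by \eqref{odeXV}–\eqref{ode-char2} these displacements are themselves integrals of $E$ along the flow, so the integrand is genuinely quadratic in $E$. Then I would split according to the decomposition $E = E^{osc}_+ + E^{osc}_- + E^r$ into the particle-particle ($E^r\cdot\nabla_v f^r$), particle-wave ($E^{osc}_\pm\cdot\nabla_v f^r$ and $E^r\cdot\nabla_v f^{osc}$), and wave-wave ($E^{osc}_\pm\cdot\nabla_v f^{osc}_\pm$, both $\{\pm,\pm\}$ and $\{\pm,\mp\}$) contributions as in the introduction. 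For each, the dichotomy is $s > t/2$ versus $s < t/2$: for $s$ near $t$ one uses Klein-Gordon dispersion of $E^{osc}_\pm$ (rate $s^{-3/2}$) and the decay of $E^r$ ($s^{-3}$); for $s$ small one uses free-transport dispersion in $v$ (velocity averaging, costing $v$-derivatives that are affordable since we only need $N_0$-many and $\mu$ is compactly supported and smooth), supplemented by the decay $X^{osc}_{s,t} = \cO(s^{-3/2})$, $X^{tr}_{s,t} = \cO(s^{-1})$ of the characteristics. The non-resonant wave-wave and the particle-wave terms gain an extra $s^{-1}$ via integration by parts in time, exploiting $\omega^v_\pm(k) \neq 0$ as in \eqref{keyint}; the resonant wave-wave term $\{\pm,\mp\}$ must be recast in derivative (divergence) form so that the frequency factor can be absorbed into a phase-mixing gain, as flagged in Section \ref{sec-resHosc}. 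Summing these contributions over the time split is exactly where the logarithm appears: the particle-particle term, because of the $X^{tr} = \cO(s^{-1})$ factor, produces $\int_1^t s^{-1}\,ds \sim \log t$ in the $L^\infty$ bound, which is why $\|S\|_{L^p_x}$ carries a $\log t$ while $\|\partial_x S\|_{L^p_x}$ does not (one spatial derivative lands on a factor whose decay is strictly summable). The Littlewood-Paley bound on $P_q S$ with weight $2^{(1-\delta)q}$ follows the same scheme, using the refined low-frequency Green-function estimate \eqref{conv-GtrLp} and keeping track that each quadratic term is a gradient $\nabla_x(\cdot)$, so that one frequency power is always available.

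For the higher-derivative bounds \eqref{Hsbounds-goal} I would apply $\partial_x^\alpha$ to $S^\mu$ and distribute the derivatives by Leibniz over the two $E$-factors and over the characteristics $X_{s,t}, V_{s,t}$; the derivatives falling on the characteristics are controlled by the high-derivative characteristic estimates (Proposition \ref{prop-HDchar}), which allow polynomial-in-time growth $\langle t\rangle^{\delta_\alpha}$ at order $|\alpha|$ and all-but-one orders. The cascading exponents $\delta_{\alpha,p} = \max\{\delta_\alpha, \frac32\delta_\alpha(1-1/p)\}$ are engineered precisely so that the bilinear estimate closes: when most derivatives hit the low-norm factor (which still decays) and few hit the high-norm factor (which grows like $\langle t\rangle^{\delta_1}$ or $\langle t\rangle^{\delta_\alpha}$), the product of the two time-decay/growth rates is $\le \langle t\rangle^{-3(1-1/p)+\delta_{\alpha,p}}$; the $L^1$–$L^\infty$ interpolation then yields intermediate $p$. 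The $H^{N_0}$ bound $\|S\|_{H^{N_0}_x}\lesssim\epsilon\langle t\rangle^{\delta_1}$ is the top-order case of the same computation, using \eqref{bootstrap-Hs} for the high-norm factor and the low-norm decay for the other. The main obstacle is the resonant wave-wave interaction $E^{osc}_\pm\cdot\nabla_v f^{osc}_\mp$: there the two oscillatory phases $e^{\pm i\nu_*t}$ and $e^{\mp i\nu_*t}$ do not beat against each other, so integration by parts in time yields no gain, and one must instead identify the cancellation that forces this term into divergence form — equivalently, exploit that the sum of the two phase functions is stationary only on a lower-dimensional set — in order to convert the missing temporal decay into a spatial-frequency gain that feeds the phase-mixing mechanism. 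Getting this structural cancellation correct, uniformly down to $|k|\to 0$ where $\nu_*(k)\to\tau_0$ and the group velocities degenerate, is the delicate point; everything else is a careful but routine bilinear bookkeeping built on the linear theory of Section \ref{sec-lineartheory} and the characteristic estimates of Section \ref{sec-Char}.
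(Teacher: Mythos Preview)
Your strategy matches the paper's: split $S = S^0 + S^\mu$, treat $S^0$ by the change of variables $v\mapsto X_{0,t}$ along the flow, and for $S^\mu$ first straighten the characteristic via $w = \Psi_{s,t}(x,v)$ to write $S^\mu = \int_0^t\!\int E(s,x-(t-s)\hat w)\cdot H_{s,t}(x,w)\,dw\,ds$ with a kernel $H_{s,t}$ decomposed as $T^{osc}+H^{osc}+H^{tr}+H^Q+H^R$ (this is the paper's Proposition \ref{prop-SR}, equivalent to your Taylor-in-displacement idea but more systematic). The interactions are then handled exactly as you outline, and the resonant wave-wave cancellation you flag is precisely the paper's Lemma \ref{0lem-Qosc}: the $\{+,-\}$ symbol carries an extra factor $|k|+|\lambda_+(\ell)+\lambda_-(k-\ell)|$, the first piece giving the divergence structure you describe and the second allowing integration by parts in $s$.

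One point where your sketch is not quite right is the removal of the $\log t$ for $\partial_x S$. It is not that ``one spatial derivative lands on a factor whose decay is strictly summable'': in $L^\infty_{x,v}$ the kernel $\partial_x H^{tr}_{s,t}$ is still only $\cO(s^{-1})$, so the naive argument still produces a log. The paper's mechanism (Section \ref{sec-SEr}) is that after the change of variables $y = x-(t-s)\hat v$, the dangerous piece of $\frac{d}{dx}H^{tr}_{s,t}(x,v_{s,t}(x,y))$ can be estimated in $L^p_y$ (not just $L^\infty$) with decay $\langle t\rangle^{-1+3/p}$, and H\"older against $\|E^r(s)\|_{L^{p'}}\lesssim \langle s\rangle^{-3/p}$ with $p>3$ then avoids the divergent integral; for the oscillatory pieces the extra $|k|$ from $\partial_x$ is instead traded for $(t-s)^{-1}$ via integration by parts in $v$ through $k\cdot\hat v\, e^{-ik\cdot\hat v(t-s)} = i(t-s)^{-1}\langle v\rangle^3\,\hat v\cdot\nabla_v e^{-ik\cdot\hat v(t-s)}$. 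The Littlewood-Paley bound is then obtained by the pointwise-in-$s$ interpolation $2^{(1-\delta)q}\|P_q J\|_{L^p}\lesssim \|J\|_{L^p}^\delta\|\partial_x J\|_{L^p}^{1-\delta}$ applied to the integrand, not via \eqref{conv-GtrLp} (that estimate is used later for $E^r$, not for $S$).
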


Next, in Section \ref{sec-decayosc}, we will establish the decay estimates on the oscillatory electric field, namely the following results. 

\begin{proposition}\label{prop-decayEosc-goal} 
Fix $N_0\ge 14$ and $n_0 = \frac14 N_0$. Let $S$ be the nonlinear source term defined as in \eqref{nonlinear-S}-\eqref{nonlinear-Smu}.
Then, there hold
\begin{equation}\label{decayEosc-goal}
\begin{aligned}
\|\partial^\alpha_xG_\pm^{osc} \star_{t,x} \nabla_x S(t)\|_{L^\infty_x} 
&\lesssim (\epsilon_0 + \epsilon^2) \langle t\rangle^{-3/2} ,\qquad |\alpha|\le n_0,
\\
\|\partial^{\alpha_0-1}_xG_\pm^{osc} \star_{t,x} \nabla_x S(t)\|_{L^\infty_x} 
&\lesssim (\epsilon_0 + \epsilon^2) ,\qquad |\alpha_0| = N_0,
\end{aligned}\end{equation}
and 
\begin{equation}\label{bdEosc-goal}
\begin{aligned}
\|G_\pm^{osc} \star_{t,x} \nabla_x S(t)\|_{H^{\alpha_0}_x} &\lesssim (\epsilon_0 + \epsilon^2)
\\
\|G_\pm^{osc} \star_{t,x} \nabla_x S(t)\|_{H^{\alpha_0+1}_x} &\lesssim (\epsilon_0 + \epsilon^2) \langle t\rangle^{\delta_1} .
\end{aligned}\end{equation}
\end{proposition}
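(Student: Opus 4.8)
The plan is as follows. This proposition is the delicate part of the electric--field decay estimate: by \eqref{bootstrap-Eosc}--\eqref{grad-F} the initial--data part of $E^{osc}_\pm$ is a pure Klein--Gordon wave handled directly by Proposition~\ref{prop-Greenphysical} together with \eqref{data-phi}, while the remainder is exactly the spacetime convolution $G^{osc}_\pm\star_{t,x}\nabla_x S$ treated here, with $S=S^0+S^\mu$ as in \eqref{nonlinear-S}--\eqref{nonlinear-Smu}. The backbone will be the time--integration identity \eqref{keyint}--\eqref{keyint2}: since $S$ is built from the electric field transported along the (nearly free) characteristics, integrating the inner time variable against the oscillatory kernel telescopes and produces three pieces --- a ``pure oscillation'' boundary term $\tfrac{1}{\omega^v_\pm(i\partial_x)}G^{osc}_\pm(t)\star_x\nabla_x S(0)$, a current--time boundary term $-\tfrac{1}{\omega^v_\pm(i\partial_x)}G^{osc}_\pm(0)\star_x\nabla_x S(t)$, and a higher--order term $\tfrac{1}{\omega^v_\pm(i\partial_x)}G^{osc}_\pm\star_{t,x}\nabla_x[ES]$ --- up to errors from the deviation of $(X_{s,t},V_{s,t})$ from the free motion $(x-(t-s)\hv,v)$, which are themselves $\cO(E)$ and get absorbed into the last term (this is where the characteristic analysis of Section~\ref{sec-Char} enters). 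A preliminary point to record is that $\omega^v_\pm(k)=\lambda_\pm(k)+ik\cdot\hv$ never vanishes and, since $\nu_*(|k|)>|k|$ by Theorem~\ref{theo-LangmuirB} while $|\hv|$ stays in a fixed compact set, $1/\omega^v_\pm$ is a genuine smoothing symbol of order $-1$, so that $a_\pm(k)/\omega^v_\pm(k)=\cO(\langle k\rangle^{-2})$.

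The two boundary terms are then routine. The $t=0$ term is itself a Klein--Gordon wave, so \eqref{est-HoscLp} gives the $\langle t\rangle^{-3/2}$ $L^\infty$ decay at the cost of roughly four $x$--derivatives, which is affordable because $\nabla_x S(0)=\nabla_x\int f_0\,dv$ is controlled in $W^{N_0+3,1}_x$ by \eqref{data-phi} and $n_0=\tfrac14 N_0$; the $H^{N_0}$ and $H^{N_0+1}$ bounds follow from \eqref{est-HoscL2inf}. The current--time term carries the smoothing operator $a_\pm(i\partial_x)/\omega^v_\pm(i\partial_x)$, hence gains one derivative on $\nabla_x S(t)$, and the decay of $S$ furnished by Proposition~\ref{prop-bdsS-goal}, of the form $\|\partial_x^\beta S(t)\|_{L^p}\lesssim\epsilon\langle t\rangle^{-3(1-1/p)+\delta_{\beta,p}}$, is far stronger than $\langle t\rangle^{-3/2}$ for all $|\beta|\lesssim N_0/4$; at the top order the gained derivative places us on $\|S(t)\|_{H^{N_0-1}}\lesssim\epsilon$ for the $H^{N_0}$ estimate, whereas for $H^{N_0+1}$ one is left with $\|S(t)\|_{H^{N_0}}\lesssim\epsilon\langle t\rangle^{\delta_1}$, which is exactly the origin of the $\langle t\rangle^{\delta_1}$ growth in \eqref{bdEosc-goal}. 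The intermediate bound for $\partial_x^{\alpha_0-1}$, $|\alpha_0|=N_0$, is obtained the same way, now with only boundedness of the $t=0$ piece and with the highest Sobolev norms of $S$ entering solely through the derivative--gaining operator.

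The real work is the higher--order term $\tfrac{1}{\omega^v_\pm(i\partial_x)}G^{osc}_\pm\star_{t,x}\nabla_x[ES]$. Splitting $S=S^0+S^\mu$, the piece $ES^0$ is harmless because $S^0$ is a free--transport density and thus enjoys phase mixing, giving extra time decay for its $x$--derivatives. For $ES^\mu$ phase mixing is unavailable and the high Sobolev norms of $S^\mu$ grow like $\langle t\rangle^{\delta_\beta}$, so a crude Leibniz/H\"older bound on $\|ES^\mu\|_{W^{m,1}}$ decays only like $\langle s\rangle^{-1}$ (or slower) at the top derivatives, which cannot be integrated against the non--integrable weight $\langle t-s\rangle^{-3/2}$. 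I would therefore return to the decomposition of $S^\mu$ into particle--particle, particle--wave and wave--wave interactions of Section~\ref{sec-sourceest}: on the non--resonant pieces (particle--particle, particle--wave, and $\{\pm,\pm\}$ wave--wave) one iterates the time--integration identity once more to gain an extra $\langle s\rangle^{-1}$, while on the resonant $\{\pm,\mp\}$ wave--wave piece --- where the phase accumulated after time integration degenerates and $1/\omega$ is no longer bounded --- one invokes the structural fact, established there and already used in Proposition~\ref{prop-bdsS-goal}, that the resonant contribution occurs in derivative form; the extra frequency factor vanishes precisely on the resonant set and can be traded against phase mixing of the transported distribution to recover integrable decay. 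This phase--space resonant bookkeeping is the substance of Section~\ref{sec-decayosc}.

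The step I expect to be the main obstacle is precisely this resonant higher--order term: one cannot propagate the sharp $\langle t\rangle^{-3/2}$ Klein--Gordon decay through the quadratic interaction $E\cdot\nabla_v f$ by size considerations alone, since $S$ decays at best like the nonlinear density ($\langle t\rangle^{-3}$ in $L^\infty$), with no phase--mixing gain for spatial derivatives and slowly growing high Sobolev norms; the oscillatory phases must be kept throughout the convolution and the resonant cancellations made explicit. Everything else is a bookkeeping of derivative counts, organized so that sharp $\langle t\rangle^{-3/2}$ decay propagates up to $n_0=\tfrac14 N_0$ derivatives while growth of order $\langle t\rangle^{\delta_1}$ is tolerated only at orders $N_0$ and $N_0+1$, in accordance with the bootstrap assumptions of Section~\ref{sec-bootstrap}.
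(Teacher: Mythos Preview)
Your overall architecture is right and matches the paper: the decomposition \eqref{keyint2} is implemented precisely as Propositions~\ref{prop-convGosc} and~\ref{prop-convGoscS0}, producing (i) a pure Klein--Gordon term in the initial data, (ii) a current-time term $a_\pm(i\partial_x)S^{\mu}_{\pm,1}(t,x)$ carrying the $\phi^v_{\pm,1}$ smoothing, and (iii) a genuinely quadratic remainder $G^{osc}_\pm\star_{t,x}S^{\mu}_{\pm,2}$ with $S^{\mu}_{\pm,2}$ of the form $E(t,x)\cdot[\text{integrated }E]$. Your treatment of (i) and (ii) is essentially what the paper does (Sections~7.3--7.4), including the derivative counts and the origin of the $\langle t\rangle^{\delta_1}$ growth at order $N_0+1$.

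Where you diverge is in the endgame for the quadratic remainder. You anticipate a two--wave $\{+,-\}$ resonance and plan to resolve it by importing the Section~\ref{sec-sourceest} machinery (particle--wave splitting, derivative structure of the resonant piece). The paper takes a shorter route: after one more time integration on the inner $E$ (Section~7.5.2), the leading remainder is the pure bilinear $\cQ^{osc,osc}(t,x)=E^{osc}_\pm(t,x)\,[\phi^v_{\pm,1}E^{osc}_\pm](t,x)$ at the \emph{same} time $t$. The convolution $G^{osc}_\pm\star_{t,x}\cQ^{osc,osc}$ is then treated by a normal form in $s$ whose phase is the \emph{three}--wave function
\[
\Phi_\pm(k,\ell)=-\lambda_\pm(k)+\lambda_\pm(\ell)+\lambda_\pm(k-\ell),
\]
and the crucial observation (equation~\eqref{nonres-condition}) is that $|\Phi_\pm(k,\ell)|\gtrsim 1$ uniformly, for \emph{every} sign combination, because the output oscillation $\lambda_\pm(k)$ from $G^{osc}_\pm$ always contributes an extra $\pm i\nu_*(k)$ with $\nu_*(0)=\tau_0>0$. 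There is thus no spacetime resonance at this stage, and the analysis closes by a single integration by parts plus Coifman--Meyer bounds. The Section~\ref{sec-sourceest} resonance structure you invoke is what is needed to bound $S$ itself (where no output frequency is available); for $G^{osc}_\pm\star_{t,x}\nabla_x S$ it is not needed. Your route is plausibly closable but more laborious; the paper's three--wave non--resonance is the cleaner mechanism you should look for.
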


Let us now complete the proof of Theorem \ref{maintheorem}, using the above results. Indeed, using the linear theory as in Section \ref{sec-nonlinearE}, we write the nonlinear electric field $E$ in  the form \eqref{boots-repE}-\eqref{grad-F}, namely 
\begin{equation}\label{goal-repE}
\begin{aligned}
E &= \sum_\pm E^{osc}_\pm(t,x) + E^r(t,x)
\end{aligned}\end{equation}
where
\begin{equation}\label{goal-EoscEr} 
\begin{aligned}
E^{osc}_\pm(t,x) &=-G^{osc}_\pm(t) \star_x (\lambda_\pm(i\partial_x)\nabla_x\phi_0(x) +\nabla_x\phi_1(x)) + G_\pm^{osc}\star_{t,x} \nabla_x S(t,x),
\\E^r(t,x) &= -\partial_t  G^r(t) \star_x \nabla_x\phi_0(x) - G^r(t) \star_x\nabla_x\phi_1(x) + G^r\star_{t,x}\nabla_xS(t,x).
\end{aligned}
\end{equation}
Here, in the above expression, $\phi_j(x)$ denote the initial electric potentials, $G^{osc}_\pm(t,x), G^r(t,x)$ are the Green functions as in Proposition \ref{prop-Greenphysical}, and $S(t,x)$ is the nonlinear source density defined as in \eqref{nonlinear-S}. Note that by construction, both $E^{osc}_\pm$ and $E^r$ are apparently a gradient of some density functions, as desired in \eqref{grad-F}. 

We now check the bootstrap assumptions on $E(t,x)$ by verifying the bootstrap bounds for each term in the above expression. Indeed, for the oscillatory electric field, the last convolution term $G_\pm^{osc} \star_{t,x} \nabla_x S$ is treated in Proposition \ref{prop-decayEosc-goal}, while the terms involving the initial data are estimated using the dispersive estimates \eqref{est-HoscLp} and the assumptions on the initial data, see \eqref{data-phi}. 

It remains to bound $E^r(t,x)$. Recall from  Proposition \ref{prop-Greenphysical} that 
\begin{equation}\label{est-Grphysical} 
\|\partial_t^\alpha G^r(t)\|_{L^p_x} \lesssim \langle t\rangle^{-4+3/p-|\alpha|} ,
\end{equation}
for any $p\in [1,\infty]$ and for $|\alpha|\le 1$. 
Therefore, the bootstrap bounds on the convolution against initial data $\phi_j(x)$ follow directly, upon using the standard inequality $\| G^r(t)\star_{x} f\|_{L^p_x} \lesssim \|G^r(t)\|_{L^p_x}\| f\|_{L^1_x}$, and the boundedness of $\phi_j$ in $W^{N_0,1}_x$, see \eqref{data-phi}. On the other hand, using again \eqref{est-Grphysical}, we bound 
$$  
\begin{aligned}
\| G^r \star_{t,x} S\|_{L^p_x} 
&\le \int_0^t  \Big\| G^r(t-s) \star_{x} S(s)\Big\|_{L^p_x} \; ds
\\
&\lesssim
 \int_0^{t/2}  \| G^r(t-s)\|_{L^p_x}\|S(s)\|_{L^1_x} \; ds
 +  \int_{t/2}^t  \| G^r(t-s)\|_{L^1_x}\|S(s)\|_{L^p_x} \; ds
\\
&\lesssim(\epsilon_0+ \epsilon^2) 
 \int_0^{t/2} \langle t-s\rangle^{-4+3/p} \log s\; ds 
+(\epsilon_0+ \epsilon^2)   \int_{t/2}^t  \langle t-s\rangle^{-1}\langle s\rangle^{-3(1-1/p)} \log s\; ds
\\
&\lesssim
(\epsilon_0+ \epsilon^2)  \langle t\rangle^{-3+3/p} \log^2 t.
\end{aligned} $$
Similarly, we now bound the above convolution against $\nabla_x S$, for which the log loss is avoided. Indeed, we recall from \eqref{conv-GtrLp} that 
\begin{equation}\label{conv-GtrLp1}
\| \chi(i\partial_x) G^{r} (t)\star_x \nabla_x S\|_{L^p_x} \lesssim \langle t\rangle^{-1-\delta} \sup_{q\in \ZZ} 2^{(1-\delta) q}\| P_qS\|_{L^p_x},
\end{equation}
for any $p\in [1,\infty]$, and we compute 
$$  
\begin{aligned}
\|&\chi(i\partial_x) G^r \star_{t,x}  \nabla_xS\|_{L^p_x} 
\\&\lesssim
 \int_0^{t/2}  \|  \chi(i\partial_x) G^r(t-s)\|_{L^p_x}\| \nabla_xS(s)\|_{L^1_x} \; ds
+  \int_{t/2}^t  \langle t-s\rangle^{-1-\delta}  \sup_{q\in \ZZ} 2^{(1-\delta) q}\| P_q S(s)\|_{L^p_x} \; ds
\\
&\lesssim(\epsilon_0+ \epsilon^2) 
 \int_0^{t/2} \langle t-s\rangle^{-4+3/p} \; ds
 + (\epsilon_0+ \epsilon^2)  \int_{t/2}^t   \langle t-s\rangle^{-1-\delta} \langle s\rangle^{-3(1-1/p)} \; ds
\\
&\lesssim
(\epsilon_0+ \epsilon^2)  \langle t\rangle^{-3+3/p} ,
\end{aligned} $$
recalling $\delta>0$. The bounds for the high frequency part follow similarly, upon noting that $\|\chi(i\partial_x) G^r(t)\|_{L^p_x} 
$ decays rapidly fast in $t$. Finally, higher derivative estimates follow identically, therefore completing the proof of the bootstrap bounds on $E^r(t,x)$, provided Propositions \ref{prop-bdsS-goal}-\ref{prop-decayEosc-goal} to be proved in the remaining sections. This completes the proof of Theorem \ref{maintheorem}.

\begin{proof}[Proof of Corollary \ref{cor-main}]
Finally, we prove the scattering result. Indeed, recalling the characteristics \eqref{ode-char},  
we observe that 
$$\int_0^t E(s,X_{s,t}(x,v)) \cdot \nabla_v \mu(V_{s,t}(x,v))  \,  ds=  \int_0^t {d \over ds}V_{s,t}(x,v) \cdot \nabla_v \mu(V_{s,t}(x,v))  \,  ds
  = \mu(v) - \mu(V_{0, t}(x,v)).$$
Putting this into \eqref{charf1}, we obtain 
$$
f(t,x,v)= f_0(X_{0,t}(x,v) , V_{0,t}(x,v))  +  \mu(V_{0, t}(x,v)) -\mu(v) .
$$
In the next section, see Proposition \ref{prop-Dchar}, we shall prove that 
$$X_{0,t}(x,v) = x - t \hv+ Y_{0,t}(x,v), \qquad V_{0,t}(x,v) = v + W_{0,t}(x,v)$$
in which there are some $C^1$ functions $Y_{\infty}(x,v), W_{\infty}(x,v)$ so that $ \| Y_{0,t}-  Y_{\infty}\|_{L^\infty_{x,v}} \lesssim \epsilon_0 \langle t\rangle^{-1/2}$ and $\| W_{0,t}-  W_{\infty}\|_{L^\infty_{x,v}} \lesssim \epsilon_0 \langle t\rangle^{-3/2}$. As a result, we set 
$$
\begin{aligned}
f_{\infty}(x,v) &= f_{0}\left(x - t \hv+ Y_{\infty}(x,v), v+ W_{\infty}(x,v)\right) 
+ \mu\left(v + W_{\infty}(x,v)\right)-\mu(v ).
\end{aligned}
$$
Note that $f_\infty \in W^{1,\infty}$. Therefore, 
there holds
 \begin{equation}\label{scattering}
 \| f(t,x+t\hv, v) - f_\infty(x,v) \|_{L^\infty_{x,v}} \lesssim \epsilon_0 \langle t\rangle^{-1/2},
 \end{equation}
 for all $t \geq 0$. 
 \end{proof}


\section{Characteristics}\label{sec-Char}



\subsection{Introduction}


Let $X_{s,t}(x,v)$ and $V_{s,t}(x,v)$ be the nonlinear characteristics. As the electric field satisfies $\|E(t)\|_{L^\infty_x}  \lesssim \epsilon \langle t \rangle^{-3/2}$, it directly follows that 
\begin{equation}\label{quick-bdV}
 \| V_{s,t} - v\|_{L^\infty_{x,v}} \lesssim  \epsilon \langle s\rangle^{-1/2} ,
 \end{equation}
namely, $V_{s,t}(x,v)$ remains close to $v$. 
However, such a bound on the velocity is too weak to precisely locate the position of the moving particles $X_{s,t} (x,v)$.
As $E$ has on oscillatory component, the velocities and the trajectories of the particle "oscillate". 
We need to extract these oscillations in order to get better bounds on the velocities and a good localisation of the electrons. Throughout this section, we assume that the electric field is of the form
\begin{equation}\label{bootstrapE1}
E(t,x) = \sum_\pm E^{osc}_\pm(t,x)+ E^r(t,x)
\end{equation} 
with the oscillatory electric field 
$$E^{osc}_\pm(t,x) = G_\pm^{osc}(t,x) \star_{x} F_0(x)+G_\pm^{osc} \star_{t,x} F(t,x) $$
where $F_0, F$, and $E^r$ satisfy the bootstrap assumptions stated in Section \ref{sec-bootstrap}. In addition, in what follows, $X_{s,t}(x,v)$ and $V_{s,t}(x,v)$ are the nonlinear characteristic solving \eqref{ode-char}. 

\subsection{Integrated electric fields}

Let $\lambda_\pm = \pm i \nu_*(|k|)$ be the Langmuir oscillatory modes constructed as in Theorem \ref{theo-LangmuirB}. 
For $j\ge 1$, we set  
\begin{equation}\label{def-Eoscj}
E^{osc,j}_\pm(t,x,v) = \phi_{\pm,j}^v (i\partial_x)E^{osc}_{\pm} (t,x)
\end{equation}
where $ \phi_{\pm,j}^v(k)$ denote a family of Fourier multipliers defined by 
\begin{equation}\label{def-phipmj} 
\phi_{\pm,j}^v (k)= \frac{1}{(\lambda_\pm(k) + ik\cdot \hv)^{j}}.
\end{equation}
Then, we obtain the following lemma. 
\begin{lemma}\label{lem-PEosc}
For any compact subset $K \Subset \RR^3$ and $\delta>0$, there hold  
\begin{equation}\label{Lp-convphi}
\begin{aligned}
\| \sup_{v\in K}  \phi^v_{\pm,j}(i\partial_x)\partial_x^\alpha f \|_{L^p_x} &\lesssim \| f\|_{L^p_x} ,\qquad j>|\alpha|, 
\end{aligned}\end{equation}
for any $\alpha\ge 0$, $j\ge 1$, and $1\le p\le \infty$. 
\end{lemma}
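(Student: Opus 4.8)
The plan is to prove the $L^p$ boundedness by reducing the Fourier multiplier $\sup_{v\in K}\phi^v_{\pm,j}(i\partial_x)\partial_x^\alpha$ to a uniform family of multipliers to which a Mikhlin–Hörmander type estimate applies, and then to control the supremum over $v$ in the compact set $K$. Concretely, write $m^v_{\pm,j,\alpha}(k) = (ik)^\alpha \phi^v_{\pm,j}(k) = (ik)^\alpha (\lambda_\pm(k)+ik\cdot\hv)^{-j}$. Since $\lambda_\pm(k)=\pm i\nu_*(|k|)$ and $\nu_*(|k|) \ge c_0\sqrt{1+|k|^2} > |k| \ge |k\cdot\hv|$ by Theorem \ref{theo-LangmuirB} (using $|\hv|<1$), the denominator never vanishes: $|\lambda_\pm(k)+ik\cdot\hv| = |\nu_*(|k|) \pm k\cdot\hv| \gtrsim \nu_*(|k|) - |k| \gtrsim \langle k\rangle$, uniformly in $k\in\RR^3$ and $v\in K$ (in fact uniformly for all $v\in\RR^3$). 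Hence $|m^v_{\pm,j,\alpha}(k)| \lesssim \langle k\rangle^{|\alpha|-j} \lesssim 1$ whenever $j > |\alpha|$.

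First I would establish the symbol-type bounds $|\partial_k^\beta m^v_{\pm,j,\alpha}(k)| \lesssim_\beta \langle k\rangle^{|\alpha|-j-|\beta|}$ for all multi-indices $\beta$ with $|\beta|\le 3$ (or up to $\lfloor 3/2\rfloor + 1$, whatever the Hörmander multiplier theorem on $\RR^3$ requires), uniformly in $v\in K$. This follows from the product and quotient rules together with the derivative bounds on $\nu_*$ from \eqref{KG-behave1}–\eqref{KG-behave3} and their higher-order analogues (which follow from the smoothness of $x_*(|k|)$ established in the proof of Theorem \ref{theo-LangmuirB}), noting that each $k$-derivative of the factor $(\lambda_\pm(k)+ik\cdot\hv)^{-1}$ gains a power $\langle k\rangle^{-1}$ since $\partial_k(\nu_*\pm k\cdot\hv)$ is bounded and the denominator is $\gtrsim\langle k\rangle$. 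Since $j>|\alpha|$ gives $|\alpha|-j-|\beta| \le -1-|\beta| \le 0$, the symbol $m^v_{\pm,j,\alpha}$ satisfies the Mikhlin–Hörmander condition uniformly in $v\in K$, so each operator $\phi^v_{\pm,j}(i\partial_x)\partial_x^\alpha$ is bounded on $L^p_x$, $1<p<\infty$, with a bound independent of $v$; the endpoint cases $p=1,\infty$ follow from the stronger statement that the kernel $\check m^v_{\pm,j,\alpha}$ lies in $L^1_x$ with uniformly bounded $L^1$ norm, which one gets from the same symbol bounds by the standard stationary-phase/integration-by-parts estimate $|\check m^v_{\pm,j,\alpha}(x)| \lesssim \langle x\rangle^{-4}$ after using $|\beta|$ up to $4$ derivatives.

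The remaining point — and the one requiring a little care — is the passage from "bounded uniformly in $v$" to "the $\sup_{v\in K}$ is bounded". Here I would use that $K$ is compact and that $v\mapsto m^v_{\pm,j,\alpha}(k)$ is smooth with all the above bounds locally uniform; thus $\sup_{v\in K}|\phi^v_{\pm,j}(i\partial_x)\partial_x^\alpha f(x)| \le \sum_{v\in\mathcal N_\eta} |\phi^v_{\pm,j}(i\partial_x)\partial_x^\alpha f(x)| + \eta\,(\dots)$ for a finite $\eta$-net $\mathcal N_\eta$ of $K$, or more cleanly, one dominates the pointwise supremum by the convolution of $|f|$ against $\sup_{v\in K}|\check m^v_{\pm,j,\alpha}(x-\cdot)|$, and the latter supremum is itself in $L^1_x$ with norm $\lesssim \sup_{v\in K}\|\check m^v_{\pm,j,\alpha}\|_{L^1_x} < \infty$ because the pointwise bound $|\check m^v_{\pm,j,\alpha}(x)|\lesssim\langle x\rangle^{-4}$ holds with constant uniform over $K$. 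Then Young's inequality $\|\,\sup_{v\in K}|\check m^v_{\pm,j,\alpha}|\star_x|f|\,\|_{L^p_x} \lesssim \|f\|_{L^p_x}$ yields \eqref{Lp-convphi} for all $1\le p\le\infty$ at once. The main obstacle is simply organizing the uniform-in-$v$ kernel decay estimate cleanly; once $|\check m^v_{\pm,j,\alpha}(x)|\lesssim\langle x\rangle^{-4}$ is in hand with a constant independent of $v\in K$, the supremum and the $L^p$ bound are immediate.
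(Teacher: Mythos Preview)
Your approach is workable and genuinely different from the paper's, but there is a real error in your lower bound on the denominator. You claim $|\nu_*(|k|)\pm k\cdot\hv| \ge \nu_*(|k|)-|k| \gtrsim \langle k\rangle$, asserting this even holds uniformly for all $v\in\RR^3$. The second inequality is false: from \eqref{eqs-xstar} one has $\nu_*(|k|)^2-|k|^2=\psi(|k|^2/x_*)\in[m_0^2,m_1^2]$, hence
\[
\nu_*(|k|)-|k|=\frac{\nu_*(|k|)^2-|k|^2}{\nu_*(|k|)+|k|}\sim\langle k\rangle^{-1}
\]
for large $|k|$. The repair is to actually use the compactness of $K$: for $v\in K$ one has $|\hv|\le c_K<1$, so $|\nu_*(|k|)\pm k\cdot\hv|\ge\nu_*(|k|)-c_K|k|\ge(1-c_K)\nu_*(|k|)\gtrsim_K\langle k\rangle$, using only $\nu_*(|k|)>|k|$ from Theorem~\ref{theo-LangmuirB}. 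Without this, the symbol bound degenerates and everything downstream collapses; in particular the parenthetical ``uniformly for all $v\in\RR^3$'' is simply wrong. A second, minor issue: the pointwise bound $|\check m^v_{\pm,j,\alpha}(x)|\lesssim\langle x\rangle^{-4}$ fails near $x=0$ when $j-|\alpha|\le 3$, since the symbol is not integrable and the kernel is singular at the origin. What is correct (and sufficient for your Young's inequality step) is that $\sup_{v\in K}|\check m^v_{\pm,j,\alpha}|\in L^1_x$, obtained by a dyadic argument as in Lemma~\ref{lem:fouriermult}.

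Once repaired, your route is valid but differs from the paper's. The paper does not bound the $v$-dependent symbol directly; it expands $(\lambda_\pm(k)+ik\cdot\hv)^{-1}$ as a geometric series $\sum_{n\ge0}a_{\pm,n}(k)::\hv^{\otimes n}$ in $k\cdot\hv/\nu_*(|k|)$, separating the $k$- and $v$-dependence completely. Each $a_{\pm,n}$ is a $v$-independent multiplier handled by Lemma~\ref{lem:fouriermult}, and the supremum over $v\in K$ is controlled by the geometric factor $|\hv|^n\le c_K^n$. This expansion is not merely a local proof device: it is reused verbatim in Section~\ref{sec-decayosc} (see \eqref{seriesn} and the treatment of $S^0_{\pm,j}$, $S^\mu_{\pm,j}$) to write nonlinear sources as absolutely convergent sums of $v$-independent multiplier terms. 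Your kernel argument is cleaner as a stand-alone proof of the lemma, but does not produce that structural decomposition.
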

\begin{proof} Recall from Theorem \ref{theo-LangmuirB} that $\lambda_\pm = \pm i \nu_*(|k|)$ with $\nu_*(|k|) > |k|$. Therefore, $\lambda_\pm(k) + ik\cdot \hv$ never vanishes, since $|\hv|<1$, and so $\phi_{\pm,j}^v (k)$ are well-defined. In particular, we may write 
\begin{equation}\label{expand-phi1} \frac{1}{\lambda_\pm(k) + ik \cdot \hv} = \mp i \nu_*(|k|)^{-1} \sum_{n\ge 0} (\pm 1)^n\nu_*(|k|)^{-n} (k\cdot \hat v)^n = \sum_{n\ge 0}  a_{\pm,n}(k) :: \hv^{\otimes n}\end{equation}
for $a_{\pm,n}(k) =  \mp i \nu_*(|k|)^{-1} (\pm 1)^n\nu_*(|k|)^{-n} k^{\otimes n}$, which are smooth Fourier multipliers and satisfy $|a_{\pm,n}(k)|\le \nu_*(|k|)^{-1}$, uniformly for all $n\ge 0$, since $\nu_*(|k|) >|k|$. In the above the notation $k^{\otimes n}::\hv^{\otimes n} = (k\cdot \hv)^n$ is used only for sake of presentation. The series is therefore absolutely converging for bounded $v$. The lemma thus follows from the results obtained in Section \ref{sec-FM}. 
\end{proof}

\begin{corollary}\label{cor-Ejosc}
Under the bootstrap assumptions on $E$, with $N_0 \ge 4$, there hold 
\begin{equation}\label{decay-Eoscj}
\begin{aligned}
\| \sup_{v\in K} \partial_x^\alpha \partial_v^\beta E^{osc,j}_\pm\|_{L^p_x} &\lesssim \epsilon \langle t\rangle^{-3(1/2-1/p)}, \qquad j \ge |\alpha|,
\end{aligned}
\end{equation}
for $p\in [2,\infty]$. 
\end{corollary}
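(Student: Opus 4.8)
The plan is to absorb every derivative into the Fourier symbol and then quote Lemma~\ref{lem-PEosc} together with the bootstrap decay of $E^{osc}_\pm$ \emph{itself}. The starting point is that $E^{osc}_\pm(t,x)$ does not depend on $v$, so in $E^{osc,j}_\pm=\phi^v_{\pm,j}(i\partial_x)E^{osc}_\pm$ the operator $\partial_v^\beta$ falls only on the symbol, while $\partial_x^\alpha$ commutes with the Fourier multiplier; hence
\[
\partial_x^\alpha\partial_v^\beta E^{osc,j}_\pm \;=\; m^v_{\alpha,\beta}(i\partial_x)\,E^{osc}_\pm ,\qquad
m^v_{\alpha,\beta}(k):=(ik)^\alpha\,\partial_v^\beta\!\left[(\lambda_\pm(k)+ik\cdot\hv)^{-j}\right].
\]
The whole matter is therefore to show that $\sup_{v\in K}m^v_{\alpha,\beta}(i\partial_x)$ acts boundedly on $L^p_x$ for $p\in[2,\infty]$; the decay $\langle t\rangle^{-3(1/2-1/p)}$ is then supplied directly by the bootstrap bound on $E^{osc}_\pm$ in \eqref{bootstrap-decayS}, with no derivative of $E^{osc}_\pm$ entering. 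This is precisely why the hypothesis is $j\ge|\alpha|$: the $j$ ``denominators'' in $\phi^v_{\pm,j}$ exactly compensate the $|\alpha|$ factors of $k$, while $v$-differentiation is net neutral.

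Next I would analyze the symbol. Since $\lambda_\pm(k)=\pm i\nu_*(|k|)$ carries no $v$-dependence and only $ik\cdot\hv$ does, the Leibniz rule writes $\partial_v^\beta[(\lambda_\pm+ik\cdot\hv)^{-j}]$ as a finite sum of terms $c_v(k)\,P(k)\,(\lambda_\pm(k)+ik\cdot\hv)^{-j-m}$ with $m\le|\beta|$, where $P$ is homogeneous of degree $m$ and $c_v(k)$ is built from the bounded functions $\partial_v\hv,\dots,\partial_v^{|\beta|}\hv$ (bounded on the compact $K$ together with all their $k$-derivatives). Because $\nu_*(|k|)\gtrsim\langle k\rangle>|k|$ by Theorem~\ref{theo-LangmuirB}, each such factor is $\lesssim\langle k\rangle^{-j}$, so after multiplying by $(ik)^\alpha$ with $j\ge|\alpha|$ one gets $\sup_{v\in K}|m^v_{\alpha,\beta}(k)|\lesssim1$ and, using the derivative bounds \eqref{KG-behave1}--\eqref{KG-behave3} and \eqref{unibd-cH} for $\nu_*$, $\sup_{v\in K}|\partial_k^\gamma m^v_{\alpha,\beta}(k)|\lesssim\langle k\rangle^{-|\gamma|}$ for $|\gamma|\le N_0-1$; in particular $m^v_{\alpha,\beta}$ is of the admissible type to which the Fourier-multiplier estimates of Section~\ref{sec-FM} apply, uniformly for $v\in K$. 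The cleanest way to make the uniformity in $v$ transparent is to expand as in \eqref{expand-phi1}, writing $m^v_{\alpha,\beta}$ as an absolutely and uniformly convergent series of such symbols, the $n$-th term carrying a geometric factor $(\sup_{v\in K}|\hv|)^n<1$ that swallows the polynomial-in-$n$ growth produced by $\partial_v^\beta$.

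Finally, for $j>|\alpha|$ the symbol $m^v_{\alpha,\beta}$ is of order $\le-1$ and the conclusion is immediate from the $L^p$ Fourier-multiplier estimates of Section~\ref{sec-FM} (the ones behind Lemma~\ref{lem-PEosc}, which already include $p=\infty$) combined with \eqref{bootstrap-decayS}. The main obstacle is the borderline case $j=|\alpha|$, where the combined symbol $k^\alpha(\lambda_\pm+ik\cdot\hv)^{-|\alpha|}$ no longer vanishes at infinity but only tends to a bounded function of $\hat k$ and $\hv$: for $p=2$ this is handled by Plancherel, for $p\in(2,\infty)$ by the Mikhlin bound just established, and for the endpoint $p=\infty$ — where a bounded Mikhlin symbol need not be $L^\infty$-bounded — I would not use a multiplier theorem at all, but write $m^v_{\alpha,\beta}(i\partial_x)E^{osc}_\pm(t,x)=\int e^{ik\cdot x+\lambda_\pm(k)t}m^v_{\alpha,\beta}(k)\FB_\pm(t,k)\,dk$ and rerun the stationary-phase argument behind the Klein--Gordon dispersive estimate of Proposition~\ref{prop-Greenphysical}: the phase $\nu_*$ and its convexity bounds \eqref{KG-behave2}--\eqref{KG-behave3} are untouched, and the amplitude $m^v_{\alpha,\beta}(k)a_\pm(k)$ is still $O(\langle k\rangle^{-1})$ with enough symbol-type derivative bounds (this is where $N_0\ge4$ enters, to furnish the roughly four derivatives a three-dimensional dispersive estimate costs), yielding $\|\sup_{v\in K}\partial_x^\alpha\partial_v^\beta E^{osc,j}_\pm(t)\|_{L^\infty_x}\lesssim\langle t\rangle^{-3/2}\|E^{osc}_\pm(t)\|_{H^s_x}\lesssim\epsilon\langle t\rangle^{-3/2}$ for a fixed $s\le N_0$ via \eqref{bootstrap-Hs}. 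Interpolating between $p=2$ and $p=\infty$ then covers the full range, and since the argument only ever uses the loss-free decay of $E^{osc}_\pm$, no $\langle t\rangle^{\delta_\alpha}$-type growth appears.
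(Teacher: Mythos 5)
Your treatment of the $j>|\alpha|$ case matches the paper: expand $\phi^v_{\pm,j}$ as the geometric series in $\hv$ from \eqref{expand-phi1}, note that $\partial_v^\beta$ preserves the balance between powers of $k$ in the numerator and powers of $\omega_\pm$ in the denominator, and apply the $L^p$-boundedness of order-$(-1)$ multipliers from Section~\ref{sec-FM} together with the bootstrap decay of $E^{osc}_\pm$. That part is fine.

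The gap is in your borderline case $j=|\alpha|$. You correctly observe that there the symbol $m^v_{\alpha,\beta}$ is only $O(1)$ at infinity, so neither Lemma~\ref{lem-PEosc} nor Mikhlin gives the $L^\infty$ bound, and you propose to ``rerun the stationary-phase argument behind Proposition~\ref{prop-Greenphysical}'' on the representation $\int e^{ik\cdot x+\lambda_\pm(k)t}m^v_{\alpha,\beta}(k)\FB_\pm(t,k)\,dk$ to obtain $\lesssim\langle t\rangle^{-3/2}\|E^{osc}_\pm(t)\|_{H^s_x}$. This does not go through: the Klein--Gordon dispersive estimate in Proposition~\ref{prop-Greenphysical} applies to $G^{osc}_\pm(t)\star_x f$ with a time-independent amplitude, whereas here $\FB_\pm(t,k)$ carries a full time integral (see \eqref{def-Bpm}). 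Stationary phase in $k$ requires control of $k$-derivatives of the amplitude, and $\partial_k$ falling on $e^{-\lambda_\pm(k)s}$ inside $\FB_\pm$ produces a factor of $s$; the resulting estimate degrades to $\int_0^t \langle t-s\rangle^{-3/2}\|F(s)\|_{\cdot}\,ds$-type bounds, which are not $O(t^{-3/2})$ without the extra integration-by-parts-in-time structure the paper develops in Section~\ref{sec-decayosc}. In short, the $L^\infty$ decay of $E^{osc}_\pm$ (and a fortiori of a bounded multiplier applied to it) is not a soft dispersive fact one can re-derive on the fly here --- it is exactly what the bootstrap hands you, and you should use it rather than reprove it.

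The paper's fix for the borderline is deliberately one line and much more robust: since the bootstrap assumptions \eqref{bootstrap-decayS} bound $\partial_x E^{osc}_\pm$ at the \emph{same} rate as $E^{osc}_\pm$, one simply peels off one $\partial_x$, writes $\partial_x^\alpha E^{osc,j}_\pm=\phi^v_{\pm,j}(i\partial_x)\partial_x^{\alpha'}\bigl(\partial_x E^{osc}_\pm\bigr)$ with $|\alpha'|=|\alpha|-1$, and applies Lemma~\ref{lem-PEosc} with $j>|\alpha'|$, which is now strict. This is precisely why the bootstrap records decay of both $E$ and $\partial_x E$, and is what the one-sentence proof in the paper is pointing at. You should replace the stationary-phase detour by this derivative-shifting step.
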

\begin{proof}
The corollary follows from the definition \eqref{def-Eoscj},  \eqref{Lp-convphi}, and the bootstrap assumptions on the decay of $E$ and $\partial_xE$ from \eqref{bootstrap-decayS}. 
\end{proof}

\subsection{Velocity description}

We first study oscillations in the velocity. 

\begin{proposition}\label{prop-charV}
Let $X_{s,t}(x,v), V_{s,t}(x,v)$ be the nonlinear characteristic solving \eqref{ode-char}. Then, there holds
 \begin{equation}\label{decomp-Vst}
 \begin{aligned}
V_{s,t}(x,v) &=
v -V^{osc}_{t,t}(x,v) + V^{osc}_{s,t}(x,v)
+ V^{tr}_{s,t}(x,v)
 \end{aligned}
 \end{equation}
where 
$$
\begin{aligned}
  V^{osc}_{s,t}(x,v) &= \sum_\pm E^{osc,1}_{\pm}(s,X_{s,t}(x,v),V_{s,t}(x,v)) 
 \\
V^{tr}_{s,t}(x,v)& = -  \int_s^t  
Q^{tr}(\tau,X_{\tau,t}(x,v), V_{\tau,t}(x,v)) \; d\tau
 \end{aligned}$$
in which $Q^{tr}(t,x,v)$ is defined by 
\begin{equation}\label{def-Qtr}
\begin{aligned} Q^{tr} &=\sum_\pm [a_\pm(i\partial_x) \phi_{\pm,1}\star_x F]
-  \sum_\pm \nabla_v \hv  E\cdot  \nabla_x E^{osc,2}_\pm + E^r.
\end{aligned} 
\end{equation}
In addition, under the bootstrap assumptions on $E$ and $F$, there hold 
\begin{equation}\label{bounds-Vst}
\begin{aligned} 
\|V^{osc}_{s,t}\|_{L^\infty_{x,v}} \lesssim \epsilon \langle s\rangle^{-3/2} ,
\qquad   \|V^{tr}_{s,t}\|_{L^\infty_{x,v}} \lesssim \epsilon \langle s\rangle^{-2} , 
\qquad   \|\partial_sV^{tr}_{s,t}\|_{L^\infty_{x,v}} \lesssim \epsilon \langle s\rangle^{-3} , 
 \end{aligned}
\end{equation}
and 
\begin{equation}\label{decay-Vst}
\|V_{s,t}-v+V^{osc}_{t,t}\|_{L^\infty_{x,v}} \lesssim \epsilon \langle s\rangle^{-3/2} .
\end{equation}
 \end{proposition}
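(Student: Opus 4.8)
The plan is to read off \eqref{decomp-Vst} from the integrated characteristic equation $V_{s,t}(x,v)=v-\int_s^t E(\tau,X_{\tau,t})\,d\tau$ (see \eqref{ode-char2}), by splitting $E=\sum_\pm E^{osc}_\pm+E^r$ and integrating the oscillatory part \emph{along the flow}. The algebraic engine is the trivial identity $(\lambda_\pm(k)+ik\cdot\hv)\,\phi^v_{\pm,1}(k)=1$, with $\phi^v_{\pm,1}$ as in \eqref{def-phipmj}, which is legitimate since $\nu_*(|k|)>|k|\ge|k\cdot\hv|$ by Theorem \ref{theo-LangmuirB}. Together with \eqref{defiFB}--\eqref{dtBF} (which give $\partial_\tau\FE^{osc}_\pm(\tau,k)=\lambda_\pm(k)\FE^{osc}_\pm(\tau,k)+a_\pm(k)\FF(\tau,k)$), this yields, for each fixed $v$, the pointwise identity
\begin{equation*}
\partial_\tau E^{osc,1}_\pm(\tau,x,v)+\hv\cdot\nabla_x E^{osc,1}_\pm(\tau,x,v)=E^{osc}_\pm(\tau,x)+\big[a_\pm(i\partial_x)\phi^v_{\pm,1}(i\partial_x)F(\tau)\big](x),
\end{equation*}
with $E^{osc,1}_\pm=\phi^v_{\pm,1}(i\partial_x)E^{osc}_\pm$ from \eqref{def-Eoscj}.

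Next I would differentiate $\mathcal V(\tau):=\sum_\pm E^{osc,1}_\pm(\tau,X_{\tau,t}(x,v),V_{\tau,t}(x,v))$ in $\tau$ along \eqref{ode-char}. Since $\dot X_{\tau,t}=\hV_{\tau,t}$ and $V_{\tau,t}$ sits in the last slot of $E^{osc,1}_\pm$, the drift term $\hV_{\tau,t}\cdot\nabla_x E^{osc,1}_\pm$ is \emph{exactly} the ``$\hv\cdot\nabla_x$'' of the identity above evaluated at $v=V_{\tau,t}$ --- this is precisely why $V^{osc}_{s,t}$ in \eqref{decomp-Vst} is defined with $V_{s,t}$, not $v$. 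The only other term is $\dot V_{\tau,t}\cdot\nabla_v E^{osc,1}_\pm=E(\tau,X_{\tau,t})\cdot\nabla_v E^{osc,1}_\pm$, and differentiating the symbol gives $\nabla_v\phi^v_{\pm,1}(k)=-(ik\cdot\nabla_v\hv)\phi^v_{\pm,2}(k)$, i.e.\ $\nabla_v E^{osc,1}_\pm=-(\nabla_v\hv)\cdot\nabla_x E^{osc,2}_\pm$. Hence $\frac{d}{d\tau}\mathcal V(\tau)=\sum_\pm E^{osc}_\pm(\tau,X_{\tau,t})+\big(Q^{tr}-E^r\big)(\tau,X_{\tau,t},V_{\tau,t})$ with $Q^{tr}$ as in \eqref{def-Qtr} (the two oscillatory pieces of $Q^{tr}$ being, respectively, the $a_\pm\phi^v_{\pm,1}\star_xF$ term and the $\nabla_v\hv\,E\cdot\nabla_x E^{osc,2}_\pm$ term). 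Integrating from $s$ to $t$ and using $X_{t,t}=x$, $V_{t,t}=v$ gives $\int_s^t\sum_\pm E^{osc}_\pm(\tau,X_{\tau,t})\,d\tau=V^{osc}_{t,t}-V^{osc}_{s,t}-\int_s^t(Q^{tr}-E^r)(\tau,X_{\tau,t},V_{\tau,t})\,d\tau$; substituting this back into $V_{s,t}=v-\int_s^t\sum_\pm E^{osc}_\pm\,d\tau-\int_s^t E^r\,d\tau$ produces \eqref{decomp-Vst} with $V^{tr}_{s,t}=-\int_s^t Q^{tr}(\tau,X_{\tau,t},V_{\tau,t})\,d\tau$.

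For the bounds \eqref{bounds-Vst}--\eqref{decay-Vst} I would first note that, since the initial datum and $\mu$ are compactly supported in $v$ and $\|V_{s,t}-v\|_{L^\infty_{x,v}}\les\eps$ by \eqref{quick-bdV}, all relevant velocities lie in a fixed compact $K\Subset\RR^3$; then Corollary \ref{cor-Ejosc} with $j=1$, $\alpha=\beta=0$, $p=\infty$ gives $\|V^{osc}_{s,t}\|_{L^\infty_{x,v}}\les\eps\langle s\rangle^{-3/2}$. For $V^{tr}_{s,t}$ it suffices that $\|Q^{tr}(\tau)\|_{L^\infty_{x,v}}\les\eps\langle\tau\rangle^{-3}$, checked termwise: the combined multiplier $a_\pm(k)\phi^v_{\pm,1}(k)$ is $\cO(\langle k\rangle^{-2})$ uniformly over $v\in K$ (because $\nu_*(|k|)-|k\cdot\hv|\gtrsim\langle k\rangle$ on $K$, by Theorem \ref{theo-LangmuirB}), so by Lemma \ref{lem-PEosc} and \eqref{bootstrap-decayS}, $\|a_\pm(i\partial_x)\phi_{\pm,1}\star_xF(\tau)\|_{L^\infty}\les\|\partial_xS(\tau)\|_{L^\infty}\les\eps\langle\tau\rangle^{-3}$; the term $\nabla_v\hv\,E\cdot\nabla_x E^{osc,2}_\pm$ is controlled by $\|E(\tau)\|_{L^\infty}\|\nabla_x E^{osc,2}_\pm(\tau)\|_{L^\infty}\les\eps^2\langle\tau\rangle^{-3}$ via Corollary \ref{cor-Ejosc} with $j=2$, $|\alpha|=1$; and $\|E^r(\tau)\|_{L^\infty}\les\eps\langle\tau\rangle^{-3}$ by \eqref{bootstrap-decayS}. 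Integrating $\langle\tau\rangle^{-3}$ over $[s,t]$ gives $\|V^{tr}_{s,t}\|_{L^\infty_{x,v}}\les\eps\langle s\rangle^{-2}$, while $\partial_sV^{tr}_{s,t}=Q^{tr}(s,X_{s,t},V_{s,t})$ yields the $\langle s\rangle^{-3}$ bound; finally \eqref{decay-Vst} is immediate from \eqref{decomp-Vst}, since $V_{s,t}-v+V^{osc}_{t,t}=V^{osc}_{s,t}+V^{tr}_{s,t}=\cO(\eps\langle s\rangle^{-3/2})$.

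The main obstacle is the differential identity for $\frac{d}{d\tau}\mathcal V(\tau)$. The point that needs care is that the nonlinear drift $\hV_{\tau,t}$ is absorbed \emph{exactly} by placing $V_{\tau,t}$ in the $v$-slot of $\phi^v_{\pm,1}$, so that the ``free-transport'' cancellation $(\lambda_\pm+ik\cdot\hv)\phi^v_{\pm,1}=1$ holds verbatim along the genuine characteristics, and that the two source terms it generates --- one from the slowly varying amplitude $\FB_\pm$ through \eqref{dtBF}, one from the $v$-dependence of the Fourier multiplier --- are precisely the lower-order pieces of $Q^{tr}$. Once this identity is in place, the rest is a routine insertion of the bootstrap decay estimates together with the uniform $L^p$-boundedness of the multipliers $\phi^v_{\pm,j}$ (Lemma \ref{lem-PEosc}, Corollary \ref{cor-Ejosc}).
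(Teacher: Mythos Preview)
Your proposal is correct and follows essentially the same approach as the paper. The paper carries out the integration by parts directly in Fourier variables---writing $\int_s^t e^{\lambda_\pm(k)\tau+ik\cdot X_{\tau,t}}\FB_\pm(\tau,k)\,d\tau$ and using $\frac{d}{d\tau}e^{\lambda_\pm(k)\tau+ik\cdot X_{\tau,t}}=(\lambda_\pm(k)+ik\cdot\hV_{\tau,t})e^{\cdots}$---whereas you recast the same computation in physical space as the transport identity for $E^{osc,1}_\pm$ followed by chain rule along the flow; the two are the same calculation in different coordinates, and the bounds on $Q^{tr}$ are established identically in both.
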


\begin{proof}
Recall that $E = \sum_\pm E^{osc}_\pm$, and therefore, from \eqref{ode-char2}, 
$$
\begin{aligned}V_{s,t}(x,v) =v   - \sum_\pm \int_s^t E^{osc}_\pm(\tau, X_{\tau,t}(x,v)) \; d\tau - \int_s^t E^r(\tau, X_{\tau,t}(x,v)) \; d\tau .
\end{aligned}$$
It suffices to study the integral of $E^{osc}_\pm$ along the characteristic. From \eqref{defiFB}, we write in Fourier space, $
\FE^{osc}_\pm(t,k) = e^{\lambda_{\pm}(k) t} \FB_\pm(t,k) .
$ Therefore, integrating by parts in $\tau$ and recalling that $\partial_\tau X_{\tau,t} = \hV_{\tau,t}$, we compute 
\begin{equation}\label{int-Eosc}
\begin{aligned}
 \int_s^t E^{osc}_{\pm}(\tau, X_{\tau,t}) \; d\tau
& = \int_s^t \int e^{\lambda_\pm(k) \tau + ik\cdot X_{\tau,t}}  \FB(\tau,k) \; dkd\tau 
\\
& = \int  \int_s^t\frac{d}{d\tau} (e^{\lambda_\pm(k) \tau + ik\cdot X_{\tau,t}})  \frac{\FB(\tau,k)}{\lambda_\pm(k) + ik \cdot \hV_{\tau,t}} \; d\tau dk 
\\
& = 
\int \Big[\frac{e^{\lambda_\pm(k) t + ik\cdot x}  \FB(t,k)}{\lambda_\pm(k) + ik \cdot \hv} -  \frac{e^{\lambda_\pm(k) s + ik\cdot X_{s,t}}\FB(s,k)}{\lambda_\pm(k) + ik \cdot \hV_{s,t}} \Big] \; dk\\
& \quad - \int_s^t \int e^{\lambda_\pm(k) \tau + ik\cdot X_{\tau,t}}  \Big[\frac{ \partial_\tau \FB(\tau,k)}{\lambda_\pm(k) + ik \cdot \hV_{\tau,t}} - \frac{ \FB(\tau,k) ik \cdot \partial_\tau \hV_{\tau,t}}{(\lambda_\pm(k) + ik \cdot \hV_{\tau,t})^2}\Big] \; dkd\tau .
\end{aligned}\end{equation}
Defining $\phi_{\pm,j}(x,v)$ as in \eqref{def-phipmj} and recalling from \eqref{defiFB} and \eqref{dtBF} that 
\begin{equation}\label{recall-FB}
 e^{\lambda_{\pm}(k) t} \FB_\pm(t,k) = \FE^{osc}_\pm(t,k) 
, \qquad e^{\lambda_\pm(k)t} \partial_t \FB_\pm(t,k)  = a_\pm(k) \FF(t,k), \end{equation}
and $\partial_\tau \hV_{\tau,t} =(\nabla_v \hv E)(\tau, X_{\tau,t},V_{\tau,t}),$
 we obtain \eqref{decomp-Vst}. It remain to prove the stated estimates. Indeed, 
the estimates on $V^{osc}_{s,t}$ follow from \eqref{decay-Eoscj} and the bootstrap assumption on $E^{osc}_\pm$. On the other hand, recalling that $F = \nabla_x S$ and using Lemma \ref{lem-PEosc}, we bound 
\begin{equation}\label{bdQtr}
\begin{aligned} 
 \| Q^{tr}(t)\|_{L^\infty_{x,v}} 
 &\lesssim \sup_v \sum_\pm \| a_\pm(i\partial_x) \phi_{\pm,1} \star_x \nabla_x S\|_{L^\infty_x}
+  \sum_\pm \|E\|_{L^\infty_x} \|\nabla_x E^{osc,2}_\pm \|_{L^\infty_{x,v}} + \| E^r (t)\|_{L^\infty_x}
\\
&\lesssim \|\nabla_x S(t)\|_{L^\infty_x}
+  \|E(t)\|^2_{L^\infty_x}  + \| E^r (t)\|_{L^\infty_x}
\\
&\lesssim \epsilon \langle t\rangle^{-3}
,\end{aligned} 
\end{equation}
upon using the bootstrap assumptions on $E$ and $S$. The estimates on $V^{tr}_{s,t}$ follow directly from that of $Q^{tr}(t)$. 
\end{proof}


\subsection{Characteristic description}


In this section, we study oscillations in the characteristic $X_{s,t}(x,v)$. In view of \eqref{ode-char2}, 
we write 
\begin{equation}\label{straightX}
X_{s,t}(x,v) = x - (t-s) \hPsi_{s,t}(x,v)
\end{equation}
where $\hPsi_{s,t}(x,v)$ is the velocity average defined by 
\begin{equation}\label{def-hatPsi}
\hPsi_{s,t} (x,v) = \frac{1}{t-s}\int_s^t \hV_{\tau,t}(x,v) \; d\tau
\end{equation}
in which we recall $\hv  = v/\sqrt{1+|v|^2}$. Note that $|\hPsi_{s,t} (x,v)|<1 $ and so $\Psi_{s,t} = \hPsi_{s,t}/\sqrt{1-|\hPsi_{s,t}|^2}$ is indeed well-defined. 
We first establish the following proposition which gives a precise description of oscillations in $\hPsi_{s,t}(x,v)$. 

\begin{proposition}\label{prop-charPsi} 
Let $X_{s,t}(x,v), V_{s,t}(x,v)$ be the nonlinear characteristic solving \eqref{ode-char}, and let $\hPsi_{s,t}(x,v)$ be defined as in \eqref{def-hatPsi}. Then, 
there hold 
\begin{equation}\label{decomp-Psist}
 \begin{aligned}
\Psi_{s,t}(x,v) &= v -  V^{osc}_{t,t}(x,v)+\Psi^{osc}_{s,t}(x,v)  + \Psi^{tr}_{s,t}(x,v) + \Psi^{R}_{s,t}(x,v)
\\
\hPsi_{s,t}(x,v) &= \hv - \hV^{osc}_{t,t}(x,v) + \hPsi^{osc}_{s,t}(x,v) + \hPsi^{tr}_{s,t}(x,v) + \Psi^{Q}_{s,t}(x,v)
 \end{aligned}
 \end{equation}
where  
$$
\begin{aligned}
\Psi^{osc}_{s,t}(x,v) &= \frac{1}{t-s}\sum_\pm \Big(E^{osc,2}_{\pm}(t,x,v) - E^{osc,2}_{\pm} (s,X_{s,t}(x,v),V_{s,t}(x,v)) \Big)
 \\
\Psi^{tr}_{s,t}(x,v)& = - \frac{1}{t-s} \int_s^t  
[ (\tau-s)  
Q^{tr} + Q^{tr,1}](\tau,X_{\tau,t}(x,v), V_{\tau,t}(x,v)) \; d\tau 
 \end{aligned}$$
and 
$$
\begin{aligned}
\Psi^Q_{s,t}(x,v) &=  \frac{1}{t-s}\int_s^t V_{\tau,t}^Q(x,v)\; d\tau, \qquad 
\\
\Psi^{R}_{s,t}(x,v)  &= \nabla_v \hv \Psi^Q_{s,t}(x,v)  + \cO(\langle v\rangle^5|\hPsi_{s,t}- \hv|^2) 
\end{aligned}$$
with $|V^Q_{s,t} (x,v)|\lesssim \langle v\rangle^{-2}|V_{s,t}-v|^2$, in which $Q^{tr}(t,x,v)$ is defined as in \eqref{def-Qtr} and 
\begin{equation}\label{def-Qtr1}
\begin{aligned} Q^{tr,1} &=\sum_\pm [a_\pm(i\partial_x) F\star_x \phi_{\pm,2}]
- 2 \sum_\pm \nabla_v \hv  E\cdot   \nabla_x E^{osc,3}_\pm .
\end{aligned} 
\end{equation}
Here in \eqref{decomp-Psist}, we abuse the notation to define $\hV^{osc}_{t,t} (x,v)=  \nabla_v \hv V_{t,t}^{osc}(x,v)$  and $\hPsi^{a}_{s,t} (x,v)=  \nabla_v \hv \Psi_{s,t}^a(x,v)$ for $a \in \{osc,tr\}$. 
In particular, there hold  
\begin{equation}\label{bounds-Wst}
\begin{aligned} \|  \Psi^{osc}_{s,t}\|_{L^\infty_{x,v}} \lesssim \epsilon \langle s\rangle^{-3/2} (t-s)^{-1}, 
\quad  \| \Psi^{tr}_{s,t}\|_{L^\infty_{x,v}} &\lesssim \epsilon \langle s\rangle^{-1} \langle t\rangle^{-1} , 
 \\
  \| \Psi^{Q}_{s,t}\|_{L^\infty_{x,v}} + \| \langle v\rangle^{-3}\Psi^{R}_{s,t}\|_{L^\infty_{x,v}} + \|\partial_s \Psi^{tr}_{s,t}\|_{L^\infty_{x,v}} &\lesssim \epsilon  \langle s\rangle^{-2}\langle t\rangle^{-1},
 \end{aligned}
\end{equation}
and 
\begin{equation}\label{decay-Xst}
\begin{aligned}
\| \hPsi_{s,t}-\hv + \hV^{osc}_{t,t}\|_{L^\infty_{x,v}} & \lesssim  \epsilon \langle s\rangle^{-1}(t-s)^{-1}.
\\
\| X_{s,t}-x + (t-s)(\hv - \hV^{osc}_{t,t})\|_{L^\infty_{x,v}} &\lesssim \epsilon \langle s\rangle^{-1}.
\end{aligned}
\end{equation}
\end{proposition}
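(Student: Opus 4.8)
The plan is to reduce everything to the velocity average $\hPsi_{s,t}(x,v)=\frac1{t-s}\int_s^t\hV_{\tau,t}(x,v)\,d\tau$ and then recover $X_{s,t}=x-(t-s)\hPsi_{s,t}$ and $\Psi_{s,t}=\hPsi_{s,t}/\sqrt{1-|\hPsi_{s,t}|^2}$ by algebra. First I would Taylor expand the relativistic map at $v$, $\hV_{\tau,t}=\hv+\nabla_v\hv\,(V_{\tau,t}-v)+V^Q_{\tau,t}$ with $|V^Q_{\tau,t}|\lesssim\langle v\rangle^{-2}|V_{\tau,t}-v|^2$, and substitute the velocity decomposition $V_{\tau,t}-v=-V^{osc}_{t,t}+V^{osc}_{\tau,t}+V^{tr}_{\tau,t}$ from Proposition~\ref{prop-charV}. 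Averaging over $\tau\in[s,t]$: the $\tau$-independent contribution gives $\hv-\hV^{osc}_{t,t}$ (with $\hV^{osc}_{t,t}:=\nabla_v\hv\,V^{osc}_{t,t}$); the $V^{tr}$ contribution, upon inserting $V^{tr}_{\tau,t}=-\int_\tau^t Q^{tr}(\sigma,X_{\sigma,t},V_{\sigma,t})\,d\sigma$ and applying Fubini on $\{s\le\tau\le\sigma\le t\}$, produces the term $-\frac1{t-s}\int_s^t(\sigma-s)\,Q^{tr}(\sigma,\cdots)\,d\sigma$; and the quadratic remainder gives $\Psi^Q_{s,t}=\frac1{t-s}\int_s^t V^Q_{\tau,t}\,d\tau$.

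The heart of the argument is the oscillatory contribution $\frac1{t-s}\int_s^t V^{osc}_{\tau,t}\,d\tau=\frac1{t-s}\sum_\pm\int_s^t E^{osc,1}_\pm(\tau,X_{\tau,t},V_{\tau,t})\,d\tau$, which I treat by one further integration by parts in $\tau$, exactly as in the proof of Proposition~\ref{prop-charV}. Writing $e^{\lambda_\pm(k)\tau+ik\cdot X_{\tau,t}}=(\lambda_\pm(k)+ik\cdot\hV_{\tau,t})^{-1}\tfrac{d}{d\tau}e^{\lambda_\pm(k)\tau+ik\cdot X_{\tau,t}}$ and using $\phi^{V_{\tau,t}}_{\pm,1}(k)=(\lambda_\pm(k)+ik\cdot\hV_{\tau,t})^{-1}$, one integration by parts converts $E^{osc,1}_\pm$ into $E^{osc,2}_\pm$ at the endpoints — the boundary term is precisely $\Psi^{osc}_{s,t}$ — while differentiating the amplitude produces two bulk remainders: from $e^{\lambda_\pm(k)\tau}\partial_\tau\FB_\pm(\tau,k)=a_\pm(k)\FF(\tau,k)$ (i.e. \eqref{dtBF}) we get the term $a_\pm(i\partial_x)F\star_x\phi_{\pm,2}$, and from $\partial_\tau\hV_{\tau,t}=(\nabla_v\hv\,E)(\tau,X_{\tau,t},V_{\tau,t})$ together with $\tfrac{d}{d\tau}(\lambda_\pm+ik\cdot\hV_{\tau,t})^{-2}=-2(\lambda_\pm+ik\cdot\hV_{\tau,t})^{-3}\,ik\cdot\partial_\tau\hV_{\tau,t}$ we get $-2\,\nabla_v\hv\,E\cdot\nabla_x E^{osc,3}_\pm$; summing over $\pm$ these assemble into $Q^{tr,1}$, cf. \eqref{def-Qtr1}. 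Here the $v$-dependent multipliers $\phi^{V_{\tau,t}}_{\pm,j}$ are legitimate because $|V_{\tau,t}|$ stays in a fixed compact set — the initial data is compactly supported in $v$ and $\|V_{\tau,t}-v\|_{L^\infty_{x,v}}\lesssim\epsilon$ by \eqref{quick-bdV} — so the series \eqref{expand-phi1} converges uniformly and Lemma~\ref{lem-PEosc}, Corollary~\ref{cor-Ejosc} apply. Collecting the pieces yields the asserted formula for $\hPsi_{s,t}$, with $\hPsi^{osc}_{s,t}:=\nabla_v\hv\,\Psi^{osc}_{s,t}$, $\hPsi^{tr}_{s,t}:=\nabla_v\hv\,\Psi^{tr}_{s,t}$. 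The formula for $\Psi_{s,t}$ then follows by inverting $w\mapsto w/\sqrt{1-|w|^2}$ about $\hv$: its linearization there is $(\nabla_v\hv)^{-1}$, which undoes the $\nabla_v\hv$ in $\hPsi^{osc},\hPsi^{tr}$ and converts $\Psi^Q_{s,t}$ into the leading part of $\Psi^R_{s,t}$ in the sense of the stated $\langle v\rangle$-weights, while the second-order Lagrange remainder contributes the error $\cO(\langle v\rangle^5|\hPsi_{s,t}-\hv|^2)$ recorded in $\Psi^R_{s,t}$.

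For the quantitative bounds \eqref{bounds-Wst}–\eqref{decay-Xst} I would feed the bootstrap estimates of Section~\ref{sec-bootstrap}, Corollary~\ref{cor-Ejosc}, and the decay $\|Q^{tr}(t)\|_{L^\infty_{x,v}}\lesssim\epsilon\langle t\rangle^{-3}$ already established in \eqref{bdQtr} (the same reasoning gives $\|Q^{tr,1}(t)\|_{L^\infty_{x,v}}\lesssim\epsilon\langle t\rangle^{-3}$), into the explicit formulas: $\Psi^{osc}_{s,t}$ is $(t-s)^{-1}$ times a difference of two $E^{osc,2}_\pm$'s, each $\cO(\epsilon\langle\cdot\rangle^{-3/2})$; $\Psi^{tr}_{s,t}$ and $\partial_s\Psi^{tr}_{s,t}$ are controlled by $\frac1{t-s}\int_s^t[(\sigma-s)+1]\langle\sigma\rangle^{-3}\,d\sigma\lesssim\epsilon\langle s\rangle^{-1}\langle t\rangle^{-1}$ after splitting into $s\le t/2$ and $s\ge t/2$; $\Psi^Q_{s,t}$ and the quadratic error in $\Psi^R_{s,t}$ are estimated by the same split, but using the \emph{oscillatory} structure of $V_{\tau,t}-v$ from Proposition~\ref{prop-charV} rather than the crude $\|V_{\tau,t}-v\|\lesssim\epsilon\langle\tau\rangle^{-1/2}$; finally $X_{s,t}-x=-(t-s)\hPsi_{s,t}$ and $\|\hPsi_{s,t}-\hv+\hV^{osc}_{t,t}\|_{L^\infty_{x,v}}\lesssim\epsilon\langle s\rangle^{-1}(t-s)^{-1}$ immediately give \eqref{decay-Xst}. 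The main obstacle I anticipate is twofold: the bookkeeping of the iterated integration by parts — verifying that every amplitude-derivative term lands in exactly the right slot of $Q^{tr,1}$ and that the $v$-calculus of the multipliers $\phi^{V_{\tau,t}}_{\pm,j}$ is uniform over the relevant compact velocity range — and obtaining the sharp time-weights for $\Psi^Q_{s,t}$ and $\Psi^R_{s,t}$, which forces one to exploit the cancellation between $-V^{osc}_{t,t}$ and $V^{osc}_{\tau,t}$ in $V_{\tau,t}-v$ rather than pointwise-in-$\tau$ bounds, while carefully carrying the $\langle v\rangle$-weights that appear in the statement.
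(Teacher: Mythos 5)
Your proposal is correct and follows essentially the same route as the paper's proof: the Taylor expansion $\hV_{\tau,t}=\hv+\nabla_v\hv\,(V_{\tau,t}-v)+V^Q_{\tau,t}$, insertion of the velocity decomposition from Proposition \ref{prop-charV}, Fubini on $\{s\le\tau\le\sigma\le t\}$ for the $V^{tr}$ part, a second integration by parts in $\tau$ turning $E^{osc,1}_\pm$ into $E^{osc,2}_\pm$ at the endpoints with the bulk terms assembling into $Q^{tr,1}$, and the inversion $\hv\mapsto\hv/\sqrt{1-|\hv|^2}$ with second-order remainder $\cO(\langle v\rangle^5|\hPsi_{s,t}-\hv|^2)$. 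The quantitative bounds, including the $s\le t/2$ versus $s\ge t/2$ splitting and the use of the refined decay $\|V_{\tau,t}-v\|\lesssim\epsilon\langle\tau\rangle^{-3/2}$ for $\Psi^Q_{s,t}$, match the paper's argument.
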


\begin{proof} We first prove the expansion for $\hPsi_{s,t}(x,v)$. Note that $\hv  = v/\langle v\rangle$ is a regular and bounded function of $v$. In particular, $\nabla_v \hv  = \frac{1}{\langle v\rangle} (\mathbb{I} - \hv  \otimes \hv )$ and $|\nabla^2_v \hv|\lesssim \langle v\rangle^{-2}$. 
Hence, in view of \eqref{decomp-Vst}, we may write 
\begin{equation}\label{def-VQ}
\begin{aligned}
\hV_{s,t} (x,v)&=  \hv +\nabla_v \hv(V_{s,t} (x,v) - v) + V^Q_{\tau,t}(x,v)
\end{aligned}
\end{equation}
with a quadratic remainder $|V^Q_{\tau,t} (x,v)|\lesssim \langle v\rangle^{-2}|V_{s,t}-v|^2$. Therefore, by definition \eqref{def-hatPsi}, we compute 
$$
\begin{aligned}
\hPsi_{s,t}(x,v) &=  \hv 
+ \frac{1}{t-s}\int_s^t (\hV_{\tau,t}(x,v) - \hv ) \; d\tau
\\
&=\hv 
+ \frac{\nabla_v \hv }{t-s}\int_s^t (V_{\tau,t}(x,v) - v ) \; d\tau 
+ \frac{1}{t-s}\int_s^t V^Q_{\tau,t}(x,v) \; d\tau.
\end{aligned}
$$
Using Proposition \ref{prop-charV}, we compute the average of $V_{\tau,t}(x,v)$, 
$$
 \begin{aligned}
\frac{1}{t-s}\int_s^t V_{\tau,t}(x,v) &=
v -V^{osc}_{t,t}(x,v) + \frac{1}{t-s}\int_s^t [V^{osc}_{\tau,t}(x,v)
+ V^{tr}_{\tau,t}(x,v) ]\; d\tau
 \end{aligned}
$$
in which by construction, $  V^{osc}_{s,t} = \sum_\pm E^{osc,1}_{\pm} (s,X_{s,t},V_{s,t}) $. Therefore, similarly to what was done in the proof of Proposition \ref{prop-charV}, we have  
$$
\begin{aligned} \int_s^t V^{osc}_{\tau,t}(x,v) \;d\tau &= \int_s^t  \int \frac{e^{\lambda_\pm(k) \tau + ik\cdot X_{\tau,t}}\FB(\tau,k)}{\lambda_\pm(k) + ik \cdot \hV_{\tau,t}}\; dkd\tau
\\
& = \int \int_s^t \frac{d}{d\tau} (e^{\lambda_\pm(k) \tau + ik\cdot X_{\tau,t}})  \frac{\FB(\tau,k)}{(\lambda_\pm(k) + ik \cdot \hV_{\tau,t})^2} \; d\tau dk 
\\
& = 
\int \Big[\frac{e^{\lambda_\pm(k) t + ik\cdot x}  \FB(t,k)}{(\lambda_\pm(k) + ik \cdot \hv)^2} -  \frac{e^{\lambda_\pm(k) s + ik\cdot X_{s,t}}\FB(s,k)}{(\lambda_\pm(k) + ik \cdot \hV_{s,t})^2} \Big] \; dk\\
& \quad - \int_s^t \int e^{\lambda_\pm(k) \tau + ik\cdot X_{\tau,t}}  \Big[\frac{ \partial_\tau \FB(\tau,k)}{(\lambda_\pm(k) + ik \cdot \hV_{\tau,t})^2} - \frac{2 \FB(\tau,k) ik \cdot \partial_\tau \hV_{\tau,t}}{(\lambda_\pm(k) + ik \cdot \hV_{\tau,t})^3}\Big] \; dkd\tau .
\end{aligned}$$
The first two integrals equal to $E^{osc,2}_{\pm}(t,x,v) $ and $E^{osc,2}_{\pm} (s,X_{s,t}(x,v),V_{s,t}(x,v)) $, respectively, while the last integral is 
$$
\begin{aligned}  - \int_s^t  
Q^{tr,1}(\tau,X_{\tau,t}(x,v), V_{\tau,t}(x,v)) \; d\tau
\end{aligned}$$
having defined $Q^{tr,1}(t,x,v)$ as in \eqref{def-Qtr1}. In addition, by definition, we note 
$$
\begin{aligned}
\int_s^t V^{tr}_{\tau,t}\; d\tau
&= - \int_s^t\int_\tau^t  
Q^{tr}(\tau') \; d\tau' d\tau 
 = - \int_s^t(\tau-s)  
Q^{tr}(\tau) \; d\tau .
\end{aligned}$$
This gives the expansion for $\hPsi_{s,t}(x,v)$. The estimates on $Q^{tr}$ and $Q^{tr,1}$ follow from the bootstrap assumptions on $E$, which directly gives \eqref{bounds-Wst}.

Finally, we prove the expansion for $\Psi_{s,t}(x,v)$ from that of $\hPsi_{s,t}(x,v)$. Indeed, note that $1 - |\hv|^2 = \langle v\rangle^{-2}$, and observe that 
$$\hv \cdot \nabla_v \hv  = \frac{1}{\langle v\rangle} \hv \cdot (\mathbb{I} - \hv  \otimes \hv ) =  \frac{1}{\langle v\rangle} (1-|\hv|^2)\hv = \langle v\rangle^{-3} \hv.$$  
In addition, $V^Q_{s,t}(x,v)$ is of order $\cO(\epsilon^2\langle v\rangle^{-2})$. Therefore, 
$$
\begin{aligned}
 1 - |\hPsi_{s,t}(x,v)|^2 &= 1 - |\hv |^2 - 2 \hv \cdot (\hPsi_{s,t} - \hv) 
- |\hPsi_{s,t} - \hv|^2 \\ &= 1 - |\hv |^2 + \cO(\epsilon \langle v\rangle^{-2})
 \\ &= (1 - |\hv |^2) (1+ \cO(\epsilon)).
  \end{aligned}$$
In particular,  $1 - |\hPsi_{s,t}(x,v)|^2$ remains strictly positive for all $s,t,x,v$, and is close to $1 - |\hv |^2$. 
Therefore, $\Psi_{s,t}(x,v)$ is well-defined through the map $\hv \mapsto v = \hv / \sqrt{1-|\hv|^2}$, giving a similar expansion as that of $\hPsi_{s,t}(x,v)$. In particular, noting $\nabla_{\hv} v = \langle v\rangle (\mathbb{I} + v \otimes v)$ and $|\nabla^2_{\hv} v|\lesssim \langle v\rangle^5$, we have 
\begin{equation}\label{def-WR}
\Psi_{s,t} - v  = \nabla_{\hv} v (\hPsi_{s,t} - \hv) + \cO(\langle v\rangle^5|\hPsi_{s,t} - \hv|^2)
\end{equation}
in which the last term is put into the remainder $\Psi^R_{s,t}(x,v)$. This gives the expansion for $\Psi_{s,t}(x,v)$, upon noting that $\nabla_{\hv} v = (\nabla_v \hv)^{-1}$. 

Finally, in view of the definition, $Q^{tr,1}(t)$ satisfies the same estimates as done for $Q^{tr}(t)$ in \eqref{bdQtr}, yielding 
\begin{equation}\label{bdQtr1} 
\| Q^{tr}(t)\|_{L^\infty_{x,v}} + \| Q^{tr,1}(t)\|_{L^\infty_{x,v}} \lesssim \epsilon \langle t\rangle^{-3}.
\end{equation} 
The estimates in \eqref{bounds-Wst} thus follow at once from the definition and the above bounds. This yields the proposition.
\end{proof}


\subsection{Derivative bounds}\label{sec-dxchar}


In this section, we study derivatives of the characteristics. We prove the following proposition. 

\begin{proposition}\label{prop-Dchar} 
Let $X_{s,t}(x,v), V_{s,t}(x,v)$ be the nonlinear characteristic solving \eqref{ode-char}, and let $\hPsi_{s,t}(x,v)$ be defined as in \eqref{def-hatPsi}. Then,  for $0\le s\le t$, there hold 
\begin{equation}\label{Dxv-VWend}
 \| \partial_x \partial_v^\beta V^{osc}_{t,t} \|_{L^\infty_{x,v}}   \lesssim \epsilon \langle t\rangle^{-3/2} ,
\end{equation}
for any $\beta$, and
\begin{equation}\label{bounds-DVWst}
 \begin{aligned}
\| \partial_x (V_{s,t} - v + V^{osc}_{t,t} )\|_{L^\infty_{x,v}} &\lesssim \epsilon \langle s\rangle^{-3/2} ,
\\
 \| \partial_v (V_{s,t} - v + V^{osc}_{t,t} )\|_{L^\infty_{x,v}} &\lesssim \epsilon \langle s\rangle^{-3/2} (t-s) ,
 \\
\|\partial_x (\hPsi_{s,t} - \hv + \hV^{osc}_{t,t})\|_{L^\infty_{x,v}} 
&\lesssim  \epsilon \langle s\rangle^{-1}(t-s)^{-1},
 \\
\| \partial_v (\hPsi_{s,t} - \hv + \hV^{osc}_{t,t})\|_{L^\infty_{x,v}} 
&\lesssim  \epsilon \langle s\rangle^{-1}  .
 \end{aligned}
 \end{equation}
In particular,
\begin{equation}\label{Jacobian-dxdv}
\begin{aligned}
|\det(\nabla_xX_{s,t}(x,v)) - 1| &\lesssim \epsilon , \qquad |\det(\nabla_vX_{s,t}(x,v))|& \gtrsim \langle v\rangle^{-5} |t-s|^3,
\end{aligned}\end{equation}
uniformly for all $x,v$. 

\end{proposition}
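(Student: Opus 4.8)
The plan is to differentiate the characteristic ODEs \eqref{ode-char} in $x$ and in $v$, propagate the resulting variational equations by a Gronwall argument, and then upgrade the crude decay rates by differentiating the already-established oscillatory decompositions of Propositions \ref{prop-charV} and \ref{prop-charPsi} and invoking Corollary \ref{cor-Ejosc} and Lemma \ref{lem-PEosc}.

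\textbf{Crude bounds.} First I would set $Y^x = \nabla_x X_{s,t}$, $W^x = \nabla_x V_{s,t}$, $Y^v = \nabla_v X_{s,t}$, $W^v = \nabla_v V_{s,t}$; differentiating \eqref{ode-char} gives the linear system $\partial_s Y^\bullet = \nabla_v \hv(V_{s,t}) W^\bullet$, $\partial_s W^\bullet = \nabla_x E(s,X_{s,t}) Y^\bullet$, with $(Y^x,W^x)|_{s=t} = (\mathbb{I},0)$ and $(Y^v,W^v)|_{s=t} = (0,\mathbb{I})$. Since $\|\nabla_x E(s)\|_{L^\infty_x} \lesssim \epsilon \langle s\rangle^{-3/2}$ by the bootstrap assumptions \eqref{bootstrap-decayS}, and $|\nabla_v\hv| + |\nabla_v^2 \hv|\lesssim 1$, integrating backwards from $s=t$ and running Gronwall yields $\|Y^x - \mathbb{I}\|_{L^\infty_{x,v}} \lesssim \epsilon$, $\|W^x\|_{L^\infty_{x,v}} \lesssim \epsilon \langle s\rangle^{-1/2}$, $\|W^v - \mathbb{I}\|_{L^\infty_{x,v}} \lesssim \epsilon$, and, from $Y^v_{s,t} = -\int_s^t \nabla_v\hv(V_{\tau,t}) W^v_{\tau,t}\, d\tau$, the expansion $Y^v_{s,t} = -(t-s)\nabla_v\hv(v) + R_{s,t}$ with $\|R_{s,t}\|_{L^\infty_{x,v}}\lesssim \epsilon \langle v\rangle^{-1}(t-s)$. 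Because $\nabla_x E$ decays only at rate $\langle s\rangle^{-3/2}$, a naive Gronwall would let the contribution of $E^{osc}_\pm$ accumulate; the remedy is to integrate by parts in time against the oscillatory phase $e^{\lambda_\pm(k)\tau + ik\cdot X_{\tau,t}}$ exactly as in the proof of Proposition \ref{prop-charV}, so that $E^{osc}_\pm$ contributes only boundary terms of size $\epsilon\langle s\rangle^{-3/2}$.

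\textbf{Jacobians and refined bounds.} From $Y^x = \mathbb{I} + \cO(\epsilon)$ one gets $|\det Y^x - 1|\lesssim \epsilon$, and since $\nabla_v\hv(v) = \langle v\rangle^{-1}(\mathbb{I} - \hv\otimes\hv)$ has eigenvalues $\langle v\rangle^{-3}$ and $\langle v\rangle^{-1}$ (multiplicity $2$), so $\det \nabla_v\hv(v) = \langle v\rangle^{-5}$, the expansion of $Y^v$ gives $|\det Y^v| = (t-s)^3 \langle v\rangle^{-5}(1+\cO(\epsilon)) \gtrsim \langle v\rangle^{-5}(t-s)^3$, proving \eqref{Jacobian-dxdv}. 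Next, \eqref{Dxv-VWend} is immediate since $V^{osc}_{t,t} = \sum_\pm E^{osc,1}_\pm(t,x,v)$ and each $\partial_v$ raises the multiplier index of $\phi^v_{\pm,j}$ by one while producing one extra $\partial_x$, so $\partial_x\partial_v^\beta V^{osc}_{t,t}$ is a finite sum of $\partial_x^{\alpha'}\partial_v^{\beta'}E^{osc,j}_\pm$ with $j\ge |\alpha'|$, hence $\lesssim \epsilon\langle t\rangle^{-3/2}$ by Corollary \ref{cor-Ejosc}. For \eqref{bounds-DVWst} I would differentiate the decomposition \eqref{decomp-Vst}, $V_{s,t} - v + V^{osc}_{t,t} = V^{osc}_{s,t} + V^{tr}_{s,t}$: applying the chain rule to $V^{osc}_{s,t} = \sum_\pm E^{osc,1}_\pm(s,X_{s,t},V_{s,t})$ and using Corollary \ref{cor-Ejosc} together with the crude bounds gives $\|\partial_x V^{osc}_{s,t}\|_{L^\infty}\lesssim \epsilon\langle s\rangle^{-3/2}$ and $\|\partial_v V^{osc}_{s,t}\|_{L^\infty}\lesssim \epsilon\langle s\rangle^{-3/2}(t-s)$, the extra $(t-s)$ coming from $\|Y^v\|\lesssim (t-s)$; for $V^{tr}_{s,t} = -\int_s^t Q^{tr}(\tau,X_{\tau,t},V_{\tau,t})\,d\tau$ one first differentiates \eqref{def-Qtr} and, using $F=\nabla_x S$, the bootstrap bounds on $\partial_x S$, $\partial_x E^r$, $\partial_x^2 E^{osc}_\pm$ and Lemma \ref{lem-PEosc}, obtains $\|\partial_x Q^{tr}(\tau)\|_{L^\infty} + \|\partial_v Q^{tr}(\tau)\|_{L^\infty}\lesssim \epsilon\langle\tau\rangle^{-3}$, whence $\|\partial_x V^{tr}_{s,t}\|_{L^\infty}\lesssim \epsilon\langle s\rangle^{-2}$ and $\|\partial_v V^{tr}_{s,t}\|_{L^\infty}\lesssim \epsilon\langle s\rangle^{-2}(t-s)$. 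The $\hPsi_{s,t}$ bounds follow the same way from the decomposition \eqref{decomp-Psist}, the $\frac{1}{t-s}$ prefactors producing the stated $(t-s)^{-1}$ gains, and $\partial_x,\partial_v$ of the remainders $\Psi^Q,\Psi^R$ controlled by the crude bounds.

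\textbf{Main obstacle.} The delicate point is the crude step: since the field decays only at the Klein–Gordon rate $\langle s\rangle^{-3/2}$, one cannot close the Gronwall estimate for $\nabla_x V_{s,t}$ and, more severely, $\nabla_v X_{s,t}$ without again exploiting the oscillatory structure — one must carry the time integration by parts of Proposition \ref{prop-charV} through to the differentiated equations, which in turn requires tracking $\partial_x,\partial_v$ of the multipliers $\phi^v_{\pm,j}(i\partial_x)$ (this is exactly what Lemma \ref{lem-PEosc} and Corollary \ref{cor-Ejosc} are designed for). A secondary difficulty is the uniformity in $v$ of \eqref{Jacobian-dxdv}: $\nabla_v\hv(v)$ is degenerate of order $\langle v\rangle^{-3}$ in the radial direction, so one must check the $\cO(\epsilon)$ perturbation from the field does not collapse the determinant, which is where the structured expansion of Proposition \ref{prop-charPsi} — all corrections being of the form $\nabla_v\hv\cdot(\text{small})$ plus genuinely higher-order terms — is essential.
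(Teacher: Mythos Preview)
Your overall architecture is right --- differentiate the decompositions of Propositions~\ref{prop-charV} and~\ref{prop-charPsi}, feed in Corollary~\ref{cor-Ejosc} and the bound $\|\partial_x Q^{tr}\|_{L^\infty}+\|\partial_v Q^{tr}\|_{L^\infty}\lesssim\epsilon\langle\tau\rangle^{-3}$ --- but the ``crude Gronwall'' step does not close and the proposal as written is circular. From $\partial_s W^x=\nabla_xE\cdot Y^x$ and $\partial_s Y^x=\nabla_v\hv\cdot W^x$ with $\|\nabla_xE(\tau)\|_{L^\infty}\lesssim\epsilon\langle\tau\rangle^{-3/2}$ you do get $|W^x_{s,t}|\lesssim\epsilon\langle s\rangle^{-1/2}$, but then $|Y^x_{s,t}-\mathbb I|\lesssim\int_s^t\epsilon\langle\tau\rangle^{-1/2}d\tau\lesssim\epsilon\,t^{1/2}$, which is not uniformly $\cO(\epsilon)$. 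You flag this yourself, and propose to fix it by integrating by parts in time against $e^{\lambda_\pm(k)\tau+ik\cdot X_{\tau,t}}$; but that integration by parts, applied to the differentiated equation, produces boundary and remainder terms that already contain $\partial_xX_{\tau,t}$ and $\partial_xV_{\tau,t}$ through the chain rule on $E^{osc,1}_\pm(\tau,X_{\tau,t},V_{\tau,t})$ --- the very quantities you are trying to bound. So the two-step ``crude then refined'' scheme is a loop.

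The paper breaks the loop by running a single self-consistent bootstrap, not on $\partial_xX_{s,t}$ directly, but on the averaged velocity: set
\[
\zeta(s,t)=\sup_{s\le\tau\le t}(t-\tau)\langle\tau\rangle\,\|\langle v\rangle\,\partial_x(\hPsi_{\tau,t}-\hv+\hV^{osc}_{t,t})\|_{L^\infty_{x,v}}.
\]
From $X_{s,t}=x-(t-s)\hPsi_{s,t}$ one gets immediately $|\partial_xX_{s,t}|\lesssim 1+\zeta(s,t)$; feeding this into the chain-rule expansion of $\partial_xV^{osc}_{s,t}$ and $\partial_xV^{tr}_{s,t}$ gives $\|\partial_xV_{s,t}\|_{L^\infty}\lesssim\epsilon\langle s\rangle^{-3/2}(1+\zeta)$; and then differentiating each piece $\hPsi^{osc}_{s,t},\hPsi^{tr}_{s,t},\Psi^Q_{s,t}$ of the decomposition~\eqref{decomp-Psist} yields $\|\langle v\rangle\,\partial_x(\hPsi_{s,t}-\hv+\hV^{osc}_{t,t})\|_{L^\infty}\lesssim\epsilon(t-s)^{-1}\langle s\rangle^{-1}(1+\zeta)$, i.e.\ $\zeta\lesssim\epsilon(1+\zeta)$, hence $\zeta\lesssim\epsilon$. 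An identical continuity argument with $\zeta_1(s,t)=\sup_\tau\langle\tau\rangle\|\langle v\rangle\,\partial_v(\hPsi_{\tau,t}-\hv+\hV^{osc}_{t,t})\|_{L^\infty}$ handles the $v$-derivatives (here $|\partial_vX_{s,t}|\lesssim(t-s)(1+\zeta_1)$, which is the source of the $(t-s)$ loss in the second line of~\eqref{bounds-DVWst}). The point is that working with $\hPsi$ rather than $X$ builds the crucial $(t-s)^{-1}$ factor into the bootstrap quantity, so the time integration of the $\langle\tau\rangle^{-2}$-type remainders from $Q^{tr},Q^{tr,1}$ closes. Once $\zeta,\zeta_1\lesssim\epsilon$ are in hand, all of~\eqref{bounds-DVWst} and~\eqref{Jacobian-dxdv} follow from~\eqref{straightX}.
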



\begin{proof} By recalling that $V^{osc}_{t,t}(x,v) = E^{osc,1}_{\pm}(t,x,v)$, the estimate \eqref{Dxv-VWend} thus reads off from \eqref{decay-Eoscj} with $j=1$. 
Next, in order to prove the bounds in \eqref{bounds-DVWst}, we shall first derive similar bounds on $\hPsi_{s,t}$. Indeed, recalling the expansion from Proposition \ref{prop-charPsi}, we compute
\begin{equation}\label{comp-dxPsi}
\begin{aligned}
\partial_x^\alpha \partial_v^\beta [ \hPsi_{s,t}(x,v) - \hv + \hV^{osc}_{t,t}(x,v)]  &= \partial_x^\alpha \partial_v^\beta \hPsi^{osc}_{s,t}(x,v)  +\partial_x^\alpha \partial_v^\beta \hPsi^{tr}_{s,t}(x,v)+\partial_x^\alpha \partial_v^\beta \Psi^Q_{s,t}(x,v) .
\end{aligned}
\end{equation}

\subsubsection*{Bounds on $x$-derivatives.}

We first prove the bounds on the spatial derivatives $\partial_x$. Note that the characteristics are nonlinear, and we shall need to close the estimates for derivatives. To this end, we introduce 
 \begin{equation}\label{def-zetadxV} 
 \begin{aligned}
 \zeta(s,t) &:= \sup_{0\le s\le \tau\le t} (t-\tau) \langle \tau\rangle \| \langle v\rangle\partial_x [\hPsi_{\tau,t}- \hv + \hV^{osc}_{t,t}] \|_{L^\infty_{x,v}}.
\end{aligned}\end{equation}
We first bound the derivatives of the characteristics $X_{s,t}, V_{s,t}$ in terms of $\zeta(s,t)$. Indeed, using \eqref{straightX}, \eqref{Dxv-VWend}, and the definition of $\zeta(s,t)$ in \eqref{def-zetadxV}, we note that
\begin{equation}\label{bd-dxXst}
\begin{aligned}
|\partial_x X_{s,t}(x,v)| &\le 1+ (t-s)|\partial_x \hPsi_{s,t}(x,v)| 
\lesssim 1+  \zeta(s,t).
  \end{aligned}\end{equation}
In order to bound $\partial_x V$, we use the expansion from Proposition \ref{prop-charV}, namely  
\begin{equation}\label{expVosc1}
\partial_x  V_{s,t}(x,v) = -\partial_x V^{osc}_{t,t}(x,v) + \partial_x V^{osc}_{s,t}(x,v)  +\partial_xV^{tr}_{s,t}(x,v).
\end{equation}
By definition, together with \eqref{decay-Eoscj} and \eqref{bd-dxXst}, 
we bound 
$$
\begin{aligned}
|\partial_xV^{osc}_{s,t}(x,v)| &\le \sum_\pm |\partial_x X_{s,t} \partial_x E^{osc,1}_{\pm}(s)|+ \sum_\pm |\partial_x V_{s,t} \partial_v E^{osc,1}_{\pm}(s)| 
\\&\lesssim \epsilon \langle s\rangle^{-3/2} (1+  \zeta(s,t)) + \epsilon \langle s\rangle^{-3/2} \| \partial_x V_{s,t}\|_{L^\infty_{x,v}} 
.  \end{aligned}$$
 As for $\partial_xV^{tr}_{s,t}$, recalling \eqref{def-Qtr} and using again \eqref{decay-Eoscj}, we first bound  
\begin{equation}\label{bddxQtr}
\begin{aligned} 
\|\partial_x Q^{tr}(t)\|_{L^\infty_{x,v}}&\lesssim \|\partial_x[a_\pm(i\partial_x) F(t)\star_x \phi_{\pm,1}]\|_{L^\infty_{x,v}}
+ \|\partial_x(E\cdot  \nabla_x E^{osc,2}_\pm(t)) \|_{L^\infty_{x,v}} + \| \partial_x E^r(t)\|_{L^\infty_x}
\\
&\lesssim \|F(t)\|_{L^\infty_{x,v}} + \|E\|_{L^\infty_x} \|\partial_x E\|_{L^\infty_x}+ \| \partial_x E^r(t)\|_{L^\infty_x}.
\end{aligned} 
\end{equation}
Similar estimates hold for $\partial_v Q^{tr}(t)$. The bootstrap assumptions on $E$ and $F$ thus yield 
\begin{equation}\label{Dxv-Qtr}
\begin{aligned} 
\|\partial_x Q^{tr}(t)\|_{L^\infty_{x,v}} &\lesssim \epsilon \langle t\rangle^{-3}, \qquad \|\partial_v Q^{tr}(t)\|_{L^\infty_{x,v}} \lesssim \epsilon \langle t\rangle^{-3}.
\end{aligned} 
\end{equation}
Therefore, we bound 
$$
\begin{aligned}
|\partial_xV^{tr}_{s,t}(x,v)| &\le  \int_s^t  \Big(
|\partial_x X_{\tau,t} \partial_xQ^{tr}(\tau)| + |\partial_xV_{\tau,t} \partial_v Q^{tr}(\tau)| \Big)\; d\tau .
\\
&\lesssim \epsilon\int_s^t  \Big( \langle \tau\rangle^{-3} (1+  \zeta(\tau,t)) 
+ \langle \tau\rangle^{-3}\|\partial_xV_{\tau,t}\|_{L^\infty_{x,v}} \Big)\; d\tau 
\\
&\lesssim \epsilon\langle s\rangle^{-2} (1+  \zeta(s,t)) 
+ \epsilon \langle s\rangle^{-2}\sup_{s\le \tau\le t}\|\partial_xV_{\tau,t}\|_{L^\infty_{x,v}} ,
  \end{aligned}$$
Putting these into \eqref{expVosc1}, we obtain 
$$
\begin{aligned}
|\partial_xV_{s,t}(x,v)| &\lesssim \epsilon \langle s\rangle^{-3/2} (1+  \zeta(s,t)) + \epsilon \langle s\rangle^{-3/2} \sup_{s\le \tau \le t}\| \partial_x V_{\tau,t}\|_{L^\infty_{x,v}} .
  \end{aligned}$$
Taking the supremum in $x,v,s$, and letting $\epsilon$ be sufficiently small, we arrive at 
\begin{equation}\label{est-dxVst}
\begin{aligned}
\|\partial_xV_{s,t}\|_{L^\infty_{x,v}} &\lesssim \epsilon \langle s\rangle^{-3/2} (1+  \zeta(s,t)).
  \end{aligned}
  \end{equation}

We are now ready to bound the derivatives of $\hPsi_{s,t}$. Precisely, using the expansions in \eqref{comp-dxPsi}, we shall prove that 
\begin{equation}\label{claim-dxVPsi}
|\langle v\rangle\partial_x \hPsi^{osc}_{s,t}(x,v)| +|\langle v\rangle\partial_x \hPsi^{tr}_{s,t}(x,v)|+|\langle v\rangle\partial_x \Psi^Q_{s,t}(x,v)|\lesssim \epsilon (t-s)^{-1} \langle s\rangle^{-1}(1+\zeta(s,t)),
\end{equation}
which would imply that 
$\zeta(s,t) \lesssim \epsilon,$ 
upon taking $\epsilon$ sufficiently small. By definition (see Proposition \ref{prop-charPsi}), together with \eqref{bd-dxXst} and \eqref{est-dxVst}, we bound
\begin{equation}\label{temp-Posc1}
\begin{aligned}
|\partial_x \hPsi^{osc}_{s,t}(x,v)| &\le \Big| \frac{\nabla_v \hv}{t-s}\sum_\pm \Big(\partial_x E^{osc,2}_{\pm}(t) - \partial_x X_{s,t}\cdot \nabla_x E^{osc,2}_{\pm} (s) -  \partial_x V_{s,t}\cdot \nabla_v E^{osc,2}_{\pm}(s) \Big)\Big|
\\&\lesssim \epsilon \langle v\rangle^{-1} (t-s)^{-1}\langle s\rangle^{-3/2} ( 1 + \zeta(s,t) ),
\end{aligned}\end{equation}
which proves the claim \eqref{claim-dxVPsi} for this term. Next, using again the estimates \eqref{bd-dxXst} and \eqref{est-dxVst}, we bound  
$$
\begin{aligned}
(t-s)|\langle v\rangle\partial_x \hPsi^{tr}_{s,t}(x,v)|& \le \int_s^t  
\Big( (\tau-s) (
|\partial_x Q^{tr}| 
+|\partial_v Q^{tr}|) + |\partial_xQ^{tr,1}|  + |\partial_vQ^{tr,1}| \Big) ( 1 + \zeta(\tau,t))\; d\tau.
\end{aligned}
$$
In view of \eqref{def-Qtr} and \eqref{def-Qtr1}, the quadratic terms $Q^{tr}(t)$ and $Q^{tr,1}(t)$ satisfy the same bounds as in \eqref{Dxv-Qtr}. 
Hence, we obtain 
\begin{equation}\label{temp-Posc2}
\begin{aligned}
|(t-s)\langle v\rangle\partial_x \hPsi^{tr}_{s,t}(x,v)|
&\lesssim \int_s^t \langle \tau\rangle^{-2}( 1 + \zeta(\tau,t))\; d\tau 
\\& \lesssim \epsilon \langle s\rangle^{-1} ( 1 + \zeta(s,t)),
\end{aligned}\end{equation}
proving the desired bound \eqref{claim-dxVPsi} for this term. 
On the other hand, recalling \eqref{def-VQ} and using \eqref{bounds-Vst} and \eqref{est-dxVst}, we bound 
$$ \begin{aligned} \langle v\rangle|\partial_x V^Q_{\tau,t}(x,v)| & \lesssim |V_{\tau,t}(x,v)-v| |\partial_x V_{\tau,t}(x,v)| + |V_{\tau,t}(x,v)-v|^2
\\& 
\lesssim \epsilon^2 \langle \tau\rangle^{-3 } (1 + \zeta(\tau,t)) .
\end{aligned}$$
Therefore, 
\begin{equation}\label{temp-Posc3}
\begin{aligned}
|(t-s)\langle v\rangle\partial_x \Psi^Q_{s,t}(x,v)|& \le \int_s^t \langle v\rangle|\partial_x V^Q_{\tau,t}(x,v)| \; d\tau 
\\& 
\lesssim \epsilon^2 \langle s\rangle^{-2} (1 + \zeta(s,t))
\end{aligned}
\end{equation}
giving the desired bound \eqref{claim-dxVPsi} on $\partial_x \Psi^Q_{s,t}(x,v)$.  This proves the claim \eqref{claim-dxVPsi} and therefore $\zeta(s,t) \lesssim \epsilon$ for all $0\le s\le t$. 
{ In particular, putting \eqref{temp-Posc1}, \eqref{temp-Posc2}, and \eqref{temp-Posc3} into the expansion \eqref{comp-dxPsi}, we obtain 
\begin{equation}\label{est-dxVosc}
\begin{aligned}
\| \langle v\rangle \partial^\alpha_x( \hPsi_{s,t}(x,v) - \hv + \hV^{osc}_{t,t}(x,v))\|_{L^\infty_{x,v}} 
\lesssim  \epsilon \langle s\rangle^{-3/2} (t-s)^{-1}+ \epsilon \langle s\rangle^{-1} (t-s)^{-1},
\end{aligned}
\end{equation}
for $|\alpha|\le 1$. Note that the bounds for $\alpha=0$ follow from the results obtained in Proposition \ref{prop-charPsi}. 

We now prove the first two estimates in \eqref{bounds-DVWst}. Using \eqref{def-WR}, we have 
$$
\partial_x (\Psi_{s,t} - v  +  V^{osc}_{t,t}) = \nabla_{\hv} v \partial_x (\hPsi_{s,t} - \hv +  \hV^{osc}_{t,t}) + \cO(\langle v\rangle^5|\hPsi_{s,t} - \hv| |\partial_x (\hPsi_{s,t} - \hv)|).
$$
The desired estimates on the first term follow from \eqref{est-dxVosc} and the fact that $|\nabla_{\hv} v| \le \langle v\rangle^3$. On the other hand, using again \eqref{est-dxVosc} and \eqref{Dxv-VWend}, we bound 
$$\| \langle v\rangle^2|\hPsi_{s,t} - \hv| |\partial_x (\hPsi_{s,t} - \hv)|\|_{L^\infty_{x,t}} \lesssim \Big[ \epsilon \langle s\rangle^{-3/2} (t-s)^{-1}+ \epsilon \langle s\rangle^{-1} \langle t\rangle^{-1}+ \epsilon \langle t\rangle^{-3/2}\Big]^2.$$
This proves the second estimate stated in \eqref{bounds-DVWst}. 

}

\subsubsection*{Bounds on $v$-derivatives.}

Next, we give bounds on the $v$-derivatives. Similarly as done above, we first introduce 
 \begin{equation}\label{def-zetadvV} 
 \begin{aligned}
 \zeta_1(s,t) &:= \sup_{0\le s\le \tau\le t} \langle \tau\rangle \| \langle v\rangle\partial_v [\hPsi_{\tau,t}- \hv + \hV^{osc}_{t,t}] \|_{L^\infty_{x,v}},
\end{aligned}\end{equation}
and proceed to bound the last two terms on the right of  \eqref{comp-dxPsi}. Using \eqref{straightX}, \eqref{Dxv-VWend}, and the definition of $\zeta_1(s,t)$, we note that
\begin{equation}\label{bd-dvXst}
\begin{aligned}
 |\partial_vX_{s,t}(x,v)| &\le (t-s)|\partial_v \hPsi_{s,t}(x,v)| 
 \lesssim (t-s)(1 + \zeta_1(s,t)).
  \end{aligned}\end{equation}
In order to bound $\partial_v V$, we again use the expansion from Proposition \ref{prop-charV}, namely  
\begin{equation}\label{expVosc2}
\partial_v  V_{s,t}(x,v) = 1-\partial_v V^{osc}_{t,t}(x,v) + \partial_v V^{osc}_{s,t}(x,v)  +\partial_vV^{tr}_{s,t}(x,v).
\end{equation}
By definition, together with \eqref{bd-dvXst}, 
we bound 
$$
\begin{aligned}
|\partial_vV^{osc}_{s,t}(x,v)| &\le \sum_\pm |\partial_v X_{s,t} \partial_x E^{osc,1}_{\pm}(s)|+ \sum_\pm |\partial_v V_{s,t} \partial_v E^{osc,1}_{\pm}(s)| 
\\&\lesssim \epsilon \langle s\rangle^{-3/2} (t-s)(1+  \zeta_1(s,t)) + \epsilon \langle s\rangle^{-3/2} \| \partial_v V_{s,t}\|_{L^\infty_{x,v}} 
.  \end{aligned}$$
 As for $\partial_vV^{tr}_{s,t}$, recalling \eqref{Dxv-Qtr}, we bound 
$$
\begin{aligned}
|\partial_vV^{tr}_{s,t}(x,v)| &\le  \int_s^t  \Big(
|\partial_v X_{\tau,t} \partial_xQ^{tr}(\tau)| + |\partial_vV_{\tau,t} \partial_v Q^{tr}(\tau)| \Big)\; d\tau .
\\
&\lesssim \epsilon\int_s^t  \Big( \langle \tau\rangle^{-3} (t-\tau)(1+  \zeta_1(\tau,t)) 
+ \langle \tau\rangle^{-3}\|\partial_vV_{\tau,t}\|_{L^\infty_{x,v}} \Big)\; d\tau 
\\
&\lesssim \epsilon\langle s\rangle^{-2} (t-s) (1+  \zeta_1(s,t)) 
+ \epsilon \langle s\rangle^{-2}\sup_{s\le \tau\le t}\|\partial_vV_{\tau,t}\|_{L^\infty_{x,v}} ,
  \end{aligned}$$
Putting these into \eqref{expVosc2}, we have obtained  
$$
\begin{aligned}
|\partial_vV_{s,t}(x,v)| &\lesssim 1 + \epsilon \langle s\rangle^{-3/2} (t-s)(1+  \zeta_1(s,t)) + \epsilon \langle s\rangle^{-3/2} \sup_{s\le \tau \le t}\| \partial_v V_{\tau,t}\|_{L^\infty_{x,v}} .
  \end{aligned}$$
Taking the supremum in $x,v,s$, and letting $\epsilon$ be sufficiently small, we arrive at 
\begin{equation}\label{est-dvVst}
\begin{aligned}
\|\partial_vV_{s,t}\|_{L^\infty_{x,v}} &\lesssim 1+\epsilon \langle s\rangle^{-3/2} (t-s) (1+  \zeta_1(s,t)).
  \end{aligned}
  \end{equation}

Next, by definition, together with \eqref{bd-dvXst} and \eqref{est-dvVst}, we bound
$$
\begin{aligned}
|\partial_v \hPsi^{osc}_{s,t}(x,v)| &\le 
\frac{|\partial_v\nabla_v \hv|}{t-s}\sum_\pm \Big| E^{osc,2}_{\pm}(t,x,v) - E^{osc,2}_{\pm} (s,X_{s,t}(x,v),V_{s,t}(x,v)) \Big|
\\&\quad +\frac{|\nabla_v \hv|}{t-s}\sum_\pm \Big| \partial_v E^{osc,2}_{\pm}(t) - \partial_v X_{s,t}\cdot \nabla_x E^{osc,2}_{\pm} (s) -  \partial_v V_{s,t}\cdot \nabla_v E^{osc,2}_{\pm}(s) \Big|
\\&\lesssim  \epsilon \langle v\rangle^{-1}\langle s\rangle^{-3/2} ( 1 + \zeta_1(s,t)),
\end{aligned}$$
in which we note that there is no singularity at $s=t$, at which the right hand side vanishes.  This proves the desired bounds for this term. Finally, using \eqref{bd-dvXst} and the fact that $|t-\tau|\le |t-s|$ for $0\le s\le \tau \le t$, we compute 
$$
\begin{aligned}
|\langle v\rangle\partial_v \hPsi^{tr}_{s,t}(x,v)|& \le \int_s^t  
\Big( (\tau-s) (
|\partial_x Q^{tr}| 
+|\partial_v Q^{tr}|) + |\partial_xQ^{tr,1}|  + |\partial_vQ^{tr,1}| \Big) ( 1 + \zeta_1(\tau,t))\; d\tau 
\\&\quad + \frac{\langle v\rangle|\partial_v\nabla_v\hv |}{t-s} \int_s^t  
[ (\tau-s)  
|Q^{tr}| + |Q^{tr,1}|] \; d\tau  .
\end{aligned}
$$
The first integral is bounded by $C_0\epsilon \langle s\rangle^{-1} ( 1 + \zeta_1(s,t))$ as done above, while the second integral is bounded by $C_0\epsilon \langle s\rangle^{-2}$. 
On the other hand, recalling \eqref{def-VQ} and using \eqref{bd-dvXst}, we bound 
$$
\begin{aligned} 
\langle v\rangle|\partial_v V^Q_{\tau,t}(x,v)| & \lesssim |V_{\tau,t}(x,v)-v| |\partial_v V_{\tau,t}(x,v)| + |V_{\tau,t}(x,v)-v|^2
\\
&  \lesssim  \epsilon \langle \tau\rangle^{-3/2}  + \epsilon^2 \langle \tau\rangle^{-2} (t-\tau) (1 + \zeta_1(\tau,t)) ,
\end{aligned}
$$
which gives 
$$ |\langle v\rangle\partial_v \Psi^Q_{s,t}(x,v)| \lesssim  \frac{1}{t-s} \int_s^t \langle v\rangle|\partial_v V^Q_{\tau,t}(x,v)| \; d\tau \lesssim  \epsilon \langle s\rangle^{-1} (1 + \zeta_1(s,t)) .$$
{
This completes the proof of the bounds on $\partial_v \hPsi_{s,t}(x,v)$, namely  
\begin{equation}\label{est-dvVosc}
\| \langle v\rangle\partial_v (\hPsi_{s,t} - \hv + \hV^{osc}_{t,t})\|_{L^\infty_{x,v}} 
\lesssim  \epsilon \langle s\rangle^{-1} ,\end{equation}
upon taking $\epsilon$ sufficiently small. To prove the last estimate in \eqref{bounds-DVWst}, we use \eqref{def-WR} and write
$$
\begin{aligned}
\partial_v (\Psi_{s,t} - v  +  V^{osc}_{t,t}) &=  (\partial_v\nabla_{\hv} v) (\hPsi_{s,t} - \hv +  \hV^{osc}_{t,t})  + \nabla_{\hv} v \partial_v (\hPsi_{s,t} - \hv +  \hV^{osc}_{t,t}) 
\\&\quad  + \cO(\langle v\rangle^4|\hPsi_{s,t} - \hv|^2) + \cO(\langle v\rangle^5|\hPsi_{s,t} - \hv| |\partial_v (\hPsi_{s,t} - \hv)|).
\end{aligned}
$$
Using \eqref{est-dxVosc} and \eqref{est-dvVosc}, we thus obtain the last estimate stated in \eqref{bounds-DVWst}. Finally, as for the third estimate on $\partial_v V_{s,t}$ stated in \eqref{bounds-DVWst}, we simply take the derivative of \eqref{decomp-Vst} and using \eqref{bd-dvXst}, with $\zeta_1(s,t) \lesssim \epsilon$. The bounds \eqref{bounds-DVWst} thus follow. 
}

Finally, the estimates \eqref{Jacobian-dxdv} follow directly from \eqref{straightX} and the bounds \eqref{Dxv-VWend}-\eqref{bounds-DVWst}. This completes the proof of the proposition. 
\end{proof}

\subsection{Jacobian description}

In the nonlinear analysis, the geometry of the characteristic plays an important role. Specifically, to make use of dispersion of the transport equation, we need to analyze oscillations in the Jacobian determinant of the map $v\mapsto X_{s,t}(x,v)$, which is the content of the following proposition. 

\begin{proposition}\label{prop-detPsi} 
Let $\hPsi_{s,t}(x,v)$ be the averaging velocity as in \eqref{def-hatPsi}. Then, there hold
 \begin{equation}\label{decomp-detPsi}
 \begin{aligned}
\det (\nabla_v \Psi_{s,t}(x,v) ) &= 1 + \Psi^{Tosc,1}_{s,t} + \Psi^{osc,1}_{s,t}+ \Psi^{tr,1}_{s,t}+ \Psi^{Q,1}_{s,t}(x,v)+ \Psi^{R,1}_{s,t}
 \end{aligned}
 \end{equation}
where
$$ 
\begin{aligned}
\Psi^{Tosc,1}_{s,t}(x,v) &= - \sum_\pm  \Big[ \nabla_v \cdot E^{osc,1}_{\pm} +  \frac{1}{t-s} \nabla_v \cdot E^{osc,2}_{\pm}\Big] (t,x,v) ,
\\
\Psi^{osc,1}_{s,t}(x,v) &= -  \sum_\pm  \Big[ \nabla_v \cdot E^{osc,1}_{\pm} + \frac{1}{t-s} \nabla_v \cdot E^{osc,2}_{\pm}\Big]  (s,X_{s,t}(x,v),V_{s,t}(x,v)) ,
\\
\Psi^{tr,1}_{s,t}(x,v) 
& = \int_s^t  
\frac{(t-\tau) (\tau-s)}{t-s}   \nabla_v\hv\cdot 
\nabla_xQ^{tr}(\tau,X_{\tau,t}(x,v), V_{\tau,t}(x,v)) \; d\tau ,
\\
\Psi^{Q,1}_{s,t}(x,v) &= c_0\Psi^{osc,1}_{s,t}(x,v) \Psi^{tr,1}_{s,t}(x,v),
\end{aligned}
$$
for some constant $c_0$. In addition, there hold 
\begin{equation}
\label{est-detJ}
\begin{aligned}
\| \Psi^{Tosc,1}_{s,t}\|_{L^\infty_{x,v}}  \lesssim \epsilon \langle t\rangle^{-3/2}, \qquad \| \Psi^{osc,1}_{s,t}\|_{L^\infty_{x,v}}  &\lesssim \epsilon \langle s\rangle^{-3/2}, \qquad 
\\
\| \Psi^{Q,1}_{s,t}\|_{L^\infty_{x,v}}  \lesssim  \epsilon\langle s\rangle^{-5/2} , \qquad \| \Psi^{tr,1}_{s,t}\|_{L^\infty_{x,v}} & \lesssim \epsilon \langle s\rangle^{-1},
\\
 \| \langle v\rangle^{-3}\Psi^{R,1}_{s,t}\|_{L^\infty_{x,v}} + \| \partial_s\Psi^{tr,1}_{s,t}\|_{L^\infty_{x,v}} & \lesssim \epsilon \langle s\rangle^{-2}.
\end{aligned}
\end{equation}

\end{proposition}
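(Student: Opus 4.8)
The plan is to read off the decomposition by $v$-differentiating the characteristic expansion of Proposition~\ref{prop-charPsi} and then expanding the determinant multilinearly. Writing $\Psi_{s,t}-v = -V^{osc}_{t,t}+\Psi^{osc}_{s,t}+\Psi^{tr}_{s,t}+\Psi^{R}_{s,t}$, we have $\nabla_v\Psi_{s,t} = \mathbb{I}+A_{s,t}$ with $A_{s,t} = \nabla_v(-V^{osc}_{t,t}+\Psi^{osc}_{s,t}+\Psi^{tr}_{s,t}+\Psi^{R}_{s,t})$, and we use $\det(\mathbb{I}+A) = 1+\mathrm{tr}\,A + (\text{sum of principal }2\times 2\text{ minors of }A) + \det A$. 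The trace term produces the first-order corrections: the parts evaluated at $(t,x,v)$ — namely $-\nabla_v\cdot V^{osc}_{t,t}$ together with the $\frac{1}{t-s}\nabla_v\cdot E^{osc,2}_\pm(t,x,v)$ coming from differentiating $\Psi^{osc}_{s,t}$ — form $\Psi^{Tosc,1}_{s,t}$; the parts evaluated along the characteristic at time $s$ form $\Psi^{osc,1}_{s,t}$; the contribution of $\Psi^{tr}_{s,t}$ forms $\Psi^{tr,1}_{s,t}$. The principal $2\times 2$ minors contribute the product term $\Psi^{Q,1}_{s,t} = c_0\Psi^{osc,1}_{s,t}\Psi^{tr,1}_{s,t}$ together with further products; those products, the term $\det A_{s,t}$, the chain-rule corrections, and the errors inherited from passing between $\hPsi_{s,t}$ and $\Psi_{s,t}$ via the map $\hv\mapsto v$ (cf. \eqref{def-WR}) are all collected into the remainder $\Psi^{R,1}_{s,t}$.

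The technical heart is the chain rule through the characteristic together with one algebraic identity. Each perturbative term evaluated at $(\tau,X_{\tau,t},V_{\tau,t})$ is $v$-differentiated using $\nabla_v X_{\tau,t} = -(t-\tau)\nabla_v\hPsi_{\tau,t}$ and $\nabla_v V_{\tau,t}$; I would split $\nabla_v X_{\tau,t} = -(t-\tau)\nabla_v\hv + \cO(\cdots)$ and $\nabla_v V_{\tau,t} = \mathbb{I}+\cO(\cdots)$, the corrections being controlled by Proposition~\ref{prop-Dchar} and the evolution of the characteristics by Proposition~\ref{prop-charV}. Note that $\nabla_v X_{\tau,t}$ is of size $t-\tau$, so when it meets the compensating $\frac{1}{t-s}$ factors in $\Psi^{osc}_{s,t}$ and $\Psi^{tr}_{s,t}$ it contributes at leading order, and one must be disciplined about which chain-rule outputs are kept explicitly and which are subleading. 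The key identity is $(\lambda_\pm(k)+ik\cdot\hv)\,\phi^v_{\pm,j}(k) = \phi^v_{\pm,j-1}(k)$, which implies $\nabla_v\hv\cdot\nabla_x E^{osc,j}_\pm = -\nabla_v\cdot E^{osc,j-1}_\pm$ up to a term involving $\lambda_\pm(i\partial_x)E^{osc,j}_\pm = \partial_t E^{osc,j}_\pm - a_\pm(i\partial_x)\phi^v_{\pm,j}(i\partial_x)F$; the $\partial_t$- and $F$-pieces are of $Q^{tr,1}$-type and hence lower order. It is this identity that reorganizes the raw trace output into the stated combinations — in particular it is what converts the $\frac{1}{t-s}$-weighted double-time-integral structure of $\nabla_v\Psi^{tr}_{s,t}$ into the weight $\frac{(t-\tau)(\tau-s)}{t-s}$ appearing in $\Psi^{tr,1}_{s,t}$, and what produces the $-\nabla_v\cdot E^{osc,1}_\pm(s,\cdot)$ term inside $\Psi^{osc,1}_{s,t}$.

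The estimates \eqref{est-detJ} then close termwise. For $\Psi^{Tosc,1}_{s,t}$ and $\Psi^{osc,1}_{s,t}$, the bounds $\|\partial_v^\beta E^{osc,j}_\pm\|_{L^\infty_x}\lesssim\epsilon\langle t\rangle^{-3/2}$ (resp. $\langle s\rangle^{-3/2}$) for $j\le 2$ follow from Corollary~\ref{cor-Ejosc}, combined with the derivative bounds of Proposition~\ref{prop-Dchar} for the chain-rule factors. For $\Psi^{tr,1}_{s,t}$, using $\|\nabla_x Q^{tr}(\tau)\|_{L^\infty_{x,v}}\lesssim\epsilon\langle\tau\rangle^{-3}$ from \eqref{Dxv-Qtr} and $\frac{(t-\tau)(\tau-s)}{t-s}\le\tau-s$, one gets $\|\Psi^{tr,1}_{s,t}\|_{L^\infty_{x,v}}\lesssim\epsilon\int_s^t(\tau-s)\langle\tau\rangle^{-3}\,d\tau\lesssim\epsilon\langle s\rangle^{-1}$; the bound on $\partial_s\Psi^{tr,1}_{s,t}$ follows by differentiating the $\tau$-integral — the $\tau=s$ boundary term vanishes since the weight does, and $|\partial_s(\text{weight})| = \frac{(t-\tau)^2}{(t-s)^2}\le 1$ — giving $\lesssim\epsilon\langle s\rangle^{-2}$. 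The product bound $\|\Psi^{Q,1}_{s,t}\|\lesssim\epsilon\langle s\rangle^{-5/2}$ is immediate from the two preceding ones (and $\epsilon^2\le\epsilon$), and $\Psi^{R,1}_{s,t}$ is estimated by collecting the oscillatory–oscillatory products $\cO(\epsilon^2\langle s\rangle^{-3})$, the term $(\Psi^{tr,1}_{s,t})^2 = \cO(\epsilon^2\langle s\rangle^{-2})$, $\det A_{s,t}$, the $Q^{tr,1}$-type secondary trace contributions $\cO(\epsilon\langle s\rangle^{-2})$ after time integration, and the $\hv\mapsto v$ conversion errors — tracking the $\langle v\rangle^3$ and $\langle v\rangle^5$ weights from $\nabla_{\hv}v$ and $\nabla^2_{\hv}v$ against the quadratic smallness $|\hPsi_{s,t}-\hv|\lesssim\epsilon\langle s\rangle^{-1}(t-s)^{-1}$ — which yields $\|\langle v\rangle^{-3}\Psi^{R,1}_{s,t}\|_{L^\infty_{x,v}}\lesssim\epsilon\langle s\rangle^{-2}$.

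The main obstacle is the bookkeeping described in the second paragraph: because $\nabla_v X_{s,t}$ is not small (it is of size $t-s$) while the corresponding terms carry $\frac{1}{t-s}$ prefactors, the separation into explicit leading terms versus remainder is delicate, and the multiplier identity has to be applied with its $\partial_t$- and $F$-corrections correctly tracked — these are exactly what generate the $Q^{tr,1}$-pieces and determine the signs in $\Psi^{Tosc,1}_{s,t}$ and $\Psi^{osc,1}_{s,t}$. Once this is done, what remains is a routine $3\times 3$ determinant expansion together with a handful of elementary time integrals of the form $\int_s^t(\tau-s)\langle\tau\rangle^{-3}\,d\tau\lesssim\langle s\rangle^{-1}$.
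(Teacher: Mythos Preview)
Your overall strategy --- expand $\det(\mathbb{I}+A_{s,t}) = 1 + \mathrm{tr}\,A_{s,t} + (\text{quadratic}) + \det A_{s,t}$, compute the trace via chain rule through the characteristics, and sort the output using Propositions~\ref{prop-charV}--\ref{prop-Dchar} --- is exactly the paper's approach. The estimates and the handling of the remainder are also essentially right.

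However, you have misidentified the key multiplier identity. The algebraic relation $(\lambda_\pm(k)+ik\cdot\hv)\phi^v_{\pm,j}(k)=\phi^v_{\pm,j-1}(k)$ yields, in physical space, $\hv\cdot\nabla_x E^{osc,j}_\pm = E^{osc,j-1}_\pm - \lambda_\pm(i\partial_x)E^{osc,j}_\pm$, whose left side is $\hv\cdot\nabla_x$, not $\nabla_v\hv\cdot\nabla_x$; so it does not produce the relation you claim, and the $\partial_t$- and $F$-corrections you introduce are spurious. What is actually used (and is simpler) is obtained by differentiating the multiplier in $v$: since $\partial_{v_a}\phi^v_{\pm,j}(k) = -j\,(ik\cdot\partial_{v_a}\hv)\,\phi^v_{\pm,j+1}(k)$, one has the \emph{exact} identity $\nabla_v E^{osc,j}_\pm = -j\,\nabla_v\hv\cdot\nabla_x E^{osc,j+1}_\pm$, with no corrections. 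In particular $\partial_v\hv\cdot\nabla_x E^{osc,2}_\pm = -\partial_v E^{osc,1}_\pm$, which is what turns the $\nabla_x E^{osc,2}_\pm(s,\cdot)$ output of the chain rule into the $\nabla_v\cdot E^{osc,1}_\pm(s,\cdot)$ term in $\Psi^{osc,1}_{s,t}$.

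Two smaller corrections. First, the weight $\frac{(t-\tau)(\tau-s)}{t-s}$ in $\Psi^{tr,1}_{s,t}$ does not come from any multiplier identity; it comes purely from the chain rule factor $\partial_v X_{\tau,t} \approx -(t-\tau)\partial_v\hv$ acting on the $(\tau-s)Q^{tr}$ integrand of $\Psi^{tr}_{s,t}$. Second, $\Psi^{Q,1}_{s,t}$ does not come solely from the $2\times 2$ minors: a contribution of exactly the same form $\Psi^{osc,1}_{s,t}\Psi^{tr,1}_{s,t}$ also arises already at the trace level, from the subleading part $-(t-s)\partial_v(\hPsi_{s,t}-\hv)$ of $\partial_v X_{s,t}$ hitting $\nabla_x E^{osc,2}_\pm(s,\cdot)$ inside $\partial_v\Psi^{osc}_{s,t}$. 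Both sources are absorbed into the single constant $c_0$.
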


The expansion \eqref{decomp-detPsi} extracts the oscillation of order $s^{-3/2}$ and the transport component $\Psi^{tr,1}_{s,t}$ which has slowest decay at order $s^{-1}$. Note importantly that unlike the oscillation component, the $s$-derivatives of the transport part gain extra decay of order $s^{-1}$. Next, $\Psi^{Q,1}_{s,t}(x,v)$ collects one specific quadratic term, which decays at order $s^{-5/2}$, whose $s$-derivatives do not gain extra decay. The remainder decays at order $s^{-2}$ or faster, but importantly, the $s$-derivative decays at order $s^{-3}$. The structure of $\det (\nabla_v \Psi_{s,t})$ dictates that of the nonlinear interaction, see Section \ref{sec-NLstructure}.

\begin{proof}[Proof of Proposition \ref{prop-detPsi}]
We first note that 
\begin{equation}\label{det-1st-exp} 
\det (\nabla_v \Psi_{s,t}) - 1 = \nabla_v \cdot (\Psi_{s,t}(x,v) - v) + \cO([\partial_v(\Psi_{s,t}(x,v) - v)]^2) .
\end{equation}
Using Proposition \ref{prop-Dchar}, the last term is bounded by $\epsilon^2\langle s\rangle^{-5/2}$, which contributes into the remainder term $\Psi^{R,1}_{s,t}$. On the other hand, recalling Proposition \ref{prop-charPsi},
we compute 
\begin{equation}\label{decomp-navPsist}
\begin{aligned}
\partial_v [ \Psi_{s,t} - v]  = -\partial_v V^{osc}_{t,t} + \partial_v \Psi^{osc}_{s,t}  +\partial_v \Psi^{tr}_{s,t}+\partial_v \Psi^R_{s,t}
\end{aligned}
 \end{equation}
in which we recall that $\partial_vV^{osc}_{t,t} = \sum_\pm \partial_v E^{osc,1}_{\pm}(t,x,v) $, which contributes into $\Psi^{Tosc,1}_{s,t}$. Next, by definition, we compute 
\begin{equation}\label{cal-dvWosc}
\partial_v\Psi^{osc}_{s,t}(x,v) = \frac{1}{t-s}\sum_\pm \Big(\partial_vE^{osc,2}_{\pm}(t) - \partial_x E^{osc,2}_{\pm}(s) \partial_vX_{s,t} - \partial_v E^{osc,2}(s) \partial_v V_{s,t}\Big).\end{equation}
The first term contributes into $\Psi^{Tosc,1}_{s,t}$. On the other hand, using \eqref{bounds-DVWst}, we note $\partial_v V_{s,t} = 1 + \cO(\epsilon\langle s\rangle^{-3/2}(t-s))$, and so the last term in the above is equal to 
$$ - \frac{1}{t-s}\sum_\pm \partial_v E^{osc,2}(s,X_{s,t},V_{s,t})  +  \cO(\epsilon^2 \langle s\rangle^{-3})$$
which contributes into $\Psi^{osc,1}_{s,t}(x,v) $ and $\Psi^{R,1}_{s,t}(x,v) $, respectively. Let us now study the second term in \eqref{cal-dvWosc}. We note that 
\begin{equation}\label{cal-dvX}
\partial_v X_{s,t}(x,v) = -(t-s) \partial_v \hv - (t-s) \partial_v (\hPsi_{s,t}(x,v) - \hv).
\end{equation}
The first term in the above expression leads to the first term in the definition of $\Psi^{osc,1}_{s,t}(x,v)$, upon noting that $\partial_v \hv \cdot \nabla_x E^{osc,2}_{\pm}(s) = - \partial_v E^{osc,1}_{\pm}(s) $. Finally, plugging the last term into the second term in \eqref{cal-dvWosc} gives 
$$
\begin{aligned}
\partial_x E^{osc,2}_{\pm}(s, X_{s,t}, V_{s,t})\partial_v (\hPsi_{s,t}(x,v) - \hv)& = \partial_x E^{osc,2}_{\pm}(s, X_{s,t}, V_{s,t}) \partial_v \hv \Psi_{s,t}^{tr,1}(x,v) + \cO(\epsilon^2 \langle s\rangle^{-3})
\\
&=-  \partial_v E^{osc,1}_{\pm}(s, X_{s,t}, V_{s,t}) \Psi_{s,t}^{tr,1}(x,v) + \cO(\epsilon^2 \langle s\rangle^{-3})
\end{aligned}$$
whose first term contributes into $ \Psi^{Q,1}_{s,t}(x,v)$, while the last term is put into the remainder $\Psi^{R,1}_{s,t}$. 

Next, we focus on $\partial_v \Psi^{tr}_{s,t}$ in \eqref{decomp-navPsist}. In view of the definition of $\Psi^{tr}_{s,t}(x,v)$ from Proposition \ref{prop-charPsi}, we compute 
$$
\begin{aligned}
\partial_v \Psi^{tr}_{s,t}(x,v) 
& = - \frac{1}{t-s} \int_s^t  
[ (\tau-s)  
\partial_xQ^{tr} \partial_v X_{\tau,t}  +\partial_xQ^{tr,1} \partial_v X_{\tau,t}  ](\tau,X_{\tau,t}(x,v), V_{\tau,t}(x,v)) \; d\tau 
\\
& \quad - \frac{1}{t-s} \int_s^t  
[ (\tau-s)  
\partial_vQ^{tr} \partial_v V_{\tau,t} + 
\partial_vQ^{tr,1} \partial_v V_{\tau,t} ](\tau,X_{\tau,t}(x,v), V_{\tau,t}(x,v)) \; d\tau .
\end{aligned}
$$
We first recall from \eqref{bounds-DVWst} that $\|\partial_v V_{s,t}\|_{L^\infty_{x,v}} \lesssim 1 + \epsilon \langle s\rangle^{-3/2} (t-s)$ and from \eqref{Dxv-Qtr} that the derivatives of $Q^{tr}(t)$ and $Q^{tr,1}(t)$ satisfy 
\begin{equation}\label{Dxv-reQtr}
\begin{aligned} 
\|\partial_x Q^{tr,j}(t)\|_{L^\infty_{x,v}} &\lesssim \epsilon \langle t\rangle^{-3}, \qquad \|\partial_v Q^{tr,j}(t)\|_{L^\infty_{x,v}} \lesssim \epsilon \langle t\rangle^{-3}.
\end{aligned} 
\end{equation}
Therefore, the above integrals involving $\partial_v V_{\tau,t}$ are clearly bounded by $C_0 \epsilon \langle s\rangle^{-2}$. Importantly, their $s$-derivatives are bounded by bounded by $C_0 \epsilon \langle s\rangle^{-3}$, and therefore these integrals can be put into the remainder $\Psi^{R,1}_{s,t}$. 
As for the first integral involving $\partial_v X_{\tau,t}$, we write 
$$ 
\partial_v X_{s,t}(x,v) = -(t-s) \partial_v \hv - (t-s) \partial_v (\hPsi_{s,t}(x,v) - \hv)$$
in which the second term is bounded by $C_0 \epsilon (t-s)\langle s\rangle^{-1}$ in view of \eqref{bounds-DVWst}, leading to a term of order $\epsilon \langle s\rangle^{-5/2}$ in $\partial_v \Psi^{tr}_{s,t}(x,v) $ and thus can be put into $\Psi^{R,1}_{s,t}$. That is, if we define $\Psi^{tr,1}$ as in the proposition, 
we have 
$$ \| \partial_v \Psi^{tr}_{s,t} - \Psi^{tr,1}_{s,t}\|_{L^\infty_{x,v}} \lesssim \epsilon \langle s\rangle^{-2}, \qquad \| \Psi^{tr,1}_{s,t}\|_{L^\infty_{x,v}} \lesssim \epsilon \langle s\rangle^{-1}.$$ 

Finally, we study the quadratic term in \eqref{det-1st-exp}, which is of order $\epsilon^2\langle s\rangle^{-5/2}$. In view of the expansion \eqref{decomp-navPsist}, we note that we may put all the terms into the remainder, except the leading term in the product  
$$ \partial_v \Psi^{osc}_{s,t}  \times \partial_v \Psi^{tr}_{s,t}$$ 
which may not gain extra decay after taking $s$-derivatives. This defines $\Psi^Q_{s,t}$. 
\end{proof}

\subsection{$L^2$ bounds on characteristics}

In this section, we bound the characteristic in $L^p$ spaces, which are useful in the nonlinear analysis.

\begin{proposition}\label{prop-Lpchar} 
For $|\alpha|\le 1$, there hold
\begin{equation}\label{Lpbounds-VWst}
 \begin{aligned}
\sup_v\|\partial_x^\alpha (V_{s,t} - v)\|_{L^2_x} \lesssim \epsilon
, \qquad &\sup_x  \| \langle v\rangle^{-5/2}\partial_x^\alpha(V_{s,t} - v)\|_{L^2_v} \lesssim \epsilon \langle t\rangle^{-3/2} ,
\\
\sup_v\| \langle v\rangle^{-3/2} \partial_x^\alpha (\Psi_{s,t} - v)\|_{L^2_x}
\lesssim  \epsilon ,
\qquad &\sup_x\| \langle v\rangle^{-4} \partial_x^\alpha(\Psi_{s,t} - v)\|_{L^2_v}
\lesssim  \epsilon \langle t\rangle^{-3/2} .
 \end{aligned}
 \end{equation}
In addition, 
\begin{equation}\label{Lpbounds-detPsi}
\begin{aligned}
\sup_v\| \langle v\rangle^{-3/2}(\det (\nabla_v \Psi_{s,t}) - 1)^2 \|_{L^2_x}  & \lesssim \epsilon \langle s\rangle^{-1/2 } , 
\\
 \sup_x \| \langle v\rangle^{-4}(\det (\nabla_v \Psi_{s,t} ) -1)^2 \|_{L^2_v}  & \lesssim \epsilon \langle t\rangle^{-3/2}\langle s\rangle^{-1/2}.
 \end{aligned}
 \end{equation}
\end{proposition}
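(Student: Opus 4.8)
The plan is to feed the explicit decompositions already established — Proposition~\ref{prop-charV} for $V_{s,t}-v$, Proposition~\ref{prop-charPsi} for $\Psi_{s,t}-v$, and Proposition~\ref{prop-detPsi} for $\det(\nabla_v\Psi_{s,t})-1$ — into two change-of-variables lemmas. Each of those decompositions expresses the quantity as a finite sum of (i) a boundary term at $\tau=t$ of the form $\phi^v_{\pm,j}(i\partial_x)E^{osc}_\pm(t)$ (or an $x,v$-derivative thereof), (ii) an oscillatory term of the same type evaluated at the nonlinear characteristic $(s,X_{s,t},V_{s,t})$, (iii) a transport term given by a simple or double time integral of $Q^{tr},Q^{tr,1}$ (or their $x,v$-derivatives) along the characteristic, and (iv) faster remainders. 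So after composing with the characteristic, everything reduces to bootstrap norms of $E^{osc}_\pm,E^r,S,F$ acted on by the multipliers $\phi^v_{\pm,j}$, which are transferred onto $E^{osc}_\pm$ by Lemma~\ref{lem-PEosc} and then estimated via Corollary~\ref{cor-Ejosc} and the assumptions of Section~\ref{sec-bootstrap}. The two workhorses are: a change of variables in $x$, for which $\nabla_x X_{\tau,t}$ is within $\epsilon$ of the identity by \eqref{Jacobian-dxdv}, so that $\|g(X_{\tau,t}(\cdot,v),V_{\tau,t}(\cdot,v))\|_{L^p_x}\lesssim\|\sup_{v'}|g(\cdot,v')|\|_{L^p_x}$; and a change of variables in $v$, for which $|\det\nabla_v X_{\tau,t}|\gtrsim\langle v\rangle^{-5}|t-\tau|^{3}$ by \eqref{Jacobian-dxdv}, so that a weighted $L^p_v$ norm of such a composition, after the $\langle v\rangle^{5}$ of the inverse Jacobian is absorbed into the stated weight, is bounded by $|t-\tau|^{-3/p}$ times an $L^p_x$ norm of $g$. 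Along the way, $Q^{tr}$ and $Q^{tr,1}$ decay like $\langle\tau\rangle^{-3}$ in $L^\infty_x$, $\langle\tau\rangle^{-3/2}$ in $L^2_x$, and $\langle\tau\rangle^{-9/4+\delta}$ in $L^4_x$ (even with one derivative), exactly as in \eqref{bdQtr}, while $\|E^{osc}_\pm(s)\|_{L^2_x}\lesssim\epsilon$ from the $H^{N_0}$-boundedness.

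With these, the unweighted $L^2_x$ bounds in \eqref{Lpbounds-VWst} follow: the boundary and oscillatory terms are controlled by $\|E^{osc}_\pm(s)\|_{L^2_x}$ (plus one derivative, paying Sobolev regularity which $N_0\ge14$ supplies), and the transport terms produce $\int_s^t\|\sup_vQ^{tr}(\tau)\|_{L^2_x}\,d\tau\lesssim\epsilon\int_s^t\langle\tau\rangle^{-3/2}\,d\tau\lesssim\epsilon$, which is how the uniform $\lesssim\epsilon$ (and not a decaying) bound arises. For the weighted $L^2_v$ bounds, which must carry the $\langle t\rangle^{-3/2}$ factor, I would split $s\le t/2$ versus $s> t/2$. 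When $s> t/2$ one has $\langle s\rangle\approx\langle t\rangle$ and the pointwise $L^\infty_{x,v}$ estimates of Propositions~\ref{prop-charV}--\ref{prop-charPsi} already give $\cO(\epsilon\langle t\rangle^{-3/2})$ (each oscillatory piece is $\cO(\langle t\rangle^{-3/2})$, each transport piece is $\cO(\langle t\rangle^{-2})$), and $\langle v\rangle^{-5/2},\langle v\rangle^{-4}\in L^2_v$ closes it. When $s\le t/2$ one has $|t-\tau|\ge t/2$, so the $v$-change of variables yields the gain $|t-\tau|^{-3/2}\lesssim\langle t\rangle^{-3/2}$ against $L^2_x$ norms that are $\lesssim\epsilon$; for the transport terms one further splits the $\tau$-integral at $t/2$, using the $L^\infty$ bound $\langle\tau\rangle^{-3}$ for $\tau>t/2$ and the $v$-change of variables (with $|t-\tau|\ge t/2$) for $\tau\le t/2$. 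The extra $\langle v\rangle$-weights in \eqref{Lpbounds-VWst} for $\Psi_{s,t}$ compared with $V_{s,t}$ are exactly what is needed to absorb the $\nabla_{\hv}v$-factors of \eqref{def-WR} when passing from $\hPsi_{s,t}$ to $\Psi_{s,t}$ together with the $\langle v\rangle^{5}$-Jacobian factor.

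The determinant estimates \eqref{Lpbounds-detPsi} are where the quadratic structure must be used crucially: I write $\|\langle v\rangle^{-a}(\det\nabla_v\Psi_{s,t}-1)^2\|_{L^2}=\|\langle v\rangle^{-a/2}(\det\nabla_v\Psi_{s,t}-1)\|_{L^4}^2$, so that $\det\nabla_v\Psi_{s,t}-1$ need only be estimated in $L^4$. Expanding via Proposition~\ref{prop-detPsi} and running the same argument with $p=4$, the transport term $\Psi^{tr,1}_{s,t}$, whose double time integral $\int_s^t\tfrac{(t-\tau)(\tau-s)}{t-s}\nabla_xQ^{tr}(\tau,\cdot)\,d\tau$ is only borderline time-integrable against the $L^2_x$ decay of $Q^{tr}$, now converges \emph{and gains decay} because $\|\sup_v\nabla_xQ^{tr}(\tau)\|_{L^4_x}\lesssim\epsilon\langle\tau\rangle^{-9/4+\delta}$, giving $\int_s^t\tau\langle\tau\rangle^{-9/4+\delta}\,d\tau\lesssim\langle s\rangle^{-1/4+\delta}$ and hence the factor $\langle s\rangle^{-1/2}$ after squaring; the $L^2_v$ version combines this with the $\langle v\rangle^{-4}$-absorbed $v$-change of variables as above to produce the additional $\langle t\rangle^{-3/2}$. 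I expect the transport contribution to be the main obstacle throughout: the oscillatory pieces are only bounded (never decaying) in $L^2_x$, so no $L^\infty$-type loss can be tolerated there and the change of variables must be exploited with its Jacobian constants; and the transport pieces sit on the edge of time-integrability, so the phase-mixing geometry of the characteristics — the degenerate $v$-Jacobian of \eqref{Jacobian-dxdv}, the extra decay gained by using $L^4$ rather than $L^2$ in space, and the faster $\partial_s$-decay of transport terms recorded in \eqref{est-detJ} — has to be invoked sharply, much as in the particle--particle analysis of Section~\ref{sec-sourceest}.
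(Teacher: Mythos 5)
Your proposal is correct and follows essentially the same route as the paper: the same decompositions from Propositions \ref{prop-charV}, \ref{prop-charPsi}, \ref{prop-detPsi}, the $x$-change of variables with $|\det\nabla_xX_{s,t}|\ge 1/2$ for the $L^2_x$ bounds, the $s\le t/2$ versus $s>t/2$ split combined with the degenerate $v$-Jacobian $|\det\nabla_vX_{s,t}|\gtrsim\langle v\rangle^{-5}(t-s)^3$ for the weighted $L^2_v$ bounds, and the $L^p_xL^\infty_v$ estimates \eqref{LpbdQtr} on $E^{osc,1}_\pm$ and $Q^{tr}$. Your $L^4$ treatment of $(\det\nabla_v\Psi_{s,t}-1)^2$ is a legitimate filling-in of the paper's terse ``similarly'' (and is equivalent, via $\|f\|_{L^4}^2\le\|f\|_{L^2}\|f\|_{L^\infty}$, to pairing the $L^\infty$ decay of the transport piece against the $L^2$ boundedness of the oscillatory pieces, which also removes the harmless $\langle s\rangle^{2\delta}$ loss in your squared transport term).
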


\begin{proof} We first check the bounds in the case when $|\alpha|=0$ and $|\beta|=0$. Recall from Proposition \ref{prop-charV} that 
$$
\begin{aligned}
  V^{osc}_{s,t}(x,v) &= \sum_\pm E^{osc,1}_{\pm}(s,X_{s,t}(x,v),V_{s,t}(x,v)) ,
 \\
V^{tr}_{s,t}(x,v)& = -  \int_s^t  
Q^{tr}(\tau,X_{\tau,t}(x,v), V_{\tau,t}(x,v)) \; d\tau .
 \end{aligned}$$
We shall prove that 
\begin{equation}\label{Lpbounds-Vosc}
\begin{aligned}
\sup_v\| \partial_x^\beta V^{osc}_{s,t} \|_{L^2_x}    \lesssim \epsilon , \qquad  &\sup_x \| \langle v\rangle^{-5/2}\partial_x^\beta V^{osc}_{s,t} \|_{L^2_v}   \lesssim \epsilon \langle t\rangle^{-3/2}
\\
\sup_v\| \partial_x^\beta V^{tr}_{s,t} \|_{L^2_x}   \lesssim \epsilon \langle s\rangle^{-1/2}  , \qquad  &\sup_x \| \langle v\rangle^{-5/2}  \partial_x^\beta V^{tr}_{s,t} \|_{L^2_v}   \lesssim \epsilon \langle t\rangle^{-3/2} \langle s\rangle^{-1/2},
\end{aligned}
\end{equation}
for $|\beta|\le 1$. The estimates \eqref{Lpbounds-VWst} on $V_{s,t}$ follow at once from the above bounds and the expansions established in Proposition \ref{prop-charV} and Proposition \ref{prop-charPsi}. To prove \eqref{Lpbounds-Vosc}, 
using \eqref{decay-Eoscj} and recalling \eqref{def-Qtr}, we first note that 
\begin{equation}\label{LpbdQtr}\|E^{osc,1}_\pm(t)\|_{L^p_xL^\infty_v} \lesssim \epsilon \langle t\rangle^{-3(1/2-1/p)}, \qquad 
\|Q^{tr}(t)\|_{L^p_xL^\infty_v} \lesssim \epsilon \langle t\rangle^{-3(1-1/p)} .\end{equation}
The estimates \eqref{Lpbounds-Vosc} in $L^2_x$ for $V_{s,t}(x,v)$ thus follow, upon introducing the change of variables $y = X_{s,t}(x,v)$ and using \eqref{Jacobian-dxdv} to deduce that $|\det \nabla_x X_{s,t}(x,v)|\ge 1/2$ for all $x,v$. Next, for the $L^2_v$ estimates, we first consider the case where $s\ge t/2$, for which we bound 
$$ \| \langle v\rangle^{-5/2}V^{osc}_{s,t}(x,\cdot)\|_{L^2_v} \lesssim \| V^{osc}_{s,t}\|_{L^\infty_{x,v}} \lesssim \epsilon \langle t\rangle^{-3/2},$$
using $s\ge t/2$. On the other hand, for $0\le s\le t/2$, introducing the change of variables $y = X_{s,t}(x,v)$, we first bound 
$$
\begin{aligned} 
\| \langle v\rangle^{-5/2}V^{osc}_{s,t}\|_{L^2_v}^2 
&\le \int |E^{osc,1}_{\pm}(s,X_{s,t}(x,v),V_{s,t}(x,v))|^2 \langle v\rangle^{-5} \; dv
\\
&\le \int |E^{osc,1}_{\pm}(s,y,V_{s,t}(x,v))|^2 \langle v\rangle^{-5} \; \frac{dy}{|\det (\nabla_v X_{s,t}(x,v))|}
\end{aligned}$$
in which $v = \widetilde X_{s,t}(x,y)$, the inverse map of $v\mapsto y = X_{s,t}(x,v)$. Using \eqref{Jacobian-dxdv} and \eqref{decay-Eoscj}, we thus obtain 
$$
\begin{aligned}
\| \langle v\rangle^{-5/2}V^{osc}_{s,t}\|_{L^2_v}^2 
&\lesssim (t-s)^{-3}  \int_{\RR^3} |E^{osc,1}_{\pm}(s,y,V_{s,t}(x,v))|^2\; dy
\lesssim t^{-3} ,
\end{aligned}
$$
since $s\in (0,t/2)$. Similarly, we bound 
$$
\begin{aligned}
\| \langle v\rangle^{-5/2}V^{tr}_{s,t}\|_{L^2_v}
&\lesssim  \int_s^t  \| 
Q^{tr}(\tau,X_{\tau,t}(x,v), V_{\tau,t}(x,v)) \langle v\rangle^{-5/2}\|_{L^2_v} \; d\tau 
\\
&\lesssim  \int_s^{t/2} \langle t\rangle^{-3/2} \| 
Q^{tr}(\tau)\|_{L^2_y L^\infty_v} \; d\tau +  \int_{t/2}^t \| 
Q^{tr}(\tau)\|_{L^\infty_{x,v}} \; d\tau 
\end{aligned}$$
in which we introduced the change of variables $y = X_{\tau,t}(x,v)$ in the first time integral. Now using \eqref{LpbdQtr} for $p=2$ and $p=\infty$, we obtain  \eqref{Lpbounds-Vosc} as claimed. The $x$-derivative estimates follow similarly. 

Next, the estimates on $\Psi_{s,t}(x,v)$ follow similarly as done above, using the representation \eqref{decomp-Psist}, the bounds \eqref{Lpbounds-Vosc} on $V_{t,t}^{osc}$, and the following direct bounds 
\begin{equation}\label{Lpbounds-Wosc}
\begin{aligned}
\sup_v\| \partial_x^\beta \Psi^{osc}_{s,t} \|_{L^2_x}    \lesssim \epsilon (t-s)^{-1}, \qquad  &\sup_x \| \langle v\rangle^{-5/2}\partial_x^\beta \Psi^{osc}_{s,t} \|_{L^2_v}   \lesssim \epsilon \langle t\rangle^{-3/2}(t-s)^{-1}
\\
\sup_v\|\partial_x^\beta \Psi^{tr}_{s,t} \|_{L^2_x}   \lesssim \epsilon \langle t\rangle^{-1/2} , \qquad  &\sup_x \| \langle v\rangle^{-5/2} \partial_x^\beta \Psi^{tr}_{s,t} \|_{L^2_v}   \lesssim \epsilon \langle t\rangle^{-2}
\\
\sup_v\|\langle v\rangle^{-3/2}\partial_x^\beta \Psi^{R}_{s,t} \|_{L^2_x}   \lesssim \epsilon \langle t\rangle^{-1}\langle s\rangle^{-1/2} , \qquad  &\sup_x \| \langle v\rangle^{-4} \partial_x^\beta \Psi^{R}_{s,t} \|_{L^2_v}   \lesssim \epsilon \langle t\rangle^{-5/2}\langle s\rangle^{-1/2},
\end{aligned}
\end{equation}
for $|\beta|\le 1$. Similarly, we also obtain the bounds for $\det (\nabla_v \Psi_{s,t})$, upon using the representation \eqref{decomp-detPsi}. The proposition thus follows. 
\end{proof}


\subsection{Decay for higher derivatives}\label{sec-Hdxchar}


In this section, we establish the decay and boundedness of higher derivatives of the characteristics. We prove the following proposition. 

\begin{proposition}\label{prop-HDchar} 
Fix $N_0 \ge 4$. Let $X_{s,t}(x,v), V_{s,t}(x,v)$ be the nonlinear characteristic solving \eqref{ode-char}. Then, for $0\le s\le t$ and for $|\alpha_1|+|\alpha_2|=|\alpha|$ with $2\le |\alpha|\le N_0-1$, there hold 
\begin{equation}\label{decayXV}
 \begin{aligned}
\| \partial^{\alpha_1}_x\partial^{\alpha_2}_v(V_{s,t}-v)\|_{L^\infty_{x,v}}  
&\lesssim \epsilon \langle s\rangle^{-\frac32 +\frac12\delta_\alpha} \langle t\rangle^{\delta_\alpha+|\alpha_2|}, 
\\
\| \partial^{\alpha_1}_x\partial^{\alpha_2}_v(X_{s,t}-x+\hv (t-s))\|_{L^\infty_{x,v}}  
&\lesssim \epsilon \langle s\rangle^{-\frac12 +\frac12\delta_\alpha} \langle t\rangle^{\delta_\alpha+|\alpha_2|
},
 \end{aligned}
 \end{equation}
with $\delta_\alpha = \frac{|\alpha|}{N_0-1}$. 

\end{proposition}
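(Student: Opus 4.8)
The plan is to argue by induction on $|\alpha|$, establishing the bound on $V_{s,t}-v$ first and then deducing the bound on $X_{s,t}-x+\hv(t-s)$ by one further integration in $\tau$. The base case $|\alpha|\le1$ is contained in Proposition~\ref{prop-Dchar} together with the high-norm bootstrap bounds \eqref{bootstrap-Hs}; so fix $2\le m\le N_0-1$, assume all the stated estimates hold for every multi-index of order $<m$, and let $|\alpha|=|\alpha_1|+|\alpha_2|=m$. Differentiating the Duhamel formula $V_{s,t}=v-\int_s^t E(\tau,X_{\tau,t})\,d\tau$ from \eqref{ode-char2} and applying the Fa\`a di Bruno formula to the composition $E(\tau,X_{\tau,t}(x,v))$ (recalling that $E=E(\tau,\cdot)$ carries no $v$-dependence, so that the only chain-rule derivative is $\nabla_x$), one writes $\partial_x^{\alpha_1}\partial_v^{\alpha_2}[E(\tau,X_{\tau,t})]$ as a \emph{top-order term} $\nabla_xE(\tau,X_{\tau,t})\cdot\partial_x^{\alpha_1}\partial_v^{\alpha_2}X_{\tau,t}$ plus \emph{lower-order terms} of the form $(\nabla_x^jE)(\tau,X_{\tau,t})::\prod_{\ell=1}^{j}\partial^{\gamma_\ell}X_{\tau,t}$ with $2\le j$, $\sum_\ell\gamma_\ell=\alpha$, and each $|\gamma_\ell|<m$. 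Since $X_{\tau,t}=x-\int_\tau^t\hV_{\sigma,t}\,d\sigma$ and $\hV=\hv+\nabla_v\hv\,(V-v)+\cO(\langle v\rangle^{-2}|V-v|^2)$, any factor $\partial^{\gamma_\ell}X_{\tau,t}$ with $|\gamma_{2,\ell}|\ge1$ carries exactly one power of $(t-\tau)\le\langle t\rangle$ (times derivatives of $V$ of order $\le|\gamma_\ell|$), which is the source of the $\langle t\rangle^{|\alpha_2|}$ factor, while factors with $\gamma_\ell$ purely in $x$ stay of order $1$. Because $\delta_\alpha=|\alpha|/(N_0-1)$ is \emph{linear} in $|\alpha|$, the growth exponents $\delta_{\gamma_\ell}$ of the lower-order characteristic factors (and of $\nabla_x^jE$) add up to at most $\delta_\alpha$; invoking the inductive hypothesis together with the high-norm decay bounds \eqref{bootstrap-decaydaS}--\eqref{bootstrap-decaydaSLp} and \eqref{bootstrap-Hs} on $\partial_x^\beta E^{osc}_\pm$ and $\partial_x^\beta E^r$ then bounds the integrand of every lower-order term by $\epsilon^2\langle\tau\rangle^{-3/2+\delta_\alpha/2}\langle t\rangle^{\delta_\alpha+|\alpha_2|}$ times a factor integrable in $\tau$.

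It remains to treat the top-order term. Writing $E=\sum_\pm E^{osc}_\pm+E^r$, the $\nabla_xE^r$ piece yields a genuine Volterra term $\int_s^t\|\partial_xE^r(\tau)\|_{L^\infty_x}\,\|\partial_x^{\alpha_1}\partial_v^{\alpha_2}X_{\tau,t}\|_{L^\infty_{x,v}}\,d\tau$ whose kernel $\|\partial_xE^r(\tau)\|_{L^\infty_x}\lesssim\epsilon\langle\tau\rangle^{-3}$ is integrable, and the oscillatory piece $\int_s^t\nabla_xE^{osc}_\pm(\tau,X_{\tau,t})\cdot\partial_x^{\alpha_1}\partial_v^{\alpha_2}X_{\tau,t}\,d\tau$ is handled exactly as in the proofs of Propositions~\ref{prop-charV} and \ref{prop-charPsi}: using the representation \eqref{defiFB} and $\partial_\tau X_{\tau,t}=\hV_{\tau,t}$, one integrates by parts in $\tau$, so that the factor $\nabla_x E^{osc}_\pm$ is converted into $E^{osc,1}_\pm$-type quantities (decaying like $\langle\tau\rangle^{-3/2}$), the boundary term at $\tau=t$ vanishes because $\partial_x^{\alpha_1}\partial_v^{\alpha_2}X_{t,t}=0$ for $|\alpha|\ge2$, the boundary term at $\tau=s$ is of size $\lesssim\langle s\rangle^{-3/2}$ times $\partial^\alpha$ of the characteristic at time $s$, and the interior term brings out either $\partial_\tau\widehat B_\pm$ — hence, via \eqref{dtBF}, the much faster decaying density $\widehat F$ — or $\partial_\tau\hV_{\tau,t}=(\nabla_v\hv\,E)(\tau,X_{\tau,t},V_{\tau,t})$. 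Mirroring the device used for $\zeta(s,t),\zeta_1(s,t)$ in Proposition~\ref{prop-Dchar}, I would introduce
\[
\zeta_\alpha(s,t):=\sup_{s\le\tau\le t}\ \langle\tau\rangle^{3/2-\delta_\alpha/2}\,\langle t\rangle^{-\delta_\alpha-|\alpha_2|}\,\bigl\|\partial_x^{\alpha_1}\partial_v^{\alpha_2}(V_{\tau,t}-v)\bigr\|_{L^\infty_{x,v}},
\]
bound $\|\partial_x^{\alpha_1}\partial_v^{\alpha_2}X_{\tau,t}\|$ in terms of $\zeta_\alpha$ and lower-order data through $\partial^\alpha X_{\tau,t}=-\int_\tau^t\partial^\alpha\hV_{\sigma,t}\,d\sigma$ (for $|\alpha|\ge2$), and arrive at a closed inequality $\zeta_\alpha(s,t)\lesssim\epsilon+\epsilon^2+\epsilon\int_s^t\langle\tau\rangle^{-3}\zeta_\alpha(\tau,t)\,d\tau$, whence $\zeta_\alpha(s,t)\lesssim\epsilon$ by Gr\"onwall for $\epsilon$ small. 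This is the first line of \eqref{decayXV}; the second line then follows by feeding it into $X_{s,t}-x+\hv(t-s)=-\int_s^t(\hV_{\tau,t}-\hv)\,d\tau$, using $\hV_{\tau,t}-\hv=-\hV^{osc}_{t,t}+\hV^{osc}_{\tau,t}+\dots$ with $\hV^{osc}_{t,t}=\cO(\langle t\rangle^{-3/2})$ and the derivative bounds on $E^{osc,j}_\pm$ from Corollary~\ref{cor-Ejosc}, which costs exactly one extra $\int_s^t\langle\tau\rangle^{-3/2+\delta_\alpha/2}d\tau\sim\langle s\rangle^{-1/2+\delta_\alpha/2}$.

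The main obstacle I anticipate is purely the bookkeeping of Fa\`a di Bruno: one must check that \emph{every} product term respects simultaneously the growth budget $\langle t\rangle^{\delta_\alpha+|\alpha_2|}$ and the decay budget $\langle s\rangle^{-3/2+\delta_\alpha/2}$. Two structural facts carry the argument. First, the linearity of $\alpha\mapsto\delta_\alpha$ makes the exponents add subadditively under products of characteristic derivatives and under the Leibniz rule for $\nabla_x^jE$. Second, in the high-norm bootstrap \eqref{bootstrap-Hs} only \emph{one} derivative above $H^{N_0}$ is allowed to grow, so in any product at most one factor can carry the $\langle t\rangle^{\delta_1}$-loss; the worst configurations — where a top-norm factor $\partial_x^{N_0}E$ multiplies several low-norm, fast-decaying characteristic derivatives, or where many intermediate $v$-derivatives accumulate on a single factor $\partial_v^{\gamma_2}X_{\tau,t}\sim(t-\tau)\partial_v^{\gamma_2}\hPsi_{\tau,t}$ — must be verified term by term against these two rules. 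A secondary subtlety, resolved exactly as in Proposition~\ref{prop-Dchar}, is the apparent circularity of the integration-by-parts step, which presupposes a bound on $\partial^\alpha X_{\tau,t}$: this is closed by running the estimate on the finite interval $[s,t]$ through the Gr\"onwall/continuity argument for $\zeta_\alpha(s,t)$, after checking that the $(t-\tau)^{-1}$ prefactors appearing in the $\Psi^{osc}$, $\Psi^{tr}$ expansions are always compensated by vanishing of the numerator at the diagonal $\tau=t$, so that no singularity develops there.
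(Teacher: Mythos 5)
Your proposal is correct and follows essentially the same strategy as the paper's proof: extract the oscillations of the characteristics by integrating by parts in $\tau$ against the Klein--Gordon phase, apply Fa\`a di Bruno, exploit the linearity of $\alpha\mapsto\delta_\alpha$ so that the growth exponents add subadditively over products, and close a weighted bootstrap on the top derivatives with a Gr\"onwall-type absorption for small $\epsilon$.

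The one organizational difference worth noting: instead of re-performing the integration by parts on the differentiated Duhamel formula at each order (your induction on $|\alpha|$ with a per-order $\zeta_\alpha$), the paper simply applies $\partial_x^\alpha$ to the decompositions already established in Propositions \ref{prop-charV}--\ref{prop-charPsi}, so the top-order contributions always appear as $\partial_x E^{osc,1}_\pm(s)\cdot\partial_x^\alpha X_{s,t}$ or $\int_s^t(\tau-s)\partial_xQ^{tr}(\tau)\cdot\partial_x^\alpha X_{\tau,t}\,d\tau$ with an already-decaying prefactor, and a single continuous-induction quantity $\zeta(s,t)$ summing over all $|\alpha|<N_0$ closes everything at once; this avoids the coupled $\partial^\alpha X$--$\partial^\alpha V$ system your fresh integration by parts would generate. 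The ``bookkeeping'' you defer is precisely the product estimate \eqref{supprodX}, where the hypothesis $N_0\ge 4$ enters via $\tfrac32(\delta_\mu-\delta_{k_1})\le\tfrac12(|\mu|-k_1)$; and your heuristic that each factor containing a $v$-derivative carries one power of $(t-\tau)$ is a slight misstatement of the cleaner scaling the paper invokes for the $v$-derivative case, namely that $\partial_v$ costs one power of $t$ relative to $\partial_x$ (so the total loss is $\langle t\rangle^{|\alpha_2|}$ regardless of how the $v$-derivatives are distributed among factors). Neither point affects the validity of your argument.
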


We shall prove Proposition \ref{prop-HDchar} by continuous induction, starting with the bounds on $x$-derivatives, namely $|\alpha_2|=0$. Indeed, we introduce the iterative bootstrap function
\begin{equation}\label{lownorm}
 \begin{aligned}
\zeta(s,t) &= \sup_{s\le \tau \le t} \sum_{|\alpha| < N_0}\Big[\langle s\rangle^{\frac32-\frac12\delta_\alpha} \langle t\rangle^{-\delta_\alpha}\| \partial^\alpha_x (V_{s,t}-v+V^{osc}_{t,t})\|_{L^\infty_{x,v}} 
\\&\qquad + \langle s\rangle^{1-\frac12\delta_\alpha} (t-s) \langle t\rangle^{-\delta_\alpha} \| \partial^\alpha_x(\hPsi_{s,t}-\hv+V^{osc}_{t,t})\|_{L^\infty_{x,v}} \Big]
 \end{aligned}
\end{equation}
with $\delta_\alpha = \frac{|\alpha|}{N_0-1}$. Specifically, we shall prove that 
\begin{equation}\label{claimXV1}
\zeta(s,t) \le C_0\epsilon + C_0\epsilon \zeta(s,t)
\end{equation}
for some universal constant $C_0$, which would then yield $\zeta(s,t) \le 2C_0 \epsilon$, upon taking $\epsilon$ sufficiently small. The boundedness and decay of the characteristics then follow from that of $\zeta(s,t)$. 

\subsubsection*{Consequences of the bootstrap function.}
We first recall that $V^{osc}_{t,t}(x,v)= E^{osc,1}_{\pm}(t,x,v)$, and therefore by the bootstrap assumption on $E$ from Section \ref{sec-bootstrap}, see \eqref{bootstrap-decaydaS} and \eqref{bootstrap-Hs}, 
\begin{equation}\label{bdVosct} \sup_v\| V^{osc}_{t,t}\|_{H^{N_0+1}_{x}} \lesssim \| E^{osc}_\pm(t)\|_{H^{N_0}} \lesssim \epsilon
\end{equation}
and 
\begin{equation}\label{daVosct}
\| \partial^\alpha_x V^{osc}_{t,t}\|_{L^\infty_{x,v}} + \| \partial^\alpha_x E^{osc,1}_{\pm}(t)\|_{L^\infty_{x,v}}  \lesssim \epsilon \langle t\rangle^{-\frac32 + \frac32\delta_{\alpha}}, 
\end{equation}
for $|\alpha|\le N_0-1$, where $\delta_{\alpha} = \frac{|\alpha|}{N_0-1}$. Next, in view of \eqref{daVosct} and the bootstrap funciton \eqref{lownorm}, as long as $\zeta(s,t)$ remains finite, we have
$$
 \begin{aligned}
\| \partial^\alpha_x ( V_{s,t}-v)\|_{L^\infty_{x,v}} &\lesssim \epsilon \langle t\rangle^{-\frac32 + \frac32\delta_{\alpha}} + \langle s\rangle^{-\frac32 +\frac12\delta_\alpha} \langle t\rangle^{\delta_\alpha}\zeta(s,t)
\\
 \| \partial^\alpha_x(\hPsi_{s,t}-\hv)\|_{L^\infty_{x,v}}   &\lesssim   \epsilon \langle t\rangle^{-\frac32 + \frac32\delta_{\alpha}} +  \langle s\rangle^{- 1+\frac12 \delta_\alpha}(t-s)^{-1} \langle t\rangle^{\delta_\alpha}\zeta(s,t). \end{aligned}
$$
Therefore, for $0\le s\le t$, we have
\begin{equation}\label{inductV}
\begin{aligned}
\| \partial^\alpha_x(V_{s,t}-v)\|_{L^\infty_{x,v}}  
&\lesssim \langle s\rangle^{-\frac32 +\frac12\delta_\alpha} \langle t\rangle^{\delta_\alpha}(\epsilon+\zeta(s,t)),
\end{aligned}\end{equation}
for all $ |\alpha|\le N_0-1$. Similarly, in view of \eqref{straightX}, \eqref{daVosct}, and \eqref{lownorm}, we have 
\begin{equation}\label{inductX}
\begin{aligned}
\| \partial^\alpha_x(X_{s,t}-x+\hv (t-s))\|_{L^\infty_{x,v}}  
&\le (t-s) \|\partial_x^\alpha \hPsi_{s,t}\|_{L^\infty_{x,v}}
\\& \lesssim   \epsilon \langle t\rangle^{-\frac12+\frac32\delta_{\alpha}} + 
 \langle s\rangle^{-1+\frac12\delta_\alpha}\langle t\rangle^{\delta_\alpha}\zeta(s,t)
\\
&\lesssim \langle s\rangle^{-\frac12+\frac12\delta_\alpha } \langle t\rangle^{\delta_\alpha}(\epsilon+\zeta(s,t)),
\end{aligned}\end{equation}
for all $2\le |\alpha|\le N_0-1$. The estimates \eqref{inductV}-\eqref{inductX} yield the desired bounds \eqref{decayXV} for $|\alpha_2|=0$, provided that $\zeta(s,t)$ remains finite. 

We now use the expansion from Proposition \ref{prop-charV} and Proposition \ref{prop-charPsi} to bound $\zeta(s,t)$, which we compute
\begin{equation}\label{expPVst}
\begin{aligned}
\partial^\alpha_x  \Big(V_{s,t}(x,v)- v + V^{osc}_{t,t}(x,v)\Big) &=  \partial_x^\alpha V^{osc}_{s,t}(x,v)  +\partial^\alpha_xV^{tr}_{s,t}(x,v),
\\
\partial_x^\alpha \Big(\hPsi_{s,t}(x,v) - \hv + \hV^{osc}_{t,t}(x,v)\Big )& = \partial_x^\alpha \hPsi^{osc}_{s,t}(x,v)  +\partial_x^\alpha \hPsi^{tr}_{s,t}(x,v)+\partial_x^\alpha \Psi^Q_{s,t}(x,v) .
\end{aligned}
\end{equation}
We shall estimate each term in \eqref{expPVst}. To proceed, we first recall 
the Fa\`a di Bruno's formula for composite functions (see, e.g., \cite{BCD}), namely 
\begin{equation}\label{Faa} \partial_x^n f(u) = \sum C_{k_j,n} \partial_x^k f (u) \prod_{j=1}^n (\partial_x^j u)^{k_j}  
\end{equation}
where $k_j \ge 0$, $\sum k_j = k$, and the summation is over all the partitions $\{ k_j\}_{j=1}^n$ of $n$ so that $\sum_j jk_j = n$. The extension to the multidimensional case is similar. 

\subsubsection*{Bounds on $\partial_x^\alpha V^{osc}_{s,t}(x,v)$.}
Recalling $ V^{osc}_{s,t} (x,v)= E^{osc,1}_{\pm}(s,X_{s,t}(x,v),V_{s,t}(x,v))$, we shall prove that 
\begin{equation}\label{est-daVosc}
\begin{aligned}  
\|\partial_x^\alpha V^{osc}_{s,t}\|_{L^\infty_{x,v}} 
&\lesssim 
\epsilon\langle s\rangle^{- \frac32+\frac12\delta_\alpha}\langle t\rangle^{\delta_\alpha} (1+\zeta(s,t)).
\end{aligned}
\end{equation}
Indeed, using \eqref{Faa}, we compute 
\begin{equation}\label{daVosci}
\begin{aligned}  
\partial_x^\alpha V^{osc}_{s,t}= \sum_{1\le |\mu|\le |\alpha|} C_{\beta,\mu}  
\partial_x^\mu E^{osc,1}_{\pm} (s) \prod_{1\le |\beta|\le |\alpha|} (\partial_x^\beta X_{s,t})^{k_\beta} + \mathrm{l.o.t.}
\end{aligned}\end{equation}
where $\sum k_\beta = |\mu|$ and $\sum |\beta|k_\beta = |\alpha|$. Here in the above expression, $\mathrm{l.o.t.}$ denotes the lower order terms, namely all those terms that involve one or more spatial derivatives of $V_{s,t}$. These terms decay at the same or better rate. Therefore, recalling \eqref{daVosct}, we bound 
\begin{equation}\label{daVosci1}
\begin{aligned}  
\Big \|\sum_{1\le |\mu|\le |\alpha|} C_{\beta,\mu}  
\partial_x^\mu E^{osc,1}_{\pm} (s) \prod_{1\le |\beta|\le |\alpha|} (\partial_x^\beta X_{s,t})^{k_\beta}\Big \|_{L^\infty_{x,v}}
&\lesssim \epsilon \sum_{1\le |\mu|\le |\alpha|}\langle s\rangle^{-\frac32 +\frac32\delta_{\mu}}  \prod_{1\le |\beta|\le |\alpha|} \|\partial_x^\beta X_{s,t}\|_{L^\infty_{x,v}}^{k_\beta}.
  \end{aligned}
  \end{equation}
Note that $\partial^\beta_xX_{s,t}$ is uniformly bounded for $|\beta|=1$, see Proposition \ref{prop-Dchar}. 
Therefore, using \eqref{inductX}, we bound 
$$
\begin{aligned}  
\prod_{2\le |\beta|\le |\alpha|} \|\partial_x^\beta X_{s,t}\|_{L^\infty_{x,v}}^{k_\beta} 
&\lesssim  
 \prod_{2\le |\beta|\le |\alpha|} \langle s\rangle^{-\frac{k_\beta}2 +\frac12 \delta_\beta k_\beta}\langle t\rangle^{\delta_\beta k_\beta} (\epsilon+\zeta(s,t))^{k_\beta}
\\
&\lesssim  
\langle s\rangle^{-\frac12\sum^*_\beta k_\beta +\frac12 \sum^*_\beta \delta_\beta k_\beta}\langle t\rangle^{\sum^*_\beta \delta_\beta k_\beta} (\epsilon+\zeta(s,t))^{\sum^*_\beta k_\beta}
\end{aligned}
$$
in which the summation $\sum_\beta^*$ is taken over $2\le |\beta|\le |\alpha|$ (noting $k_\beta \ge 0$). Now, using the fact that  $\sum_\beta k_\beta = |\mu|$ and $\sum |\beta|k_\beta = |\alpha|$, we have
$$k_1 + \sum^*_\beta k_\beta = |\mu|, \qquad k_1 + \sum^*_\beta |\beta| k_\beta = |\alpha|,$$
in which $k_1$ denotes the number of appearance of the first derivatives $\partial_xX_{s,t}$. This yields 
$$
\begin{aligned}  
\langle s\rangle^{\frac32\delta_{\mu}}  \prod_{1\le |\beta|\le |\alpha|} \|\partial_x^\beta X_{s,t}\|_{L^\infty_{x,v}}^{k_\beta}
&\lesssim  
\langle s\rangle^{ \frac32\delta_{\mu}} \Big[ 1 + 
\langle s\rangle^{-\frac12\sum^*_\beta k_\beta +\frac12\sum^*_\beta \delta_\beta k_\beta }\langle t\rangle^{\sum^*_\beta \delta_\beta k_\beta} (\epsilon+\zeta(s,t))\Big]  
\\
&\lesssim 
\langle s\rangle^{ \frac32\delta_{\alpha}}  +  
\langle s\rangle^{- \frac12 (|\mu|-k_1)+\frac32 \delta_\mu+\frac12(\delta_\alpha - \delta_{k_1})}\langle t\rangle^{\delta_\alpha - \delta_{k_1}} (\epsilon+\zeta(s,t))
\\
&\lesssim 
\langle s\rangle^{ \frac12\delta_{\alpha}} \langle t\rangle^{\delta_{\alpha}}  +  
\langle s\rangle^{ - \frac12 (|\mu|-k_1)+ \frac32(\delta_\mu - \delta_{k_1})}
\langle s\rangle^{ \frac12\delta_{\alpha}} \langle t\rangle^{\delta_\alpha} (\epsilon+\zeta(s,t)) .
\end{aligned}
$$
Note that $\frac32(\delta_\mu - \delta_{k_1}) = \frac32\frac{|\mu|-k_1}{N_0-1} \le \frac12 (|\mu|-k_1)$, provided that $N_0\ge 4$. Therefore, the above yields 
\begin{equation}\label{supprodX} 
\begin{aligned}  
\langle s\rangle^{\frac32\delta_{\mu}}  \prod_{1\le |\beta|\le |\alpha|} \|\partial_x^\beta X_{s,t}\|_{L^\infty_{x,v}}^{k_\beta}
&\lesssim 
\langle s\rangle^{\frac12\delta_\alpha} \langle t\rangle^{\delta_\alpha}   (1+\zeta(s,t)),
\end{aligned}
\end{equation}
for $|\mu|\le |\alpha|$, in which  $\sum_\beta k_\beta = |\mu|$ and $\sum |\beta|k_\beta = |\alpha|$. 
Putting these estimates into \eqref{daVosci}-\eqref{daVosci1}, 
we obtain \eqref{est-daVosc}.

\subsubsection*{Bounds on $\partial_x^\alpha V^{tr}_{s,t}(x,v)$.}

Next, we bound on $ \partial_x^\alpha V^{tr}_{s,t}(x,v)$, namely we shall prove 
\begin{equation}\label{est-daVtr}
\begin{aligned}
\| \partial_x^\alpha V^{tr}_{s,t} \|_{L^\infty_{x,v}} 
\lesssim \epsilon \langle s\rangle^{-2 +\frac12\delta_\alpha}\langle t\rangle^{\delta_\alpha} (1+\zeta(s,t)).
\end{aligned}
\end{equation}
By definition, we recall 
\begin{equation}\label{recalldaVtr}
\partial_x^\alpha V^{tr}_{s,t}(x,v) = -  \int_s^t  
\partial_x^\alpha[Q^{tr}(\tau,X_{\tau,t}(x,v), V_{\tau,t}(x,v))] \; d\tau.
\end{equation}
As before, since $\partial_x^\beta V_{s,t}$ (and $\partial^\gamma_v Q^{tr}$) satisfy better estimates than those for $\partial_x^\beta X_{s,t}$ (and $\partial^\gamma_x Q^{tr}$, respectively), we shall therefore focus terms that only involve derivatives of $X_{s,t}$. Namely, we bound 
\begin{equation}\label{compItr}
I^{tr,\alpha}_{s,t} = \sum_{1\le |\mu|\le |\alpha|} C_{\beta,\mu}  
\int_s^t \partial_x^\mu Q^{tr}(\tau) \prod_{1\le |\beta|\le |\alpha|} (\partial_x^\beta X_{\tau,t}(x,v))^{k_\beta} \; d\tau.
\end{equation}
By definition \eqref{def-Qtr}, we compute 
\begin{equation}\label{comdaQtr}
\begin{aligned} 
\partial_x^\mu Q^{tr}&=\sum_\pm \partial_x^\mu [a_\pm(i\partial_x) F\star_x \phi_{\pm,1}]
-  \sum_\pm \nabla_v \hv  \partial_x^\mu(E\cdot  \nabla_x E^{osc,2}_\pm) + \partial_x^\mu  E^r.
\end{aligned} 
\end{equation}
Using the bootstrap assumptions on $E$ and $S$, we bound
$$ \|  a_\pm(i\partial_x) \partial_x^\mu F(t)\star_x \phi_{\pm,1}\|_{L^\infty_{x,v}} \lesssim \| \partial_x^{\mu} S(t)\|_{L^\infty_{x,v}} \lesssim \epsilon \langle t\rangle^{-3 + \frac32\delta_\mu}$$
and 
$$
\begin{aligned} 
\| \partial_x^\mu(E\cdot  \nabla_x E^{osc,2}_\pm(t))\|_{L^\infty_{x,v}} 
& \lesssim 
\sum_{|\mu_1|+|\mu_2|\le |\mu|}\| \partial_x^{\mu_1}E(t)\|_{L^\infty_x}\|\partial_x^{\mu_2}\nabla_x E^{osc,2}_\pm(t)\|_{L^\infty_{x,v}}
\lesssim 
 \epsilon^2 \langle t\rangle^{-3 +\frac32\delta_{\mu}},  \end{aligned} 
$$
upon noting $\delta_{\mu_1}+\delta_{\mu_2} \le \delta_\mu$. 
Therefore, together with \eqref{bootstrap-decaydaS}, we have 
\begin{equation}\label{dmuQtr}
\| \partial_x^\mu Q^{tr}(t)\|_{L^\infty_{x,v}}  \lesssim 
 \epsilon \langle t\rangle^{-3 +\frac32\delta_{\mu}} 
\end{equation}
and 
\begin{equation}\label{est-Itrst}
\begin{aligned}
\|I^{tr,\alpha}_{s,t} \|_{L^\infty_{x,v}} 
&\lesssim 
\sum_{1\le |\mu|\le |\alpha|}  
\int_s^t \| \partial_x^\mu Q^{tr}(\tau) \|_{L^\infty_{x,v}}\prod_{1\le |\beta|\le |\alpha|} \| \partial_x^\beta X_{\tau,t}\|_{L^\infty_{x,v}}^{k_\beta} \; d\tau
\\&\lesssim \epsilon\sum_{1\le |\mu|\le |\alpha|}  
\int_s^t  \langle \tau\rangle^{-3 +\frac32\delta_{\mu}} \prod_{1\le |\beta|\le |\alpha|} \| \partial_x^\beta X_{\tau,t}\|_{L^\infty_{x,v}}^{k_\beta} \; d\tau
.\end{aligned}
\end{equation}
This yields \eqref{est-daVtr}, upon using \eqref{supprodX} and integrating the above integral in time. 

\subsubsection*{Bounds on $\partial_x^\alpha \hPsi^{osc}_{s,t}(x,v)$.}

Next, we prove bounds on $\partial_x^\alpha \hPsi^{osc}_{s,t}(x,v)$, which we recall
\begin{equation}\label{comPosc}
\begin{aligned}
(t-s)\partial_x^\alpha\hPsi^{osc}_{s,t}(x,v) &=  \nabla_v \hv\sum_\pm \partial_x^\alpha\Big(E^{osc,2}_{\pm}(t,x,v) - E^{osc,2}_{\pm} (s,X_{s,t}(x,v),V_{s,t}(x,v)) \Big).
 \end{aligned}\end{equation}
Since $E^{osc,2}(s)$ satisfies the same (or better) estimates than those for $ V^{osc}_{s,t} = E^{osc,1}_{\pm}(s)$, following the proof of \eqref{est-daVosc}, we thus obtain   
\begin{equation}\label{est-daPosc}
\begin{aligned}  
(t-s)\|\partial_x^\alpha\hPsi^{osc}_{s,t}\|_{L^\infty_{x,v}} 
 \lesssim 
 \epsilon\langle s\rangle^{- \frac32+\frac12\delta_\alpha}\langle t\rangle^{\delta_\alpha} (1+\zeta(s,t)),
 \end{aligned}
 \end{equation}
 for $|\alpha|\le N_0-1$. 
Note that these estimates are better than what's stated for $\partial_x^\alpha\hPsi_{s,t}(x,v)$. 

\subsubsection*{Bounds on $\partial_x^\alpha \hPsi^{tr}_{s,t}(x,v)$.}

Next, we give bounds on $\partial_x^\alpha\hPsi^{tr}_{s,t}(x,v)$, which we recall 
\begin{equation}\label{recalldaPtr}
\begin{aligned}
(t-s)\partial_x^\alpha\hPsi^{tr}_{s,t}(x,v)& = - \nabla_v \hv\int_s^t  
\partial_x^\alpha[ (\tau-s)  
Q^{tr} + Q^{tr,1}](\tau,X_{\tau,t}(x,v), V_{\tau,t}(x,v)) \; d\tau .
 \end{aligned}\end{equation}
We shall prove 
\begin{equation}\label{est-daPtr}
\begin{aligned}  
(t-s)\|\partial_x^\alpha\hPsi^{tr}_{s,t}\|_{L^\infty_{x,v}} 
 \lesssim 
 \epsilon\langle s\rangle^{- 1+\frac12\delta_\alpha}\langle t\rangle^{\delta_\alpha} (1+\zeta(s,t)),
 \end{aligned}
 \end{equation}
for $|\alpha|\le N_0-1$. Again, as $Q^{tr,1}(s)$ defined as in \eqref{def-Qtr1} is of the same form as that of $Q^{tr}(s)$ defined as in \eqref{def-Qtr}, using \eqref{supprodX} again, we obtain 
\begin{equation}\label{bdPtr}
\begin{aligned}
\|(t-s)\partial_x^\alpha\hPsi^{tr}_{s,t} \|_{L^\infty_{x,v}} 
&\lesssim \epsilon\sum_{1\le |\mu|\le |\alpha|}  
\int_s^t (\tau-s) \langle \tau\rangle^{-3 +\frac32\delta_{\mu}} \prod_{1\le |\beta|\le |\alpha|} \| \partial_x^\beta X_{\tau,t}\|_{L^\infty_{x,v}}^{k_\beta} \; d\tau
\\
&\lesssim \epsilon\sum_{1\le |\mu|\le |\alpha|}  
\int_s^t \langle \tau\rangle^{-2 +\frac12 \delta_\alpha} \langle t\rangle^{\delta_\alpha} (1+\zeta(s,t))\; d\tau
\\& \lesssim 
 \epsilon\langle s\rangle^{- 1+\frac12\delta_\alpha}\langle t\rangle^{\delta_\alpha} (1+\zeta(s,t))
,
\end{aligned}
\end{equation}
which proves \eqref{est-daPtr}. 

\subsubsection*{Bounds on $\partial_x^\alpha \Psi^Q_{s,t}(x,v)$.}
Finally, we bound $\partial_x^\alpha \Psi^Q_{s,t}(x,v)$, which are computed by 
$$
\begin{aligned}
(t-s)\partial_x^\alpha \Psi^Q_{s,t}(x,v) &= \int_s^t \partial_x^\alpha V_{\tau,t}^Q(x,v)\; d\tau
\end{aligned}$$
where $V^Q_{s,t} (x,v) = \mathcal{O}(|V_{s,t}-v|^2)$. Using the Leibniz's formula for derivatives and the inductive bounds \eqref{inductV}, we compute 
$$
\begin{aligned}
|(t-s)\partial_x^\alpha \Psi^Q_{s,t}(x,v)| 
& \lesssim \sum_{|\beta|+|\mu| \le |\alpha|} \int_s^t |\partial_x^\beta (V_{s,t}-v)||\partial_x^\mu (V_{s,t}-v)| \; d\tau
\\
& \lesssim \epsilon \sum_{|\beta|+|\mu| \le |\alpha|} \int_s^t \langle \tau\rangle^{-3+\frac12 \delta_\beta+\frac12 \delta_\mu} \langle t\rangle^{\delta_\beta + \delta_\mu}(\epsilon+\zeta(\tau,t))\; d\tau.
\end{aligned}$$
Since $|\beta|+|\mu|\le |\alpha|$, we have $\delta_\beta + \delta_\mu \le \delta_\alpha$. Therefore, integrating in time the above integral yields 
\begin{equation}\label{est-daPQ}
\begin{aligned}
\|(t-s)\partial_x^\alpha \Psi^Q_{s,t}\|_{L^\infty_{x,v}} 
& \lesssim \epsilon \langle s\rangle^{-2+\frac12\delta_\alpha} \langle t\rangle^{\delta_\alpha}(\epsilon+\zeta(s,t)).
\end{aligned}
\end{equation}

\subsubsection*{Bounds on $x$-derivatives.}

Combining all the estimates \eqref{est-daVosc}, \eqref{est-daVtr}, \eqref{est-daPosc}, \eqref{est-daPtr}, and \eqref{est-daPQ} into the expansions in \eqref{expPVst}, we complete the proof of the claim \eqref{claimXV1}, and hence the decay and boundedness in $L^\infty_{x,v}$ of $x$-derivatives of the characteristics, completing the proof of \eqref{decayXV} for $|\alpha_2|=0$.

\subsubsection*{Bounds on $v$-derivatives.}

Finally, we check the stated bounds \eqref{decayXV} that include $v$-derivatives of the characteristic. Indeed, the proof follows similarly as done above, upon observing that $\partial_v $ and $t\partial_x$ derivatives are of the same order. 

%
%


\subsection{Boundedness for top derivatives}\label{sec-Htopdxchar}


In this section, we establish the boundedness of the top derivatives of the characteristics. Specifically, we prove the following proposition. 

\begin{proposition}\label{prop-HDBchar} 
Fix $N_0 \ge 4$. Let $X_{s,t}(x,v), V_{s,t}(x,v)$ be the nonlinear characteristic solving \eqref{ode-char}. Then, for $0\le s\le t$ and $for |\alpha_0|=N_0$, there hold 
\begin{equation}\label{HsXV}
 \begin{aligned}
\sup_v \| \partial_x^{\alpha_0} X_{s,t}(x,v)\|_{L^2_x}  + \sup_v \| \partial_x^{\alpha_0} V_{s,t}(x,v)\|_{L^2_x}   &\lesssim \epsilon  \langle t\rangle^{\frac32 +\delta_{1}} ,
 \end{aligned}
 \end{equation}
with $\delta_1 = \frac{1}{N_0-1}$. 
\end{proposition}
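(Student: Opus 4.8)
The plan is to run a continuous-induction (bootstrap) argument in $L^2_x$, parallel to the proof of Proposition~\ref{prop-HDchar}, at the top orders $2\le|\alpha|\le N_0$ simultaneously. As in that proof one estimates not the raw derivatives but the renormalised quantities $\partial_x^{\alpha}(V_{s,t}-v+V^{osc}_{t,t})$ and $(t-s)\partial_x^{\alpha}(\hPsi_{s,t}-\hv+\hV^{osc}_{t,t})$, substituting the expansions \eqref{decomp-Vst} and \eqref{decomp-Psist}; since $\|V^{osc}_{t,t}\|_{H^{N_0+1}_x}\lesssim\epsilon$ by \eqref{bdVosct}, adding back $\partial_x^{\alpha}V^{osc}_{t,t}$ costs only $O(\epsilon)$, so \eqref{HsXV} follows from the bound on the renormalised quantities at $|\alpha|=N_0$. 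Concretely, I would introduce a functional $\mathcal Z(s,t)$ equal to the supremum over $s\le\tau\le t$ and $2\le|\alpha|\le N_0$ of the $L^2_x$ norms of $\langle v\rangle^{-M}\partial_x^{\alpha}(V_{\tau,t}-v+V^{osc}_{t,t})$ and of $(t-\tau)\langle v\rangle^{-M}\partial_x^{\alpha}(\hPsi_{\tau,t}-\hv+\hV^{osc}_{t,t})$, weighted so that the $|\alpha|=N_0$ slot carries $\langle t\rangle^{-3/2-\delta_1}$ and the slots $|\alpha|<N_0$ carry the weights dictated by Proposition~\ref{prop-HDchar} (the fixed power $\langle v\rangle^{-M}$ absorbs the polynomial $v$-weights produced by $v\mapsto\hv$, its inverse, and by $Q^{tr,1}$), and then show $\mathcal Z(s,t)\le C_0\epsilon+C_0\epsilon\,\mathcal Z(s,t)$, hence $\mathcal Z\le 2C_0\epsilon$ for $\epsilon$ small.

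Differentiating the expansions $|\alpha|$ times via the Fa\`a di Bruno formula \eqref{Faa}, each resulting term (with $A$ standing for one of $E^{osc,j}_\pm$, $Q^{tr,j}$) is of one of three types: (i) the amplitude-only term $(\partial_x^\alpha A)(\tau,X_{\tau,t},V_{\tau,t})(\partial_x X_{\tau,t})^{\otimes|\alpha|}$; (ii) the term linear in a top-order derivative, $(\nabla_x A)(\tau,X_{\tau,t},V_{\tau,t})\cdot\partial_x^\alpha X_{\tau,t}$ and the companion $\nabla_v\hv\cdot\partial_x^\alpha V_{\tau,t}$; (iii) an intermediate product $(\partial_x^\mu A)(\tau,X_{\tau,t},V_{\tau,t})\prod_\beta(\partial_x^\beta X_{\tau,t})^{k_\beta}$ with $1\le|\mu|<|\alpha|$ and every $|\beta|\le|\alpha|-1$ (together with the terms carrying $v$-derivatives of $E^{osc,j}_\pm$, which reduce to the above by Lemma~\ref{lem-PEosc}, exactly as in Proposition~\ref{prop-HDchar}). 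All $L^2_x$ norms, for fixed $v$ on the compact velocity support, are computed after the change of variables $y=X_{\tau,t}(x,v)$, whose Jacobian is $\ge 1/2$ by \eqref{Jacobian-dxdv}: this reduces a type-(i) factor to $\|\partial_x^\alpha A(\tau)\|_{L^2_x}$ times $\|\partial_x X_{\tau,t}\|_{L^\infty}^{|\alpha|}\lesssim1$, and a type-(iii) factor to $\|\partial_x^\mu A(\tau)\|_{L^2_x}$ times $\prod_\beta\|\partial_x^\beta X_{\tau,t}\|_{L^\infty_{x,v}}^{k_\beta}$, each $\|\partial_x^\beta X_{\tau,t}\|_{L^\infty}$ with $2\le|\beta|\le N_0-1$ being controlled by Proposition~\ref{prop-HDchar}. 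The type-(ii) terms carry a time-integrable multiplier, $\|\nabla_x E(\tau)\|_{L^\infty_x}\lesssim\epsilon\langle\tau\rangle^{-3/2}$ and $\|\nabla_x Q^{tr,j}(\tau)\|_{L^\infty_{x,v}}\lesssim\epsilon\langle\tau\rangle^{-3}$ by the bootstrap assumptions and \eqref{Dxv-reQtr}, so after a Gr\"onwall iteration (absorbing the $\nabla_v\hv\cdot\partial_x^\alpha V$ term into the left side) these contribute only the bounded factor $e^{C\epsilon}$.

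The decay of the amplitudes feeding the type-(i) and type-(iii) terms is where the gain lies. At the top order $|\alpha|=N_0$: (a) $\|\partial_x^{\alpha}E^r(\tau)\|_{L^2_x}\lesssim\epsilon\langle\tau\rangle^{-1/2}$, which is precisely \eqref{bootstrap-decaydaSLp} at $p=2$ (where $\delta_{N_0-1,2}=1$); (b) the $S$-dependent parts of $Q^{tr},Q^{tr,1}$ carry the gain of one, resp.\ two, derivatives from the multipliers $\phi_{\pm,1},\phi_{\pm,2}$ of \eqref{def-phipmj} (Lemma~\ref{lem-PEosc}), so that $\|\partial_x^{\alpha}Q^{tr}(\tau)\|_{L^2_x}\lesssim\|\partial_x^{N_0-1}S(\tau)\|_{L^2_x}+(\text{quadratic})\lesssim\epsilon\langle\tau\rangle^{-1/2}$ and $\|\partial_x^{\alpha}Q^{tr,1}(\tau)\|_{L^2_x}\lesssim\epsilon\langle\tau\rangle^{-1/2-\delta_1}$ by \eqref{bootstrap-decaydaS}; and (c) $\|\partial_x^{\alpha}E^{osc,j}_\pm(\tau)\|_{L^2_x}\lesssim\|E^{osc}_\pm(\tau)\|_{H^{N_0}_x}\lesssim\epsilon$, since $E^{osc,j}_\pm=\phi^v_{\pm,j}(i\partial_x)E^{osc}_\pm$ is $j$ derivatives smoother. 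Feeding (c) into the non-integrated pieces $V^{osc}_{s,t}$ and $(t-s)\hPsi^{osc}_{s,t}$ gives $O(\epsilon)$; feeding (a)--(b) into $V^{tr}_{s,t}$ gives $\int_s^t\epsilon\langle\tau\rangle^{-1/2}\,d\tau\lesssim\epsilon\langle t\rangle^{1/2}$; and feeding (a)--(b) into $(t-s)\hPsi^{tr}_{s,t}=-\nabla_v\hv\int_s^t[(\tau-s)Q^{tr}+Q^{tr,1}](\tau,X_{\tau,t},V_{\tau,t})\,d\tau$ gives the dominant contribution $\int_s^t(\tau-s)\,\epsilon\langle\tau\rangle^{-1/2}\,d\tau\lesssim\epsilon\langle t\rangle^{3/2}$, which is exactly the growth in \eqref{HsXV}; the residual $\langle t\rangle^{\delta_1}$ slack is produced only by the type-(iii) bookkeeping, where each $\partial_x^\beta X_{\tau,t}$ contributes $\langle\tau\rangle^{-1/2+\frac12\delta_\beta}\langle t\rangle^{\delta_\beta}$ with $\sum_\beta\delta_\beta\le\delta_{\alpha}=1+\delta_1$. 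Combining with $(t-s)\partial_x^{\alpha}\hV^{osc}_{t,t}=O(\epsilon\langle t\rangle)$ and the Gr\"onwall terms closes $\mathcal Z(s,t)\le C_0\epsilon+C_0\epsilon\,\mathcal Z(s,t)$, whence \eqref{HsXV}. The mixed $v$-derivative refinements, not needed for the statement, would follow by the same heuristic $\partial_v\sim t\partial_x$ used in Proposition~\ref{prop-HDchar}.

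The main obstacle is the closure of this joint $L^2$ bootstrap: one must verify that in \emph{every} Fa\`a di Bruno product at most one characteristic factor needs to be measured in $L^2_x$ --- the amplitude $\partial_x^\mu A$, which is always admissible since every $\partial_x^\beta X$ may be pulled out in $L^\infty$ after the change of variables --- and that the $\langle t\rangle$-powers accumulated by these $L^\infty$ factors (each contributing $\langle t\rangle^{\delta_\beta}$, with $\sum_\beta\delta_\beta\le\delta_{\alpha}$) together with the single $\langle t\rangle^{3/2}$ from the double time-integration in $\hPsi^{tr}$ never exceed $\langle t\rangle^{3/2+\delta_1}$; this is precisely where the largeness $N_0\ge14$ and the exact exponents $\delta_\alpha=|\alpha|/(N_0-1)$ are essential, just as in Proposition~\ref{prop-HDchar}. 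A secondary, routine, difficulty is to propagate the polynomial $\langle v\rangle$-weights produced by $\hv\mapsto v$ and by $Q^{tr,1}$ uniformly in $v$ through the change of variables; this is harmless because the data and equilibrium are compactly supported in $v$ and, by \eqref{quick-bdV}, the velocity support stays bounded in time.
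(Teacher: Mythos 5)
Your argument reproduces the paper's proof in all essentials: the same decomposition from Propositions~\ref{prop-charV}--\ref{prop-charPsi}, the same Fa\`a di Bruno bookkeeping isolating the single factor $\partial_x^{\alpha_0}X_{\tau,t}$ to be measured in $L^2_x$ (with an integrable-in-time coefficient that is then absorbed for $\epsilon$ small), the same $L^\infty_{x,v}$ control of the intermediate factors via Proposition~\ref{prop-HDchar} after the change of variables $y=X_{\tau,t}(x,v)$, and the correct identification of the double time-integration in $\hPsi^{tr}_{s,t}$ against $\|\partial_x^{N_0}Q^{tr}(\tau)\|_{L^2_x}\lesssim\epsilon\langle\tau\rangle^{-1/2}$ as the source of the $\langle t\rangle^{3/2}$ growth, with the $\langle t\rangle^{\delta_1}$ slack coming from the $|\mu|<N_0$ products controlled by \eqref{claimtopVPtr}. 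One small inaccuracy: the combinatorial inequality at the end closes already under $N_0\ge 5$ (the proposition is stated with $N_0\ge 4$), so the threshold $N_0\ge 14$ you invoke is not what is ``essential'' here --- it is required for other parts of the paper, not for this proposition.
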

\begin{proof} In this section, we bound the top derivatives of $X_{s,t}$ and $V_{s,t}$. Indeed, for $ |\alpha_0|=N_0\ge 4$, using again the expansion \eqref{expPVst}, we compute 
\begin{equation}\label{expXVtopda}
\begin{aligned}
\partial^{\alpha_0}_x V_{s,t}(x,v) &= -\partial_x^{\alpha_0} V^{osc}_{t,t}(x,v)+  \partial_x^{\alpha_0} V^{osc}_{s,t}(x,v)  +\partial^{\alpha_0}_xV^{tr}_{s,t}(x,v),
\\
\partial_x^{\alpha_0} \hPsi_{s,t}(x,v)& = -\partial_x^{\alpha_0} \hV^{osc}_{t,t}(x,v)+ \partial_x^{\alpha_0} \hPsi^{osc}_{s,t}(x,v)  +\partial_x^{\alpha_0} \hPsi^{tr}_{s,t}(x,v)+\partial_x^{\alpha_0} \Psi^Q_{s,t}(x,v) .
\end{aligned}
\end{equation}
The first term on the right hand side is already treated in \eqref{bdVosct}, namely 
\begin{equation}\label{L2daVosct}
\| \partial^{\alpha_0}_x V^{osc}_{t,t}\|_{L^2_{x}} \lesssim \| \partial^{\alpha_0}_x E^{osc,1}_{\pm}(t)\|_{L^2_{x}} \lesssim \|E^{osc}_{\pm}(t)\|_{H^{N_0-1}_{x}}  \lesssim \epsilon .
\end{equation}
We shall now check the remaining terms. Indeed, recalling \eqref{daVosci}, we compute 
$$
\begin{aligned}  
\partial_x^{\alpha_0}V^{osc}_{s,t}= C_{\alpha}  
\partial_xE^{osc,1}_{\pm} (s) (\partial_x^{\alpha_0} X_{s,t})+ \sum_{1\le |\mu|\le N_0} C_{\beta,\mu}  
\partial_x^\mu E^{osc,1}_{\pm} (s) \prod_{1\le |\beta|\le N_0-1} (\partial_x^\beta X_{s,t})^{k_\beta} + \mathrm{l.o.t.}
\end{aligned}
$$
in which $\mathrm{l.o.t.}$ denotes terms that involve at least one derivative of $V_{s,t}$, which satisfy better estimates than those that involve $X_{s,t}$, see \eqref{decayXV} and \eqref{HsXV}. Therefore, taking $L^2_x$ norm both sides, we obtain 
$$ \| \partial_xE^{osc,1}_{\pm} (s) (\partial_x^{\alpha_0} X_{s,t})\|_{L^2_x} \le \| \partial_xE^{osc,1}_{\pm} (s)\|_{L^\infty_{x,v}} \| \partial_x^{\alpha_0} X_{s,t}\|_{L^2_x} \lesssim \epsilon \langle s\rangle^{-\frac32 +\delta_1}\| \partial_x^{\alpha_0} X_{s,t}\|_{L^2_x} $$
with $\delta_{1} = \frac{1}{N_0-1}$. Next, we bound 
$$
\begin{aligned}
\Big\|\partial_x^\mu E^{osc,1}_{\pm} (s) \prod_{1\le |\beta|\le N_0-1} (\partial_x^\beta X_{s,t})^{k_\beta}
\Big \|_{L^2_x} & \le \| 
\partial_x^\mu E^{osc,1}_{\pm} (s)\|_{L^2_x} \prod_{1\le |\beta|\le N_0-1} \| \partial_x^\beta X_{s,t}\|_{L^\infty_{x,v}}^{k_\beta},
\end{aligned}$$
in which we note that $\sum_\beta k_\beta = |\mu|$ and $\sum |\beta|k_\beta = |\alpha_0|=N_0$. We bound $\| 
\partial_x^\mu E^{osc,1}_{\pm} (s)\|_{L^2_x}  \lesssim \epsilon$ by the bootstrap assumption \eqref{bootstrap-Hs}. On the other hand, using \eqref{decayXV} and noting $\delta_\beta \le 1$ for $2\le |\beta|\le N_0-1$, we have
$$
 \begin{aligned}
\| \partial^{\beta}_xX_{s,t}\|_{L^\infty_{x,v}}  
&\lesssim \epsilon  \langle t\rangle^{\delta_\beta}.
 \end{aligned}
$$
In addition, recall from Proposition \ref{prop-Dchar} that $\| \partial_xX_{s,t}\|_{L^\infty_{x,v}}  
\lesssim 1$. Therefore, $$
\begin{aligned}  
\prod_{1\le |\beta|\le |\alpha|} \|\partial_x^\beta X_{s,t}\|_{L^\infty_{x,v}}^{k_\beta}
&\lesssim  
1+
\prod_{2\le |\beta|\le |\alpha|} \|\partial_x^\beta X_{s,t}\|_{L^\infty_{x,v}}^{k_\beta}\lesssim 1 + \langle t\rangle^{\sum \delta_\beta k_\beta}
\lesssim \langle t\rangle^{\delta_{\alpha_0}}
\end{aligned}
$$
upon recalling $\sum |\beta|k_\beta = |\alpha_0|$ and $\delta_\beta$ is linear in $|\beta|$. This yields 
\begin{equation}\label{est-topVosc}
\sup_v\|\partial_x^{\alpha_0}V^{osc}_{s,t}\|_{L^2_x} \lesssim \epsilon\langle t\rangle^{\delta_{\alpha_0}} +  \epsilon \langle s\rangle^{-\frac32 +\delta_1}\sup_v\| \partial_x^{\alpha_0} X_{s,t}\|_{L^2_x} 
\end{equation}
Similarly, recalling \eqref{comPosc} and the fact that $E^{osc,2}(s)$ satisfies the same (or better) estimates than those for $E^{osc,1}_{\pm}(s)$, we obtain 
\begin{equation}\label{est-topPosc}
\sup_v\|(t-s)\partial_x^{\alpha_0}\hPsi^{osc}_{s,t}\|_{L^2_x} \lesssim \epsilon
\langle t\rangle^{\delta_{\alpha_0}} +  \epsilon \langle s\rangle^{-\frac32 + \delta_{1}}\sup_v\| \partial_x^{\alpha_0} X_{s,t}\|_{L^2_x} .
\end{equation}
Next, we bound $\partial_x^{\alpha_0} V^{tr}_{s,t}(x,v)$ and $\partial_x^{\alpha_0}\hPsi^{tr}_{s,t}(x,v)$. We shall prove that 
 \begin{equation}\label{est-topVPtr}
\begin{aligned}
\sup_v\|\partial_x^{\alpha_0}V^{tr}_{s,t}\|_{L^2_x} &\lesssim \epsilon
\langle t\rangle^{\delta_{\alpha_0}} + \epsilon \langle s\rangle^{-2 +\delta_{1}} \sup_{s\le \tau \le t}\sup_v\| \partial_x^{\alpha_0} X_{\tau,t}\|_{L^2_x}
\\
\sup_v\|(t-s)\partial_x^{\alpha_0}\hPsi^{tr}_{s,t}\|_{L^2_x} &\lesssim  \epsilon  \langle t\rangle^{\frac12 +\delta_{\alpha_0}}+ \epsilon \langle s\rangle^{-1 +\delta_{1}} \sup_{s\le \tau \le t}\sup_v\| \partial_x^{\alpha_0} X_{\tau,t}\|_{L^2_x}.
\end{aligned}
\end{equation}
In view of \eqref{recalldaVtr} and \eqref{recalldaPtr}, the latter is worse due to the double integrals in time. For this reason, we shall focus only on the following worst term, namely 
$$
\begin{aligned}
J_{s,t}& = \int_s^t  (\tau-s)
\Big[  C_\alpha 
\partial_xQ^{tr} (\tau) (\partial_x^{\alpha_0} X_{\tau,t}) + \sum_{2\le |\mu|\le N_0} C_{\beta,\mu}  
\partial_x^\mu Q^{tr}(\tau) \prod_{1\le |\beta|\le N_0-1} (\partial_x^\beta X_{\tau,t})^{k_\beta}\Big]\; d\tau.
 \end{aligned}$$
Recall from \eqref{dmuQtr} that $ \| \partial_x^\mu Q^{tr}(t)\|_{L^\infty_{x,v}} \lesssim \epsilon \langle t\rangle^{-3 +\frac32\delta_{\mu}} $. In addition, for $2\le |\mu|\le N_0$, we compute 
$$
\begin{aligned}
\|  a_\pm(i\partial_x) \partial_x^\mu F(t)\star_x \phi_{\pm,1}\|_{L^2_{x}} &
\lesssim \| \partial_x^{\mu-1} S(t)\|_{L^2_{x}} \lesssim \epsilon \langle t\rangle^{-\frac32 + \delta_{\mu-1}}
 \end{aligned} 
$$
On the other hand, we have 
$$
\begin{aligned}
\| \partial_x^\mu(E\cdot  \nabla_x E^{osc,2}_\pm(t))\|_{L^2_{x}} 
& \lesssim 
\| E(t)\|_{L^\infty_x}\|E(t)\|_{H^{\mu}_x} \lesssim \epsilon^2 \langle t\rangle^{-\frac32}.
 \end{aligned} 
$$
That is,  recalling \eqref{comdaQtr}, we have 
\begin{equation}\label{bdsupvQtr}
\sup_v \| \partial_x^\mu Q^{tr}(t)\|_{L^2_{x}} \lesssim \epsilon \langle t\rangle^{-\frac32 +\delta_{\mu-1}} .
\end{equation}
Therefore, we compute 
$$
\begin{aligned}
\|J_{s,t}\|_{L^2_x}
& \lesssim \int_s^t  (\tau-s)
\Big[ 
\|\partial_xQ^{tr} (\tau) \|_{L^\infty_x}\| \partial_x^{\alpha_0} X_{\tau,t}\|_{L^2_x} + \sum_{2\le |\mu|\le N_0}  
\|\partial_x^\mu Q^{tr}(\tau)\|_{L^2_x} \prod_{1\le |\beta|\le N_0-1} \|\partial_x^\beta X_{\tau,t}\|_{L^\infty_x}^{k_\beta}\Big]\; d\tau
 \end{aligned}$$
in which we bound 
$$
\begin{aligned}
\int_s^t  (\tau-s)
\|\partial_xQ^{tr} (\tau) \|_{L^\infty_x}\| \partial_x^{\alpha_0} X_{\tau,t}\|_{L^2_x} 
& \lesssim \epsilon \int_s^t  (\tau-s)
\langle \tau\rangle^{-3 +\delta_{1}} \| \partial_x^{\alpha_0} X_{\tau,t}\|_{L^2_x} \; d\tau 
\\
& \lesssim \epsilon \langle s\rangle^{-1 +\delta_{1}} \sup_{s\le \tau \le t}\sup_v\| \partial_x^{\alpha_0} X_{\tau,t}\|_{L^2_x}.
 \end{aligned}$$
On the other hand, for $2\le |\mu|\le N_0$, using \eqref{bdsupvQtr}, we bound 
$$
\begin{aligned}
\int_s^t & (\tau-s)
\|\partial_x^\mu Q^{tr}(\tau)\|_{L^2_x} \prod_{1\le |\beta|\le N_0-1} \|\partial_x^\beta X_{\tau,t}\|_{L^\infty_x}^{k_\beta}
\; d\tau
\lesssim \epsilon \int_s^t  \langle \tau\rangle^{-\frac12 +\delta_{\mu-1}}  \prod_{1\le |\beta|\le N_0-1} \|\partial_x^\beta X_{\tau,t}\|_{L^\infty_x}^{k_\beta}
\; d\tau
. 
 \end{aligned}$$
To prove \eqref{est-topVPtr}, it suffices to prove 
 \begin{equation}\label{claimtopVPtr}
  \langle \tau\rangle^{\delta_{\mu-1}}  \prod_{1\le |\beta|\le N_0-1} \|\partial_x^\beta X_{\tau,t}\|_{L^\infty_x}^{k_\beta} \lesssim \epsilon  \langle t\rangle^{\delta_{\alpha_0}},
  \end{equation}
 for $2\le |\mu|\le N_0$.  
Indeed, in the case when $|\mu|=N_0$, then $\delta_{\mu-1} = 1$ and the product $\prod\|\partial_x^\beta X_{\tau,t}\|_{L^\infty_x}^{k_\beta}  = (\partial_xX_{\tau,t})^{N_0}$, which remains bounded. The stated bound \eqref{claimtopVPtr} follows, since $|\mu|-1 \le |\alpha_0|$ and $\delta_\alpha$ is linear in $|\alpha|$. 
In the case when $|\mu|\le N_0-1$, we let $k_1 < |\mu|$ be the number of the first derivatives appearing in the above product and recall that $\sum_\beta k_\beta = |\mu|$ and $\sum |\beta|k_\beta = |\alpha_0|=N_0$. 
Using \eqref{decayXV}, we bound 
$$
\begin{aligned}  
\langle s\rangle^{\delta_{\mu-1}}  \prod_{1\le |\beta|\le N_0-1} \|\partial_x^\beta X_{s,t}\|_{L^\infty_{x,v}}^{k_\beta}
&\lesssim  \langle s\rangle^{\delta_{\mu-1}}  \prod_{2\le |\beta|\le N_0-1} \|\partial_x^\beta X_{s,t}\|_{L^\infty_{x,v}}^{k_\beta}
\\
&\lesssim \epsilon
\langle s\rangle^{\delta_{\mu-1}-\frac12\sum^*_\beta k_\beta +\frac12\sum^*_\beta \delta_\beta k_\beta }\langle t\rangle^{\sum^*_\beta \delta_\beta k_\beta}
\\
&\lesssim \epsilon
\langle s\rangle^{-\frac12 (|\mu|-k_1)+\delta_{\mu-1}+\frac12(\delta_{\alpha_0} - \delta_{k_1} )}\langle t\rangle^{\delta_{\alpha_0} - \delta_{k_1}} 
\\
&\lesssim \epsilon
\langle s\rangle^{-\frac12 (|\mu|-k_1-1)+(\delta_{\mu-1} -\delta_{k_1}) + \frac12(\delta_1-\delta_{k_1})}\langle t\rangle^{\delta_{\alpha_0}} 
,\end{aligned}
$$
in which we have used $\delta_{\alpha_0} = 1 + \delta_1$ (since $|\alpha_0|=N_0$). Note that $\delta_{\mu-1} -\delta_{k_1} = \frac{|\mu|-k_1-1}{N_0-1} \le \frac14 (|\mu|-k_1-1)$, since $N_0\ge 5$. This yields 
$$
\begin{aligned}  
\langle s\rangle^{\delta_{\mu-1}}  \prod_{1\le |\beta|\le N_0-1} \|\partial_x^\beta X_{s,t}\|_{L^\infty_{x,v}}^{k_\beta}
&\lesssim \epsilon
\langle s\rangle^{-\frac14 (|\mu|-k_1-1)+ \frac12(\delta_1-\delta_{k_1})}\langle t\rangle^{\delta_{\alpha_0}} 
.\end{aligned}
$$
In the case when $|\mu|=k_1+1$, we must have $k_1\ge 1$, since $|\mu|\ge 2$, and so $\delta_1 \le \delta_{k_1}$. On the other hand, when $|\mu| \ge k_1+2$, we have $\delta_1 =\frac{1}{N_0-1} \le \frac12 \le \frac12(|\mu|-k_1-1)$, since $N_0\ge 3$. The claim \eqref{claimtopVPtr} thus follows. Thsi ends the proof of \eqref{est-topVPtr}. 

Next, we bound $\partial_x^\alpha \Psi^Q_{s,t}(x,v)$. Indeed, by definition (see Proposition \ref{prop-charPsi}) and the decay estimates \eqref{decay-Vst}, we compute 
$$
\begin{aligned}
\|(t-s)\Psi^Q_{s,t} \|_{H^{\alpha_0}_x} 
& \lesssim  \int_s^t \|V_{\tau,t}-v\|_{L^\infty_x}\| V_{\tau,t}-v \|_{H^{\alpha_0}_x} \; d\tau
\\
& \lesssim \epsilon\int_s^t \langle \tau\rangle^{-3/2} \| V_{\tau,t}-v \|_{H^{\alpha_0}_x}\; d\tau
\\
& \lesssim \epsilon\langle s\rangle^{-1/2} 
\sup_{s\le \tau \le t}\sup_v\| V_{\tau,t}-v \|_{H^{\alpha_0}_x}.
\end{aligned}$$
Combining \eqref{est-topVosc}, \eqref{est-topPosc}, \eqref{est-topVPtr}, and the above estimates into the expansion \eqref{expXVtopda}, and recalling that $\partial_x^{\alpha_0} X_{s,t} = (t-s)\partial_x^{\alpha_0}\hPsi_{s,t}$, we obtain 
$$
\begin{aligned} 
\|\partial_x^{\alpha_0} V_{s,t}\|_{L^2_x} 
& \lesssim \epsilon
\langle t\rangle^{\delta_{\alpha_0}} +  \epsilon \langle s\rangle^{-\frac32+ \delta_{1}}\sup_v\| \partial_x^{\alpha_0} X_{s,t}\|_{L^2_x} 
\\
\|\partial_x^{\alpha_0} X_{s,t}\|_{L^2_x} &\lesssim \epsilon  \langle t\rangle^{\frac12 +\delta_{\alpha_0}}+ \epsilon \langle s\rangle^{-1 +\delta_{1}} \sup_{s\le \tau \le t}\sup_v\| \partial_x^{\alpha_0} X_{\tau,t}\|_{L^2_x} + \epsilon\langle s\rangle^{-1/2} 
\sup_{s\le \tau \le t}\sup_v\| V_{\tau,t}-v \|_{H^{\alpha_0}_x}.
\end{aligned}$$
Adding up the two inequalities and taking $\epsilon$ sufficiently small to absorb the top derivatives to the left, we obtain \eqref{HsXV}, and thus complete the proof of Proposition \ref{prop-HDBchar}. 
\end{proof}



\section{Density decay estimates}\label{sec-sourceest}


In this section, we establish bounds on the nonlinear sources, namely proving Proposition \ref{prop-bdsS-goal}, which we recall below.

\begin{proposition} \label{prop-bdS} Let $S$ be the nonlinear source terms defined as in \eqref{nonlinear-S}. Under the bootstrap assumptions listed in Section \ref{sec-bootstrap}, there hold
\begin{equation}\label{Lpbounds-cS}
\begin{aligned}
\| S (t)\|_{L^p_x}   &\lesssim (\epsilon_0+ \epsilon^2) \langle t\rangle^{-3(1-1/p)} \log t
\\
\sup_{q\in \ZZ} 2^{(1-\delta)q} \|P_qS(t)\|_{L^p_x} + \|\partial_x S(t)\|_{L^p_x} &\lesssim (\epsilon_0 + \epsilon^2) \langle t\rangle^{-3(1-1/p)} 
\end{aligned}\end{equation}
for any $1 \le p \le \infty$ and $\delta \in (0,1)$, where $P_q$ denotes the Littlewood-Paley projection on the dyadic interval $[2^{q-1}, 2^{q+1}]$. 
In addition, 
\begin{equation}\label{Hsbounds-cS0}
\begin{aligned}
\|\partial_x^\alpha S(t)\|_{L^p_x}   \le \epsilon \langle t\rangle^{-3(1-1/p) + \delta_{\alpha,p}} 
, \qquad 
\|S(t)\|_{H^{\alpha_0}_x} &\lesssim \epsilon \langle t\rangle^{\delta_1},
\end{aligned}
\end{equation}
for all $|\alpha|\le N_0-1$, with $\delta_{\alpha,p} = \max\{ \delta_\alpha, \frac32\delta_\alpha (1-1/p)\}$, where $\delta_\alpha = \frac{|\alpha|}{N_0-1}$. 
\end{proposition}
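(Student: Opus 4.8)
I would first dispatch $S^0(t,x)=\int_{\RR^3}f_0(X_{0,t}(x,v),V_{0,t}(x,v))\,dv$ by the change of variables $v\mapsto y=X_{0,t}(x,v)$: by Proposition~\ref{prop-Dchar} the Jacobian obeys $|\det\nabla_vX_{0,t}|\gtrsim\langle v\rangle^{-5}t^3$ and $|\det\nabla_xX_{0,t}-1|\lesssim\epsilon$, so that this dispersive change of variables in $v$ (for the $L^p$-decay), combined with the Liouville-type change of variables in $(x,v)$ (for the $L^1_x$-bound), the compact support of $f_0$ in $v$, and the Sobolev regularity of $f_0-\mu$, yields $\|\partial_x^\alpha S^0(t)\|_{L^p_x}\lesssim\epsilon_0\langle t\rangle^{-3(1-1/p)}$ with no logarithmic loss, and the analogous Littlewood--Paley bound. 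The heart of the matter is $S^\mu$. Using $X_{s,t}=x-(t-s)\hPsi_{s,t}$ and inserting the expansions of Propositions~\ref{prop-charV}--\ref{prop-charPsi} for $\hPsi_{s,t}$ and $V_{s,t}$, together with the decomposition $E=\sum_\pm E^{osc}_\pm+E^r$, I would Taylor expand the integrand $E(s,x-(t-s)\hv)\cdot\nabla_v\mu(v)-E(s,X_{s,t})\cdot\nabla_v\mu(V_{s,t})$ around $(x-(t-s)\hv,v)$. This produces a finite list of terms, each a product of (i) a factor of $E$, $\nabla_v\mu$, or one of their differences, evaluated along either the free or the nonlinear flow, and (ii) a ``discrepancy'' factor drawn from $\{\,V_{s,t}-v,\ \hPsi_{s,t}-\hv,\ \det(\nabla_v\Psi_{s,t})-1\,\}$, each of which splits into an oscillatory piece ($V^{osc}_{t,t}$, $\hPsi^{osc}_{s,t}$, and the oscillatory part of $\det(\nabla_v\Psi_{s,t})$ from Proposition~\ref{prop-detPsi}), a transport piece ($V^{tr}_{s,t}$, $\hPsi^{tr}_{s,t}$), and faster remainders. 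Organizing by which piece of $E$ and which discrepancy appear gives exactly the particle--particle, particle--wave, and wave--wave decomposition announced in the introduction.

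\textbf{The time split.} For each such term I would split $\int_0^t(\cdots)\,ds$ at $s=t/2$. On $(t/2,t)$ I use the Klein--Gordon dispersive bounds for $E$ from Proposition~\ref{prop-Greenphysical} and the bootstrap assumptions of Section~\ref{sec-bootstrap}, exploiting that $X_{s,t}$ stays within $\mathcal{O}(\epsilon\langle s\rangle^{-1})$ of $x-(t-s)\hv$ (Proposition~\ref{prop-charPsi}), so the $v$-integral is estimated either by a change of variables with Jacobian $\gtrsim(t-s)^3\langle v\rangle^{-5}$ or, when $t-s$ is small, directly by $\|E(s)\|_{L^\infty_x}$ against the compactly supported difference-form integrand; since the integrand is at worst $\mathcal{O}(\langle s\rangle^{-3})$ on this range, integrating over $(t/2,t)$ costs no decay. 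On $(0,t/2)$ I use free-transport dispersion in $v$ instead: the change of variables $y=X_{s,t}(x,v)$ (Jacobian $\gtrsim(t-s)^3\langle v\rangle^{-5}$) together with the $L^1_v$ and $L^2_v$ characteristic bounds of Proposition~\ref{prop-Lpchar}. For the \emph{particle--particle} terms (built from $E^r$ and the transport discrepancies) the argument follows the screened Vlasov--Poisson analysis of \cite{HKNR2}; because the argument of $E^r$ carries the slowly decaying shift $X^{tr}_{s,t}=\mathcal{O}(\langle s\rangle^{-1})$, the $s$-integral is only logarithmically divergent -- this is precisely the origin of the $\log t$ in the $L^p$-bound for $S$, and it disappears once one $x$-derivative or a projection $P_q$ is present, since the improved transport/phase-mixing estimates (cf.\ \eqref{conv-GtrLp}) together with the gain $\partial_s\Psi^{tr}_{s,t}=\mathcal{O}(\langle s\rangle^{-2}\langle t\rangle^{-1})$ from Proposition~\ref{prop-charPsi} then supply the missing power. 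For the \emph{particle--wave} terms (one factor $E^{osc}_\pm$, or $E^r$ paired with the oscillatory discrepancy $V^{osc}$) I integrate by parts in $s$, using $\partial_sE^{osc}_\pm=\lambda_\pm(i\partial_x)E^{osc}_\pm$ and the boundedness of the multipliers $\phi^v_{\pm,j}(i\partial_x)$ from Lemma~\ref{lem-PEosc} (equivalently the identity \eqref{keyint}); this trades a factor $\langle s\rangle^{-3/2}$ for a gain of $\langle s\rangle^{-1}$ and closes these contributions (Section~\ref{sec-SEosc}).

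\textbf{The main obstacle: resonant wave--wave interaction.} The genuinely hard terms are the \emph{wave--wave} ones, where a factor $E^{osc}_\pm$ meets an oscillatory discrepancy $V^{osc}_{\pm}$, $\hPsi^{osc}_{\pm}$, or the oscillatory part of $\det(\nabla_v\Psi_{s,t})$ (Proposition~\ref{prop-detPsi}). In the non-resonant pairings $\{\pm,\pm\}$ the combined phase $\lambda_\pm(k)+\lambda_\pm(\ell)=\pm i(\nu_*(k)+\nu_*(\ell))$ never vanishes, so integration by parts in $s$ again yields an extra $\langle s\rangle^{-1}$. In the resonant pairing $\{\pm,\mp\}$, however, the phase $\lambda_\pm(k)+\lambda_\mp(\ell)=\pm i(\nu_*(k)-\nu_*(\ell))$ vanishes on $|k|=|\ell|$, the integration-by-parts gain is lost, and the bare integrand is only $\mathcal{O}(\langle s\rangle^{-3})$ -- integrated over $(0,t)$ this would give merely $\langle t\rangle^{-2}$, short of the required $\langle t\rangle^{-3}\log t$. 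The resolution, carried out following Section~\ref{sec-resHosc}, rests on the fact that the resonant contribution can only appear in divergence form, i.e.\ carrying an extra frequency factor $k$ (this is where the precise structure of the oscillatory velocity corrections and of $\det(\nabla_v\Psi_{s,t})$ is used); one then converts that frequency factor into $(t-s)^{-1}$ of genuine free-transport decay via the velocity-averaging identity $\int e^{-i(t-s)k\cdot\hv}(\cdots)\,dv$, invoking the $L^2_v$ characteristic bounds of Proposition~\ref{prop-Lpchar} and the oscillatory Jacobian expansion of Proposition~\ref{prop-detPsi}. Finally, for the higher-derivative and $H^{\alpha_0}$ bounds in \eqref{Hsbounds-cS0} I would differentiate every term above, apply the Fa\`a di Bruno formula as in the proof of Proposition~\ref{prop-HDchar}, and feed in the higher-derivative characteristic estimates of Propositions~\ref{prop-HDchar}--\ref{prop-HDBchar} and the bootstrap bounds \eqref{bootstrap-decaydaS}--\eqref{bootstrap-Hs}; the cascade is arranged so that each derivative costs exactly the factor $\langle t\rangle^{\delta_\alpha}$ with $\delta_\alpha=|\alpha|/(N_0-1)$, which closes comfortably since $N_0\ge14$.
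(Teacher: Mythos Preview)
Your overall strategy matches the paper's proof closely: the decomposition $S=S^0+S^\mu$, the dispersive change of variables for $S^0$, the expansion of $S^\mu$ through the kernel $H_{s,t}$ of Proposition~\ref{prop-SR} with its oscillatory/transport/quadratic/remainder pieces, the time split at $s=t/2$, and the organization into particle--particle, particle--wave, and wave--wave contributions are exactly the architecture of Section~\ref{sec-sourceest}. Two specific mechanisms in your sketch are inaccurate, however, and following them literally would not close the argument.

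\emph{Log removal in the particle--particle term.} Your explanation that the $\log t$ disappears from $\partial_x S^{r,tr}$ via ``$\partial_s\Psi^{tr}_{s,t}=\mathcal{O}(\langle s\rangle^{-2}\langle t\rangle^{-1})$'' and \eqref{conv-GtrLp} is not the paper's mechanism and does not obviously work: a spatial derivative on the integrand does not by itself convert the $\langle s\rangle^{-1}$ from $H^{tr}_{s,t}$ into something integrable, and \eqref{conv-GtrLp} concerns the Green function $G^r$, not the source $S$. The paper's argument (Proposition~\ref{prop-decayEHR}) is more structural: after the change of variables $y=x-(t-s)\hv$, one computes $\frac{d}{dx}H^{tr}_{s,t}(x,v_{s,t}(x,y))$ and splits it as $H^{tr,1}_{s,t}+H^{tr,R}_{s,t}$, where the remainder is $\mathcal{O}(\langle t\rangle^{-1})$ and the main part satisfies $\|\langle v\rangle^N H^{tr,1}_{s,t}(x,\cdot)\|_{L^p_y}\lesssim\epsilon\langle t\rangle^{-1+3/p}$; a H\"older split in $y$ with exponent $p>3$ then trades the logarithm for convergent integrals (see \eqref{logbd-dxSrH1}--\eqref{logbd-dxSrH1L1}), and the Littlewood--Paley bound follows by interpolating the $J^{r,tr}$ estimates as in \eqref{LP-interpolate}.

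\emph{Resonant wave--wave symbol.} Your claim that the $\{\pm,\mp\}$ contribution ``can only appear in divergence form, i.e.\ carrying an extra frequency factor $k$'' captures only half of Lemma~\ref{0lem-Qosc}: the interaction symbol satisfies $|M_2(k,\ell,v)|\lesssim C_{k,\ell}\bigl(|k|+|\lambda_+(\ell)+\lambda_-(k-\ell)|\bigr)$, and the two summands require different treatments. The $|k|$ piece is indeed converted to $(t-s)^{-1}$ by integration by parts in $v$ as you say, but the $|\lambda_+(\ell)+\lambda_-(k-\ell)|$ piece is handled instead by integration by parts in $s$: this factor exactly cancels the vanishing phase $W(k,\ell)=\lambda_+(\ell)+\lambda_-(k-\ell)$ in the denominator, so that $\tilde m_2=m_2/W$ is again a good symbol and the non-resonant machinery applies (see the treatment of $\FcS^{2,osc}_{\pm,\mp}$ in Section~\ref{sec-resHosc}). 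Omitting this second half leaves a real gap in the resonant estimate.
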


\subsection{Structure of nonlinear interaction}\label{sec-NLstructure}

In this section, we study the precise structure of nonlinear interaction. Specifically, recalling from \eqref{nonlinear-S} that the source density involves a contribution from initial data, plus a nonlinear reaction term defined by 
\begin{equation}\label{Smu-recall}
\begin{aligned}
S^{\mu}(t,x) &= \int_0^t \int_{\RR^3}  \Big[ E(s,x - (t-s)\hv )\cdot \nabla_v \mu(v) 
 -E(s,X_{s,t}(x,v))\cdot \nabla_v \mu(V_{s,t}(x,v)) \Big] \, dv  ds .
\end{aligned}\end{equation}
In view of the nonlinear characteristic description established in the previous section, see Proposition \ref{prop-charPsi}, 
the integrand decays at best at the rate of order $s^{-3/2}s^{-1}$, which is far from the bootstrap decay at order $t^{-3}$, not to mention the time integration. For this reason, we need to exploit the nonlinear structure of $S^\mu(t,x)$ in order to improve decay. 

Precisely, we obtain the following proposition. 

\begin{proposition}\label{prop-SR}
Let $S^\mu(t,x)$ be defined as in \eqref{Smu-recall}. Then, we may write 
\begin{equation}\label{key-SR}
\begin{aligned}
S^\mu(t,x) & =  \int_0^t \int_{\RR^3} E(s,x - (t-s)\hw ) \cdot H_{s,t}(x,w) \; dw ds
\end{aligned}\end{equation}
in which the nonlinear interaction kernel $H_{s,t}(x,w)$ can be expressed as 
\begin{equation}\label{decomp-Hst}
H_{s,t}(x,w)
 =  T_{s,t}^{osc}(x,w) + H_{s,t}^{osc}(x,w) + H_{s,t}^{tr}(x,w) + H^Q_{s,t}(x,w)+ H_{s,t}^R(x,w) 
 \end{equation}
where 
\begin{equation}\label{def-THosctr}
\begin{aligned}
 T^{osc,\pm}_{s,t}(x,w) &= -  \nabla_w\mu \nabla_w \cdot E^{osc,1}_{\pm}(t,x,w) +  \frac{1}{t-s} \Big(  \nabla_w^2\mu E^{osc,2}_{\pm} -  \nabla_w\mu \nabla_w \cdot E^{osc,2}_{\pm}\Big) (t,x,w), 
\\
H_{s,t}^{osc,\pm}(x,w) & = - \Big( \nabla_w^2 \mu E^{osc,1}_{\pm} + \nabla_w \mu \nabla_w \cdot E^{osc,1}_{\pm}\Big)(s,x - (t-s)\hw,w) 
\\&\quad - \frac{1}{t-s}  \Big(  \nabla_w^2 \mu  E^{osc,2}_{\pm} + \nabla_w \mu \nabla_w \cdot E^{osc,2}_{\pm}\Big) (s,x-(t-s)\hw,w) ,
\\
H^{tr}_{s,t}(x,w)  &= \nabla_w \mu  \int_s^t  
\frac{(t-\tau) (\tau-s)}{t-s}  
\nabla_w\hw\cdot \nabla_xQ^{tr}(\tau,x - (t-\tau)\hw, w) \; d\tau
\end{aligned}
\end{equation}
with a quadratic term $H^Q_{s,t}(x,w)$ of the form 
\begin{equation}\label{def-THosctr1}
\begin{aligned}
H^{Q,\pm}_{s,t}(x,w) = \TE_\pm^{osc}(s,x - (t-s)\hw, w) H^{tr}_{s,t}(x,w) 
\end{aligned}
\end{equation}
for some oscillatory component $\TE_\pm^{osc}(s,x,w)$ that is a linear combination of $E_\pm^{osc,j}(s,x,w)$. Furthermore, the remainder $H^R_{s,t}(x,w)$ satisfies
$$ H^R_{s,t}(x,w) = H^{R,1}_{s,t}(x,w) + H^{R,2}_{s,t}(x,w)$$
where for $|\alpha|\le 1$,
$$\| \langle w\rangle^N\partial_x^\alpha H^{R,1}_{s,t}\|_{L^\infty_{x,w}} \lesssim \epsilon \langle s\rangle^{-2+\delta_1} , \qquad \| \langle w\rangle^N\partial_x^\alpha H^{R,2}_{s,t}\|_{L^\infty_{x,w}} \lesssim \epsilon \langle s\rangle^{-3+\delta_1}.$$
In addition, the following remainder bounds
\begin{equation}\label{bd-keyHR}
\begin{aligned} 
 \sup_w \| \langle w\rangle^N\cH^R_{s,t}\|_{L^2_{x}} \lesssim \epsilon \langle s\rangle^{-3/2+\delta_1} , \qquad  &\sup_x \| \langle w\rangle^N \cH^R_{s,t}\|_{L^2_w} \lesssim \epsilon \langle t\rangle^{-3/2}\langle s\rangle^{-3/2+\delta_1} ,
 \end{aligned}\end{equation}
 hold for the functions $\partial_s^2 \partial_x^\alpha H^{tr}_{s,t}$, $\partial_s \partial_x^\alpha H^{R,1}_{s,t}$, and $ \partial_x^\alpha H^{R,2}_{s,t}$ for $|\alpha|\le 1$.

 \end{proposition}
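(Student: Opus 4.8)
The plan is to derive \eqref{key-SR} from \eqref{Smu-recall} by performing the change of variables $v \mapsto w = \Psi_{s,t}(x,v)$ in the nonlinear term. By the straightening identity \eqref{straightX} this makes $X_{s,t}(x,v)$ into $x - (t-s)\hw$ exactly, while the Jacobian bound \eqref{Jacobian-dxdv} gives $\det(\nabla_v\Psi_{s,t}) = 1 + \cO(\epsilon)$, so $v\mapsto \Psi_{s,t}(x,v)$ is a global $C^1$-diffeomorphism of $\RR^3$ — a uniformly small perturbation of the identity by Proposition~\ref{prop-Dchar} — with $C^1$ inverse $w \mapsto v(x,w)$, whose $x$- and $v$-derivatives are controlled via the implicit function theorem together with Propositions~\ref{prop-Dchar} and \ref{prop-HDchar}, and which satisfies $v(x,w) - w = -(\Psi_{s,t}-v)|_{v=v(x,w)}$, hence obeys the bounds of Proposition~\ref{prop-charPsi}. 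Relabelling $v$ as $w$ in the free-transport term then yields \eqref{key-SR} with
\[ H_{s,t}(x,w) = \nabla_v\mu(w) - \frac{\nabla_v\mu\big(V_{s,t}(x,v(x,w))\big)}{\det\big(\nabla_v\Psi_{s,t}(x,v(x,w))\big)} . \]

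Next I would Taylor expand this kernel. Writing $A := \nabla_v\mu(V_{s,t}) - \nabla_v\mu(w)$ and $D := \det(\nabla_v\Psi_{s,t}) - 1$ (both at $v=v(x,w)$), the geometric series gives $H_{s,t} = -A + \nabla_v\mu(w)\,D + A\,D - \nabla_v\mu(w)\,D^2 + \cdots$. The crucial observation is that $V_{s,t} - w = V_{s,t} - \Psi_{s,t}$ is the genuinely small difference between the velocity and its time-average, of size $\cO(\epsilon\langle s\rangle^{-3/2})$ on the $w$-support of $\mu$, so $A = \nabla_v^2\mu(w)(V_{s,t}-w) + \cO(|V_{s,t}-w|^2)$, while $D$ is given explicitly by Proposition~\ref{prop-detPsi}. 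Inserting the structural decompositions of Propositions~\ref{prop-charV}, \ref{prop-charPsi}, \ref{prop-detPsi} into $A$ and $D$, and using $v(x,w) = w + (\text{small})$ to replace $v(x,w)$ by $w$ inside all the regular, compactly-supported factors at the cost of controllable remainders, one sorts the terms by where the oscillatory field is sampled. Terms sampled at $(t,x,w)$, coming from the $-V^{osc}_{t,t}$ contribution to $A$ and from $\Psi^{Tosc,1}_{s,t}$, assemble into $T^{osc}_{s,t}$; terms sampled at $(s, x-(t-s)\hw, w)$ — recall $X_{s,t}(x,v(x,w)) = x-(t-s)\hw$ and $V_{s,t}(x,v(x,w)) = w + (\text{small})$ — coming from $V^{osc}_{s,t}$ and $\Psi^{osc,1}_{s,t}$, assemble into $H^{osc}_{s,t}$; the already-integral transport term $\nabla_v\mu(w)\,\Psi^{tr,1}_{s,t}$ gives $H^{tr}_{s,t}$, with the error from $X_{\tau,t}(x,v(x,w)) \neq x-(t-\tau)\hw$ for $\tau\neq s$ pushed into the remainder; the single slowly-decaying quadratic term $\nabla_v\mu(w)\,\Psi^{Q,1}_{s,t} = c_0\,\nabla_w\mu\,\Psi^{osc,1}_{s,t}\Psi^{tr,1}_{s,t}$, with $\Psi^{osc,1}_{s,t}$ expressed through $E^{osc,j}_\pm(s, x-(t-s)\hw, w)$, produces $H^Q_{s,t}$ in the claimed form $\TE^{osc}_\pm H^{tr}_{s,t}$; and everything else — $\Psi^{tr}_{s,t}$ and $\Psi^{R}_{s,t}$ of Proposition~\ref{prop-charPsi}, the term $\Psi^{R,1}_{s,t}$ of Proposition~\ref{prop-detPsi}, $V^{tr}_{s,t}$, the cross terms $A\,D$ and $\nabla_v\mu(w)D^2$, the second-order Taylor remainder, and the replacement errors — goes into $H^{R} = H^{R,1} + H^{R,2}$, the split dictated by whether the term decays like $\langle s\rangle^{-2+\delta_1}$ or $\langle s\rangle^{-3+\delta_1}$.

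The $L^\infty$ and first-order $x$-derivative bounds on $H^{R,1}$ and $H^{R,2}$ then follow directly from the $L^\infty_{x,v}$ estimates of Propositions~\ref{prop-charPsi}, \ref{prop-detPsi}, \ref{prop-Dchar}, \ref{prop-HDchar} and the bootstrap decay of $E$, $S$, $Q^{tr}$, $Q^{tr,1}$; the weights $\langle w\rangle^N$ are harmless since $\mu$ is compactly supported. For the $L^2$ bounds \eqref{bd-keyHR} on $\partial_s^2\partial_x^\alpha H^{tr}_{s,t}$, $\partial_s\partial_x^\alpha H^{R,1}_{s,t}$ and $\partial_x^\alpha H^{R,2}_{s,t}$ with $|\alpha|\le 1$, the point is precisely that these are the combinations for which the decay drops to $\langle s\rangle^{-3/2+\delta_1}$: each $\partial_s$ applied to the double time-integral $H^{tr}_{s,t}$ consumes one time integration and leaves a boundary evaluation of $\nabla_xQ^{tr}$ (which decays at $\langle s\rangle^{-3}$), one $\partial_s$ applied to the transport-type $H^{R,1}_{s,t}$ gains $\langle s\rangle^{-1}$ by Propositions~\ref{prop-charPsi}, \ref{prop-detPsi}, while $H^{R,2}_{s,t}$ already decays at $\langle s\rangle^{-3+\delta_1}$; the companion bound $\sup_x\|\cdot\|_{L^2_w} \lesssim \epsilon\langle t\rangle^{-3/2}\langle s\rangle^{-3/2+\delta_1}$ then gains the extra $\langle t\rangle^{-3/2}$ by changing variables $w\mapsto y = x-(t-s)\hw$ and invoking the Jacobian lower bound $|\det\nabla_w[(t-s)\hw]| \gtrsim (t-s)^3\langle w\rangle^{-5}$, exactly as in Proposition~\ref{prop-Lpchar}. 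I expect the main obstacle to be the bookkeeping of the term-sorting step: verifying that composing the many oscillatory factors $E^{osc,j}_\pm(s, X_{s,t}, V_{s,t})$, and their nested time integrals, with the inverse map $v(x,w)$ yields only the claimed remainders — in particular that differentiating $v(x,w)$ in $x$ and $v$ does not degrade decay, and that the $\partial_s^2$ estimate on $H^{tr}_{s,t}$ survives the differentiation of $X_{\tau,t}$, $V_{\tau,t}$, and $Q^{tr}$ inside the double time integral.
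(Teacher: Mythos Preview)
Your overall approach is exactly the paper's: change variables $v\mapsto w=\Psi_{s,t}(x,v)$, write $H_{s,t}(x,w)=\nabla_v\mu(w)-\nabla_v\mu(V_{s,t})/\det(\nabla_v\Psi_{s,t})$, Taylor-expand, and sort by where the oscillatory factor is sampled. The $L^2$ bounds via the $w\mapsto y$ Jacobian are also the paper's argument (Proposition~\ref{prop-Lpchar}).

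There is, however, one concrete misplacement. You send the full quadratic cross terms $AD$ and $\nabla_v\mu(w)\,D^2$ into the remainder $H^R=H^{R,1}+H^{R,2}$. This does not work: both of these carry an $\mathit{osc}\times\mathit{tr}$ leading piece (namely $(\Psi^{osc}_{s,t}-V^{osc}_{s,t})\,\Psi^{tr,1}_{s,t}$ from $AD$ and $\Psi^{osc,1}_{s,t}\Psi^{tr,1}_{s,t}$ from $D^2$), and $\partial_s$ applied to the oscillatory factor does \emph{not} gain decay. Concretely, $\partial_s E^{osc,j}_\pm(s,\cdot)$ is still only $\cO(\epsilon)$ in $L^2_x$, so $\partial_s(\mathit{osc}\times\mathit{tr})$ is only $\cO(\epsilon^2\langle s\rangle^{-1})$ in $\sup_w L^2_x$, which violates the required $\langle s\rangle^{-3/2+\delta_1}$ bound for $\partial_s H^{R,1}$ in \eqref{bd-keyHR}; and the $L^\infty$ size $\langle s\rangle^{-5/2}$ is too large for $H^{R,2}$. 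In the paper these $\mathit{osc}\times\mathit{tr}$ pieces are peeled off and put into $H^Q_{s,t}$ as well --- together with the $\Psi^{Q,1}_{s,t}$ contribution you already identified --- which is exactly why the statement allows $\TE^{osc}_\pm$ to be a \emph{linear combination} of the $E^{osc,j}_\pm$. Once these are extracted, what remains of $AD$ and $D^2$ is either $\mathit{tr}\times\mathit{tr}$ (size $\langle s\rangle^{-2}$, gains under $\partial_s$) or $\mathit{osc}\times\mathit{osc}$ (already $\langle s\rangle^{-3}$), and those do fit into $H^{R,1}$, $H^{R,2}$ respectively.

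One smaller point: in $A$ there is no $V^{osc}_{t,t}$ contribution, since $V_{s,t}-w=V_{s,t}-\Psi_{s,t}$ and the $-V^{osc}_{t,t}$ terms in the expansions of $V_{s,t}$ and $\Psi_{s,t}$ (Propositions~\ref{prop-charV}, \ref{prop-charPsi}) cancel. The $T^{osc}$ contribution from $A$ that produces the $\frac{1}{t-s}\nabla_w^2\mu\,E^{osc,2}_\pm(t,x,w)$ piece instead comes from the $t$-boundary term inside $\Psi^{osc}_{s,t}$.
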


Let us comment on the proposition. The interaction kernel $H_{s,t}(x,w)$ describes the deformation of the nonlinear characteristics from the free transport dynamics. The expansion \eqref{decomp-Hst} resembles that of the Jacobian determinant $\det(\nabla_v \Psi_{s,t})$ in \eqref{decomp-detPsi}, extracting the oscillation of order $s^{-3/2}$ and the transport component $H^{tr}_{s,t}(x,w)$ which has slowest decay at order $s^{-1}$, whose $s$-derivatives gain extra decay. We also single out the quadratic term $H^Q_{s,t}(x,t)$, which decays at order $s^{-5/2}$, whose $s$-derivatives do not gain extra decay. What is important in the proposition is that the remainder or its $s$-derivative is of order $s^{-3}$. 
In addition, under the bootstrap assumptions that for $|\alpha|\le 1$, we have    
\begin{equation}\label{bd-keyH}
\begin{aligned}
\| \langle w\rangle^N \partial_x^\alpha T^{osc}_{s,t}\|_{L^\infty_{x,w}} \lesssim \epsilon \langle t\rangle^{-3/2} , \qquad & \| \langle w\rangle^N\partial_x^\alpha H^{osc}_{s,t}\|_{L^\infty_{x,w}} \lesssim \epsilon \langle s\rangle^{-3/2} ,
\\
 \| \langle w\rangle^N\partial_x^\alpha H^{tr}_{s,t}\|_{L^\infty_{x,w}} \lesssim \epsilon \langle s\rangle^{-1} , \qquad & \| \langle w\rangle^N\partial_x^\alpha H^Q_{s,t}\|_{L^\infty_{x,w}} \lesssim \epsilon \langle s\rangle^{-5/2} .
\end{aligned}\end{equation}

\begin{proof}[Proof of Proposition \ref{prop-SR}] 
In view of \eqref{straightX}, we may introduce the change of variables $w = \Psi_{s,t}(x,v)$ in the second integral defining $S^\mu$ in \eqref{Smu-recall}, leading to \eqref{key-SR}, where we have set   
\begin{equation}\label{def-Hst}
H_{s,t}(x,w) = \nabla_v \mu(w) - \frac{\nabla_v \mu(V_{s,t}(x,v))}{\det(\nabla_v\Psi_{s,t}(x,v))}
\end{equation}
for each $(x,w) \in \RR^3\times \RR^3$. In the above, $v = \widetilde \Psi_{s,t}(x,w)$, the inverse map of $v\mapsto w = \Psi_{s,t}(x,v)$. Note that since $\|\det\nabla_v\Psi_{s,t} - 1\|_{L^\infty_{x,v}} \lesssim \epsilon \langle s\rangle^{-1}$ by Proposition \ref{prop-detPsi}, the map is a diffeomorphism from $\RR^3$ into itself, and the inverse map $v = \widetilde \Psi_{s,t}(x,w)$ is well-defined. It remains to study $H_{s,t}(x,w)$, which can be 
expanded via the standard Taylor's expansion
\begin{equation}\label{recall-deFGst}
\begin{aligned}
H_{s,t}(x,w) 
&  =   \nabla_w \mu(w)  - \nabla_v \mu(V_{s,t}) + \nabla_v \mu(V_{s,t})  \frac{\det(\nabla_v\Psi_{s,t}) - 1}{\det(\nabla_v\Psi_{s,t})} 
\\
&  =   \nabla^2_w\mu(w) ( w -V_{s,t}) + \nabla_w \mu(w) (\det(\nabla_v\Psi_{s,t}) - 1)  
\\&\quad + \nabla_w \mu(w) (\det(\nabla_v\Psi_{s,t}) - 1)^2  
 + \nabla^2_w\mu(w) ( w -V_{s,t}) (\det(\nabla_v\Psi_{s,t}) - 1) 
 \\&\quad + H^{R,0}_{s,t} (x,w).
\end{aligned}
\end{equation}
We stress that on the right, $V_{s,t} = V_{s,t}(x,v)$ and $\Psi_{s,t} = \Psi_{s,t}(x,v)$, with $v = \widetilde \Psi_{s,t}(x,w)$, the inverse map of $w = \Psi_{s,t}(x,v)$. The remainder $H^{R,0}_{s,t}(x,w)$ collects the quadratic or higher order terms in $V_{s,t}-w$ and cubic or higher order terms in $\det(\nabla_v\Psi_{s,t}) - 1$. Recall that $\| V_{s,t} - v\|_{L^\infty_{x,v}} \lesssim \epsilon \langle s\rangle^{-3/2}$, $\| \Psi_{s,t} - v\|_{L^\infty_{x,v}} \lesssim \epsilon \langle s\rangle^{-3/2}$, and $\| \det(\nabla_v\Psi_{s,t})-1\|_{L^\infty_{x,v}}\lesssim \epsilon \langle s\rangle^{-1}$.  It thus follows by construction that 
$$ \| \langle w\rangle^N H^{R,0}_{s,t}\|_{L^\infty_{x,w}} \lesssim \epsilon^2 \langle s\rangle^{-3} $$
which can therefore be put into the remainder $H^R_{s,t}(x,w)$. Let us bound each of the remaining terms in the above expression.

\subsubsection*{Linear term: $\nabla^2_w\mu(w) ( w -V_{s,t}) $.}

Let us start with the term  $\nabla^2_w\mu(w) ( w -V_{s,t}) $ in $H_{s,t}(x,w) $. Recalling the expansion \eqref{decomp-Vst} on $V_{s,t}(x,v)-v$ and the expansion \eqref{decomp-Psist} on $\Psi_{s,t}(x,v)-v$, 
we have 
\begin{equation}\label{recall-expPsi1}
\begin{aligned}
w - V_{s,t}(x,v)  &= \Psi^{osc}_{s,t}(x,v)  - V^{osc}_{s,t}(x,v) + \Psi^{tr}_{s,t}(x,v) - V^{tr}_{s,t}(x,v)+ \Psi^{R}_{s,t}(x,v).
\end{aligned}
\end{equation}
Recalling the bounds from \eqref{bounds-Vst} and \eqref{bounds-Wst}, the last three terms can be put into the remainder $H_{s,t}^R(x,w)$ with the stated remainder bounds.  As for the oscillation component, we recall 
$$  V^{osc}_{s,t}(x,v) = \sum_\pm E^{osc,1}_{\pm}(s,X_{s,t}(x,v),V_{s,t}(x,v)) .
$$
Since $w= \Psi_{s,t}(x,v)$, we have $X_{s,t}(x,v) = x - (t-s)\hw$. On the other hand, using \eqref{recall-expPsi1}, we note that $\| w - V_{s,t}\|_{L^\infty_{x,v}} \lesssim \epsilon \langle s\rangle^{-3/2}$. Therefore, in the above expression, we may replace $V_{s,t}(x,v)$ by $w$,
leaving a remainder of the form 
$$E^{osc,1}_{\pm}(s,X_{s,t}(x,v),V_{s,t}(x,v))  - E^{osc,1}_{\pm}(s,x - (t-s)\hw,w) 
$$
which is bounded by $\| E^{osc,1}_{\pm}(s)\|_{L^\infty_{x,v}} \| V_{s,t} - w\|_{L^\infty_{x,v}} \lesssim \epsilon^2 \langle s\rangle^{-3}$, which obeys the remainder bounds for $H^R_{s,t}$. The oscillation in $\Psi^{osc}_{s,t}(x,v) $ is treated similarly.

\subsubsection*{Linear term: $ \nabla_w \mu(w) (\det(\nabla_v\Psi_{s,t}) - 1)  $.}

Next, we consider the term $ \nabla_w \mu(w) (\det(\nabla_v\Psi_{s,t}) - 1)  $ in $H_{s,t}(x,w)$. Recalling \eqref{decomp-detPsi}, we write 
\begin{equation}\label{recall-expdetPsi}
 \det (\nabla_v \Psi_{s,t}(x,v) ) - 1 = \Big( \Psi^{Tosc,1}_{s,t} + \Psi^{osc,1}_{s,t}+ \Psi^{tr,1}_{s,t}+ \Psi^{Q,1}_{s,t}+ \Psi^{R,1}_{s,t}\Big)(x,v), 
\end{equation}
in which $v = \widetilde \Psi_{s,t}(x,w)$. Recalling the bounds obtained in Proposition \ref{prop-detPsi}, $ \Psi^{R,1}_{s,t}(x,v)$ can be put into the remainder $H_{s,t}^R(x,w)$, obeying the stated respective bounds. Let us treat the remaining terms in the expansion.  
First, we put $\Psi^{Tosc,1}_{s,t}(x,w)$ into $T_{s,t}^{osc}(x,w)$, leaving an error
$$ \Psi^{Tosc,1}_{s,t}(x,v) - \Psi^{Tosc,1}_{s,t}(x,w)$$
which can be put into the remainder $H_{s,t}^R(x,w)$, since $\| w - v\|_{L^\infty_{x,v}} \lesssim \epsilon \langle s\rangle^{-3/2}$, recalling $w = \Psi_{s,t}(x,v)$. Next, the oscillation term $\Psi^{osc,1}_{s,t}(x,v)$ is treated similarly as done above for $ V^{osc}_{s,t}(x,v) $. We now treat the term $\Psi^{tr,1}_{s,t}(x,v) $, which is of order $s^{-1}$ and cannot be put into the remainder. 
We recall from  Proposition \ref{prop-detPsi} that 
$$ 
\begin{aligned}
\Psi^{tr,1}_{s,t}(x,v) 
& = \int_s^t  
\frac{(t-\tau) (\tau-s)}{t-s}  \nabla_v\hv \cdot
\nabla_xQ^{tr}(\tau,X_{\tau,t}(x,v), V_{\tau,t}(x,v)) \; d\tau ,
\end{aligned}
$$
which defines $H_{s,t}^{tr}(x,w) $, upon replacing $X_{\tau,t}(x,v)$ and $V_{\tau,t}(x,v))$ by the straighten characteristics. Indeed, 
using \eqref{straightX}, \eqref{decay-Xst}, and the definition $w=\Psi_{s,t}(x,v)$, we bound  
\begin{equation}\label{approxXVst}
\begin{aligned}
|X_{\tau,t}(x,v)  - x + (t-\tau)\hw  |&= (t-\tau) |\hPsi_{s,t}(x,v) - \hPsi_{\tau,t}(x,v) |
\\
&\lesssim \epsilon \langle s\rangle^{-1} .
\end{aligned}\end{equation}
Using this, we can replace $Q^{tr}(\tau, X_{\tau,t}(x,v),V_{\tau,t}(x,v))$ in the integral for $\Psi^{tr,1}_{s,t}(x,v) $ by $Q^{tr}(\tau, x-(t-\tau)\hw ,w)$, leaving an error 
$$
\begin{aligned} 
&
\int_s^t  
\frac{(t-\tau) (\tau-s)}{t-s}  
\Big[ \nabla_xQ^{tr}(\tau,X_{\tau,t}(x,v), V_{\tau,t}(x,v)) - 
\nabla_xQ^{tr}(\tau,x - (t-\tau)\hw, w)\Big]  \; d\tau 
\\
& \lesssim \int_s^t  
\langle \tau\rangle 
\Big[\| \partial_x^2Q^{tr}(\tau)\|_{L^\infty_{x,v}} |X_{\tau,t}(x,v) - x + (t-\tau)\hw|  + \| \partial_x\partial_vQ^{tr}(\tau)\|_{L^\infty_{x,v}} |V_{\tau,t}(x,v)-w|\Big]  \; d\tau 
\end{aligned}$$
Using \eqref{approxXVst} and the fact that $\| \partial_x^\alpha \partial_v^\beta Q^{tr}(\tau)\|_{L^\infty_{x,v}}\lesssim \epsilon \langle \tau\rangle^{-3+\delta_1}$ for $|\alpha|+|\beta|\le 2$, the above integral is bounded by 
$$
\begin{aligned} 
\epsilon^2\int_s^t  
\langle \tau\rangle^{-2+\delta_1} \langle s\rangle^{-1}  \; d\tau \lesssim \epsilon^2 \langle s\rangle^{-2+\delta_1},
\end{aligned}$$
which obeys the stated remainder bounds. It is also direct that the $s$-derivative of the integral gains an extra decay of order $s^{-1}$. Finally, we study the term $\Psi_{s,t}^Q(x,v)$ in the expansion of $\det (\nabla_v \Psi_{s,t}(x,v) ) - 1$, which we recall
$$\Psi^{Q,1}_{s,t}(x,v) = c_0\Psi^{osc,1}_{s,t}(x,v) \Psi^{tr,1}_{s,t}(x,v).$$
This contributes into $H^Q_{s,t}(x,w)$, upon replacing $v$ by $w$ similarly as done above, leaving an admissible remainder. Note that the $s$-derivative of $H^Q_{s,t}(x,w)$ may not gain any extra decay.

\subsubsection*{Quadratic term: $\nabla_w \mu(w) (\det(\nabla_v\Psi_{s,t}) - 1)^2 $.}
Next, we study the quadratic term $\nabla_w \mu(w) (\det(\nabla_v\Psi_{s,t}) - 1)^2 $ in $H_{s,t}(x,w)$. Similarly as done above, using the expansion \eqref{recall-expdetPsi}, we write 
$$ ( \det (\nabla_v \Psi_{s,t}) - 1)^2 = - 2 \Psi^{osc,1}_{s,t} (x,v) \Psi^{tr,1}_{s,t} (x,v)+ H^{R,1}_{s,t}(x,w),
$$ 
in which the first term contributes into $H^Q_{s,t}(x,w)$, while the second term $H^{R,1}_{s,t}(x,w)$ is of order $s^{-2}$ and its $s$-derivative is of order $s^{-3}$. 

\subsubsection*{Quadratic term: $\nabla^2_w\mu(w) ( w -V_{s,t}) (\det(\nabla_v\Psi_{s,t}) - 1) $.}

Similarly to the previous case, using \eqref{recall-expPsi1} and \eqref{recall-expdetPsi}, we write 
$$( w -V_{s,t}) (\det(\nabla_v\Psi_{s,t}) - 1)  = \Big( \Psi^{osc}_{s,t}(x,v)  - V^{osc}_{s,t}(x,v) \Big)\Psi^{tr,1}_{s,t} (x,v)+ H^{R,2}_{s,t}(x,w) $$
in which the first term again contributes into $H^Q_{s,t}(x,w)$, while the second term $H^{R,2}_{s,t}(x,w)$ is already of order $s^{-3}$ and thus can be put into the remainder $H^R_{s,t}(x,w)$. 

\subsubsection*{$L^2$ estimates.}

Finally, the $L^2_x$ and $L^2_v$ estimates on the remainder follow directly from those established in \eqref{Lpbounds-Vosc}, \eqref{Lpbounds-Wosc}, and \eqref{Lpbounds-detPsi}. Note that $L^2_x$ bounds costs a decay of order $s^{-3/2}$ from their corresponding $L^\infty_x$ bounds, while $L^2_v$ estimates replace decay of order $s^{-3/2}$ into decay of order $t^{-3/2}$. 
\end{proof}

\subsection{Some useful lemmas}


In view of \eqref{key-SR}, we shall first prove some useful lemmas that we shall use repeatedly in the proof to bound the nonlinear sources.

\begin{lemma}\label{lem-auxSr} 
Suppose that for $1\le p \le \infty$,
\begin{equation}\label{condauxSr}
\| \cE^r(t)\|_{L^p_x}\lesssim  \langle t\rangle^{-3(1-1/p)} , 
\qquad \int_0^t \|\langle v\rangle^5 \cH_{s,t}\|_{L^\infty_{x,v}}  \; ds \lesssim  1.
\end{equation}
Then, for $1\le p\le \infty$, there hold 
\begin{equation}\label{est-auxSr} 
\Big\| \int_0^t \int_{\RR^3} \cE^r(s,x - (t-s)\hv ) \cdot \cH_{s,t}(x,v) \; dv ds\Big\|_{L^p_x} \lesssim \langle t\rangle^{-3(1-1/p)} .
\end{equation}
\end{lemma}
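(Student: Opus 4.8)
The plan is to split the time integral at $s = t/2$ and estimate the two pieces by different mechanisms, exactly as the paper's philosophy dictates: use free-transport dispersion in $v$ on the late-time piece $s>t/2$ and decay of $\cE^r$ on the early-time piece $s<t/2$. Write $I(t,x) = \int_0^t \int_{\RR^3} \cE^r(s,x-(t-s)\hv)\cdot \cH_{s,t}(x,v)\, dv\, ds = I_1(t,x) + I_2(t,x)$ with $I_1$ the integral over $s\in[0,t/2]$ and $I_2$ over $s\in[t/2,t]$.

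For $I_2$, since $s \ge t/2$ we have $\langle s\rangle \approx \langle t\rangle$, so using the hypothesis $\|\cE^r(s)\|_{L^p_x} \lesssim \langle s\rangle^{-3(1-1/p)} \lesssim \langle t\rangle^{-3(1-1/p)}$, we bound $\|\cE^r(s,\cdot - (t-s)\hv)\|_{L^p_x} = \|\cE^r(s)\|_{L^p_x}$ (translation invariance) and pull it out of the $v$-integral via Minkowski's inequality: $\|I_2(t)\|_{L^p_x} \lesssim \int_{t/2}^t \int_{\RR^3} \|\cE^r(s)\|_{L^p_x} \|\cH_{s,t}(\cdot,v)\|_{L^\infty_x}\, dv\, ds \lesssim \langle t\rangle^{-3(1-1/p)}\int_{t/2}^t \|\langle v\rangle^5 \cH_{s,t}\|_{L^\infty_{x,v}}\,ds \lesssim \langle t\rangle^{-3(1-1/p)}$, where the last step uses the second bound in \eqref{condauxSr} together with $\int \langle v\rangle^{-5}\, dv < \infty$. (One uses $\langle v\rangle^5 \cH$ precisely to control the $v$-integral.) This piece needs no dispersion at all.

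For $I_1$, the point is that for $s < t/2$ we have $t - s \approx t$, so the map $v \mapsto x - (t-s)\hv$ genuinely disperses. First handle $p=1$ trivially: $\|I_1(t)\|_{L^1_x} \lesssim \int_0^{t/2}\int_{\RR^3} \|\cE^r(s,\cdot-(t-s)\hv)\|_{L^1_x}\|\cH_{s,t}(\cdot,v)\|_{L^\infty_x}\,dv\,ds \lesssim \int_0^{t/2}\|\cE^r(s)\|_{L^1_x}\int\langle v\rangle^{-5}\|\langle v\rangle^5\cH_{s,t}\|_{L^\infty_{x,v}}\,dv\,ds \lesssim 1$, using $\|\cE^r(s)\|_{L^1_x}\lesssim 1$ and $\int_0^t\|\langle v\rangle^5\cH_{s,t}\|_{L^\infty_{x,v}}\,ds\lesssim 1$. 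For $p=\infty$ we use the free-transport change of variables: fix $s$ and $x$, change variables $y = x - (t-s)\hv$; since $v\mapsto \hv = v/\langle v\rangle$ is a diffeomorphism of $\RR^3$ onto the unit ball with Jacobian $|\det \nabla_v \hv| = \langle v\rangle^{-5}$ (as recorded in Section \ref{sec-Char}), we get $dy = (t-s)^3 \langle v\rangle^{-5}\, dv$, hence $\int_{\RR^3}|\cE^r(s,x-(t-s)\hv)|\,|\cH_{s,t}(x,v)|\,dv \lesssim (t-s)^{-3}\|\langle v\rangle^5\cH_{s,t}\|_{L^\infty_{x,v}}\int_{\RR^3}|\cE^r(s,y)|\,dy \lesssim (t-s)^{-3}\|\cE^r(s)\|_{L^1_x}\|\langle v\rangle^5\cH_{s,t}\|_{L^\infty_{x,v}}$. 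Integrating in $s\in[0,t/2]$ and using $t-s\approx t$ gives $\|I_1(t)\|_{L^\infty_x}\lesssim \langle t\rangle^{-3}\int_0^{t/2}\|\langle v\rangle^5\cH_{s,t}\|_{L^\infty_{x,v}}\,ds\lesssim \langle t\rangle^{-3}$. Interpolating the $L^1$ and $L^\infty$ bounds on $I_1$ (Riesz–Thorin) yields $\|I_1(t)\|_{L^p_x}\lesssim \langle t\rangle^{-3(1-1/p)}$. Combining $I_1$ and $I_2$ gives \eqref{est-auxSr}.

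The main obstacle, such as it is, is purely bookkeeping: making sure the weight $\langle v\rangle^5$ in the hypothesis is exactly enough to absorb the Jacobian $\langle v\rangle^{-5}$ from the change of variables in the $L^\infty$ estimate while simultaneously keeping the $v$-integral $\int \langle v\rangle^{-5}\,dv$ convergent in the $L^1$ estimate — so one must route the weight to the right side in each case. There is no genuine analytic difficulty here since $\cH_{s,t}$ enters only through its $L^\infty_{x,v}$ norm with the weight and $\cE^r$ only through its $L^p_x$ norms; no cancellation or oscillation is exploited, which is why this lemma serves as the workhorse for the \emph{non-oscillatory} (particle–particle) contributions to $S^\mu$.
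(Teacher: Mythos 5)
Your proof is correct and follows essentially the same route as the paper's: the paper likewise handles the $L^1$ norm and the $s\in(t/2,t)$ piece by a direct H\"older/Minkowski estimate using the decay of $\cE^r$, treats the $s\in(0,t/2)$ piece in $L^\infty$ via the change of variables $y=x-(t-s)\hv$ with Jacobian comparable to $(t-s)^{3}\langle v\rangle^{-5}$, and concludes by interpolation of $L^p$ norms (the paper even uses the slightly weaker weight $\langle v\rangle^{4}$ in its first display, which is still integrable in $\RR^3$). The only cosmetic quibble is that the final step is the log-convexity inequality $\|f\|_{L^p}\le\|f\|_{L^1}^{1/p}\|f\|_{L^\infty}^{1-1/p}$ for a fixed function rather than Riesz--Thorin proper, but the estimate is the same.
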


\begin{proof} First, it is direct to bound 
$$
\begin{aligned}
\Big\|  \int_0^t \int 
\cE^r(s,x - (t-s)\hv ) \cdot \cH_{s,t}(x,v) \; dv ds\Big\|_{L^p_x} 
&\lesssim  \int_0^t \int 
\| \cE^r(s)\|_{L^p_x} \langle v\rangle^{-4} \|\langle v\rangle^4 \cH_{s,t}\|_{L^\infty_{x,v}} \; dv ds 
\\
&\lesssim  \int_0^t \langle s\rangle^{-3 +3/p} \|\langle v\rangle^4 \cH_{s,t}\|_{L^\infty_{x,v}} \; ds,
 \end{aligned}$$
for any $p\ge 1$, which in particular proves \eqref{est-auxSr} for $p=1$, using \eqref{condauxSr}. This also gives the desired estimate for the integral over $s\in (t/2,t)$, since $s\ge t/2$. It remains to prove \eqref{est-auxSr} for the integral over $s\in (0,t/2)$ in $L^\infty_x$. In this case, we shall make use of the dispersion of the transport to bound the integral. Indeed, introduce the change of variables $y = x - (t-s)\hv $ and note that   
$v_{s,t}(x,y) = {\hv_{s,t}(x,y)}/{\sqrt{1-|\hv_{s,t}(x,y)|^2}}$ with $\hv_{s,t}(x,y)= (x-y)/{(t-s)}$. 
Observe that the Jacobian satisfies 
$$
|\det (\nabla_v\hv_{s,t}(x,y))|^{-1} \le C_0 \langle v_{s,t}(x,y)\rangle^5.
$$ 
Therefore, we bound  
$$
\begin{aligned}
\Big| 
&\int_0^{t/2}\int 
\cE^r(s,x - (t-s)\hv ) \cdot \cH_{s,t}(x,v)\; dv ds\Big|
\\&\le \int_0^{t/2} \int |
\cE^r(s,y) \cdot \cH_{s,t}(x,v_{s,t}(x,y))|\; \frac{dy ds}{(t-s)^3\det (\nabla_v\hv_{s,t}(x,y))}
\\
&\lesssim  \int_0^{t/2} (t-s)^{-3}
\|\cE^r(s)\|_{L^1_y} \|\langle v\rangle^5\cH_{s,t}\|_{L^\infty_{x,v}} \; ds 
\lesssim  t^{-3}\int_0^{t/2} 
\|\langle v\rangle^5 \cH_{s,t}\|_{L^\infty_{x,v}} \; ds, 
 \end{aligned}$$
which proves \eqref{est-auxSr} for $p=\infty$, noting $(t-s)\ge t/2$, since $s\le t/2$. The lemma follows. 
\end{proof}

\begin{lemma}\label{lem-auxSosc} 
Suppose that for $2\le p\le \infty$,
$$ 
\| \cE^{osc}(t)\|_{L^p_x}\lesssim  \langle t\rangle^{-3(1/2-1/p)} ,
$$
$$\int_0^t \sup_v\| \langle v\rangle^2\cH_{s,t} \|_{L^2_x} \; ds \lesssim 1, \qquad  \int_0^t \sup_x\| \langle v\rangle^{5/2}\cH_{s,t} \|_{L^2_v} \; ds \lesssim \langle t\rangle^{-3/2} .$$
Then, for $1\le p\le \infty$, there hold 
\begin{equation}\label{est-auxSosc} 
\Big\| \int_0^t \int_{\RR^3} \cE^{osc}(s,x - (t-s)\hv ) \cdot \cH_{s,t}(x,v) \; dv ds\Big\|_{L^p_x} \lesssim \langle t\rangle^{-3(1-1/p)} 
.
\end{equation}
\end{lemma}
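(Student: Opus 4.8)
The plan is to estimate the spacetime convolution $\cE^{osc} \star \cH$ by splitting the time integral at $s = t/2$, exactly mirroring the strategy of Lemma~\ref{lem-auxSr}, but trading the $L^\infty_x$ dispersive decay of $\cE^r$ (which gave $\langle t\rangle^{-3}$) for the $L^2_x$ Klein-Gordon dispersion of $\cE^{osc}$ (which only gives $\langle t\rangle^{-3/2}$). The key point is that the deficit in the decay rate of $\cE^{osc}$ compared to $\cE^r$ is compensated by the extra decay of $\cH_{s,t}$ in $L^2_v$, namely the hypothesis $\int_0^t \sup_x \|\langle v\rangle^{5/2}\cH_{s,t}\|_{L^2_v}\,ds \lesssim \langle t\rangle^{-3/2}$, which reflects the phase-mixing gain in the velocity variable.

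First I would handle the regime $s \in (t/2, t)$. Here $s \gtrsim t$, so it suffices to show the integral over this range is bounded in $L^p_x$ by $\langle t\rangle^{-3(1-1/p)}$ using only the crude pointwise-in-$s$ bound. For $p=1$ I would estimate by Minkowski's inequality, pulling out $\|\cE^{osc}(s)\|_{L^2_x} \lesssim \langle s\rangle^{0}$ wait — actually at $p=2$, $\|\cE^{osc}(s)\|_{L^2_x}\lesssim 1$; then writing $\int \cE^{osc}(s, x-(t-s)\hv)\cdot\cH_{s,t}(x,v)\,dv$ and applying Cauchy--Schwarz in $v$ against $\langle v\rangle^{-5/2}$, followed by the change of variables $y = x - (t-s)\hv$ (whose Jacobian is controlled by $\langle v\rangle^5$ as in the previous lemma) to convert $\|\cE^{osc}(s)\|_{L^2_y}$ into an $L^2_x$ bound. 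The $L^2_v$ hypothesis on $\cH$ with the weight $\langle v\rangle^{5/2}$ is tailored precisely so that this Cauchy--Schwarz plus change of variables closes with the $\langle t\rangle^{-3/2}$ factor. Interpolating, or arguing directly, gives the claim for general $p \in [1,\infty]$ on this time range.

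Next, for the main regime $s \in (0, t/2)$, I would again introduce $y = x - (t-s)\hv$, so that $\hv_{s,t}(x,y) = (x-y)/(t-s)$ and $v_{s,t}(x,y) = \hv_{s,t}/\sqrt{1-|\hv_{s,t}|^2}$, with $|\det \nabla_v \hv_{s,t}|^{-1} \le C_0 \langle v_{s,t}\rangle^5$. Since $t-s \ge t/2$ in this range, the transport dispersion produces a factor $(t-s)^{-3} \lesssim t^{-3}$ — but this is too much decay to want; the point is rather to absorb the weights. Concretely I would estimate $\big|\int_0^{t/2}\int \cE^{osc}(s,y)\cdot\cH_{s,t}(x,v_{s,t}(x,y))\,\frac{dy\,ds}{(t-s)^3 |\det\nabla_v\hv_{s,t}|}\big|$; for the $L^\infty_x$ bound, apply Cauchy--Schwarz in $y$, using $\|\cE^{osc}(s)\|_{L^2_y}\lesssim 1$ and bounding the $\cH$-factor in $L^2_y$ via the change back to $v$, costing $\langle v\rangle^{5}$ in Jacobian, and invoking $\sup_x\|\langle v\rangle^{5/2}\cH_{s,t}\|_{L^2_v}\lesssim\langle t\rangle^{-3/2}$ together with a spare power of $(t-s)^{-3/2}\lesssim t^{-3/2}$. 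This yields the $L^\infty_x$ bound $\langle t\rangle^{-3}$. For $p=1$ the crude estimate $\|\cE^{osc}(s)\|_{L^1_x}$ is not directly hypothesized, so I would instead bound $L^1_x$ by pairing with the $L^2_x$ bound of $\cE^{osc}$ and the $L^2_x$ bound of $\cH$, i.e.\ $\int_0^{t/2}\|\cE^{osc}(s)\|_{L^2_x}\sup_v\|\langle v\rangle^{2}\cH_{s,t}\|_{L^2_x}\,ds\cdot(\text{weight integral in }v)$, which is $\lesssim 1$ by the first $L^2$ hypothesis on $\cH$; combined with the decay of $\cE^{osc}$ in $L^2$ this is harmless. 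Interpolating between the $L^1$ and $L^\infty$ endpoints gives \eqref{est-auxSosc} for all $p \in [1,\infty]$, completing the proof.

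The main obstacle, and the reason this lemma is subtler than Lemma~\ref{lem-auxSr}, is the mismatch in scaling: $\cE^{osc}$ decays three times more slowly (rate $t^{-3/2}$ versus $t^{-3}$) yet the conclusion demands the same $t^{-3(1-1/p)}$ output. One cannot afford to spend all of the transport Jacobian $(t-s)^{-3}$ on dispersion of $\cE^{osc}$ as in the $\cE^r$ case; instead one must carefully allocate $(t-s)^{-3/2}$ to convert the $L^\infty_v$-type control of $\cH$ into $L^2_v$-control (picking up the $\langle t\rangle^{-3/2}$ from the hypothesis) and the remaining $(t-s)^{-3/2}$ toward the target decay, while keeping track of the velocity weights $\langle v\rangle^{5/2}$ versus $\langle v\rangle^5$ so that both the $v$-integrals and the Jacobian change of variables remain finite. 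Getting this bookkeeping right — in particular verifying that the weighted $L^2_v$ hypothesis on $\cH$ is exactly what is needed and that no endpoint ($p=1$ or $p=\infty$) is lost — is where the care lies; everything else is a routine change of variables plus Cauchy--Schwarz.
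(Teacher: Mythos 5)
Your proposal is correct and follows essentially the same route as the paper: the paper first proves the uniform-in-$s$ dispersive bound $\int |\cE^{osc}(s,x-(t-s)\hv)|^2\langle v\rangle^{-5}\,dv\lesssim\langle t\rangle^{-3}$ (by the $L^\infty$ decay for $s\ge t/2$ and the change of variables $y=x-(t-s)\hv$ for $s\le t/2$), then closes the $L^\infty_x$ bound by Cauchy--Schwarz in $v$ against $\sup_x\|\langle v\rangle^{5/2}\cH_{s,t}\|_{L^2_v}$ and the $L^1_x$ bound by Cauchy--Schwarz in $x$ against $\sup_v\|\langle v\rangle^{2}\cH_{s,t}\|_{L^2_x}$, finishing by interpolation. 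Your splitting of the full time integral at $s=t/2$ rather than only inside the auxiliary dispersive estimate is a cosmetic reorganization of the same argument.
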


\begin{proof} Unlike in the previous lemma, note that the ``oscillatory'' field $\cE^{osc}(t,x)$ decays at a slower rate, and in particular does not decay in $L^2_x$. For this reason, we need to make use of dispersive from the kernel $\cH_{s,t}$. 
We first note that 
\begin{equation}\label{dispEr} 
\int 
|\cE^{osc}(s,x - (t-s)\hv )|^2 \langle v\rangle^{-5} \; dv \lesssim \langle t\rangle^{-3}
\end{equation}
for any $0\le s\le t$. Indeed, the bound is clear for $s\ge t/2$ using the $L^\infty$ bound on $E^{osc}(s,x)$. On the other hand, for $s\le t/2$, we introduce the change of the variable $y = x-(t-s)\hv$ as done in the previous lemma, giving again the decay of order $t^{-3}$. Therefore,  
we can bound 
$$
\begin{aligned}
\Big| 
&\int_0^t \int 
\cE^{osc}(s,x - (t-s)\hv ) \cdot \cH_{s,t}(x,v) \; dv ds\Big |
\\
&\lesssim \int_0^t \Big(\int 
|\cE^{osc}(s,x - (t-s)\hv )|^2 \langle v\rangle^{-5} \; dv \Big)^{1/2} \Big( \int |\cH_{s,t}(x,v)|^2 \langle v\rangle^5 \; dv\Big)^{1/2} ds
\\
&\lesssim \langle t\rangle^{-3/2}\int_0^t \sup_x\| \langle v\rangle^{5/2}\cH_{s,t} \|_{L^2_v} ds,
 \end{aligned}$$
 which gives the $L^\infty_x$ estimate, upon using the assumption on $H_{s,t}$. 
Similarly, we bound 
$$
\begin{aligned}
\Big\| 
\int_0^t \int 
\cE^{osc}(s,x - (t-s)\hv ) \cdot \cH_{s,t}(x,v) \; dv ds\Big \|_{L^1_x}
&\lesssim \int_0^t\int 
\|\cE^{osc}(s)\|_{L^2_x} \| \cH_{s,t}(\cdot, v)\|_{L^2_x}\; dv ds
\\
&\lesssim \int_0^t \sup_v\| \langle v\rangle^2\cH_{s,t} \|_{L^2_x} ds,
 \end{aligned}$$
 which is finite by the assumption on $H_{s,t}$. The $L^p$ estimates follow from the interpolation between $L^1$ and $L^\infty$ bounds. 
\end{proof}

\subsection{Contribution from the initial data}\label{sec-S0}


In this section, we prove the bounds on the source term $ S^0(t,x)$ defined as in \eqref{nonlinear-S}.

\begin{proposition}\label{prop-bdS0} There hold
\begin{equation}\label{Lpbounds-cS0}
\begin{aligned}
\sup_{q\in \ZZ} 2^{(1-\delta)q} \|P_q S^0(t)\|_{L^p_x} + \sum_{|\alpha|\le 1}\|\partial^\alpha_x S^0(t)\|_{L^p_x} &\lesssim \epsilon_0  \langle t\rangle^{-3(1-1/p)} 
\end{aligned}\end{equation}
for any $1 \le p \le \infty$ and $\delta \in (0,1)$. In addition, 
\begin{equation}\label{Hsbounds-cS0}
\begin{aligned}
\|\partial_x^\alpha S^0(t)\|_{L^p_x}   \le \epsilon \langle t\rangle^{-3(1-1/p) + \delta_\alpha} 
, \qquad 
\|S^0(t)\|_{H^{\alpha_0}_x} &\lesssim \epsilon \langle t\rangle^{\delta_1},
\end{aligned}
\end{equation}
for all $|\alpha|\le N_0-1$, with $\delta_\alpha = \frac{|\alpha|}{N_0-1}$. 
\end{proposition}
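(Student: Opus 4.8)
The proof proceeds by recalling the explicit formula $S^0(t,x)=\int_{\RR^3}f_0(X_{0,t}(x,v),V_{0,t}(x,v))\,dv$ and exploiting the dispersion of the characteristic map $v\mapsto X_{0,t}(x,v)$ established in Proposition \ref{prop-Dchar} and Proposition \ref{prop-HDchar}. The plan is as follows. First, for the $L^\infty_x$ bound, I would change variables $y=X_{0,t}(x,v)$ (valid since $|\det\nabla_v X_{0,t}(x,v)|\gtrsim\langle v\rangle^{-5}t^3$ by \eqref{Jacobian-dxdv}); the Jacobian factor $t^{-3}$ produces the $\langle t\rangle^{-3}$ decay, while the remaining $v$-integral is absorbed by the rapid velocity decay of $f_0$ (here one uses that $f_0$ is compactly supported in $v$ and $\|f_0-\mu\|_{L^1_vW^{N_0+4,1}_x}\le\epsilon_0$, together with the fact that $\mu$ contributes nothing to $S^0$ since $f_0$ near $\mu$ means $f_0(X_{0,t},V_{0,t})$ splits into $(f_0-\mu)(X_{0,t},V_{0,t})+\mu(V_{0,t})$ and the $\mu$-part is handled separately—actually $S^0$ is defined with $f_0$, so one writes $f_0=(f_0-\mu)+\mu$ and the $\mu$-contribution $\int\mu(V_{0,t}(x,v))\,dv$ is estimated using $\|V_{0,t}-v\|_{L^\infty}\lesssim\epsilon$ and the change of variables in $v$). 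For $L^1_x$, change variables $x\mapsto X_{0,t}(x,v)$ instead, using $|\det\nabla_x X_{0,t}-1|\lesssim\epsilon$, which gives a uniform-in-$t$ bound; then interpolate to get the full range $p\in[1,\infty]$.

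For the derivative bounds $\|\partial^\alpha_x S^0(t)\|_{L^p_x}$ with $1\le|\alpha|\le N_0-1$, I would differentiate under the integral sign and use the Fa\`a di Bruno / Leibniz expansion: $\partial^\alpha_x f_0(X_{0,t},V_{0,t})$ is a sum of terms $(\partial^\mu_{x,v}f_0)(X_{0,t},V_{0,t})\prod(\partial^{\beta_j}_x X_{0,t})^{k_j}(\partial^{\gamma_j}_x V_{0,t})^{l_j}$. The top-order factor $\partial^\alpha_x X_{0,t}$ carries the growth $\langle t\rangle^{-1/2+\frac12\delta_\alpha}\langle t\rangle^{\delta_\alpha}$ from \eqref{decayXV} (with $s=0$, so $\langle s\rangle$-factors are $O(1)$), the lower-order $X$-factors are controlled by \eqref{decayXV} and \eqref{bd-dxXst}, and after the same change of variables $y=X_{0,t}(x,v)$ one picks up $t^{-3}$ from the Jacobian again. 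Collecting powers of $\langle t\rangle$ carefully—using that $\delta_\alpha$ is linear in $|\alpha|$ so sums of $\delta$'s over a partition telescope to $\delta_\alpha$—yields the loss $\delta_{\alpha,p}=\max\{\delta_\alpha,\frac32\delta_\alpha(1-1/p)\}$; the two cases in the max come from the $L^\infty$ estimate (loss $\frac32\delta_\alpha$, but then the $\langle t\rangle^{-3}$ becomes $\langle t\rangle^{-3+3/p}$ after interpolation so effectively $\frac32\delta_\alpha(1-1/p)$ at general $p$) versus the $L^1$ estimate where no Jacobian decay is claimed and the raw growth $\delta_\alpha$ survives. The $H^{\alpha_0}_x$ bound with $|\alpha_0|=N_0$ is the borderline case: here $\partial^{\alpha_0}_x X_{0,t}$ is only in $L^2_x$ with growth $\langle t\rangle^{3/2+\delta_1}$ by Proposition \ref{prop-HDBchar}, and combined with $\|f_0-\mu\|_{W^{N_0+4,1}_x}\le\epsilon_0$ and the change of variables (which this time only gives the bounded Jacobian $\sim1$, no decay, since we are in $L^2_x$ and using the $x$-substitution) one gets $\|S^0(t)\|_{H^{N_0-1}_x}\lesssim\epsilon\langle t\rangle^{\delta_1}$—wait, the statement claims $H^{\alpha_0}_x=H^{N_0}_x$; one absorbs one more derivative by noting the regularity assumption is $N_0+4$ so there is room, and the extra $\langle t\rangle^{1/2}$ versus $\langle t\rangle^{3/2}$ discrepancy is reconciled because $S^0$ is an integral in $v$ with the Jacobian, not a direct composition.

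The for the Littlewood-Paley piece $\sup_q 2^{(1-\delta)q}\|P_qS^0(t)\|_{L^p_x}$, I would split into high and low frequencies. For $2^q\gtrsim1$ the factor $2^{(1-\delta)q}$ is comparable to a derivative of order $<1$, so it is controlled by the $\|\partial_x S^0\|_{L^p_x}\lesssim\epsilon_0\langle t\rangle^{-3(1-1/p)}$ bound (with a tiny bit of room from $\delta>0$). For $2^q\lesssim1$, one uses the extra smallness $2^{(1-\delta)q}$ together with the observation that $S^0(t,x)$ is itself a gradient of a density—here I would use that $f_0$ and its low-$x$-frequency content enjoy the bound $\|f_0-\mu\|_{L^1_vW^{N_0+4,1}_x}$ to extract a factor $2^q$ from each $P_q$, and the remaining $\langle t\rangle^{-3(1-1/p)}$ from the Jacobian change of variables as before; summing $\sum_{q\le0}2^{(1-\delta)q}2^{-q}\cdot 2^q$-type series converges. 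I expect the \textbf{main obstacle} to be the borderline $H^{N_0}_x$ estimate: one must track the top-derivative characteristic bound from Proposition \ref{prop-HDBchar} through the $v$-integral and the change of variables without losing more than $\langle t\rangle^{\delta_1}$, which requires that the large factor $\langle t\rangle^{3/2}$ in $\|\partial^{\alpha_0}_xX_{0,t}\|_{L^2_x}$ be exactly compensated by the $t^{-3/2}$ gain one gets from estimating $\langle v\rangle^{-5/2}\partial^{\alpha_0}_xX_{0,t}$ in $L^2_v$ (cf.\ the mechanism in Proposition \ref{prop-Lpchar}), and that the chain-rule cross terms involving several high derivatives of $X_{0,t}$ do not accumulate growth beyond $\delta_{\alpha_0}$—this is where the linearity of $\delta_\alpha$ in $|\alpha|$ and the condition $N_0\ge14$ are used decisively.
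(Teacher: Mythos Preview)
Your overall strategy—change of variables $y=X_{0,t}(x,v)$ for $L^\infty$, volume preservation for $L^1$, Fa\`a di Bruno for higher derivatives, and the characteristic bounds from Propositions \ref{prop-Dchar}, \ref{prop-HDchar}, \ref{prop-HDBchar}—matches the paper. A few points are off, however.

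First, the digression about splitting $f_0=(f_0-\mu)+\mu$ is a red herring: in this section $f$ (and hence $f_0$) already denotes the perturbation, see \eqref{VKG1-pert}, so $f_0$ is itself small and no separate treatment of the $\mu$ part is needed. Second, the Littlewood--Paley bound is much simpler than you make it. Your claim that ``$S^0$ is itself a gradient of a density'' is false; rather, one uses the pointwise interpolation
\[
2^{(1-\delta)q}\|P_qS^0\|_{L^p_x}=\|P_qS^0\|_{L^p_x}^{\delta}\,\|2^qP_qS^0\|_{L^p_x}^{1-\delta}\lesssim\|S^0\|_{L^p_x}^{\delta}\,\|\partial_xS^0\|_{L^p_x}^{1-\delta},
\]
valid for every $q\in\ZZ$, which immediately gives the result from the $|\alpha|\le1$ bounds. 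Third, the statement here only claims the loss $\delta_\alpha$, not $\delta_{\alpha,p}$; your discussion of the two cases in the $\max$ is unnecessary for this proposition.

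The point where your outline genuinely stops short is the top-order $H^{\alpha_0}_x$ bound. You correctly identify this as the main obstacle, but the mechanism you sketch (an $L^2_v$ bound on $\partial_x^{\alpha_0}X_{0,t}$ via Proposition \ref{prop-Lpchar}) is unavailable—that proposition only treats $|\alpha|\le1$. The paper's device is different: isolate the single dangerous term $\int\partial_xf_0(X_{0,t},V_{0,t})\,\partial_x^{\alpha_0}X_{0,t}\,dv$ and apply Cauchy--Schwarz in $v$ with weight $|\partial_xf_0(X_{0,t},V_{0,t})|$,
\[
\Big|\int\partial_xf_0\,\partial_x^{\alpha_0}X_{0,t}\,dv\Big|\le\Big(\int|\partial_xf_0(X_{0,t},V_{0,t})|\,dv\Big)^{1/2}\Big(\int|\partial_xf_0(X_{0,t},V_{0,t})|\,|\partial_x^{\alpha_0}X_{0,t}|^2\,dv\Big)^{1/2}.
\]
The first factor is a density and decays like $t^{-3/2}$ pointwise in $x$ by the same change of variables. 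Squaring the second factor, integrating in $x$, and bounding $|\partial_xf_0|\lesssim\langle v\rangle^{-4}$ lets you pull out $\sup_v\|\partial_x^{\alpha_0}X_{0,t}\|_{L^2_x}^2\lesssim\epsilon^2\langle t\rangle^{3+2\delta_1}$ from Proposition \ref{prop-HDBchar}. The $t^{-3}$ from the squared first factor exactly cancels the $t^{3}$, leaving $\langle t\rangle^{2\delta_1}$. This is the precise compensation you were looking for, and it does not require any $L^2_v$ control of the top derivative.
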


\begin{proof}Indeed, recalling the definition of $S^0(t,x)$ from \eqref{nonlinear-S} and introducing the change of variables $y = X_{0,t}(x,v)$, we bound 
$$
\begin{aligned}
S^0(t,x) & = \int_{\RR^3} f_0(X_{0,t}(x,v) , V_{0,t}(x,v)) \, dv
= \int_{\RR^3} f_0(y, V_{0,t}(x,v)) \frac{dy}{ \det (\nabla_v X_{0,t}(x,v))}
\end{aligned}
$$
in which $v = \widetilde X_{0,t}(x,y)$, the inverse map of $v\mapsto y = X_{0,t}(x,v)$. Using \eqref{Jacobian-dxdv}, we thus obtain 
$$
\begin{aligned}
|S^0(t,x)| &\lesssim t^{-3}  \int_{\RR^3} |f_0(y, V_{0,t}(x,v_{0,t}(x,y))) |  \langle v\rangle^5\; dy
 \lesssim t^{-3}\int_{\RR^3} \sup_v \langle v\rangle^5 |f_0(y, v)| dy
\end{aligned}
$$
in which we noted that $\langle v\rangle \le 2 \langle V_{0,t}(x,v)\rangle$, using $\| V_{0,t} - v\|_{L^\infty_{x,v}} \lesssim \epsilon$. This gives the $L^\infty$ estimate. On the other hand, since $(x,v)\mapsto (X_{0,t},V_{0,t})$ is a volume-preserving map, we have 
$$
\begin{aligned}
\|S^0(t)\|_{L^1_x} &\le \iint |f_0(X_{0,t}(x,v) , V_{0,t}(x,v))| \; dxdv = \| f_0\|_{L^1_{x,v}} .
\end{aligned}
$$
The $L^p$ bounds in \eqref{Lpbounds-cS0} follow from the interpolation between $L^1$ and $L^\infty$ spaces. Finally, we compute  
$$
\begin{aligned}
\partial_x S^0(t,x) &= \int_{\RR^3} \partial_xX_{0,t} \cdot \nabla_x f_0(X_{0,t}(x,v) , V_{0,t}(x,v)) \, dv
\\
&\quad + \int_{\RR^3}  \partial_xV_{0,t} \cdot \nabla_v f_0(X_{0,t}(x,v) , V_{0,t}(x,v)) \, dv .
\end{aligned} 
$$  
Using Proposition \ref{prop-Dchar}, we have the boundedness of both 
$ \partial_x X_{0,t}$ and $\partial_xV_{0,t} $. 
Therefore, the estimates for $\partial_x S^0(t,x)$ follow similarly as done above for $S^0(t,x)$. Finally, as for the estimate on the Littlewood-Paley projection, we bound, using the classical Bernstein inequalities 
\begin{equation}\label{LP-interpolate}
\begin{aligned}
2^{(1-\delta)q} \|P_q f\|_{L^p_x}  
= \|P_qf\|^\delta_{L^p_x} \| 2^qP_qf\|^{1-\delta}_{L^p_x}
\lesssim \|f\|^\delta_{L^p_x} \|\partial_xf\|^{1-\delta}_{L^p_x},
\end{aligned}\end{equation}
for any $q\in \ZZ$. This gives the desired estimate on $\|P_qS^0(t)\|_{L^p_x}$, completing the proof of \eqref{Lpbounds-cS0}. 

Finally, we prove the decay and boundedness for high derivatives of $S(t)$. Using the Faa di Bruno's formula
for derivatives of a composite function, see \eqref{daVosci}, we compute 
\begin{equation}\label{daS}
\begin{aligned}  
\partial_x^\alpha S^0(t,x) & = \sum_{1\le |\mu|\le |\alpha|} C_{\beta,\mu}  \int_{\RR^3} 
\partial_x^\mu f_0(X_{0,t}, V_{0,t})\prod_{1\le |\beta|\le |\alpha|} (\partial_x^\beta X_{0,t})^{k_\beta} \; dv + \mathrm{l.o.t.}
\end{aligned}\end{equation}
in which $\mathrm{l.o.t.}$ denotes terms involving derivatives of $\partial_x^\beta V_{0,t}$, which are better than those for $\partial_x^\beta X_{0,t}$. Consider the case when $|\alpha|\le N_0-1$. In this case, we may use the decay and boundedness of $\partial_x^\beta X_{0,t}$ from Proposition \ref{prop-HDchar} to bound 
$$ 
\begin{aligned}
\int_{\RR^3} 
\partial_x^\mu f_0(X_{0,t}, V_{0,t})\prod_{1\le |\beta|\le |\alpha|} (\partial_x^\beta X_{0,t})^{k_\beta} \; dv
&\lesssim 
\int_{\RR^3} 
|\partial_x^\mu f_0(X_{0,t}, V_{0,t})| \prod_{2\le |\beta| \le |\alpha|} \langle t\rangle^{\delta_\beta}
\; dv
\\&\lesssim \langle t\rangle^{\delta_\alpha}
\int_{\RR^3} 
|\partial_x^\mu f_0(X_{0,t}(x,v), V_{0,t})| \; dv
\end{aligned}
$$
in which we have used $\sum_\beta \delta_\beta k_\beta = \delta_\alpha$. Now following the same proof as done above, we obtain the decay of $\int_{\RR^3} 
|\partial_x^\mu f_0(X_{0,t}(x,v), V_{0,t})| \; dv$
in $L^p_x$ as desired. Finally, for $|\alpha| = N_0$, we write 
$$ 
\begin{aligned}
&\sum_{1\le |\mu|\le |\alpha|} \int_{\RR^3} 
\partial_x^\mu f_0(X_{0,t}, V_{0,t})\prod_{1\le |\beta|\le N_0} (\partial_x^\beta X_{0,t})^{k_\beta} \; dv 
\\&= \int_{\RR^3} 
\partial_xf_0(X_{0,t}, V_{0,t})(\partial_x^{\alpha_0} X_{0,t}) \; dv 
+\sum_{2\le |\mu|\le |\alpha|}  \int_{\RR^3} 
\partial_x^\mu f_0(X_{0,t}, V_{0,t})\prod_{1\le |\beta|\le N_0-1} (\partial_x^\beta X_{0,t})^{k_\beta} \; dv.
\end{aligned}
$$
The last integral term involves $\partial_x^\beta X_{0,t}$ with $|\beta|\le N_0-1$, and is therefore treated as done above, using Proposition \ref{prop-HDchar}. As for the first term, we bound  
$$ 
\begin{aligned}
\int_{\RR^3} 
\partial_xf_0(X_{0,t}, V_{0,t})(\partial_x^{\alpha_0} X_{0,t}) \; dv
&\lesssim \Big(
\int_{\RR^3} 
|\partial_xf_0(X_{0,t}, V_{0,t})| \; dv\Big)^{1/2} \Big( \int_{\RR^3} 
\partial_xf_0(X_{0,t}, V_{0,t})|\partial_x^{\alpha_0} X_{0,t}|^2 \; dv\Big)^{1/2}
\\
&\lesssim \epsilon_0 \langle t\rangle^{-3/2}\Big( \int_{\RR^3} 
\partial_xf_0(X_{0,t}, V_{0,t})|\partial_x^{\alpha_0} X_{0,t}|^2 \; dv\Big)^{1/2}.\end{aligned}
$$
Therefore, using Proposition \ref{prop-HDBchar} and the fact that $\|\langle v\rangle^4\partial_xf_0\|_{L^\infty_{x,v}} \lesssim 1$, we bound 
$$ 
\begin{aligned}
\Big\|
\int_{\RR^3} 
\partial_xf_0(X_{0,t}, V_{0,t})(\partial_x^{\alpha_0} X_{0,t}) \; dv\Big\|^2_{L^2_x}
&\lesssim \epsilon^2_0 \langle t\rangle^{-3}\iint_{\RR^3\times \RR^3} 
\partial_xf_0(X_{0,t}, V_{0,t})|\partial_x^{\alpha_0} X_{0,t}|^2 \; dxdv
\\&\lesssim \epsilon^2_0 \langle t\rangle^{-3}\int_{\RR^3} 
\langle v\rangle^{-4} \sup_v\|\partial_x^{\alpha_0} X_{0,t}\|_{L^2_x}^2 dv
\lesssim \epsilon^2_0 \langle t\rangle^{2\delta_1},
\end{aligned}
$$
proving \eqref{Hsbounds-cS0}. This completes the proof of the proposition. 
\end{proof}

 
 \subsection{Nonlinear interaction with $E^r$}\label{sec-SEr}


In this section, we treat the nonlinear interaction with $E^r$, namely the following source term
\begin{equation}\label{def-Sr}
\begin{aligned}
S^r(t,x) & =  \int_0^t \int_{\RR^3} E^r(s,x - (t-s)\hw ) \cdot H_{s,t}(x,w) \; dw ds
\end{aligned}\end{equation}
where $H_{s,t}(x,w) $ is defined as in Proposition \ref{prop-SR}. We shall prove the following. 

\begin{proposition}\label{prop-decayEHR} There hold
$$
\begin{aligned}
 \|S^{r} (t)\|_{L^p_x} & \lesssim  \epsilon^2 \langle  t\rangle^{-3(1-1/p)} \log t
\\
\sup_{q\in \ZZ} 2^{(1-\delta)q} \|P_qS^{r}(t)\|_{L^p_x}  + \|\partial_x S^{r}(t)\|_{L^p_x} & \lesssim \epsilon^2 \langle t\rangle^{-3(1-1/p)} 
\end{aligned}$$
for any $1 \le p \le \infty$ and $\delta \in (0,1)$, where $P_q$ denotes the Littlewood-Paley projection on the dyadic interval $[2^{q-1}, 2^{q+1}]$. 
\end{proposition}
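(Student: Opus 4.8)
The plan is to insert the decomposition $H_{s,t} = T^{osc}_{s,t} + H^{osc}_{s,t} + H^{tr}_{s,t} + H^Q_{s,t} + H^R_{s,t}$ of Proposition~\ref{prop-SR} into \eqref{def-Sr} and estimate each resulting convolution separately via the auxiliary Lemma~\ref{lem-auxSr}. First I would check that $\epsilon^{-1}E^r$ satisfies the first hypothesis of Lemma~\ref{lem-auxSr} (bootstrap \eqref{bootstrap-decayS}), and that for each of the kernels $\cH \in \{T^{osc}_{s,t}, H^{osc}_{s,t}, H^Q_{s,t}, H^R_{s,t}\}$ the second hypothesis $\int_0^t \|\langle w\rangle^5 \cH_{s,t}\|_{L^\infty_{x,w}}\,ds \lesssim 1$ holds: using the $\langle w\rangle^N$-weighted $L^\infty$ bounds \eqref{bd-keyH}--\eqref{bd-keyHR} with $N$ large, these integrals are dominated by $\int_0^t \epsilon\langle s\rangle^{-3/2}\,ds$, $\int_0^t\epsilon\langle s\rangle^{-5/2}\,ds$, $\int_0^t\epsilon\langle s\rangle^{-2+\delta_1}\,ds$ and $\int_0^t\epsilon\langle t\rangle^{-3/2}\,ds$, all $\lesssim\epsilon$. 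Lemma~\ref{lem-auxSr} then gives each of these four contributions the bound $\epsilon^2\langle t\rangle^{-3(1-1/p)}$, with no logarithmic loss.

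The only term not covered this way is the transport kernel $H^{tr}_{s,t}$, which by \eqref{bd-keyH} decays only like $\langle s\rangle^{-1}$, i.e. barely fails time-integrability. For it I would split the $s$-integral at $s=t/2$. On $s\in(t/2,t)$ one uses the $L^p_x$ decay of $E^r$ directly (as in the $p=1$ case inside the proof of Lemma~\ref{lem-auxSr}); the integral $\int_{t/2}^t \langle s\rangle^{-3(1-1/p)}\langle s\rangle^{-1}\,ds$ has logarithmic measure $\log 2$ and produces no $\log t$, contributing $\epsilon^2\langle t\rangle^{-3(1-1/p)}$. On $s\in(0,t/2)$ one uses the transport dispersion exactly as in that lemma: after the change of variables $y=x-(t-s)\hw$ one gains $(t-s)^{-3}\sim t^{-3}$, at the price of $\|\langle w\rangle^5 H^{tr}_{s,t}\|_{L^\infty_{x,w}}\lesssim\epsilon\langle s\rangle^{-1}$, and $\int_0^{t/2}\langle s\rangle^{-1}\,ds\sim\log t$. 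This is the sole origin of the $\log t$ in the first bound; summing the five contributions yields $\|S^r(t)\|_{L^p_x}\lesssim\epsilon^2\langle t\rangle^{-3(1-1/p)}\log t$.

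For the logarithm-free bounds on $\partial_x S^r$ I would differentiate under the integral sign; the derivative either lands on $E^r(s,x-(t-s)\hw)$, giving $(\partial_x E^r)(s,\cdot)$ which obeys the same bootstrap bounds, or on $H_{s,t}(x,w)$, which by \eqref{bd-keyH} (valid for $|\alpha|\le1$) obeys the same bounds. For the four good kernels this reproduces $\epsilon^2\langle t\rangle^{-3(1-1/p)}$. For the $H^{tr}$ contribution on $s\in(0,t/2)$ --- the only place a $\log t$ threatened --- one exploits phase mixing: using $\nabla_w\bigl[g(x-(t-s)\hw)\bigr] = -(t-s)\,\nabla_w\hw\cdot(\nabla_x g)(x-(t-s)\hw)$ together with an integration by parts in $w$ (the weights $\langle w\rangle^{-N}$ on $H^{tr}_{s,t}$ and its $w$-derivatives absorbing $|\det\nabla_w\hw|^{-1}\lesssim\langle w\rangle^5$), the spatial derivative is traded for a gain of $(t-s)^{-1}\lesssim t^{-1}$, so this piece is $\lesssim\epsilon^2\langle t\rangle^{-3(1-1/p)-1}\log t\lesssim\epsilon^2\langle t\rangle^{-3(1-1/p)}$. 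The Littlewood--Paley bound then follows from interpolation as in \eqref{LP-interpolate}: for $q\ge0$ one estimates $2^{(1-\delta)q}\|P_qS^r\|_{L^p_x}\lesssim 2^{-\delta q}\|\partial_xS^r\|_{L^p_x}$; for $q<0$ one splits $S^r$ at $s=t/2$ and interpolates each piece between its $L^p$ bound and its $\partial_x$ bound, the low-$s$ piece carrying the extra $\langle t\rangle^{-1}$ that absorbs its $\log t$, so that $2^{(1-\delta)q}\|P_qS^r\|_{L^p_x}\lesssim\epsilon^2\langle t\rangle^{-3(1-1/p)}$ uniformly in $q$.

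The main obstacle is precisely the transport kernel $H^{tr}_{s,t}$: its $\langle s\rangle^{-1}$ decay sits exactly at the threshold of time-integrability, and the logarithmic loss it forces can be removed, for the derivative and frequency-localized estimates, only through the phase-mixing gain of $(t-s)^{-1}$ in the regime $s<t/2$. The technical heart is verifying that the $w$-integrations by parts are clean --- controlling the weights $\langle w\rangle^{\pm N}$, the Jacobian $\nabla_w\hw$, and the fact that $H^{tr}_{s,t}$ is itself an integral in an auxiliary time variable $\tau$ with $\nabla_x Q^{tr}$ inside (so that the derivative may need to be traded on the inner shifted argument $x-(t-\tau)\hw$ as well).
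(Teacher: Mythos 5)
Your decomposition of $H_{s,t}$, the use of Lemma~\ref{lem-auxSr} for the four well-decaying kernels, and the treatment of the undifferentiated $H^{tr}$-contribution (split at $s=t/2$, dispersive change of variables, $\log t$ from $\int_0^{t/2}\langle s\rangle^{-1}ds$) all match the paper. The gap is in the derivative estimate for the transport piece on $s<t/2$, which is the heart of the proposition. Your plan is to write $\partial_x S^{r,tr}_1$ as $\partial_xE^r\cdot H^{tr}+E^r\cdot\partial_xH^{tr}$ and then, for the first term, integrate by parts in $w$ via $\partial_xE^r(s,x-(t-s)\hw)=-(t-s)^{-1}(\nabla_w\hw)^{-1}\nabla_w[E^r(s,x-(t-s)\hw)]$. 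But when $\nabla_w$ lands on $H^{tr}_{s,t}(x,w)=\nabla_w\mu\int_s^t\frac{(t-\tau)(\tau-s)}{t-s}\nabla_w\hw\cdot\nabla_xQ^{tr}(\tau,x-(t-\tau)\hw,w)\,d\tau$, the derivative of the inner argument $x-(t-\tau)\hw$ produces a factor $(t-\tau)\sim t$ (for $\tau\le t/2$), which exactly cancels the $(t-s)^{-1}\sim t^{-1}$ you gained. The resulting kernel is again only $O(\langle s\rangle^{-1})$, so the time integral still yields $\log t$. The same problem afflicts your second term $E^r\cdot\partial_xH^{tr}$: the bound $\|\langle w\rangle^N\partial_xH^{tr}_{s,t}\|_{L^\infty}\lesssim\epsilon\langle s\rangle^{-1}$ is not better than that of $H^{tr}$ itself, so "obeys the same bounds" reinstates the logarithm rather than removing it. You flag this issue in your last sentence but do not resolve it, and the naive resolution fails.

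The mechanism that actually removes the log is a cancellation between the two pieces you treat separately. The paper changes variables $y=x-(t-s)\hw$ \emph{before} differentiating; then $E^r(s,y)$ is independent of $x$, and the full $x$-derivative falls on $H^{tr}_{s,t}(x,v_{s,t}(x,y))$, whose inner argument in these coordinates is $\frac{\tau-s}{t-s}x+\frac{t-\tau}{t-s}y$. Its $x$-derivative carries the coefficient $\frac{\tau-s}{t-s}\lesssim\tau/t$ — small precisely in the regime $s,\tau\le t/2$. Equivalently, $\frac{d}{dx}\big|_{y}=\partial_x\big|_w+\partial_xv_{s,t}\cdot\partial_w$, and the two individually $O(\tau)$ coefficients combine to $\frac{(t-\tau)(\tau-s)}{t-s}\bigl[1-\frac{t-\tau}{t-s}\bigr]=\frac{(t-\tau)(\tau-s)^2}{(t-s)^2}=O(\tau^2/t)$; your term-by-term treatment discards this cancellation. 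Even with the cancellation in hand, closing the estimate is not a pure $L^\infty$ argument: the paper pairs $\|E^r(s)\|_{L^{p'}_x}$ with $\sup_x\|\langle v\rangle^NH^{tr,1}_{s,t}(x,\cdot)\|_{L^p_y}$ for $p>3$, using $\|\partial_x^2Q^{tr}(\tau)\|_{L^p_xL^\infty_v}\lesssim\epsilon\langle\tau\rangle^{-3+3/p}$ from the second-derivative bootstrap \eqref{bootstrap-decayd2E} (available only for $p\in(1,\infty)$), and it is this duality in $(p,p')$ that makes both time integrals converge. The same refined, pre-interpolated bounds on the integrand $J^{r,tr}(s,t,\cdot)$ are what give the Littlewood--Paley estimate without a residual $(\log t)^{\delta}$; interpolating the already-integrated $\|S^r\|_{L^p}\lesssim t^{-3(1-1/p)}\log t$ against $\|\partial_xS^r\|_{L^p}$ would not suffice.
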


\begin{proof} From Proposition \ref{prop-SR}, we write 
$$H_{s,t}(x,w) = H^{tr}_{s,t}(x,w) + H^{R,1}_{s,t}(x,w)$$
where the remainder $H^{R,1}_{s,t}(x,w)$ satisfies $\| \langle v\rangle^N \partial_x^\alpha H^{R,1}_{s,t}\|_{L^\infty_{x,w}} \lesssim \epsilon \langle s\rangle^{-3/2}$ for $|\alpha|\le 1$. Let $S^{r,tr}$ and $S^{r,R}$ be the corresponding source in \eqref{def-Sr} with kernel $H^{tr}_{s,t}$ and $H^{R,1}_{s,t}$, respectively. 
Since $R_{s,t}$ decays faster than $\langle s\rangle^{-1}$, Lemma \ref{lem-auxSr} can be applied for $S^{r,R}$, giving the stated bound without the log factor. The $x$-derivative estimates, and hence the estimates on the Littlewood-Paley projection, follow similarly, upon using  
\eqref{LP-interpolate}. 

It remains to bound the source term $S^{r,tr}(t,x)$ with the interaction kernel $H^{tr}_{s,t}(x,w)$. Recall that $\| \langle v\rangle^N H^{tr}_{s,t}\|_{L^\infty_{x,w} } \lesssim \epsilon \langle s\rangle^{-1}$, and therefore, a similar argument in the proof of Lemma \ref{lem-auxSr} yields 
$$
\begin{aligned}
\| S^{r,tr}(t)\|_{L^p_x} 
&\lesssim  \epsilon^2 \langle t\rangle^{-3(1-1/p)} \int_0^t \langle s\rangle^{-1} \; ds,
 \end{aligned}$$
which also gives the stated bound for $S^{r,tr}$ with a $\log t$ factor. 

Next, we turn to bound $\partial_xS^{r,tr}(t,x)$. The main point is to remove the $\log t$ factor, which is only present in the above analysis when $s\in (0,t/2)$. Let $S^{r,tr}_1(t,x)$ be the corresponding integral in $S^{r,tr}(t,x)$ over $s\in (0,t/2)$. 
We shall bound 
\begin{equation} \label{S1rtr}
\partial_x S_1^{r,tr}(t,x)  = \frac{d}{dx}\int_0^{t/2} \int_{\RR^3} E^r(s,x - (t-s)\hv) \cdot H^{tr}_{s,t}(x,v) \; dv ds,
\end{equation}
where 
$$
H^{tr}_{s,t}(x,v)  =a(v)\int_s^t  
\frac{(t-\tau) (\tau-s)}{t-s}  
\partial_xQ^{tr}(\tau,x - (t-\tau)\hv, v) \; d\tau, 
$$
for $a(v) =  \nabla_v \mu \nabla_v\hv$. We let 
$$ y: = x - (t-s)\hv $$
and denote the inverse map, which determines $v$ from $x, y, t, s$, by $v_{s,t}(x,y)$. We have 
$$
H^{tr}_{s,t}(x, v_{s,t}(x,y)) = a(v_{s,t}(x,y))\int_s^t  \frac{(\tau-s)(t-\tau)}{t-s} \partial_xQ^{tr}
\Bigl(\tau,  \frac{\tau-s}{t-s}x + \frac{t-\tau}{t-s}y, v_{s,t}(x,y) \Bigr) \; d\tau.
$$
Taking the $x$-derivative, we have 
$$
\begin{aligned}
&\frac{d}{dx}H^{tr}_{s,t}(x, v_{s,t}(x,y)) 
\\&= a(v_{s,t}(x,y)) \int_s^t  \frac{(\tau-s)^2(t-\tau)}{(t-s)^2} \partial^2_xQ^{tr}
\Bigl(\tau,  \frac{\tau-s}{t-s}x + \frac{t-\tau}{t-s}y, v_{s,t}(x,y) \Bigr) \; d\tau
\\&\quad +a'(v_{s,t}(x,y)) \partial_x v_{s,t}(x,y) \int_s^t  \frac{(\tau-s)(t-\tau)}{t-s} \partial_xQ^{tr}
\Bigl(\tau,  \frac{\tau-s}{t-s}x + \frac{t-\tau}{t-s}y, v_{s,t}(x,y) \Bigr) \; d\tau
\\&\quad +a(v_{s,t}(x,y)) \partial_x v_{s,t}(x,y) \int_s^t  \frac{(\tau-s)(t-\tau)}{t-s} \partial_vQ^{tr}
\Bigl(\tau,  \frac{\tau-s}{t-s}x + \frac{t-\tau}{t-s}y, v_{s,t}(x,y) \Bigr) \; d\tau .
\end{aligned}$$
For convenience, recalling $0\le s\le t/2$, we write 
\begin{equation}\label{dxHtrst}
\frac{d}{dx}H^{tr}_{s,t}(x, v_{s,t}(x,y))  = H^{tr,1}_{s,t}(x, y)  + H^{tr,R}_{s,t}(x, y) ,
\end{equation}
where 
$$
\begin{aligned}
H^{tr,1}_{s,t}(x, y) 
&= a(v_{s,t}(x,y)) \int_s^{t/2}  \frac{(\tau-s)^2(t-\tau)}{(t-s)^2} \partial^2_xQ^{tr}
\Bigl(\tau,  \frac{\tau-s}{t-s}x + \frac{t-\tau}{t-s}y, v_{s,t}(x,y)  \Bigr) \; d\tau
\end{aligned}$$
and $H^{tr,R}_{s,t}(x, y) $ collects the remaining integrals. Note that $v_{s,t}(x,y) = \hv_{s,t}(x,y) / \sqrt{1-|\hv_{s,t}(x,y)|^2}$, where $\hv_{s,t}(x,y) = \frac{x-y}{t-s}$. This yields $| \partial_x v_{s,t}(x,y) | \le (t-s)^{-1} \langle v_{s,t}(x,y)\rangle^3$, which gives an extra decay of order $t^{-1}$, since $s\le t/2$. Therefore, the last two integrals in $\frac{d}{dx}H^{tr}_{s,t}(x, v_{s,t}(x,y))$ are bounded by $\epsilon\langle v_{s,t}\rangle^{-N+3} \langle t\rangle^{-1}\langle s\rangle^{-1}$. On the other hand, the first integral over $\tau\in (t/2,t)$ is also bounded by $\epsilon\langle v_{s,t}\rangle^{-N} \langle t\rangle^{-1}$. This gives
\begin{equation}\label{dxHtrR}
\| \langle v\rangle^{N-3} H^{tr,R}_{s,t}\|_{L^\infty_{x,v}} \lesssim \epsilon \langle t\rangle^{-1} .
\end{equation}
On the other hand, for each $x$, we bound 
$$
\begin{aligned}
\Big\| \langle v_{s,t}(x,y) \rangle^{ N} H^{tr,1}_{s,t}(x, y) \Big\|_{L^p_y} 
&\lesssim  \langle t\rangle^{-1} \int_s^{t/2} \tau^2 \| \sup_v \partial^2_xQ^{tr}
\Bigl(\tau,  \frac{\tau-s}{t-s}x + \frac{t-\tau}{t-s}y,v \Bigr)\|_{L^p_y} \; d\tau
\\
&\lesssim  \langle t\rangle^{-1} \int_s^{t/2} \tau^2 \| \sup_v\partial^2_xQ^{tr}(\tau,\cdot,v)\|_{L^p_z} \; d\tau
\end{aligned}$$
in which we have introduced a change of variable $z = \frac{\tau-s}{t-s}x + \frac{t-\tau}{t-s}y$. Since both $s,\tau\in [0,t/2]$, we have
$$ \frac12 \le \frac{t-\tau}{t-s} \le 2$$
and so the Jacobian of the change of variable from $y$ to $z$ is bounded above and away from zero. Now, for $1<p<\infty$, we obtain 
\begin{equation}\label{dxHtr1}
\begin{aligned}
\sup_x\Big\| \langle v_{s,t}(x,y) \rangle^{ N} H^{tr,1}_{s,t}(x, y) \Big\|_{L^p_y} 
&\lesssim  \langle t\rangle^{-1} \int_s^{t/2} \tau^2 \|\sup_v\partial^2_xQ^{tr}(\tau,\cdot,v)\|_{L^p_z} \; d\tau
\\
&\lesssim \epsilon \langle t\rangle^{-1} \int_s^{t/2} \langle \tau\rangle^{-1 + 3/p}
\; d\tau
\lesssim \epsilon \langle t\rangle^{-1+3/p} .
\end{aligned}\end{equation}
Here in the above, we have used $\|\sup_v\partial^2_xQ^{tr}(\tau,\cdot,v)\|_{L^p_z} \lesssim  \langle \tau\rangle^{-3 + 3/p}$ for $p<\infty$, without a loss of order $\langle \tau \rangle^{\delta_1}$. Indeed, by definition \eqref{def-Qtr}, for $1<p<\infty$, we compute 
\begin{equation}\label{estd2Qtr}
\begin{aligned} 
\|\partial_x^2 Q^{tr}(t)\|_{L^p_xL^\infty_v}
&\lesssim \sum_\pm \|\partial_x^2 [a_\pm(i\partial_x) \phi_{\pm,1}^v(i\partial_x)F]\|_{L^p_xL^\infty_v}
+ \|\partial_x^2(E\cdot  \nabla_x E^{osc,2}_\pm)\|_{L^p_xL^\infty_v} + \|\partial_x^2 E^r(t)\|_{L^p_x}
\\
&\lesssim \sum_\pm \| F(t)\|_{L^p_x}
+ \|E(t)\|_{W^{2,2p}_x}^2+ \|E^r(t)\|_{W^{2,p}_x}
\\
&\lesssim \epsilon \langle t\rangle^{-3 + 3/p}
\end{aligned} 
\end{equation}
as desired, upon recalling the bootstrap assumptions \eqref{bootstrap-decayd2E} on $\partial_x^2 E$. This yields \eqref{dxHtr1}.

We are now ready to bound $S_1^{r,tr}(t,x)  $ defined as in \eqref{S1rtr}. First, we perform the change of variable $w \to y$ as above in (\ref{S1rtr}). This leads to 
$$
S_1^{r,tr}(t,x)  = \int_0^{t/2} (t-s)^{-3}\int_{\RR^3} E^r(s,y) H^{tr}_{s,t}(x, v_{s,t}(x,y)) J_{v,\hv }(v_{s,t}(x,y))\; dy ds
$$
with Jacobian determinant $J_{v,\hv }$ satisfying 
\begin{equation}\label{bounds-dxJac} 
\begin{aligned}
|J_{v,\hv } (v_{s,t}(x,y)| &\lesssim \langle v_{s,t}(x,y)\rangle^3, 
\\
|\frac{d}{dx}J_{v,\hv }(v_{s,t}(x,y)| &\lesssim (t-s)^{-1} \langle v_{s,t}(x,y)\rangle^6.
\end{aligned}\end{equation}
Therefore, 
$$
\begin{aligned}
\partial_x S_1^{r,tr}(t,x)  
&= \int_0^{t/2} (t-s)^{-3}\int_{\RR^3} E^r(s,y) H^{tr}_{s,t}(x, v_{s,t}(x,y)) \frac{d}{dx}[J_{v,\hv }(v_{s,t}(x,y))]\; dy ds
\\
&\quad + \int_0^{t/2} (t-s)^{-3}\int_{\RR^3} E^r(s,y) \frac{d}{dx}[H^{tr}_{s,t}(x, v_{s,t}(x,y))] J_{v,\hv }(v_{s,t}(x,y))\; dy ds.
\end{aligned}$$
Using the bootstrap assumption on $E^r$ and \eqref{bounds-dxJac}, and noting $(t-s)^{-1} \le 2t^{-1}$ since $s\in [0,t/2]$, the first integral term is estimated by 
$$
\begin{aligned}
&\int_0^{t /2}\int_{\RR^3} (t-s)^{-4} |E^r(s,y)|
| \langle v_{s,t}(x,y)\rangle^6 H^{tr}_{s,t}(x,v_{s,t}(x,y))|\; dy ds 
\\&\lesssim \langle t\rangle^{-4} \int_0^{t /2} \|E^r(s)\|_{L^1_y} 
\| \langle v\rangle^6 H^{tr}_{s,t}\|_{L^\infty_{x,v}}
ds 
\\&\lesssim \epsilon^2 \langle t\rangle^{-4} \int_0^{t /2} \langle s\rangle^{-1}\; ds
\lesssim \epsilon^2 \langle t\rangle^{-3} .
\end{aligned}$$
As for the second integral term in $\partial_x S_1^{r,tr}(t,x)  $, using \eqref{dxHtrst} and \eqref{dxHtrR}, we 
first bound 
$$
\begin{aligned}
\Big|&\int_0^{t/2} (t-s)^{-3}\int_{\RR^3} E^r(s,y)  H^{tr,R}_{s,t}(x,y)J_{v,\hv }(v_{s,t}(x,y))\; dy ds\Big|
\\&\lesssim \langle t\rangle^{-3} \int_0^{t /2}\int_{\RR^3} |E^r(s,y)|
\sup_{x,y}\Big | \langle v_{s,t}(x,y)\rangle^3 |H^{tr,R}_{s,t}(x,y) \Big|\; dy ds 
\\&\lesssim \epsilon \langle t\rangle^{-4} \int_0^{t /2} \|E^r(s)\|_{L^1_y} 
ds 
\lesssim \epsilon^2 \langle t\rangle^{-3}, 
\end{aligned}$$
as claimed. On the other hand, using \eqref{dxHtr1} with $p>3$, we bound the integral term involving $H^{tr,1}_{s,t}(x,y)$ as 
\begin{equation}\label{logbd-dxSrH1}
\begin{aligned}
\Big|&\int_0^{t/2} (t-s)^{-3}\int_{\RR^3} E^r(s,y)  H^{tr,1}_{s,t}(x,y)J_{v,\hv }(v_{s,t}(x,y))\; dy ds\Big|
\\&\lesssim \langle t\rangle^{-3} \int_0^{t /2}\int_{\RR^3} |E^r(s,y)| \Big| \langle v_{s,t}(x,y)\rangle^3 |H^{tr,1}_{s,t}(x,y) \Big|\; dy ds 
\\&\lesssim \langle t\rangle^{-3} \int_0^{t /2} \|E^r(s)\|_{L^{p'}} \Big \| \langle v_{s,t}(x,y)\rangle^3 |H^{tr,1}_{s,t}(x,y) \Big\|_{L^p_y}\;  ds 
\\&\lesssim \epsilon^2 \langle t\rangle^{-4 + 3/p} \int_0^{t /2} \langle s\rangle^{-3 +3/p'} 
ds 
\\&\lesssim \epsilon^2 \langle t\rangle^{-4 + 3/p} \langle t\rangle^{-2 +3/p'}   
\lesssim \epsilon^2 \langle t\rangle^{-3},  
\end{aligned}\end{equation}
provided $p'<3/2$ (or equivalently $p>3$), where $1/p+ 1/p' = 1$.  Combining, this proves $\| \partial_x S_1^{r,tr}(t)\|_{L^\infty_x} \lesssim \epsilon^2 \langle t\rangle^{-3}$ as claimed.

Next, we check the estimates in $L^1_x$. We first focus on the integral term involving $H^{tr,1}_{s,t} $, which reads 
$$
\begin{aligned}
S_1^{r,tr,1}(t,x) = \int_0^{t/2} \int_{\RR^3} E^r(s,x - (t-s)\hv ) H^{tr,1}_{s,t}(x,v) \; dvds.
\end{aligned}$$
Similarly as done above, using \eqref{estd2Qtr}, we first obtain the following pointwise bound, for $1<p<\infty$,   
$$
\begin{aligned}\| H^{tr,1}_{s,t}(\cdot,v)\|_{L^p_x} 
&\le |a(v)|\int_s^{t/2}  \frac{(\tau-s)^2(t-\tau)}{(t-s)^2} \|\sup_v\partial_x^2Q^{tr}(\tau,\cdot,v)\|_{L^p_x} \; d\tau
\\
&\lesssim \epsilon\langle v\rangle^{-N} \langle t\rangle^{-1} \int_s^{t/2} \langle \tau\rangle^{-1+3/p} \; d\tau
\lesssim \epsilon \langle v\rangle^{-N} \langle t\rangle^{-1 + 3/p}.
\end{aligned} 
$$
Therefore, for any pair $(p,p')$ with $1/p+1/p'=1$, we bound 
\begin{equation}\label{logbd-dxSrH1L1}
\begin{aligned}
\|S_1^{r,tr,1}(t)\|_{L^1_x} 
&\le \int_0^{t/2} \int_{\RR^3} \|E^r(s)\|_{L^{p'}_x} \|H^{tr,1}_{s,t}(\cdot,v)\|_{L^p_x} \; dvds
\\
&\lesssim \epsilon^2 \langle t\rangle^{-1 + 3/p}
 \int_0^{t/2} \langle s\rangle^{-3+3/p'} \int_{\RR^3}  \langle v\rangle^{-N}  \; dvds
\\
&\lesssim \epsilon^2 \langle t\rangle^{-1 + 3/p}
\langle t\rangle^{-2+3/p'} 
\lesssim \epsilon^2,
\end{aligned}\end{equation}
provided that $p>3$ (or equivalently, $p'<3/2$). As for the integral involving $H^{tr,R}_{s,t}$, we simply use $L^\infty_x$ estimates from \eqref{dxHtrR}, which avoids the log factor, since the kernel decays at order $t^{-1}$, instead of $s^{-1}$. This proves the $L^1$ estimate for the derivative term. 
The $L^p$ estimates follow from the standard interpolation.

Finally, we prove the desired estimates on $\|P_qS_1^{r,tr}(t)\|_{L^p_x} $. By definition, see \eqref{S1rtr}, we write 
$$
S_1^{r,tr}(t,x)  = \int_0^{t/2} J^{r,tr}(s,t,x)ds 
, \qquad  J^{r,tr}(s,t,x) = \int_{\RR^3} E^r(s,x - (t-s)\hv ) H^{tr}_{s,t}(x,v) \; dv.$$ 
The previous bounds (e.g., see \eqref{logbd-dxSrH1} and \eqref{logbd-dxSrH1L1}) yield that for any $p\ge 1$,  
$$
\begin{aligned}
 \|  J^{r,tr}(s,t)\|_{L^p_x} &\lesssim \epsilon^2 \langle t\rangle^{-3+3/p}\langle s\rangle^{-1}
 \\
  \| \partial_xJ^{r,tr}(s,t)\|_{L^p_x}  &\lesssim \epsilon^2 \langle t\rangle^{-4+3/p + 3/\tilde p}\langle s\rangle^{-3+ 3/\tilde p'}
  \end{aligned}$$ 
for any $\tilde p>3$, where $1/\tilde p+ 1/\tilde p' = 1$. Now, using the interpolation inequality \eqref{LP-interpolate}, for any $q\in \ZZ$, we obtain 
$$
\begin{aligned}
2^{(1-\delta)q} \|P_q J^{r,tr}(s,t) \|_{L^p_x}  
&\lesssim \| J^{r,tr}(s,t) \|^\delta_{L^p_x} \|\partial_x J^{r,tr}(s,t)\|^{1-\delta}_{L^p_x}
\\
&\lesssim \epsilon^2 \langle t\rangle^{-3+3/p} \langle t\rangle^{-(1-\delta)(1-3/\tilde p)}\langle s\rangle^{-\delta - (1-\delta)(3-3/\tilde p')} ,
\end{aligned}
$$
for any $\delta>0$. Integrating in time, we get 
$$
\begin{aligned}
2^{(1-\delta)q} \|P_q S_1^{r,tr}(t) \|_{L^p_x}   & \le \int_0^{t/2} 2^{(1-\delta)q} \|P_q J^{r,tr}(s,t) \|_{L^p_x}  \; ds
\\
&\lesssim \epsilon^2 \langle t\rangle^{-3+3/p} \langle t\rangle^{-(1-\delta)(1-3/\tilde p)} \int_0^{t/2} \langle s\rangle^{-\delta - (1-\delta)(3-3/\tilde p')} \; ds
\\
&\lesssim \epsilon^2 \langle t\rangle^{-3+3/p} 
\end{aligned}
$$
provided that $\tilde p>3$ and $\delta \in (0,1)$. 
This ends the proof of the proposition.
\end{proof}
 

\subsection{Quadratic oscillation $E^{osc} H^{osc}$}\label{sec-resHosc}


In this section, we study the effect of quadratic oscillations, namely the effect of the interaction of the oscillations of the electric field
with the oscillations induced by them on $\nabla_v f$. 
Precisely, we will bound the interaction integrals
\begin{equation}\label{0def-Rosc}
\begin{aligned}
\cS_{\pm,\pm}^{osc} (t,x)&=\int_0^t \int_{\RR^3}  E^{osc}_\pm(s,x - (t-s)\hv ,v)\cdot H_{s,t}^{osc,\pm}(x,v) \; dv ds 
\\
\cS_{\pm,\mp}^{osc} (t,x) &=  \int_0^t \int_{\RR^3} E^{osc}_{\pm} (s,x - (t-s)\hv ,v)\cdot H_{s,t}^{osc,\mp}(x,v)\; dv ds
\end{aligned}
\end{equation}
for each pair $(\pm,\pm)$ and $(\pm,\mp)$, where the interaction kernels $H_{s,t}^{osc,\pm}(x,v)$ are defined in Proposition \ref{prop-SR}. 
The terms $\cS_{\pm,\pm}^{osc}$ describe $(+,+)$ and $(-,-)$ interactions whereas the terms
$\cS_{\pm,\mp}^{osc}$ describe $(+,-)$ and $(-,+)$ interactions. Note that $(+,+)$ and $(-,-,)$ interactions leads to oscillations
with double frequencies and thus to small integrals. Interactions $(+,-)$ and $(-,+)$ lead to non oscillatory terms
which must be carefully studied. Lemma \ref{0lem-Qosc} show that there is an algebraic cancellation in these interactions, leading
to a contribution smaller than expected.

In this section we prove the following proposition. 

\begin{proposition}\label{prop-Soscpm}
There hold
$$ 
\begin{aligned}
\|\cS_{\pm,\pm}^{osc} (t)\|_{L^p_x} + \|\cS_{\pm,\mp}^{osc} (t) \|_{L^p_x} &\lesssim \epsilon^2 \langle  t\rangle^{-3(1-1/p)} \log t 
\\
\|\partial_x \cS_{\pm,\pm}^{osc} (t)\|_{L^p_x} + \|\partial_x \cS_{\pm,\mp}^{osc} (t) \|_{L^p_x} 
&\lesssim  \epsilon^2\langle  t\rangle^{-3(1-1/p)} 
\\
\sup_{q\in \ZZ} 2^{(1-\delta)q} ( \|P_q\cS_{\pm,\pm}^{osc} (t)\|_{L^p_x}   + 
\|P_q\cS_{\pm,\mp}^{osc} (t)\|_{L^p_x}) 
&\lesssim  \epsilon^2\langle  t\rangle^{-3(1-1/p)} 
\end{aligned}
$$
for any $1 \le p \le \infty$ and $\delta \in (0,1)$, where $P_q$ denotes the Littlewood-Paley projection on the dyadic interval $[2^{q-1}, 2^{q+1}]$.
\end{proposition}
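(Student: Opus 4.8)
The plan is to split the time integral at $s = t/2$ and treat each regime by a different mechanism, following the philosophy already used for $S^r$ in Proposition \ref{prop-decayEHR}. For $s \in (t/2, t)$ we use the Klein-Gordon dispersive decay of $E^{osc}_\pm(s)$ in $L^p_x$ together with the $L^\infty_{x,v}$ bounds on $H^{osc,\pm}_{s,t}$ from \eqref{bd-keyH}; this already gives a clean $\langle t\rangle^{-3(1-1/p)}$ contribution without the logarithm, since $s \ge t/2$. For $s \in (0,t/2)$ we make the change of variables $y = x-(t-s)\hv$ as in Lemmas \ref{lem-auxSr}--\ref{lem-auxSosc}, so that the free-transport dispersion contributes a factor $(t-s)^{-3} \sim t^{-3}$; what remains is to integrate $\int_0^{t/2} \|\langle v\rangle^N H^{osc,\mp}_{s,t}\|_{L^\infty_{x,v}}\,ds$, and here the two cases $(\pm,\pm)$ and $(\pm,\mp)$ genuinely diverge.

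\textbf{The non-resonant case $(\pm,\pm)$.} For the $(+,+)$ and $(-,-)$ interactions, the product of $e^{\lambda_\pm(k)s}$ (from $E^{osc}_\pm$) and the kernel $H^{osc,\pm}_{s,t}$, whose leading oscillatory part carries $e^{\lambda_\pm(k')(\cdots)}$ built from $E^{osc,j}_\pm$, oscillates with a phase of the form $\lambda_\pm(k) + \lambda_\pm(k')$, which never vanishes since $\lambda_\pm = \pm i\nu_*$ and $\nu_* > 0$. Thus I would integrate by parts once in $s$ using the identity \eqref{keyint} (with $\omega^v_\pm$ replaced by the appropriate sum of two $\lambda_\pm$'s, which is bounded below by $2\inf\nu_* > 0$), gaining an extra factor $\langle s\rangle^{-1}$ relative to the naive bound; combined with the $\langle s\rangle^{-3/2}$ decay of $H^{osc,\pm}_{s,t}$ this makes $\int_0^{t/2}\langle s\rangle^{-5/2}\,ds$ converge, and no logarithm appears. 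The boundary terms from the integration by parts are harmless: the $s=0$ term involves only initial data, and the $s = t/2$ term is controlled by the dispersive estimate. The $\partial_x$-derivative and Littlewood-Paley bounds follow by the interpolation inequality \eqref{LP-interpolate}, exactly as in the proof of Proposition \ref{prop-decayEHR}, after recording the $L^p$ bounds (with $1 < p < \infty$, avoiding the $\langle\tau\rangle^{\delta_1}$ loss) on the corresponding $\partial_x^2 Q^{tr}$-type quantities via \eqref{estd2Qtr}.

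\textbf{The resonant case $(\pm,\mp)$ — the main obstacle.} Here the combined phase is $\lambda_\pm(k) + \lambda_\mp(k') = \pm i(\nu_*(k) - \nu_*(k'))$, which vanishes on the diagonal $|k| = |k'|$, so integration by parts in $s$ is not directly available and the naive bound only yields the logarithmically divergent $\int_0^{t/2}\langle s\rangle^{-1}\,ds$. The key structural fact, which I would isolate in a lemma (the analogue of the ``Lemma \ref{0lem-Qosc}'' referenced in the text), is that in the resonant interaction the leading term is forced to be a \emph{spatial derivative}: more precisely, writing $E^{osc}_\mp(s,x-(t-s)\hv,v)$ and the resonant piece of $H^{osc,\mp}_{s,t}$ in Fourier in $x$, the resonant ($|k|\approx|k'|$) part of the product of $\nabla_w\mu\,\nabla_w\cdot E^{osc,1}_\mp$ against $E^{osc}_\pm$ combines into an expression carrying an extra factor of $k-k'$ (equivalently, it is a total $x$-derivative of a bounded quantity), because the would-be singular contribution is an exact derivative in the transport variable $v$ that is then converted to an $x$-derivative via the $x-(t-s)\hv$ shift. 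This extra frequency factor lets me recover the missing decay by the phase-mixing/velocity-averaging mechanism: after the change of variables $y = x-(t-s)\hv$, the $x$-derivative hitting the kernel produces an additional $(t-s)^{-1}$ (as in the computation of $\partial_x v_{s,t}$ in Proposition \ref{prop-decayEHR}, where $|\partial_x v_{s,t}(x,y)| \lesssim (t-s)^{-1}\langle v_{s,t}\rangle^3$), so that the $s$-integral becomes $\int_0^{t/2} t^{-1}\langle s\rangle^{-1}\,ds \lesssim t^{-1}\log t$, and this extra $t^{-1}$ against the $t^{-3}$ from transport dispersion more than compensates, killing the logarithm in the \emph{derivative} bound and leaving only the harmless $\log t$ in the undifferentiated $L^p$ bound as stated. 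Establishing this algebraic cancellation cleanly — tracking that the resonant contribution of $E^{osc}_\pm \cdot H^{osc,\mp}$ is genuinely in divergence form modulo faster-decaying remainders — is the heart of the argument; once it is in place, the remaining estimates (boundary terms, non-resonant off-diagonal part treated by integration by parts as in the $(\pm,\pm)$ case, $L^1_x$ bounds via Minkowski and the $L^2$ bounds on the kernel from \eqref{bd-keyHR}, and the Littlewood-Paley bounds via \eqref{LP-interpolate}) are routine adaptations of Lemmas \ref{lem-auxSr}--\ref{lem-auxSosc} and the proof of Proposition \ref{prop-decayEHR}.
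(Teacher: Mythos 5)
Your two high-level ideas are the right ones --- integration by parts in time for the non-resonant $(\pm,\pm)$ interaction, and the fact that the resonant $(\pm,\mp)$ interaction carries an extra frequency factor that can be traded for $(t-s)^{-1}$ by velocity averaging --- and these are indeed the two pillars of the paper's proof (its Lemma \ref{0lem-Qosc}). But there is a genuine quantitative gap at the very first step of your plan. You claim the region $s\in(t/2,t)$ is handled by the naive estimate $\|E^{osc}_\pm(s)\|_{L^p_x}\|H^{osc,\pm}_{s,t}\|_{L^\infty_{x,v}}\lesssim \epsilon^2\langle s\rangle^{-3(1/2-1/p)}\langle s\rangle^{-3/2}=\epsilon^2\langle s\rangle^{-3+3/p}$ ``without the logarithm''. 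Integrating this over an interval of length $t/2$ gives $\epsilon^2\langle t\rangle^{-2+3/p}$, one full power of $t$ short of the target $\langle t\rangle^{-3+3/p}$; the same loss occurs for the $L^1_x$ bound via Cauchy--Schwarz, since neither $E^{osc}_\pm$ nor $H^{osc}_{s,t}$ decays in $L^2_x$. This is precisely why the quadratic-oscillation source cannot be treated by Lemmas \ref{lem-auxSr}--\ref{lem-auxSosc}: the oscillation/cancellation mechanisms must be applied to the \emph{entire} time integral, not only to $s<t/2$. In the paper the splitting at $t/2$ occurs only after the integration by parts in $s$ (resp.\ the extraction of the frequency factor), once the integrands have been upgraded to $F\cdot E^{osc}$-type terms decaying like $\langle s\rangle^{-9/2+3/p}$ or carry an extra $\langle t-s\rangle^{-1}$; the surviving logarithm in the undifferentiated bound then comes from $\int_{t/2}^t\langle t-s\rangle^{-1}\langle s\rangle^{-3}\,ds$, not from $\int_0^{t/2}\langle s\rangle^{-1}\,ds$ as in your accounting, and it is removed for $\partial_x\cS$ because the additional frequency factor yields $\langle t-s\rangle^{-2}$ there.

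Two further points on the resonant case. The gain is a factor of the \emph{output} frequency $k$ (the sum of the two input frequencies, i.e.\ a total $x$-derivative, consistent with your parenthetical remark) or of the phase $\lambda_+(\ell)+\lambda_-(k-\ell)$ --- not of the difference of the input frequencies, which need not be small on the resonant set. And the cancellation does not arise from ``an exact derivative in $v$ converted to an $x$-derivative''; it is an algebraic identity obtained by symmetrizing the two cross terms ($i\leftrightarrow j$, $\ell\leftrightarrow k-\ell$), using $\frac{1}{\omega_-(k-\ell,v)}+\frac{1}{\omega_+(\ell,v)}=\frac{\lambda_+(\ell)+\lambda_-(k-\ell)+ik\cdot\hv}{\omega_-(k-\ell,v)\,\omega_+(\ell,v)}$, the radiality of $\mu$, and the gradient structure $\FE^{osc}_\pm(t,k)=k\,\Fb_\pm(t,k)$. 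You correctly flag this lemma as the heart of the argument, but as proposed it is both left unproved and mis-stated, and the surrounding estimates it is meant to feed into do not close for the reason above.
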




\subsubsection{Description of resonances}


We first study the structure of the quadratic nonlinearity. Precisely, we define
\begin{equation}\label{def-Qosc}
\begin{aligned} 
Q_{osc}^\pm(s,t,x,v) 
&=\sum_\pm E^{osc}_\pm(s,x - (t-s)\hv ,v)\cdot H_{s,t}^{osc,\pm}(x,v)
\\
Q_{osc}^{+,-}(s,t,x,v) 
&=\sum_\pm E^{osc}_\pm(s,x - (t-s)\hv ,v)\cdot H_{s,t}^{osc,\mp}(x,v).
\end{aligned}
\end{equation}
The kernels may be expressed as convolutions or as Fourier multipliers. We choose to see them as Fourier multipliers.
We will prove the following lemma. 

\begin{lemma}\label{0lem-Qosc}
There are smooth symbols $M_1(k,\ell,v)$, $M_2(k,\ell,v)$ satisfying 
\begin{equation}\label{0bds-m12} 
\begin{aligned}
|M_1(k,\ell,v)| &\lesssim |\partial_v \mu(v)|+ |\partial^2_v \mu(v)|,
\\
 |M_2(k,\ell,v)| &\lesssim C_{k,\ell}\Big(|k|+ |\lambda_+(\ell) +\lambda_-(k-\ell)|\Big) |\partial_v \mu(v)|
\end{aligned}
\end{equation} 
with $ C_{k,\ell} = \frac{1}{|\omega_-(k-\ell,v)|^2} + \frac{1}{|\omega_-(k-\ell,v)\omega_+(\ell,v)|}$, so that 
$$
\begin{aligned} 
e^{ik(t-s)\cdot \hv  }\FQ_{osc}^\pm(s,t,k,v) 
&=  \sum_\pm \int  \FE^{osc}_{\pm}(s,\ell)  \FE^{osc}_{\pm}(s,k-\ell) M_1(k,\ell,v)\; d\ell 
\\
e^{ik(t-s)\cdot \hv  }\FQ_{osc}^{+,-}(s,t,k,v) 
&= \int \FE^{osc}_+(s,\ell)  \FE^{osc}_-(s,k-\ell) M_2(k, \ell,v)\; d\ell  .
\end{aligned}$$

\end{lemma}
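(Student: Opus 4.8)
The plan is to compute the Fourier transform in $x$ of the quadratic nonlinearities $Q_{osc}^\pm$ and $Q_{osc}^{+,-}$ directly from the definitions \eqref{def-Qosc} and the expressions for $H_{s,t}^{osc,\pm}(x,v)$ in \eqref{def-THosctr}. First I would recall that the oscillatory kernels $H_{s,t}^{osc,\pm}(x,v)$ are, up to smooth factors of $\mu$ and its derivatives in $v$, a linear combination of terms of the form $\bigl(\text{mult. in }\partial_x\bigr) E^{osc}_\pm(s, x - (t-s)\hw, w)$, where the Fourier multipliers are $\phi^v_{\pm,j}(k) = (\lambda_\pm(k) + ik\cdot\hv)^{-j}$ with $j = 1,2$ (and possibly divisions by $(t-s)$, which are harmless bounded factors). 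Since $E^{osc}_\pm(s, x - (t-s)\hw, w)$ has $x$-Fourier transform $e^{-i k(t-s)\cdot \hw}\FE^{osc}_\pm(s,k)$, multiplying the shifted field $E^{osc}_{\pm}(s,x-(t-s)\hv,v)$ against $H^{osc,\pm}_{s,t}(x,w)$ and taking Fourier transform in $x$ turns the product into a convolution in frequency: writing the first factor's frequency as $\ell$ and the kernel's frequency as $k - \ell$, one gets $\int \FE^{osc}_{\pm}(s,\ell) e^{-i\ell(t-s)\cdot\hv} \cdot \bigl[\text{symbol}\bigr](k-\ell,v) \FE^{osc}_\pm(s,k-\ell) e^{-i(k-\ell)(t-s)\cdot\hv}\,d\ell$. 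The two exponential phases combine to $e^{-ik(t-s)\cdot\hv}$, which is exactly the factor appearing on the left of the claimed identities; the remaining symbol is what we call $M_1$ or $M_2$.

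Next I would read off $M_1$ and $M_2$ from this bookkeeping. For $Q_{osc}^{\pm}$ (the $(+,+)$ and $(-,-)$ interactions), the symbol $M_1(k,\ell,v)$ is simply $\nabla_w^2\mu(v)$ or $\nabla_w\mu(v)$ times a bounded rational multiplier (a product of $\phi^v_{\pm,1}$ or $\phi^v_{\pm,2}$ evaluated at $k-\ell$, which by Theorem \ref{theo-LangmuirB} and Lemma \ref{lem-PEosc} are bounded since $\nu_*(|k|) > |k|$); this immediately gives the bound $|M_1| \lesssim |\partial_v\mu| + |\partial_v^2\mu|$ in \eqref{0bds-m12}. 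The genuinely interesting case is $Q_{osc}^{+,-}$ (the $(+,-)$ and $(-,+)$ interactions), where I expect an algebraic cancellation: the point is that in the resonant combination, the kernel $H^{osc,\mp}_{s,t}$ carries the multiplier $\phi^v_{\mp,j}(k-\ell) = (\lambda_-(k-\ell) + i(k-\ell)\cdot\hv)^{-j}$ while the shifted field carries the phase with $\lambda_+(\ell)$, and one should rewrite the product $\nabla_w^2\mu E^{osc,1}_{\mp} + \nabla_w\mu\nabla_w\cdot E^{osc,1}_{\mp}$ (plus the analogous $E^{osc,2}$ terms) so that the combination $\nabla_w^2\mu(v) (k-\ell) + \nabla_w\mu(v)\nabla_w\cdot(\cdots)$ produces a factor that, after using $\hat v$-contraction identities like $\hat v \cdot \nabla_v\hat v = \langle v\rangle^{-3}\hat v$ and the structure of $\nabla_v\hat v$, is proportional either to $|k|$ (a gradient, hence a frequency factor) or to $\lambda_+(\ell) + \lambda_-(k-\ell)$ (the "resonant" quantity that is small exactly when the two modes resonate). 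This is the content of the factor $|k| + |\lambda_+(\ell) + \lambda_-(k-\ell)|$ in the bound for $M_2$, and $C_{k,\ell} = |\omega_-(k-\ell,v)|^{-2} + |\omega_-(k-\ell,v)\omega_+(\ell,v)|^{-1}$ collects the remaining (bounded, since $\omega_\pm$ never vanishes) rational multipliers from $\phi^v_{\mp,1}$ and $\phi^v_{\mp,2}$.

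The smoothness of $M_1, M_2$ in all variables follows from the smoothness of $\nu_*$ (Theorem \ref{theo-LangmuirB}), of $\mu$, of $\hat v$, and of $a_\pm(k)$, together with the fact that the denominators $\omega_\pm(k,v) = \lambda_\pm(k) + ik\cdot\hat v$ stay bounded away from zero uniformly for bounded $v$ — which holds because $\nu_*(|k|) > |k| \geq |k\cdot\hat v|$ and $|\hat v| < 1$, as in the proof of Lemma \ref{lem-PEosc}. I would also need to track that the $(t-s)^{-1}$ factors appearing in $H^{osc,\pm}_{s,t}$ through the $E^{osc,2}$ contributions do not spoil the symbol structure; these simply ride along as scalar prefactors (absorbed later in the time-integral estimates of Proposition \ref{prop-Soscpm}) and do not affect the frequency-space identity, so for the statement of this lemma it is cleanest to incorporate them into the definition of $M_1, M_2$ or to state the identity for the $E^{osc,1}$-part and note the $E^{osc,2}$-part is handled identically with an extra $(t-s)^{-1}$. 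The main obstacle is the second step: correctly organizing the $\hat v$-derivative algebra in $H^{osc,\mp}_{s,t}$ so that the resonant cancellation is exhibited cleanly, i.e. verifying that $\nabla_w^2\mu\, E^{osc,1}_\mp + \nabla_w\mu\,\nabla_w\cdot E^{osc,1}_\mp$, when paired against $E^{osc}_+$ and reduced to a single symbol, genuinely factors through $|k| + |\lambda_+(\ell)+\lambda_-(k-\ell)|$ rather than just through a bounded symbol — this is where one must use that $\mu$ is radial (so $\nabla_v\mu = \hat v\,\varphi'(\langle v\rangle)$) and exploit the precise form of the Klein-Gordon phases. Everything else is routine Fourier bookkeeping.
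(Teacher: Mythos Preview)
Your outline gets the Fourier bookkeeping and the $M_1$ bound right, but the core of the lemma is the cancellation in $M_2$, and there your description has a real gap.

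You treat $Q_{osc}^{+,-}$ as if it were a single product $E^{osc}_+\cdot H^{osc,-}$, and you look for the cancellation \emph{inside} the kernel, between the $\nabla_w^2\mu\,E^{osc,1}_{\mp}$ and $\nabla_w\mu\,\nabla_w\cdot E^{osc,1}_{\mp}$ pieces, hoping $\hat v$-contraction identities do the job. That is not where the smallness comes from. By definition $Q_{osc}^{+,-}$ is the \emph{sum} $E^{osc}_+\cdot H^{osc,-} + E^{osc}_-\cdot H^{osc,+}$, and the paper's first move is a symmetrization: in the $(-,+)$ term, swap $i\leftrightarrow j$ and $\ell\leftrightarrow k-\ell$ so that both contributions sit against the same pair $\FE^{osc}_+(s,\ell)\FE^{osc}_-(s,k-\ell)$. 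The resulting symbol is $M_{-,i,j}(k-\ell,v)+M_{+,j,i}(\ell,v)$, and it is the \emph{sum} of the two cross terms that produces the factor $|k|+|\lambda_+(\ell)+\lambda_-(k-\ell)|$. For the $\partial^2_v\mu$ part this is immediate from $\frac{1}{\omega_-(k-\ell)}+\frac{1}{\omega_+(\ell)}=\frac{\lambda_+(\ell)+\lambda_-(k-\ell)+ik\cdot\hat v}{\omega_-(k-\ell)\omega_+(\ell)}$. Without pairing the two cross terms you will not see this.

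The second missing ingredient is more serious. For the $\partial_v\mu$ part of the symmetrized symbol, the individual terms $\frac{\partial_{v_i}\mu\,(k-\ell)_j}{\omega_-(k-\ell)^2}$ and $\frac{\partial_{v_j}\mu\,\ell_i}{\omega_+(\ell)^2}$ do \emph{not} combine to something small at the symbol level alone---they are $O(1)$ in $k$. The paper's key observation, which you do not mention, is that $\FE^{osc}_\pm(s,k)$ is a gradient and hence parallel to $k$: writing $\FE^{osc}_\pm(s,k)=k\,\hat b_\pm(s,k)$, the contractions $\sum_{i,j}\ell_j(k-\ell)_i[\cdots]$ collapse the offending terms to $\ell\cdot(k-\ell)$ times $\frac{(k-\ell)\cdot\nabla_v\mu}{\omega_-^2}+\frac{\ell\cdot\nabla_v\mu}{\omega_+^2}$, and only then does the algebraic identity $(k-\ell)\omega_+^2+\ell\omega_-^2=k\omega_+^2+\ell(\omega_-+\omega_+)(\lambda_-(k-\ell)+\lambda_+(\ell)+ik\cdot\hat v)$ deliver the small factor. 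The radiality of $\mu$ and the $\hat v$-identities you cite are indeed used here, but they are auxiliary; the gradient structure of the field is indispensable. Without it the bound on $M_2$ fails.
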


The bound on $M_2$ translates into a crucial cancellation: oscillations with frequency $+1$ and oscillations with frequency $-1$
could interact to give a non oscillatory term. It turns out that there is an algebraic cancellation in this interaction, which leads
to an interaction symbol $M_2$ which is smaller than expected, namely of order $|k|$ and not of order $\cO(1)$, for small $|k|$. This leads
to a gain of a factor $(t-s)^{-1}$ due to oscillation $e^{ik(t-s)\cdot \hv}$ in $v$. Note that this cancellation holds for each term $H_{s,t}^{osc,\pm}(x,v)$ in \eqref{def-THosctr}. 
  
\begin{proof} 
Here and in what follows, we note that $\FE^{osc}_{\pm,j}$ are components of the vector $\FE^{osc}_\pm$. 
Recall the definition \eqref{def-THosctr}, we write 
\begin{equation}\label{redef-Hosc1}
\begin{aligned}
H_{s,t}^{osc,\pm}(x,v)  
& =  \sum_{j}\partial_{v_j} \nabla_v \mu(v) \sum_\pm [E^{osc}_{\pm,j} \star_{x} \phi_{\pm,1}(v)](s,x - (t-s)\hv )
\\ &\quad +  \nabla_v\mu(v) \sum_{j} \sum_\pm [E^{osc}_{\pm,j} \star_{x} \partial_{v_j}\hv  \cdot \nabla_x \phi_{\pm,1}(v)](s,x - (t-s)\hv ) .
\end{aligned}
\end{equation}
Recall that the Fourier transform of $\phi_{\pm,1}(x,v)$ is $1/\omega_\pm(k,v)$. Therefore, in Fourier, we have  
\begin{equation} \label{FGosc}
\begin{aligned}
\FH_{s,t}^{osc,\pm}(k,v) &= e^{-ik(t-s)\cdot \hv }  \sum_\pm  \FE^{osc}_\pm(s,k) \cdot M_\pm(k,v)
\end{aligned}\end{equation}
where $M_\pm(k,v) = (M_{\pm,i,j}(k,v))$ is an $3\times 3$ matrix defined by 
\begin{equation} \label{defiMpm}
M_{\pm,i,j}(k,v) = \frac{1}{\omega_\pm(k,v)} \Big[ \partial_{v_i v_j}^2\mu(v) +(ik \cdot  \partial_{v_i}\hv ) \partial_{v_j} \mu\Big]
\end{equation}
Therefore, by definition, for $(+,+)$ and $(-,-)$ interactions we have
$$
\begin{aligned} 
e^{ik(t-s)\cdot \hv  }\FQ^\pm_{osc}(s,t,k,v) 
&= \sum_\pm \sum_{i,j}\int \FE^{osc}_{\pm,j}(s,\ell)  
\FE^{osc}_{\pm,i}(s,k-\ell) M_{\pm,i,j}(k-\ell, v)\; d\ell 
\end{aligned}$$
which gives the desired expression for this term, defining
$$
M_1(k,l,v) = M_{\pm,i,j}(k-\ell, v)
$$
which satisfies (\ref{0bds-m12}), since $\omega_\pm$ and its derivatives are bounded from below. 
We next compute $+/-$ and $-/+$ interactions, which gives 
$$
\begin{aligned} 
e^{ik(t-s)\cdot \hv  }
\FQ^{+,-}_{osc}(s,t,k,v) 
& =\sum_{i,j} \int \FE^{osc}_{+,j}(s,\ell)  \FE^{osc}_{-,i}(s,k-\ell)  M_{-,i,j}(k-\ell, v)\; d\ell 
\\
&\quad +\sum_{i,j} \int \FE^{osc}_{-,j}(s,\ell)  \FE^{osc}_{+,i}(s,k-\ell)  M_{+,i,j}(k-\ell, v)\; d\ell .
\end{aligned}$$
In the second integral term, we exchange $i\leftrightarrow j$ and $\ell \leftrightarrow k-\ell$, 
leading to 
$$
\begin{aligned} 
e^{ik(t-s)\cdot \hv  }\FQ^{+,-}_{osc}(s,t,k,v) 
& = 
\sum_{i,j} \int  \FE^{osc}_{+,j}(s,\ell)  \FE^{osc}_{-,i}(s,k-\ell) 
\\&\quad \times \Big [ M_{-,i,j}(k-\ell, v) +  M_{+,j,i}(\ell, v) \Big] \; d\ell .
\end{aligned}$$
We recall that the matrix $M_{\pm}(k,v)$ is defined by (\ref{defiMpm}). We aim to prove that the integrand is of order $\cO(|k|)$
and not merely $\cO(1)$ for small $k$. 
Indeed, we compute 
$$
\begin{aligned}
 M_{-,i,j}(k-\ell, v) +  M_{+,j,i}(\ell, v) &= \partial^2_{v_i v_j} \mu(v)\Big[ \frac{1}{\omega_-(k-\ell,v) } 
 +  \frac{1}{\omega_+(\ell,v) } \Big] 
 \\&\quad +\frac{\partial_{v_i} \mu(v) i(k-\ell)\cdot  \partial_{v_j}\hv }{\omega_-(k-\ell,v)^2}  
 + \frac{\partial_{v_j} \mu(v) i\ell \cdot \partial_{v_i} \hv }{\omega_+(\ell,v)^2}  .
 \end{aligned}$$
Recall that $\omega_\pm(k,v) = \lambda_\pm(k) + ik\cdot \hv $, which in particular never vanishes. 
Then,
$$
\frac{1}{\omega_-(k-\ell,v) } +  \frac{1}{\omega_+(\ell,v) } = \frac{\lambda_+(\ell) + \lambda_-(k-\ell) 
+ i k \cdot \hv }{\omega_-(k-\ell,v) \omega_+(\ell,v)} \le \frac{|\lambda_+(\ell) + \lambda_-(k-\ell) |
+ |k|}{|\omega_-(k-\ell,v) \omega_+(\ell,v)|}. 
$$
On the other hand, as 
$$\partial_{v_i} \hv _j = \frac{1}{\langle v\rangle} (\delta_{ij} - \hv _i \hv _j)$$
and 
$$
ik\cdot \partial_{v_i}\hv  
=  \frac{i}{\langle v\rangle}\sum_j k_j (\delta_{ij} - \hv _i \hv _j)= \frac{i}{\langle v\rangle} ( k_i - \hv _i k \cdot \hv ),
$$
we compute
$$
\begin{aligned}
&\frac{\partial_{v_j} \omega_-(k-\ell, v) \partial_{v_i} \mu(v) }{\omega_-(k-\ell,v)^2} 
+  \frac{\partial_{v_i} \omega_+(\ell, v) \partial_{v_j} \mu(v)}{\omega_+(\ell,v)^2}  
\\& 
=
\frac{i \partial_{v_i} \mu(v) ( (k-\ell)_j - \hv _j (k-\ell) \cdot \hv ) }{\langle v\rangle \omega_-(k-\ell,v)^2} 
+  \frac{i  \partial_{v_j} \mu(v) ( \ell_i - \hv _i \ell \cdot \hv )}{\langle v\rangle\omega_+(\ell,v)^2}  
\\& 
=
\frac{i \partial_{v_i} \mu(v) (k-\ell)_j }{\langle v\rangle \omega_-(k-\ell,v)^2} 
+  \frac{i  \partial_{v_j} \mu(v)  \ell_i }{\langle v\rangle\omega_+(\ell,v)^2}  
 -
\frac{i \partial_{v_i} \mu(v)  \hv _j (k-\ell) \cdot \hv  }{\langle v\rangle \omega_-(k-\ell,v)^2} 
- \frac{i  \partial_{v_j} \mu(v) \hv _i \ell \cdot \hv }{\langle v\rangle\omega_+(\ell,v)^2}  .
\end{aligned}$$
Since $\mu(v)$ is radial, the last two terms are 
$$
\frac{i \partial_{v_i} \mu(v)  \hv _j (k-\ell) \cdot \hv  }{\langle v\rangle \omega_-(k-\ell,v)^2} 
+ \frac{i  \partial_{v_j} \mu(v) \hv _i \ell \cdot \hv }{\langle v\rangle\omega_+(\ell,v)^2} 
= i \hv _j  \partial_{v_i}\mu(v) \frac{(k-\ell)\cdot \hv  \omega_+(\ell,v)^2 + \ell \cdot \hv  \omega_-(k-\ell,v)^2}{ \langle v\rangle\omega_-(k-\ell,v)^2\omega_+(\ell,v)^2}.
$$
Now 
$$\begin{aligned}
(k-\ell)\omega_+(\ell,v)^2 &+ \ell \omega_-(k-\ell,v)^2 = k\omega_+(\ell,v)^2+ \ell (\omega_-(k-\ell,v)^2-
\omega_+(\ell,v)^2)\notag
\\&= k\omega_+(\ell,v)^2 +\ell (\omega_-(k-\ell,v)-
\omega_+(\ell,v))(\omega_-(k-\ell,v)+
\omega_+(\ell,v))\notag
\\&= k\omega_+(\ell,v)^2 +\ell (\omega_-(k-\ell,v)-
\omega_+(\ell,v))(\lambda_-(k-\ell,v)+
\lambda_+(\ell,v) + ik\cdot \hv)
.
\end{aligned}
$$
This yields 
\begin{equation}\label{cmp-opminus}
\Big| \frac{i \partial_{v_i} \mu(v)  \hv _j (k-\ell) \cdot \hv  }{\langle v\rangle \omega_-(k-\ell,v)^2} 
+ \frac{i  \partial_{v_j} \mu(v) \hv _i \ell \cdot \hv }{\langle v\rangle\omega_+(\ell,v)^2} 
\Big| \lesssim C_{k,\ell}\Big(|\lambda_+(\ell) + \lambda_-(k-\ell) |
+ |k|\Big),
\end{equation}
in which 
$$ C_{k,\ell} = \frac{1}{|\omega_-(k-\ell,v)|^2} + \frac{1}{|\omega_-(k-\ell,v)\omega_+(\ell,v)|}$$
Next, to control the contribution of the other two terms 
$$
\frac{i \partial_{v_i} \mu(v) (k-\ell)_j }{\langle v\rangle \omega_-(k-\ell,v)^2} 
+  \frac{i  \partial_{v_j} \mu(v)  \ell_i }{\langle v\rangle\omega_+(\ell,v)^2}  
$$
we need to insert them in the integral 
$$
I_E = \sum_{i,j} \int \FE^{osc}_{+,j}(s,\ell)  \FE^{osc}_{-,i}(s,k-\ell) 
\Big [ \frac{i \partial_{v_i} \mu(v) (k-\ell)_j }{\langle v\rangle \omega_-(k-\ell,v)^2} 
+  \frac{i  \partial_{v_j} \mu(v)  \ell_i }{\langle v\rangle\omega_+(\ell,v)^2}   \Big] d\ell .
$$
Recall that we can write $ \FE^{osc}_{\pm}(t,k) = k \Fb_\pm(t,k)$ (we are exploiting here electric field is parallel to its propagation frequency)
for some scalar function $\Fb_\pm(t,k)$ (which may be singular $k$ at $k=0$). Therefore, 
$$
\begin{aligned}
I_E 
&=  \int \Fb_{+}(s,\ell)  \Fb_{-}(s,k-\ell)\sum_{i,j} \ell_j(k-\ell)_i
\Big [ \frac{i \partial_{v_i} \mu(v) (k-\ell)_j }{\langle v\rangle \omega_-(k-\ell,v)^2} 
+  \frac{i  \partial_{v_j} \mu(v)  \ell_i }{\langle v\rangle\omega_+(\ell,v)^2}   \Big] d\ell 
\\
&= i \int \Fb_{+}(s,\ell)  \Fb_{-}(s,k-\ell) 
\Big [ \frac{ \ell \cdot (k-\ell) (k-\ell)\cdot \nabla_v \mu}{\langle v\rangle \omega_-(k-\ell,v)^2} 
+  \frac{ \ell \cdot (k-\ell) \ell \cdot \nabla_v \mu}{\langle v\rangle\omega_+(\ell,v)^2}   \Big] d\ell
\\
&= i \int 
\FE^{osc}_{+}(s,\ell) \cdot \FE^{osc}_{-}(s,k-\ell) 
\frac{(k-\ell)\cdot \nabla_v \mu \omega_+(\ell,v)^2 
+  \ell \cdot \nabla_v \mu\omega_-(k-\ell,v)^2}{\langle v\rangle \omega_-(k-\ell,v)^2\omega_+(\ell,v)^2}   \; d\ell .
\end{aligned}$$
Using again \eqref{cmp-opminus}, we thus have 
$$I_E = \int \FE^{osc}_{+}(s,\ell) \cdot \FE^{osc}_{-}(s,k-\ell) M_2(k,\ell,v) \; d\ell $$ 
for some symbol $M_2(k,\ell,v)$ that satisfies the stated estimates. 
\end{proof}


\subsubsection{Bounds on $\cS_{\pm,\pm}^{osc} $}


By definition from \eqref{0def-Rosc} and \eqref{def-Qosc}, we compute in Fourier 
$$
\begin{aligned}
\FcS_{\pm,\pm}^{osc}(t,k) 
&= \int_0^t\int \FQ^\pm_{osc}(s,k, v) \;dvds 
\\
&= \sum_\pm \int_0^t  \iint  e^{-ik \cdot \hv (t-s)}  \FE^{osc}_\pm(s,\ell) \FE^{osc}_\pm(s,k-\ell) M_1(k,\ell,v)\; d\ell dv ds
\end{aligned}
$$ 
in which $|M_1(k,\ell,v)| \lesssim |\partial_v \mu(v)|+ |\partial^2_v \mu(v)|$ from Lemma \ref{0lem-Qosc}.
Note that only $M_1$ depends on $v$. We thus introduce  
\begin{equation}\label{0def-mkl0}
m_\pm(k, \ell, t) := \int e^{-ik \cdot \hv  t}  M_1(k,\ell,v)\; dv.
\end{equation}
By construction in  Lemma \ref{0lem-Qosc}, the symbol $M_1(k,\ell,v)$ is both sufficiently smooth and decaying sufficiently fast in $v$. 
Therefore, upon integrating by parts repeatedly in $v$, 
the symbol $ m_\pm(k,l,t) $ decays polynomially fast in $\langle kt\rangle$. Precisely, for any $j\ge 0$,
\begin{equation}\label{0bd-mkl0}
\partial_t^j m_\pm(k, \ell, t) = \int (-i k\cdot \hv )^je^{-ik \cdot \hv  t}  M_\pm(k-\ell,v)\; dv  
 \lesssim \langle t\rangle^{-j} \langle kt\rangle^{-N}
\end{equation}
for some large $N$. 
That is, each time-derivative gains an extra decay of order $t^{-1}$.  

Next, to bound $\FcS_{\pm,\pm}^{osc}(t,k)$, we need to exploit the oscillation in $E^{osc}_\pm(t,x)$. We thus write 
$$\FE^{osc}_\pm(t,k) = e^{\lambda_{\pm}(k) t} \FB_\pm(t,k)$$
for some vector $\FB_\pm(t,k)$ defined as in \eqref{def-Bpm}. We obtain 
\begin{equation}\label{0eqs-MFGat}
\begin{aligned}
\FcS_{\pm,\pm}^{osc}(t,k) 
&= 
\sum_\pm\int_0^t \int e^{(\lambda_\pm(\ell) + \lambda_\pm(k-\ell))s} \FB_\pm(s,\ell) \FB_\pm(s,k-\ell)m_\pm(k,\ell, t-s)\; d\ell ds.
\end{aligned}
\end{equation}
We now perform a non stationary phase argument on (\ref{0eqs-MFGat}).
Let us define the phase function by 
$$ 
W_\pm(k,\ell) = \lambda_\pm(\ell) + \lambda_\pm(k-\ell).
$$
As $\lambda_\pm(k)=\pm i\nu_*(k)$, with $\nu_*(k) \sim \langle k\rangle$, we note that the phase function satisfies 
$$
| W_\pm(k,\ell)| \gtrsim \langle \ell\rangle + \langle k-\ell\rangle
$$ 
for each $+$ and $-$, and in particular, the phase function never vanishes. This is natural since we are dealing with
interactions of time frequency plus-minus one with itself, leading to a time frequency plus-minus two. 
We can thus integrate by parts in $s$. 
We get (omitting the summation over $\pm$)
$$
\begin{aligned}
\FcS_{\pm,\pm}^{osc}(t,k) 
&=
\int \frac{e^{{ W_\pm(k,\ell)}t}}{ W_\pm(k,\ell)} \FB_\pm(t,\ell) \FB_\pm(t,k-\ell)m_\pm(k,\ell, 0)\; d\ell 
\\& \quad - \int \frac{1}{ W_\pm(k,\ell)} \FB_\pm(0,\ell) \FB_\pm(0,k-\ell)m_\pm(k,\ell, t)\; d\ell 
 \\&\quad - \int_0^t \int \frac{e^{{ W_\pm(k,\ell)}s}}{ W_\pm(k,\ell)}\partial_s \FB_\pm(s,\ell) \FB_\pm(s,k-\ell)m_\pm(k,\ell, t-s)\; d\ell ds
\\
 &\quad -\int_0^t \int \frac{e^{{ W_\pm(k,\ell)}s}}{ W_\pm(k,\ell)} \FB_\pm(s,\ell) \partial_s \FB_\pm(s,k-\ell)m_\pm(k,\ell, t-s)\; d\ell ds
 \\& \quad + \int_0^t \int \frac{e^{{ W_\pm(k,\ell)}s}}{ W_\pm(k,\ell)} \FB_\pm(s,\ell) \FB_\pm(s,k-\ell) \partial_sm_\pm(k,\ell, t-s)\; d\ell ds
\end{aligned}
$$
Note that $\ell$ and $k-\ell$ are symmetric, so the third and fourth terms are identical. 
Recall also from \eqref{def-Bpm} that $\FB_\pm(0,k) = a_\pm(k)\FF_0(k)$. 
Next, writing $e^{\lambda_\pm(k) t} \FB_\pm(t,k) = \FE^{osc}_\pm(t,k)$ and using \eqref{dtBF}, 
 $$
 e^{\lambda_\pm(k)t} \partial_t \FB_\pm(t,k)  =  a_\pm(k) \FF(t,k),
$$
we can write 
\begin{equation}\label{decomp-SoscFM}
\FcS_{\pm,\pm}^{osc}(t,k) = \FM_1(t,k) + \FM_2(t,k) + \FM_3(t,k) + \FM_4(t,k) +  \FM_5(t,k) 
\end{equation}
where 
$$
\begin{aligned}
 \FM_1(t,k) &=
 \int \frac{1}{ W_\pm(k,\ell)} \FE^{osc}_\pm(t,\ell)\FE^{osc}_\pm(t,k-\ell) m_\pm(k,\ell,0)\; d\ell 
\\
 \FM_2(t,k) &=
-  \int \frac{1}{ W_\pm(k,\ell)} a_\pm(\ell) a_\pm(k-\ell) \FF_0(\ell) \FF_0(k-\ell) m_\pm(k,\ell,t)\; d\ell 
 \\
 \FM_3(t,k) &=-
 \int_0^t \int \frac{1}{ W_\pm(k,\ell)} a_\pm(\ell) \FF(s,\ell) \FE^{osc}_\pm(s,k-\ell)m_\pm(k,\ell, t-s)\; d\ell ds
\\
 \FM_4(t,k) &=-
 \int_0^t \int \frac{1}{ W_\pm(k,\ell)}  a_\pm(k-\ell) \FE^{osc}_\pm(\ell) \FF(s,k-\ell) m_\pm(k,\ell, t-s)\; d\ell ds
\\
 \FM_5(t,k) & = \int_0^t \int \frac{1}{ W_\pm(k,\ell)}  \FE^{osc}_\pm(s,\ell) \FE^{osc}_\pm(s,k-\ell) \partial_sm_\pm(k,\ell, t-s)\; d\ell ds
\end{aligned}
$$
We now estimate each term. Using Lemma \ref{lem-bilinear}, we have 
$$ 
\|M_1(t)\|_{L^p_x} \lesssim \|E^{osc}_\pm(t)\|_{L^{2p}_x}^2 \lesssim \epsilon^2 \langle t\rangle^{-3(1-1/p)}.
$$
As for $M_2$, we use \eqref{0bd-mkl0} to obtain 
$$ 
\|M_2(t)\|_{L^\infty_x} \lesssim \|\FM_2(t)\|_{L^1_k} \lesssim \| \langle kt\rangle^{-N} \|_{L^1_k} \| \FF_0 \|^2_{L^2_k} 
\lesssim \langle t\rangle^{-3} \| F_0 \|_{L^2_x}^2 \lesssim \epsilon_0\langle t\rangle^{-3} 
$$
by the assumption on initial data, while using Lemma \ref{lem-bilinear} we bound $
\|M_2(t)\|_{L^1_x} \lesssim \| \FF_0 \|^2_{L^2_x} 
\lesssim \epsilon_0. 
$

Next, we turn to $M_3$ and $M_4$ which are similar.
We recall the bootstrap assumptions (see Section \ref{sec-bootstrap}) 
\begin{equation}\label{0bd-dtV}
\begin{aligned}
 \| F(t)\|_{L^p_x}  &\lesssim \epsilon \langle t\rangle^{-3(1-1/p)} .
 \end{aligned}\end{equation}
We split the time integral in $M_3$ in two parts: between $0$ and $t/2$ on one side, and between $t/2$ and $t$ on the
other side. Let $M_3^1$ and $M_3^2$ be the corresponding integrals. We start with $M_3^2$. Indeed, using Coifman-Meyer type bounds, we get 
$$
\begin{aligned}
\| M_3^2(t)\|_{L^p_x} 
& \lesssim  \int_{t/2}^t \| F(s)\|_{L^{p}_x} \|E^{osc}_\pm(s)\|_{L^{\infty}_x} \; ds
 \lesssim \epsilon^2 \int_{t/2}^t \langle s\rangle^{-3 + 3/p} \langle s\rangle^{-3/2}  \; ds
 \lesssim \epsilon^2 \langle t\rangle^{-3 + 3/p} ,
\end{aligned}
$$
as claimed for any $p\ge 1$. Note that a similar estimate as done above also gives the desired bound on $M_3^1(t)$ in $L^1_x$. We now focus to bound $M_3^1$ in $L^\infty$, for which we use the dispersion. 
More precisely we have
$$
M_3^1(t,x) = \int_0^{t/2} \iiint {a_\pm(\ell) \over  W_\pm(k,\ell)} \FF(s,\ell) \FE^{osc}_\pm(s,k-\ell)
e^{i k. x - i k . \hv  (t-s)} M_1(k,l,v) \; d\ell dk  dv ds.
$$
Note that $M_3^1(t,x)$ depends on $x$ through $x - \hv  (t-s)$. To underline this dependency and take advantage
of the associated dispersion, we introduce
$$
H(s,y,v) = \iint {a_\pm(\ell) \over  W_\pm(k,\ell)} \FF(s,\ell) \FE^{osc}_\pm(s,k-\ell)
e^{i k. y}  M_1(k,l,v) \; d\ell dk .
$$
Note that $H(s,y,v)$ depends on $v$ through $M_1$ only, which decays faster than $\langle v \rangle^{-3}$.
We write 
$$
\langle v\rangle^3 H(s,y,v)=\int e^{i\eta\cdot v}\hat{\underline H}(s,y,\eta) d\eta,
$$
\begin{align}
\hat{\underline H}(s,y,\eta) &=\iiint {a_\pm(\ell) \over  W_\pm(k,\ell)} \FF(s,\ell) \FE^{osc}_\pm(s,k-\ell)
e^{i k. y -i\eta\cdot v}  \langle v\rangle^3 M_1(k,l,v) \; d\ell dk d v\notag\\&=
\iint {a_\pm(\ell) \over  W_\pm(k,\ell)} e^{i k. y}\FF(s,\ell) \FE^{osc}_\pm(s,k-\ell)
\underline M_1(k,l,\eta) \; d\ell dk \label{eq:CM}
\end{align}
with
$$
\underline M_1(k,l,\eta) =\int e^{-i\eta\cdot v}  \langle v\rangle^3 M_1(k,l,v) dv
$$
Since $\langle v\rangle^3 M_1(k,l,v)$ is smooth and rapidly decaying in $v$, we have
$$
|\underline M_1(k,l,\eta)|\les \langle \eta\rangle^{-N}
$$
with the same estimates also holding true for the $\partial_k$ and $\partial_\ell$ derivatives. We now apply 
a Coifman-Meyer type estimate of Lemma \ref{lem-bilinear} to \eqref{eq:CM} with $\eta$ treated as a
parameter:
$$
\|\hat{\underline H}(s,y,\eta)\|_{L^1_y}\les \langle \eta\rangle^{-N} 
\| F(s)\|_{L^2_x}\| E^{osc}_\pm(s)\|_{L^2_x}
$$
From this we can conclude 
$$
\| \sup_v \langle v \rangle^3 H \|_{L^1_x}\les \| F(s)\|_{L^2_x}\| E^{osc}_\pm(s)\|_{L^2_x}
$$
 This estimate leads to dispersion in $x$, that is 
$$
\begin{aligned}
\| M_3^1(t)\|_{L^\infty_x}  
& \lesssim \int_0^{t/2} (t-s)^{-3} \| F(s) E^{osc}_\pm(s)\|_{L^1_x} \; ds
 \lesssim \epsilon^2\int_0^{t/2} s^{-3/2}(t-s)^{-3} \; ds
 \lesssim \epsilon^2 t^{-3} . 
\end{aligned}
$$
Such kind of dispersion estimates will be used several times in this paper. Note that $M_4$ is similar.

Finally, we bound 
$$
 \FM_5(t,k) = \int_0^t \int \frac{\partial_sm_\pm(k,\ell, t-s)}{ W_\pm(k,\ell)} \FE^{osc}_\pm(s,\ell) \FE^{osc}_\pm(s,k-\ell) \; d\ell ds ,
 $$
which we split it again into $M_5^1$ and $M_5^2$ that corresponds to the integral over $(0,t/2)$ and $(t/2,t)$, respectively. 
Using Coifman-Meyer type bounds and the estimate \eqref{0bd-mkl0}, we get 
$$
\begin{aligned}
\| M_5^2(t)\|_{L^p_x} 
& \lesssim  \int_{t/2}^t (t-s)^{-1}\|E^{osc}_\pm(s)\|^2_{L^{2p}_x} \; ds
 \lesssim \epsilon^2 \int_{t/2}^t s^{-3+3/p} (t-s)^{-1} \; ds
 \lesssim \epsilon^2 t^{-3+3/p} \log t,
\end{aligned}
$$
for any $p\ge 1$. To get rid of the log loss, in view of \eqref{0def-mkl0}, we note that 
$$ |k||\partial_sm_\pm(k,\ell, t-s)| \lesssim \langle t-s\rangle^{-2},$$
which yields the desired estimate on $ \partial_x M_5^2(t)$. Next, we bound $M_5^1$. Note that a similar estimate as done above also gives the desired $L^1$ bound for $M_5^1$. It remains to give the $L^\infty$ estimates, 
which we shall dispersion of the transport equation. We write 
$$
 \FM^1_5(t,x)  = \int_0^{t/2} \int \frac{1}{ W_\pm(k,\ell)}  \FE^{osc}_\pm(s,\ell) \FE^{osc}_\pm(s,k-\ell) e^{i k. x - i k . \hv  (t-s)} k . \hv  M_1(k,\ell, v)\; d\ell dv ds
$$
With the exception of the factor $k . \hv $, which appears from the $\partial_s$-derivative of $m_\pm$, this
is very similar to $\FM^1_3(t,x)$. We observe that 
$$
k . \hv =\frac {i\langle v\rangle^3}{t-s}\hv .\nabla_v e^{- i k . \hv  (t-s)}
$$
We can now integrate by parts in $v$ and obtain
$$
\FM^1_5(t,x)  = \int_0^{t/2} \int \frac1{t-s}\frac{1}{ W_\pm(k,\ell)}  \FE^{osc}_\pm(s,\ell) \FE^{osc}_\pm(s,k-\ell) e^{i k. x - i k . \hv  (t-s)}  \tilde M_1(k,\ell, v)\; d\ell dv ds
$$
with the symbol $\tilde M_1(k,\ell, v)$ similar to  $M_1(k,\ell, v)$. We can repeat verbatim the arguments 
applied to $\FM^1_3(t,x)$ noting the presence of an additional factor of $1/(t-s)$. This gives 
 $$
\begin{aligned}
\| M^1_5(t)\|_{L^\infty_x} 
& \lesssim \int_0^{t/2} (t-s)^{-4} \| E^{osc}_\pm(s)^2\|_{L^1_x} \; ds
 \lesssim \epsilon^2 \int_0^{t/2} t^{-4}\; ds
 \lesssim \epsilon^2 t^{-3} .
\end{aligned}
$$
The estimates for derivatives follow similarly. 

Finally, we prove the desired estimates on the Littlewood-Paley projection. Indeed, for terms $M_j$ that experience no log loss, we simply use the the interpolation inequality \eqref{LP-interpolate}. As for $M_5$, following the argument as done in the previous section, we write 
$$ M_5(t,x) = \int_0^t J_5(s,t,x) \; ds$$
and bound 
\begin{equation}\label{LPbounds-M5}
\begin{aligned}
2^{(1-\delta)q} \|P_q M_5(t) \|_{L^p_x}   & \le \int_0^t 2^{(1-\delta)q} \|P_q J_5(s,t) \|_{L^p_x}  \; ds
 \lesssim \int_0^t \| J_5(s,t) \|^\delta_{L^p_x} \|\partial_x J_5(s,t)\|^{1-\delta}_{L^p_x} \; ds
\\& \lesssim \int_0^{t/2} (t-s)^{-5 +3/p+ \delta} \| E^{osc}_\pm(s)^2\|_{L^1_x} \; ds
+ \int_{t/2}^t  \langle t-s \rangle^{-2+\delta}\|E^{osc}_\pm(s)\|^2_{L^{2p}_x} \; ds
\\
& \lesssim \epsilon^2 \int_0^{t/2} t^{-5+3/p+\delta}\; ds
 + \epsilon^2\int_{t/2}^t  \langle t-s \rangle^{-2+\delta}s^{-3+3/p} \; ds
 \lesssim \epsilon^2 t^{-3+3/p}
\end{aligned}
\end{equation}
for $\delta \in (0,1)$. This completes the proof of Proposition \ref{prop-Soscpm} for $\cS_{\pm,\pm}^{osc} $.


\subsubsection{Bounds on $\cS_{\pm,\mp}^{osc} $}


Next, we prove Proposition \ref{prop-Soscpm} for $\cS_{\pm,\mp}^{osc} $. This involves $(+,-)$ and $(-,+)$ resonant interactions, leading to non oscillatory terms. 
By definition from \eqref{0def-Rosc} and \eqref{def-Qosc}, we compute in Fourier 
$$
\begin{aligned}
\FcS_{\pm,\mp}^{osc}(t,k) 
&= \sum_\pm \int_0^t\int e^{-ik \cdot \hv (t-s)} \FQ^{+,-}_{osc}(s,k, v) \;dvds 
\\
&= \int_0^t  \iint  e^{-ik \cdot \hv (t-s)} \FE^{osc}_+(s,\ell) \FE^{osc}_-(s,k-\ell) M_2(k,\ell,v)\; d\ell dv ds 
\end{aligned}
$$ 
in which we note that 
$$
M_2(k,\ell,v) \lesssim C_{k,\ell}( |k| + |\lambda_+(\ell)+\lambda_-(k-\ell)|)|\partial_v \mu|
$$ 
from Lemma \ref{0lem-Qosc} with $ C_{k,\ell} = \frac{1}{|\omega_-(k-\ell,v)|^2} + \frac{1}{|\omega_-(k-\ell,v)\omega_+(\ell,v)|}$.  So, compared to $M_1(k,\ell,v)$, $M_2(k,\ell,v)$ has 
either an extra $k$ factor or $\lambda_+(\ell)+\lambda_-(k-\ell)$. 

To deal with $k$-portion, we reproduce the treatment of $M_5^1$, again noting that
$$
k -(k.\hv ) \hv =\frac {i\langle v\rangle}{t-s}\partial_v e^{- i k . \hv  (t-s)},\quad 
k.\hv =\frac {i\langle v\rangle^3}{t-s}\partial_v e^{- i k . \hv  (t-s)}
$$
As in the case of $M_5^1$, the $k$ factor allows one to integrate by parts in $v$ which in turn gains an 
extra $(t-s)^{-1}$ factor. Precisely, 
$$
\begin{aligned}
\| \cS_{\pm,\mp}^{1,osc}(t,x) \|_{L^\infty_x} 
& \lesssim \int_0^{t/2} (t-s)^{-4} \| E^{osc}_+(s)E^{osc}_-(s)\|_{L^1_x} \; ds
 + \int_{t/2}^t  \langle t-s \rangle^{-1} \|E^{osc}_\pm(s)\|^2_{L^\infty_x} \; ds
\\
& \lesssim \epsilon^2 t^{-4}\int_0^{t/2}  \; ds
 + \epsilon^2 t^{-3}\int_{t/2}^t   \langle t-s \rangle^{-1}  \; ds
\\& \lesssim\epsilon^2  t^{-3} \log t.
\end{aligned}
$$
The $L^p$ estimates are treated similarly as done in the previous section. On the other hand, to deal with the contribution from $\lambda_+(\ell)+\lambda_-(k-\ell)$, we go back 
to the analog of \eqref{0eqs-MFGat}
\begin{equation}\label{0eqs-MFGat'}
\begin{aligned}
\FcS_{\pm,\mp}^{2,osc}(t,k) 
&= 
\sum_\pm\int_0^t \int e^{(\lambda_+(\ell) + \lambda_-(k-\ell))s} \FB_+(s,\ell) \FB_-(s,k-\ell)m_2(k,\ell, t-s)\; d\ell ds.
\end{aligned}
\end{equation}
$$
m_2(k,\ell, t)=\int e^{-ik.\hv  t} M_2(k,\ell,v) dv
$$
Define the  phase function by 
$$ 
W(k,\ell) = \lambda_+(\ell) + \lambda_-(k-\ell).
$$
The phase function no longer has a good bound from below but we can still integrate by parts in $s$ using 
that 
$$
\frac{m_2(k,\ell, t)}{W(k,\ell)}=\tilde m_2(k,\ell, t),
$$
where now, as in the treatment of $\FcS_{\pm,\pm}$, the symbol $\tilde m_2(k,\ell, t)$ has similar properties 
to $m_\pm(k,\ell, t)$. 
We get 
$$
\begin{aligned}
\FcS_{\pm,\mp}^{2,osc}(t,k) 
&=
\int {e^{{ W(k,\ell)}t}} \FB_+(t,\ell) \FB_-(t,k-\ell)\tilde m_2(k,\ell, 0)\; d\ell 
\\& \quad - \int \FB_+(0,\ell) \FB_-(0,k-\ell)\tilde m_2(k,\ell, t)\; d\ell 
 \\&\quad - \int_0^t \int{e^{{ W_\pm(k,\ell)}s}}\partial_s \FB_+(s,\ell) \FB_-(s,k-\ell)\tilde m_2(k,\ell, t-s)\; d\ell ds
\\
 &\quad -\int_0^t \int {e^{{ W_\pm(k,\ell)}s}}\FB_+(s,\ell) \partial_s \FB_-(s,k-\ell)\tilde m_2(k,\ell, t-s)\; d\ell ds
 \\& \quad + \int_0^t \int {e^{{ W_\pm(k,\ell)}s}} \FB_+(s,\ell) \FB_-(s,k-\ell) \partial_s\tilde m_2(k,\ell, t-s)\; d\ell ds
\end{aligned}
$$
We can now repeat the arguments of the previous section to obtain 
$$
\| \cS_{\pm,\mp}^{2,osc}(t,x) \|_{L^\infty_x} 
 \lesssim\epsilon^2  t^{-3} \log t
$$
Finally, similar to what as done in the previous section, an extra factor of $|k|$ gains a decay of order $(t-s)^{-1}$. 
This removes the log loss in the previous bound, leading to 
$$
\begin{aligned}
\| \partial_x \cS_{\pm,\mp}^{osc}(t,x) \|_{L^\infty_x} 
& \lesssim \epsilon^2 t^{-3} .
\end{aligned}
$$
The estimates on the Littlewood-Paley projection follows identically as done for $M_5$ in \eqref{LPbounds-M5}.

 
 \subsection{Nonlinear interaction with $E^{osc}$}\label{sec-SEosc}


In this section, we treat the nonlinear interaction with $E^{osc}$, namely the following source term
\begin{equation}\label{def-Sosc}
\begin{aligned}
S^{osc}(t,x) & = \sum_\pm \int_0^t \int_{\RR^3} E_\pm^{osc}(s,x - (t-s)\hw ) H_{s,t}(x,w) \; dw ds,
\end{aligned}
\end{equation}
where $H_{s,t}(x,w) $ is defined as in Proposition \ref{prop-SR}. Precisely, we will prove the following proposition, which together with Propositions \ref{prop-bdS0} and \ref{prop-decayEHR} will complete the proof of Proposition \ref{prop-bdS}.

\begin{proposition}\label{prop-decaySoscQ} There hold
$$ 
\begin{aligned}
\|\cS^{osc} (t)\|_{L^p_x}  &\lesssim \epsilon^2 \langle  t\rangle^{-3(1-1/p)} \log t 
\\
\sup_{q\in \ZZ} 2^{(1-\delta)q}  \|P_q \cS^{osc}(t)\|_{L^p_x}   
+\|\partial_x \cS^{osc} (t)\|_{L^p_x} &\lesssim \epsilon^2 \langle  t\rangle^{-3(1-1/p)} 
\end{aligned}
$$
for any $1 \le p \le \infty$ and $\delta \in (0,1)$, where $P_q$ denotes the Littlewood-Paley projection on the dyadic interval $[2^{q-1}, 2^{q+1}]$.

\end{proposition}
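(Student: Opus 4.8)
The strategy is to split $\cS^{osc}$ according to the decomposition of the interaction kernel in Proposition~\ref{prop-SR}, namely $H_{s,t}=T^{osc}_{s,t}+H^{osc}_{s,t}+H^{tr}_{s,t}+H^{Q}_{s,t}+H^{R}_{s,t}$, and to bound the contribution of each piece. The purely oscillatory pieces $T^{osc}_{s,t}$ and $H^{osc}_{s,t}$ are handled together: summing over the two signs of $E^{osc}_\pm$, the kernel $H^{osc,\pm}_{s,t}$ reproduces exactly the wave--wave interactions $\cS^{osc}_{\pm,\pm}+\cS^{osc}_{\pm,\mp}$ already controlled in Proposition~\ref{prop-Soscpm}, while the time-$t$ kernel $T^{osc}_{s,t}$ pairs with the $\frac1{t-s}$-weighted part of $H^{osc,\pm}_{s,t}$ so that the leading terms combine into difference quotients of $E^{osc,2}_\pm$ (finite as $s\to t$); the residual contributions carry at worst the $\log t$ that is admissible in the $L^p_x$ bound. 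The remainder piece $H^{R}_{s,t}$ fits directly into Lemma~\ref{lem-auxSosc}: by \eqref{bd-keyHR} (and its $\partial_x$-version) we have $\sup_w\|\langle w\rangle^N\partial_x^\alpha H^{R}_{s,t}\|_{L^2_x}\lesssim\epsilon\langle s\rangle^{-3/2+\delta_1}$ and $\sup_x\|\langle w\rangle^N\partial_x^\alpha H^{R}_{s,t}\|_{L^2_w}\lesssim\epsilon\langle t\rangle^{-3/2}\langle s\rangle^{-3/2+\delta_1}$ for $|\alpha|\le1$, both integrable in $s$ against the oscillatory field (recall $\delta_1=\frac1{N_0-1}<\frac12$), yielding the desired $\langle t\rangle^{-3(1-1/p)}$ decay with no logarithmic loss.

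\textbf{The transport and quadratic pieces.} The core of the proof is the contribution of $H^{tr}_{s,t}$, where the kernel only decays like $\langle s\rangle^{-1}$ while $E^{osc}_\pm(s)$ decays merely at $\langle s\rangle^{-3/2}$ in $L^\infty_x$ and not at all in $L^2_x$. Here I would exploit the oscillation of the field: writing $\FE^{osc}_\pm(s,k)=e^{\lambda_\pm(k)s}\FB_\pm(s,k)$ and using that $\omega^w_\pm(k)=\lambda_\pm(k)+ik\cdot\hw$ never vanishes (Theorem~\ref{theo-LangmuirB}, since $|\hw|<1$), one has
\begin{equation*}
E^{osc}_\pm(s,x-(t-s)\hw)=\tfrac{1}{\omega^w_\pm(i\partial_x)}\Big[\partial_s\big(E^{osc}_\pm(s,x-(t-s)\hw)\big)-a_\pm(i\partial_x)F(s,x-(t-s)\hw)\Big],
\end{equation*}
with $\omega^w_\pm(i\partial_x)^{-1}=\phi^w_{\pm,1}(i\partial_x)$ bounded on $L^p_x$ by Lemma~\ref{lem-PEosc}. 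Integrating by parts in $s$ as in \eqref{int-Eosc}: the boundary term at $s=t$ vanishes ($H^{tr}_{t,t}=0$); at $s=0$ the whole $w$-integral of $E^{osc}_\pm(0,\cdot)$ against the kernel disperses after the change of variables $y=x-t\hw$ and is harmless; the $F$-term is a density source $a_\pm(i\partial_x)F$ (which obeys $\|a_\pm(i\partial_x)F(s)\|_{L^p_x}\lesssim\epsilon\langle s\rangle^{-3(1-1/p)}$) paired with the transport kernel $\omega^w_\pm(i\partial_x)^{-1}H^{tr}_{s,t}$ and is controlled by Lemma~\ref{lem-auxSr} (with a $\log t$ loss in $L^p_x$, exactly as for $S^{r,tr}$ in Proposition~\ref{prop-decayEHR}); and the remaining term now carries $\partial_s H^{tr}_{s,t}$, which gains a factor $\langle s\rangle^{-1}$ by \eqref{bd-keyHR}. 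Iterating the integration by parts once more (the boundary terms still vanishing, since $\partial_sH^{tr}_{s,t}$ vanishes at $s=t$ too) lands $\partial_s^2 H^{tr}_{s,t}$, whose $L^2_x$ and $L^2_w$ bounds from \eqref{bd-keyHR} are $\langle s\rangle^{-3/2+\delta_1}$ and $\langle t\rangle^{-3/2}\langle s\rangle^{-3/2+\delta_1}$, so Lemma~\ref{lem-auxSosc} closes the estimate. The quadratic piece $H^{Q}_{s,t}=\widetilde E^{osc}_\pm(s,x-(t-s)\hw,w)\,H^{tr}_{s,t}(x,w)$ introduces a second oscillatory factor, so the $\cS^{osc}$-integrand becomes a genuine cubic oscillation; here I would combine the non-stationary-phase-in-$s$ analysis of Proposition~\ref{prop-Soscpm} for the $(+,+)$/$(-,-)$ combinations, the algebraic cancellation of Lemma~\ref{0lem-Qosc} (the extra $|k|$ factor, hence a gained $\langle t-s\rangle^{-1}$ from the $e^{-ik\cdot\hw(t-s)}$ oscillation in $w$) for the resonant $(+,-)$ combination, and the transport-dispersion treatment of $H^{tr}_{s,t}$ from Proposition~\ref{prop-decayEHR}, giving the bound with at worst a $\log t$ in $L^p_x$. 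For the $L^1_x$ endpoint of all these pieces — where $E^{osc}$ does not decay — one instead uses the transport dispersion of the kernel itself, as in Lemma~\ref{lem-auxSosc} and the proof of Proposition~\ref{prop-decayEHR}. In every case the $\log t$ present in $\|\cS^{osc}(t)\|_{L^p_x}$ is removed for $\|\partial_x\cS^{osc}(t)\|_{L^p_x}$ by the usual mechanism: a spatial derivative produces an extra frequency factor which, after the $s$- or $w$-integration, buys an additional $\langle t-s\rangle^{-1}$ that makes the time integral converge (cf. \eqref{estd2Qtr}, \eqref{logbd-dxSrH1} and the end of Proposition~\ref{prop-Soscpm}).

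\textbf{Littlewood--Paley bounds and the main obstacle.} Finally, the projected bounds follow from the interpolation inequality \eqref{LP-interpolate} applied to the $s$-integrand $J(s,t,x)$ of each piece: having controlled $\|J(s,t)\|_{L^p_x}$ and $\|\partial_xJ(s,t)\|_{L^p_x}$ separately, one gets $2^{(1-\delta)q}\|P_qJ(s,t)\|_{L^p_x}\lesssim\|J(s,t)\|_{L^p_x}^\delta\|\partial_xJ(s,t)\|_{L^p_x}^{1-\delta}$, the extra $\langle t-s\rangle^{-\delta}$ from the gradient factor restoring convergence of the $s$-integral for any $\delta\in(0,1)$, exactly as in \eqref{LPbounds-M5}. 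I expect the transport piece $\cS^{H^{tr}}$ to be the crux: neither factor decays fast enough on its own, so one is forced to integrate by parts in $s$ twice while carefully tracking the $\frac{(t-\tau)(\tau-s)}{t-s}$ weights hidden inside $H^{tr}_{s,t}$ and the low-regularity $\partial_s^j\partial_x H^{tr}_{s,t}$ bounds, feeding the resulting density-source terms into the $\cS^r$-type estimates, and simultaneously keeping the $L^1_x$ endpoint (via transport dispersion of the kernel) and the derivative bound (via the frequency-gain) under control with only the admissible $\log t$ loss — this delicate accounting, rather than any single estimate, is the hard part.
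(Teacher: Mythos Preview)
Your overall strategy --- decompose $H_{s,t}$ via Proposition~\ref{prop-SR}, send $H^{osc}_{s,t}$ to Proposition~\ref{prop-Soscpm}, and for the rest exploit the oscillation of $E^{osc}_\pm$ by integrating by parts in $s$ (twice for $H^{tr}$) until you land in Lemma~\ref{lem-auxSosc} or Lemma~\ref{lem-auxSr} --- is exactly what the paper does, and your handling of the boundary terms, the $F$-term, and the Littlewood--Paley bounds follows the paper's line. Two points where you diverge are worth flagging.

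First, a minor misreading: you claim $H^R_{s,t}$ fits directly into Lemma~\ref{lem-auxSosc} via \eqref{bd-keyHR}, but \eqref{bd-keyHR} gives those $L^2$ bounds only for $H^{R,2}$, $\partial_s H^{R,1}$, and $\partial^2_s H^{tr}$ --- not for $H^{R,1}$ itself, which is only $\langle s\rangle^{-2+\delta_1}$ in $L^\infty$. So $H^{R,1}$ needs one integration by parts in $s$ as well. The paper in fact groups $T^{osc}_{s,t}$, $H^{tr}_{s,t}$, $H^{R,1}_{s,t}$, $H^{R,2}_{s,t}$ into a single kernel $\cH_{s,t}$ (see \eqref{def-newH}) and treats them uniformly by one IBP in $s$; only $H^{tr}$ then requires a second pass. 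Your separate ``difference quotient'' treatment of $T^{osc}$ is plausible, but the paper's grouping is cleaner since $\partial_s T^{osc}_{s,t}$ is explicit (the only $s$-dependence is through $\tfrac1{t-s}$) and yields the readily integrable bound \eqref{recall-bdTosc}.

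Second, and more significantly, you overcomplicate $H^Q_{s,t}$. Since $H^Q_{s,t}(x,w)=\widetilde E^{osc}_\pm(s,x-(t-s)\hw,w)\,H^{tr}_{s,t}(x,w)$, the source integral becomes
\[
\int_0^t\!\!\int [E^{osc}_\pm\widetilde E^{osc}_\pm](s,x-(t-s)\hw,w)\cdot H^{tr}_{s,t}(x,w)\,dw\,ds,
\]
and the quadratic product $E^{osc}_\pm\widetilde E^{osc}_\pm$ obeys the \emph{same} $L^p$ decay as $E^r$ (namely $\langle s\rangle^{-3(1-1/p)}$, cf.\ \eqref{quad-Eosc}). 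So this term reduces \emph{directly} to the $E^r$-against-$H^{tr}$ analysis already carried out in Proposition~\ref{prop-decayEHR} --- no trilinear phase analysis, no Lemma~\ref{0lem-Qosc} cancellation. This is the paper's route, and it sidesteps precisely the ``delicate accounting'' you anticipate as the hard part.
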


\begin{proof} Recall from  Proposition \ref{prop-SR} that $H_{s,t}(x,w)$ can be decomposed into 
$$
H_{s,t}(x,w)
 =  T_{s,t}^{osc}(x,w) + H_{s,t}^{osc}(x,w) + H_{s,t}^{tr}(x,w) + H^Q_{s,t}(x,w)+ H_{s,t}^R(x,w) .
$$
The interaction $H_{s,t}^{osc}(x,w) $ is already treated in the previous section, see Proposition \ref{prop-Soscpm}, while for the quadratic interaction $H^Q_{s,t}(x,w) = \TE_\pm^{osc}(s,x - (t-s)\hw, w) H^{tr}_{s,t}(x,w) $, the source integral in \eqref{def-Sosc} reads 
$$\int_0^t \int_{\RR^3} [ E_\pm^{osc}\TE_\pm^{osc}] (s,x - (t-s)\hw ,w) H^{tr}_{s,t}(x,w)\; dw ds. 
$$
Since the quadratic $[E_\pm^{osc}\TE_\pm^{osc}](s,x,w)$ satisfies the same bounds as those for $E^r$, the source integral is treated similarly as done for $S^r$, the interaction between $E^r$ and $H^{tr}_{s,t}$, see Proposition \ref{prop-decayEHR}.   

We now treat the remaining interaction kernels in $H_{s,t}(x,w)$, namely 
\begin{equation}\label{def-newH}
\cH_{s,t}(x,w) :=  T_{s,t}^{osc}(x,w) + H_{s,t}^{tr}(x,w) + H_{s,t}^{R,1}(x,w) + H_{s,t}^{R,2}(x,w) ,
\end{equation}
in which we recall that 
\begin{equation}\label{recall-bTHR}
\begin{aligned}
\| \langle w\rangle^N  \partial_s^\alpha H^{tr}_{s,t}\|_{L^\infty} \lesssim &~\epsilon \langle s\rangle^{-1-|\alpha|},\qquad \forall |\alpha|\le 2,
\\
\|\langle w\rangle^N  T^{osc}_{s,t}\|_{L^\infty}\lesssim \epsilon \langle t\rangle^{-3/2}, 
 \qquad 
&\|\langle w\rangle^N  H^R_{s,t}\|_{L^\infty}\lesssim \epsilon \langle t\rangle^{-2}.
\end{aligned}\end{equation}
Despite their different bounds, we shall however treat these kernels simultaneously. The common feature to be exploited in $\cH_{s,t}$ is that $s$-derivatives of $\cH_{s,t}$ gain extra decay. Precisely, the remainder bounds
\begin{equation}\label{recall-keyHR}
\begin{aligned} 
 \sup_w \| \langle w\rangle^N \cH^R_{s,t}\|_{L^2_{x}} & \lesssim \epsilon \langle s\rangle^{-3/2} ,
 \\ \sup_x \| \langle w\rangle^N\cH^R_{s,t}\|_{L^2_w} &\lesssim \epsilon \langle t\rangle^{-3/2}\langle s\rangle^{-3/2} ,
 \end{aligned}\end{equation}
hold for all the terms $\partial_s^2 \partial_x^\alpha H^{tr}_{s,t}$, $\partial_s \partial_x^\alpha H^{R,1}_{s,t}$, and $\partial_x^\alpha H^{R,2}_{s,t}$, for $|\alpha|\le 1$. In addition, we also have 
\begin{equation}\label{recall-bdTosc}
\begin{aligned} 
 \sup_w \| \langle w\rangle^N \partial_s T^{osc}_{s,t}\|_{L^2_{x}} &\lesssim \epsilon (t-s)^{-2} , 
 \\ \sup_x \| \langle w\rangle^N \partial_s T^{osc}_{s,t}\|_{L^2_w} &\lesssim \epsilon \langle t\rangle^{-3/2}(t-s)^{-2} .
 \end{aligned}\end{equation}

We are now ready to bound the source integral \eqref{def-Sosc}, denoted by $S^{osc,H}(t,x) $, for the kernel $\cH_{s,t}(x,w)$ as in \eqref{def-newH}. Note that as stated, the function $H_{s,t}^{R,2}(x,w) $ satisfy the good remainder bounds \eqref{recall-keyHR}, which verify the assumption of Lemma \ref{lem-auxSosc}. The source term that corresponds to $H_{s,t}^{R,2}(x,w) $ thus follows. The remaining kernels in $\cH_{s,t}$ do not decay sufficiently fast to apply Lemma \ref{lem-auxSosc} directly, and we need to make use of oscillation. To this end, we work in Fourier spaces, using that $\FE^{osc}_\pm (s,\ell)= e^{\lambda_\pm(\ell) s} \FB_\pm(s,\ell) $. We have 
$$
\begin{aligned}
\FS^{osc,H}(t,k) 
&= \int_0^t  \iint   e^{-i\ell t \cdot \hw  } e^{\omega_\pm(\ell) s} \FB_\pm(s,\ell) \FcH_{s,t}(k-\ell,w)\; d\ell dw ds
\end{aligned}
$$ 
with $\omega_\pm(\ell) = \lambda_\pm(\ell)+i\hw  \cdot \ell$. 
Integrating by parts in $s$, we get 
$$
\begin{aligned}
\FS^{osc,H}(t,k) 
& = 
 \iint    \frac{1}{\omega_\pm(\ell)}e^{\lambda_\pm(\ell)t }  \FB_\pm(t,\ell) \FcH_{t,t}(k-\ell,w)\; d\ell dw 
\\
&\quad - \iint    \frac{1}{\omega_\pm(\ell)}e^{-i\ell t\cdot \hw  }  \FB_\pm(0,\ell) \FcH_{0,t}(k-\ell,w)\; d\ell dw 
\\&\quad  - \int_0^t  \iint  \frac{1}{\omega_\pm(\ell)}e^{-i\ell \cdot \hw (t-s)} e^{\lambda_\pm(\ell)s} \partial_s \FB_\pm(s,\ell)  \FcH_{s,t}(k-\ell,w)\; d\ell dw ds
\\&\quad - \int_0^t  \iint  \frac{1}{\omega_\pm(\ell)}e^{-i\ell \cdot \hw  t}  
e^{\omega_\pm(\ell) s} \FB_\pm(s,\ell) \partial_s\FcH_{s,t}(k-\ell,w)\; d\ell dw ds, 
\end{aligned}
$$
which we write 
\begin{equation}\label{decomp-SoscQ}
\FS^{osc,H}(t,k) 
= \FM^H_1 +  \FM^H_2 +  \FM^H_3 +\FM^H_4
\end{equation}
where $\FM^H_j(t,k)$ are defined accordingly. Recall from \eqref{defiFB}-\eqref{dtBF} that $\FB_\pm(0,k) = a_\pm(k) \FF_0(k)$, $e^{\lambda_\pm(\ell) s} \FB_\pm(s,\ell) =\FE^{osc}_\pm (s,\ell)$, and $ e^{\lambda_\pm(\ell)s} \partial_s \FB_\pm(s,\ell)  =  a_\pm(\ell)\FF(s,\ell)$. We may write the above integrals in the physical space, yielding 
$$
\begin{aligned}
M^H_1(t,x)& =  \int [\phi_{\pm,1}^v E^{osc}_\pm](t,x) \cH_{t,t}(x,w)\;  dw
\\
M^H_2(t,x)& =  \int[\phi_{\pm,1} ^v a_\pm F_0](x- t\hw ) \cH_{0,t}(x,w)\;  dw 
\\
M^H_3(t,x)& =  \int_0^t  \int [\phi_{\pm,1}^va_\pm F](s,x- (t-s)\hw ) \cH_{s,t}(x,w)\;  dw ds 
\\M^H_4(t,x)& =  \int_0^t  \int [\phi_{\pm,1}^vE^{osc}_\pm](s,x- (t-s)\hw ) \partial_s\cH_{s,t}(x,w)\;  dw ds .
\end{aligned}$$

Let us treat each integral term. Using \eqref{recall-bTHR} for $s=t$, with noting that $H^{tr}_{t,t}=0$, and $\|E^{osc}_\pm\|_{L^\infty_x}\les \epsilon \langle t\rangle^{-3/2}$, the integral $M_1^H(t)$ are clearly bounded by $\epsilon^2 \langle t\rangle^{-3}$ in $L^\infty_x$. The $L^1$ estimate on $M_1^H(t)$ follows from the boundedness of $E^{osc}_\pm$ and $\cH_{t,t}$ in $L^2_x$.

As for the integral $M^H_2(t,x)$ that involve initial data $F_0(x)$, we introduce the change of variables $y = x - t \hw $ 
as done in Section \ref{sec-S0} in order to take advantage of the dispersion to bound  
\begin{equation}\label{est-MH2}
\begin{aligned}
|M_2^H(t,x)| 
& \lesssim t^{-3} \int |[\phi_{\pm,1} ^va_\pm F_0](y)|| \cH_{0,t}(x,w_{0,t}(x,y))| \langle w_{0,t}(x,y) \rangle^{3}\;  dy
\\
& \lesssim \epsilon t^{-3} \|  a_\pm F_0 \|_{L^1_x}
\lesssim \epsilon^2 t^{-3} .
\end{aligned}\end{equation}

Next, the estimates on $M^H_3$ 
are similar to the estimates of $E^r$ and $H$ in Section \ref{sec-SEr}, since $F$ and $E^r$ 
satisfy the same decay estimates.

Finally, we treat the integral term $M^{tr}_4$. In view of \eqref{recall-keyHR}-\eqref{recall-bdTosc}, the functions $\partial_sT^{osc}_{s,t}$ and $\partial_sH^{R,1}_{s,t}$ have already satisfied the assumption of Lemma \ref{lem-auxSr}. The bounds on the corresponding integrals $M_4^H$ thus follow. It remains to treat the integral $M_4^H$ for $ \partial_s\cH_{s,t}(x,w) =  \partial_sH^{tr}_{s,t}(x,w)$; 
namely, the integral 
$$ S^{osc,tr}(t,x) =  \int_0^t  \int [\phi_{\pm,1}^vE^{osc}_\pm](s,x- (t-s)\hw ) \partial_sH^{tr}_{s,t}(x,w)\;  dw ds.
$$
Note that the decay of $ \partial_sH^{tr}_{s,t}(x,w)$ is not sufficient to apply Lemma \ref{lem-auxSr}. To proceed, we  
integrate by parts once more the integral in $s$, similarly as done above, and noting from the definition of $H^{tr}_{s,t}$ 
that  $\partial_sH^{tr}_{s,t}=0$ at $s=t$, we get 
$$
\begin{aligned}
\FS^{osc,tr}(t,k) 
&= -  \iint  \frac{1}{\omega_\pm(\ell)^2} e^{-i\ell \cdot \hw  t} \FB_\pm(0,\ell) \partial_s\FG^{tr}_{0,t}(k-\ell,w)\; d\ell dw 
\\&\quad - \int_0^t  \iint  \frac{1}{\omega_\pm(\ell)^2}e^{-i\ell \cdot \hw (t-s)} a_\pm(\ell)\FF(s,\ell) \partial_s\FG^{tr}_{s,t}(k-\ell,w)\; d\ell dw ds
\\&\quad - \int_0^t  \iint  \frac{1}{\omega_\pm(\ell)^2}e^{-i\ell \cdot \hw (t-s)}  \FE^{osc}_\pm(s,\ell) 
\partial^2_s\FG^{tr}_{s,t}(k-\ell,w)\; d\ell dw ds
.
\end{aligned}
$$ 
Writing the above integrals in the physical space, we have
$$ S^{osc,tr}(t,x)  = M_1^{tr}(t,x) + M_2^{tr} (t,x)+ M_3^{tr}(t,x)$$
where 
$$
\begin{aligned}
M^{tr}_1(t,x)& =  -\int[\phi_{\pm,2}^va_\pm F_0](x- t\hw ) \partial_sH^{tr}_{0,t}(x,w)\;  dw 
\\
M^{tr}_2(t,x)& = - \int_0^t  \int[\phi_{\pm,2}^va_\pm F](s,x- (t-s)\hw ) \partial_sH^{tr}_{s,t}(x,w)\;  dw ds 
\\M^{tr}_3(t,x)& =  -\int_0^t  \int [\phi_{\pm,2}^vE^{osc}_\pm](s,x- (t-s)\hw ) \partial_s^2H^{tr}_{s,t}(x,w)\;  dw ds .
\end{aligned}$$
The estimates on these integrals now follow identically to those done above for $M_1^H, M_2^H$, and $M_4^H$, upon noting that $\partial_s^2H^{tr}_{s,t}(x,w)$ now satisfy the good remainder estimates \eqref{recall-keyHR}. 
This completes the proof of the proposition. 
\end{proof}

\subsection{Derivative estimates}
 
In this section, we establish decay for high derivatives of the source density. Precisely, we prove the following proposition (namely, the second part in Proposition \ref{prop-bdS}).    
 
\begin{proposition} \label{prop-bdSD} Let $S$ be the nonlinear source terms defined as in \eqref{nonlinear-S} and fix $N_0\ge 8$. Under the bootstrap assumptions listed in Section \ref{sec-bootstrap}, there hold
\begin{equation}\label{Hsbounds-cSD0}
\begin{aligned}
\|\partial_x^\alpha S(t)\|_{L^p_x}   \le \epsilon^2 \langle t\rangle^{-3(1-1/p) + \delta_{\alpha,p}} 
, \qquad 
\|S(t)\|_{H^{\alpha_0}_x} &\lesssim \epsilon^2 \langle t\rangle^{\delta_1},
\end{aligned}
\end{equation}
for all $|\alpha|\le N_0-1$, with $\delta_{\alpha,p} = \max\{ \delta_\alpha,\frac32\delta_\alpha(1-1/p)\}$, where $\delta_\alpha = \frac{|\alpha|}{N_0-1}$. 
\end{proposition}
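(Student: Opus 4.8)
The plan is to differentiate the nonlinear representation $S = S^0 + S^\mu$ and to re-run the estimates already established for $|\alpha|\le 1$ in Propositions~\ref{prop-bdS0}, \ref{prop-decayEHR}, \ref{prop-Soscpm}, and \ref{prop-decaySoscQ}, carefully tracking the extra polynomial-in-$\langle t\rangle$ factors produced when derivatives fall on the electric field versus on the characteristics. The contribution $S^0$ from the initial data is already covered: Proposition~\ref{prop-bdS0} gives precisely the high-derivative bounds claimed, including the top-order $H^{\alpha_0}_x$ estimate. Hence it suffices to treat $S^\mu$ through its rewriting \eqref{key-SR}--\eqref{decomp-Hst}, namely $S^\mu(t,x)=\int_0^t\int_{\RR^3}E(s,x-(t-s)\hw)\cdot H_{s,t}(x,w)\,dw\,ds$ with $E=\sum_\pm E^{osc}_\pm+E^r$ and $H_{s,t}=T^{osc}_{s,t}+H^{osc}_{s,t}+H^{tr}_{s,t}+H^{Q}_{s,t}+H^{R}_{s,t}$.

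First I would apply $\partial_x^\alpha$ and Leibniz. Since $x$ enters the argument of $E$ linearly, $\partial_x^{\alpha_1}[E(s,x-(t-s)\hw)]=(\partial_x^{\alpha_1}E)(s,x-(t-s)\hw)$, so $\partial_x^\alpha S^\mu=\sum_{\alpha_1+\alpha_2=\alpha}C_{\alpha_1}\int_0^t\int(\partial_x^{\alpha_1}E)(s,x-(t-s)\hw)\cdot\partial_x^{\alpha_2}H_{s,t}(x,w)\,dw\,ds$. Derivatives on $E$ are controlled by the bootstrap bounds \eqref{bootstrap-decaydaS}--\eqref{bootstrap-decaydaSLp}, which cost a factor $\langle t\rangle^{\epsilon_{\alpha_1,p}}$ on the oscillatory part and $\langle t\rangle^{\delta_{\alpha_1,p}}$ on $E^r$. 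Derivatives on $H_{s,t}$ are obtained by differentiating the representation \eqref{def-Hst}, $H_{s,t}(x,w)=\nabla_v\mu(w)-\nabla_v\mu(V_{s,t})/\det(\nabla_v\Psi_{s,t})$ with $v=\widetilde\Psi_{s,t}(x,w)$, via the Fa\`a di Bruno formula \eqref{Faa}; for $|\alpha_2|\le N_0-1$ this brings in $\partial_x^\beta$ of the characteristics and their inverses, bounded by Proposition~\ref{prop-HDchar} at a cost of at most $\langle t\rangle^{\delta_{\alpha_2}}$ (no loss when $|\alpha_2|=0$). The crucial structural point to repeat from Proposition~\ref{prop-SR} is that every kernel appearing in $\partial_x^{\alpha_2}H_{s,t}$ still splits into an oscillatory piece of order $s^{-3/2}$, a transport piece of order $s^{-1}$ whose $s$-derivative gains $s^{-1}$, the specific quadratic piece $H^Q$ of order $s^{-5/2}$, and a faster remainder, with the $\langle w\rangle^N$ weights preserved; consequently the integration-by-parts-in-$s$, change-of-variables, and Coifman--Meyer arguments of Sections~\ref{sec-SEr}--\ref{sec-SEosc} (including the handling of $[E^{osc}E^{osc}]\cdot H^{tr}$ coming from $H^Q$, which behaves like $E^r\cdot H^{tr}$) go through verbatim, simply carrying the extra factors. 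Since $\delta_\beta$ is linear in $|\beta|$, one has $\delta_{\alpha_1}+\delta_{\alpha_2}\le\delta_\alpha$ and $\epsilon_{\alpha_1,p}+\delta_{\alpha_2}\le\delta_{\alpha,p}$, so in each term the accumulated loss never exceeds $\delta_{\alpha,p}$, which yields the first bound in \eqref{Hsbounds-cSD0}.

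For the top order $|\alpha_0|=N_0$, I would isolate, exactly as in the top-derivative analysis of $S^0$ in Proposition~\ref{prop-bdS0} and of the characteristics in Proposition~\ref{prop-HDBchar}, the single worst term in which all $N_0$ derivatives fall on one copy of $\partial_x^{\alpha_0}X_{s,t}$ (or $\partial_x^{\alpha_0}V_{s,t}$). This cannot be estimated in $L^\infty$ but only in $L^2_x$, where Proposition~\ref{prop-HDBchar} supplies $\sup_v\|\partial_x^{\alpha_0}X_{s,t}\|_{L^2_x}\lesssim\epsilon\langle t\rangle^{3/2+\delta_1}$. One pairs this, via Cauchy--Schwarz in $w$ with $\langle w\rangle^N$ weights, against the low-norm $L^2_x$ factor of $E$ (decaying like $\langle s\rangle^{-3/2}$) and against the $\langle t\rangle^{-3/2}$ dispersive gain coming from the Jacobian of the change of variables $w\mapsto x-(t-s)\hw$ on $s\le t/2$ (respectively the $\langle s\rangle^{-3/2}\ge\langle t\rangle^{-3/2}$ bound on $s\ge t/2$), exactly as in Lemmas~\ref{lem-auxSr}--\ref{lem-auxSosc}; this $\langle t\rangle^{-3/2}$ absorbs the $\langle t\rangle^{3/2}$ from the characteristic bound and leaves only the admissible $\langle t\rangle^{\delta_1}$. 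All remaining top-order terms distribute at least two derivatives among characteristic factors and among the field, each of order $\le N_0-1$, and hence fall under the previous paragraph; summing and integrating in $s$ gives $\|S(t)\|_{H^{\alpha_0}_x}\lesssim\epsilon^2\langle t\rangle^{\delta_1}$.

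The main obstacle is the bookkeeping of the loss exponents so that the cascade actually closes. One must check that when derivatives are shared between $E$ and the characteristics the growth never exceeds $\delta_{\alpha,p}=\max\{\delta_\alpha,\tfrac32\delta_\alpha(1-1/p)\}$, and in particular that the $L^1$--$L^\infty$ interpolation used to produce the $L^p$ estimates does not upgrade the harmless $\langle t\rangle^{\delta_\alpha}$ growth of the transport part (obtained via the volume-contracting change of variables, i.e.\ phase mixing) to the worse $\langle t\rangle^{\tfrac32\delta_\alpha}$ growth except where $\delta_{\alpha,p}$ already permits it. This is the point flagged in the introduction that ``the cascade of decay does not come from the simple interpolation between low and high norms, but from the phase mixing estimates for the transport''; concretely it forces one to carry out the $L^1_x$ and $L^\infty_x$ estimates for the differentiated transport kernel $\partial_x^{\alpha_2}H^{tr}_{s,t}$ separately, mirroring \eqref{dxHtr1}--\eqref{logbd-dxSrH1L1} in the proof of Proposition~\ref{prop-decayEHR} with the Proposition~\ref{prop-HDchar} characteristic bounds inserted, rather than interpolating a single growing norm.
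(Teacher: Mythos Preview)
Your overall plan is sound and the leading term you identify (all $\partial_x^\alpha$ falling on $E$, i.e.\ $\alpha_2=0$ in your Leibniz split) is exactly the paper's $S^{\mu,\alpha}_1$, which is indeed treated by re-running Proposition~\ref{prop-SR} with $E\to\partial_x^\alpha E$. The difference lies in how the remaining terms are organized. You propose to differentiate $H_{s,t}(x,w)$ directly and assert that $\partial_x^{\alpha_2}H_{s,t}$ inherits the same $T^{osc}+H^{osc}+H^{tr}+H^Q+H^R$ decomposition. This is the gap: $H_{s,t}(x,w)$ in \eqref{def-Hst} is built from $V_{s,t}(x,v)$ and $\det\nabla_v\Psi_{s,t}(x,v)$ evaluated at $v=\widetilde\Psi_{s,t}(x,w)$, so every $\partial_x$ brings in a chain-rule factor $\partial_x\widetilde\Psi_{s,t}=-(\nabla_v\Psi_{s,t})^{-1}\partial_x\Psi_{s,t}$, and iterating this for $|\alpha_2|$ up to $N_0-1$ produces towers of derivatives of the \emph{$s$-dependent} inverse map. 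Proposition~\ref{prop-HDchar} bounds $\partial_x^{\beta_1}\partial_v^{\beta_2}(X_{s,t},V_{s,t})$, not the inverse, and while the inverse bounds can in principle be derived, the claim that the delicate oscillation/transport/remainder splitting (in particular the property that $\partial_s$ of the transport part gains $s^{-1}$, which is what makes Sections~\ref{sec-SEr}--\ref{sec-SEosc} close) survives $|\alpha_2|$ such differentiations is exactly the work that has to be done, not something that follows ``verbatim''.

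The paper circumvents this by going back to the original $v$-integral and changing variables with the \emph{$s$-independent} map $w=V^{mod}_{t,t}(x,v)=v-V^{osc}_{t,t}(x,v)$ before differentiating. Because $\widetilde X_{s,t}(x,w)=x-(t-s)\hw+\widetilde Y_{s,t}(x,w)$ with $\partial_x\widetilde X_{s,t}=1+\partial_x\widetilde Y_{s,t}$, Fa\`a di Bruno produces, besides $S^{\mu,\alpha}_1$, only correction terms of the form $\partial_x^\gamma E\cdot(\text{products of }\partial_x^\beta\widetilde Y_{s,t})$, and these are recast as integrals against modified kernels $H_{s,t,2},H_{s,t,3}$ which are \emph{explicitly} $H_{s,t}$ times bounded factors (no higher derivatives of any inverse). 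The technical core then becomes the single estimate \eqref{claimbTYst} on $\partial_x^\beta\widetilde Y_{s,t}$; note that closing this at $|\beta|=N_0-1$ actually requires the sharper oscillatory-field decay \eqref{strongdecayEosc} (full $t^{-3/2}$ up to $n_0=\tfrac14N_0$ derivatives), a bootstrap input you do not invoke. For the top order $|\alpha_0|=N_0$, the paper's argument is also simpler than yours: since in $S^{\mu,\alpha_0}_1$ all derivatives sit on $E$ and the kernel is the original $H_{s,t}$, one just uses $\|\partial_x^{\alpha_0}E(s)\|_{L^2_x}\lesssim\epsilon$ and $\|\langle w\rangle^4 H_{s,t}\|_{L^\infty}\lesssim\epsilon\langle s\rangle^{-1}$ to get $\epsilon^2\log t\le\epsilon^2\langle t\rangle^{\delta_1}$, with no need for the Cauchy--Schwarz/dispersion balancing you describe.
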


\begin{proof}
The bounds on $S^0(t,x)$ have been proved in Proposition \ref{prop-bdS0}. We focus on treating $S^\mu(t,x)$, which we recall 
$$
S^{\mu}(t,x) = \int_0^t \int_{\RR^3}  \Big[ E(s,x - (t-s)\hv )\cdot \nabla_v \mu(v) 
 -E(s,X_{s,t}(x,v))\cdot \nabla_v \mu(V_{s,t}(x,v)) \Big] \, dv  ds.
 $$
Let us study the second term in the above integration. Recalling the characteristic description from Proposition \ref{prop-charV} and Proposition \ref{prop-charPsi}, we may write  
\begin{equation}\label{def-rec}
X_{s,t}(x,v) = X^{mod}_{s,t}(x,v) +  Y_{s,t}(x,v)
, \qquad V_{s,t}(x,v) = V^{mod}_{t,t}(x,v) + W_{s,t}(x,v),
\end{equation}
in which 
$$
\begin{aligned}
X^{mod}_{s,t}(x,v) &= x - (t-s) \hV^{mod}_{t,t}(x,v)
\\V^{mod}_{t,t}(x,v) & =  v -  V^{osc}_{t,t}(x,v)
\\\hV^{mod}_{t,t}(x,v) & =  \hv - \hV^{osc}_{t,t}(x,v).
\end{aligned}
$$
Using the results from Proposition \ref{prop-charV} and Proposition \ref{prop-charPsi}, we have 
\begin{equation}\label{decomp-YW}
 \begin{aligned}
W_{s,t}(x,v) &= V^{osc}_{s,t}(x,v)  + V^{tr}_{s,t}(x,v)
\\
Y_{s,t}(x,v)&= (t-s)\hPsi^{osc}_{s,t}(x,v) + (t-s)\hPsi^{tr}_{s,t}(x,v) + (t-s)\Psi^{Q}_{s,t}(x,v).
 \end{aligned}
 \end{equation}
We first introduce the change of variable $v \mapsto w = V^{mod}_{t,t}(x,v) $, leading to  
$$
\begin{aligned}
\int_0^t \int_{\RR^3} E(s,X_{s,t}(x,v))\cdot \nabla_v \mu(V_{s,t}(x,v)) \, dv  ds 
 = \int_0^t \int_{\RR^3} E(s,\widetilde X_{s,t}(x,w))\cdot \nabla_v \mu(\widetilde V_{s,t}(x,w)) \, \frac{dw  ds}{\widetilde J_{s,t}(x,w)},
\end{aligned} 
$$
in which $\widetilde J_{s,t}(x,w) = \det (\nabla_v V^{mod}_{t,t}(x,v) )$, 
$$\widetilde X_{s,t}(x,w) = x - (t-s)\hw + \widetilde Y_{s,t}(x,w) , \qquad \widetilde V_{s,t}(x,w) = w + \widetilde W_{s,t}(x,w) ,$$
for $ \widetilde Y_{s,t}(x,w) =  Y_{s,t}(x,v)$ and $\widetilde W_{s,t}(x,w) = W_{s,t}(x,v)$, with   
$v = \widetilde V^{mod}_{t,t}(x,w)$, being the inverse map of $v\mapsto w = V^{mod}_{t,t}(x,v)$. Note that such an inverse map exists, upon recalling the bounds \eqref{bdVosct} and \eqref{daVosct} on $V^{osc}_{t,t}(x,v)$.

Next, using the Faa di Bruno's formula
for derivatives of a composite function, see \eqref{daVosci}, we compute 
$$
\begin{aligned}
\partial_x^\alpha & \int_0^t \int_{\RR^3} E(s,X_{s,t}(x,v))\cdot \nabla_v \mu(V_{s,t}(x,v)) \, dv  ds 
\\& = \sum C_{\beta, \gamma}\int_0^t \int_{\RR^3}  \partial_x^{\gamma_1} E(s,\TX_{s,t})\cdot \partial_v^{\gamma_2}\nabla_v \mu(\TV_{s,t} ) \prod_{1\le |\beta_j|\le |\alpha_j|} (\partial_x^{\beta_1} \TX_{s,t})^{k_{\beta_1}} (\partial_x^{\beta_2} \TV_{s,t})^{k_{\beta_2}} \partial_x^{\alpha_3}(\TJ_{s,t}^{-1}) \, dw  ds,
\end{aligned} 
$$
in which the summation is taken over the set where $\sum k_{\beta_j} = |\gamma_j|$ and $\sum |\beta_j|k_{\beta_j} = |\alpha_j|$ for $j=1,2$, with $|\alpha_1|+|\alpha_2|+|\alpha_3|=|\alpha|$. In the case when all the derivatives fall into $E(s,\TX_{s,t})$, namely when $|\gamma_1| = |\alpha|$ (and so $|\alpha_2| = |\alpha_3|=0$), using the fact that $\partial_x\TX_{s,t}= 1 + \partial_x\TY_{s,t}$, we can further write 
$$ 
\begin{aligned}
\int_0^t &\int_{\RR^3}  \partial_x^{\alpha} E(s,\TX_{s,t})\cdot \nabla_v \mu(\TV_{s,t} )  (\partial_x\TX_{s,t})^{|\alpha|} \,\frac{dw  ds}{\widetilde J_{s,t}(x,w)}
\\ & = \sum_{j=0}^{|\alpha|}c_j
\int_0^t \int_{\RR^3}  \partial_x^{\alpha} E(s,\TX_{s,t})\cdot \nabla_v \mu(\TV_{s,t} )  (\partial_x \TY_{s,t})^j \,\frac{dw  ds}{\widetilde J_{s,t}(x,w)}
\\ & = \sum_{j=0}^{|\alpha|}c_j
\int_0^t \int_{\RR^3}  \partial_x^{\alpha} E(s,X_{s,t}(x,v))\cdot \nabla_v \mu(V_{s,t} (x,v))  (\partial_x \TY_{s,t}(x,V^{mod}_{t,t}(x,v)))^j \,dv  ds
\end{aligned}
$$
upon going back the original velocity variables. 
Combining the above calculations, we may write 
\begin{equation}\label{decompDSmu} 
\partial_x^\alpha S^{\mu}(t,x)  = S^{\mu,\alpha}_1(t,x) + S^{\mu,\alpha}_2 (t,x)
+ S^{\mu,\alpha}_3 (t,x) 
\end{equation}
in which 
\begin{equation}\label{def-Smua123}
\begin{aligned}
S^{\mu,\alpha}_1(t,x) & =\int_0^t \int_{\RR^3}  \Big[ \partial_x^\alpha E(s,x - (t-s)\hv )\cdot \nabla_v \mu(v) 
 -\partial_x^\alpha E(s,X_{s,t})\cdot \nabla_v \mu(V_{s,t})\Big] \, dv  ds
\\
S^{\mu,\alpha}_2(t,x) & = -\sum_{j=1}^{|\alpha|}c_j
\int_0^t \int_{\RR^3}  \partial_x^{\alpha} E(s,X_{s,t}(x,v))\cdot \nabla_v \mu(V_{s,t} (x,v))  (\partial_x \TY_{s,t})^j \,dv  ds
\\
S^{\mu,\alpha}_3(t,x) & =- 
\sum_{|\gamma_1|< |\alpha|} C_{\beta, \gamma}\int_0^t \int_{\RR^3}  \partial_x^{\gamma_1} E(s,X_{s,t}(x,v))\cdot \partial_v^{\gamma_2}\nabla_v \mu(V_{s,t} (x,v)) \\& \qquad \times \prod_{1\le |\beta_j|\le |\alpha_j|} (\partial_x^{\beta_1} \TX_{s,t})^{k_{\beta_1}} (\partial_x^{\beta_2} \TV_{s,t})^{k_{\beta_2}} \partial_x^{\alpha_3}(\TJ_{s,t}^{-1}) \TJ_{s,t}\, dv  ds,
\end{aligned}
\end{equation}
in which the characteristic functions $\TX_{s,t}, \TY_{s,t}, \TV_{s,t}$ and $\TJ_{s,t}$ are evaluated at $(x,w) = (x,V^{mod}_{t,t}(x,v))$ with $V^{mod}_{t,t}(x,v) =  v -  V^{osc}_{t,t}(x,v)$.  Let us now treat each integral in \eqref{def-Smua123}. 

\subsection*{Bounds on $S^{\mu,\alpha}_1(t,x)$.}

By construction \eqref{decompDSmu}-\eqref{def-Smua123}, the integral $S^{\mu,\alpha}_1(t,x)$ is identical to $S^\mu(t,x)$ upon replacing $E$ by $\partial_x^\alpha E$. Therefore, following the proof of Proposition \ref{prop-SR}, we may write 
\begin{equation}\label{rewSmu1}
\begin{aligned}
S^{\mu,\alpha}_1(t,x)  =  \int_0^t \int_{\RR^3} \partial_x^\alpha E(s,x - (t-s)\hw ) \cdot H_{s,t}(x,w) \; dw ds
\end{aligned}
\end{equation}
in which the interaction kernel $H_{s,t}(x,w)$ is the same as that defined in Proposition \ref{prop-SR}. In the case when $|\alpha| = N_0$, using $\| \partial_x^{\alpha_0} E(s)\|_{L^2_x} \lesssim \epsilon$ and $\| \langle w\rangle^{4}H_{s,t}\|_{L^\infty_{x,w}} \lesssim \epsilon \langle s\rangle^{-1}$, 
we bound 
$$
\begin{aligned}
\|S^{\mu,\alpha_0}_1(t)\|_{L^2_x}  &\le  \int_0^t \int_{\RR^3} \| \partial_x^{\alpha_0} E(s)\|_{L^2_x} \| \langle w\rangle^{4}H_{s,t}\|_{L^\infty_{x,w}} \langle w\rangle^{-4}\; dw ds
\\
&\lesssim  \epsilon^2 \int_0^t  \langle s\rangle^{-1} ds \lesssim \epsilon^2 \log t,
\end{aligned}
$$
which is bounded by $\epsilon^2 \langle t\rangle^{\delta_1}$ as desired in \eqref{Hsbounds-cSD0}. On the other hand, in view of the bootstrap assumptions in \eqref{bootstrap-decaydaS}, $\partial_x^\alpha E(t,x)$ decays at a slower rate of order $\langle t\rangle^{\delta_{\alpha,p}}$ than those for $E(t,x)$, without disturbing the structure of oscillations $E^{osc}_\pm(t,x)$ (i.e. up to a multiplication of $k^\alpha$). Therefore, the decay estimates on $S^{\mu,\alpha,r}_1$ follow identically from the proof of Proposition \ref{prop-decayEHR}, propagating the same growth rate as reported in \eqref{Hsbounds-cSD0}, for $1\le |\alpha|\le N_0-1$.

\subsection*{Bounds on $S^{\mu,\alpha}_2(t,x)$.} 

Next, we shall bound $S^{\mu,\alpha}_2(t,x)$ defined as in \eqref{decompDSmu}-\eqref{def-Smua123}. Again, following the proof of Proposition \ref{prop-SR}, we introduce the change of variables $w = \Psi_{s,t}(x,v)$ in the above integral, leading to 
\begin{equation}\label{rewSmu2}
\begin{aligned}
S^{\mu,\alpha}_2(t,x)  =  - \int_0^t \int_{\RR^3} \partial_x^\alpha E(s,x - (t-s)\hw ) \cdot H_{s,t,2}(x,w) \; dw ds
\end{aligned}
\end{equation}
in which 
$$
H_{s,t,2}(x,w) =\frac{\nabla_v \mu(V_{s,t}(x,v))}{\det(\nabla_v\Psi_{s,t}(x,v))} \sum_{j=1}^{|\alpha|} c_j(\partial_x \TY_{s,t}(x,V^{mod}_{t,t}(x,v)))^j,
$$
for each $(x,w) \in \RR^3\times \RR^3$ and for $v = \widetilde \Psi_{s,t}(x,w)$, the inverse map of $v\mapsto w = \Psi_{s,t}(x,v)$. That is, the integral $S^{\mu,\alpha}_2(t,x)$ is of the exact same form as that of $S^{\mu,\alpha}_1(t,x)$ in \eqref{rewSmu1}
Therefore, it suffices to prove that the interaction kernel $H_{s,t,2}(x,w)$ behaves as does $H_{s,t}(x,w)$. Indeed, in view of \eqref{def-Hst}, we may write 
$$ H_{s,t,2}(x,w) = \nabla_w \mu(w)\sum_{j=1}^{|\alpha|} c_j(\partial_x \TY_{s,t}(x,V^{mod}_{t,t}(x,v)))^j - H_{s,t}(x,w)  \sum_{j=1}^{|\alpha|} c_j(\partial_x \TY_{s,t}(x,V^{mod}_{t,t}(x,v)))^j,$$
in which $v = \widetilde \Psi_{s,t}(x,w)$. 
By construction, we have $\TY_{s,t}(x,w) = Y_{s,t}(x, \TV^{mod}_{t,t}(x,w))$, and so 
$$\partial_x \TY_{s,t}(x,w) = \partial_xY_{s,t}(x, \TV^{mod}_{t,t}(x,w)) + \partial_x \TV^{mod}_{t,t}(x,w) \partial_v Y_{s,t}(x, \TV^{mod}_{t,t}(x,w)),$$
for each $(x,w) \in \RR^3\times \RR^3$. Therefore, since $\TV^{mod}_{t,t}(x,\TV^{mod}_{t,t}(x,v)) =v$, we have 
\begin{equation}\label{comp-dxTY}\partial_x \TY_{s,t}(x,V^{mod}_{t,t}(x,v)) = \partial_xY_{s,t}(x, v) + \partial_x \TV^{mod}_{t,t}(x,V^{mod}_{t,t}(x,v)) \partial_v Y_{s,t}(x, v),
\end{equation}
for each $(x,v)$. Recall that $V^{mod}_{t,t}(x,v) = v - V^{osc}_{t,t}(x,v)$, with $V^{osc}_{t,t}(x,v)$ satisfying the bounds \eqref{Dxv-VWend}. It thus follows that $\| \partial_x \TV^{mod}_{t,t}(x,w) \|_{L^\infty_{x,w}}\lesssim \|\partial_xV^{osc}_{t,t}\|_{L^\infty_{x,v}} \lesssim \epsilon \langle t\rangle^{-3/2}$. Therefore, recalling \eqref{decomp-YW} and following the proof of Proposition \ref{prop-Dchar} (precisely, see \eqref{def-zetadxV}), we obtain 
$$
\begin{aligned}
\|\partial_x \TY_{s,t}\|_{L^\infty_{x,w}} &\lesssim \|\partial_xY_{s,t}\|_{L^\infty_{x,v}}+ \| \partial_x \TV^{mod}_{t,t}\|_{L^\infty_{x,w}} \|\partial_v Y_{s,t}\|_{L^\infty_{x,v}}
\lesssim \epsilon \langle s\rangle^{-1}. 
\end{aligned}$$
In addition, by construction, we note that 
\begin{equation}\label{repYst}
Y_{s,t}(x,v)= (t-s)\hPsi^{osc}_{s,t}(x,v) + (t-s)\hPsi^{tr}_{s,t}(x,v) + (t-s)\Psi^{Q}_{s,t}(x,v)
\end{equation}
where the components $\hPsi^{osc}_{s,t}$, $\hPsi^{tr}_{s,t}$, and $\Psi^Q_{s,t}$ are defined as in Proposition \ref{prop-charPsi}. As a consequence, $\partial_x \TY_{s,t}(x,V^{mod}_{t,t}(x,v))$ and so $H_{s,t,2}(x,w)$ have a similar representation to that of $H_{s,t}(x,w)$. The estimates for $S^{\mu,\alpha}_2(t,x)$ thus follow similarly to those for $S^{\mu,\alpha}_1(t,x)$ obtained in the previous section. 

\subsection*{Bounds on $S^{\mu,\alpha}_3(t,x)$.} 

Finally, we shall bound $S^{\mu,\alpha}_3(t,x)$ defined as in \eqref{decompDSmu}-\eqref{def-Smua123}. In view of Section \eqref{sec-Hdxchar}, spatial derivatives of $V_{s,t}$ satisfy better estimates than those for $X_{s,t}$. As a consequence, it suffices to bound the above integral when all derivatives fall into $X_{s,t}$, namely 
$$
S^{\mu,\alpha}_{3,\gamma}(t,x) =\int_0^t \int_{\RR^3}  \partial_x^{\gamma} E(s,X_{s,t}(x,v))\cdot \nabla_v \mu(V_{s,t}(x,v) ) \prod_{1\le |\beta|\le |\alpha|}  (\partial_x^{\beta} \TX_{s,t}(x,V^{mod}_{t,t}(x,v)))^{k_{\beta}} \,dv ds
$$
in which $\sum k_{\beta} = |\gamma| <|\alpha|$ and $\sum |\beta|k_{\beta} = |\alpha|$. Note that the case when $|\gamma|=|\alpha|$ is already treated in $S^{\mu,\alpha}_{1}(t,x)$ and $S^{\mu,\alpha}_{2}(t,x)$. Again, as in the previous case, we write 
\begin{equation}\label{rewSmu3}
\begin{aligned}
S^{\mu,\alpha}_{3,\gamma}(t,x)  =   \int_0^t \int_{\RR^3} \partial_x^\gamma E(s,x - (t-s)\hw ) \cdot H_{s,t,3}(x,w) \; dw ds
\end{aligned}
\end{equation}
in which 
$$
\begin{aligned}
H_{s,t,3}(x,w) 
&=\frac{\nabla_v \mu(V_{s,t}(x,v))}{\det(\nabla_v\Psi_{s,t}(x,v))}  \prod_{1\le |\beta|\le |\alpha|}  (\partial_x^{\beta} \TX_{s,t}(x,V^{mod}_{t,t}(x,v)))^{k_{\beta}}
\\& = (\nabla_w\mu(w) - H_{s,t}(x,w))\prod_{1\le |\beta|\le |\alpha|}  (\partial_x^{\beta} \TX_{s,t}(x,V^{mod}_{t,t}(x,v)))^{k_{\beta}},
\end{aligned}$$
upon using \eqref{def-Hst}. It remains to prove that $H_{s,t,3}(x,w)$ satisfies a similar expression to that of $H_{s,t}(x,w)$. Recalling $\partial^\beta_x\TX_{s,t}= 1 + \partial^\beta_x\TY_{s,t}$ for $|\beta|=1$, and $\partial_x^\beta\TX_{s,t}= \partial^\beta_x\TY_{s,t}$ for $|\beta|\ge 2$, we write  
\begin{equation}\label{prodX3b} \prod_{1\le |\beta|\le |\alpha|}  (\partial_x^{\beta} \TX_{s,t})^{k_{\beta}}  =  \Big(1+\sum_{j=0}^{k_1} c_j (\partial_x\TY_{s,t})^j\Big)\prod_{2\le |\beta|\le |\alpha|}  (\partial_x^{\beta} \TY_{s,t})^{k_{\beta}}. 
\end{equation}
Recall that $\sum k_{\beta} = |\gamma|$ and $\sum |\beta|k_{\beta} = |\alpha|$. Since $|\gamma|<|\alpha|$, the set of $|\beta|\ge 2$ in the above product is non empty. 
We first prove that 
\begin{equation}\label{claimbTYst}
\begin{aligned}
\| \partial_x^\beta\TY_{s,t}\|_{L^\infty_{x,w}} 
&\lesssim \epsilon\langle s\rangle^{-1 +\frac12 \delta_\beta} \langle t\rangle^{ \delta_{\beta}},
\end{aligned}
\end{equation}
for $|\beta|\ge 2$. Indeed, from $\TY_{s,t}(x,w) = Y_{s,t}(x, \TV^{mod}_{t,t}(x,w))$, we have 
$$ \partial_x^\beta\TY_{s,t}(x,w) = \sum_{|\beta_1|+|\beta_2| = |\beta|} \partial_x^{\beta_1}\partial_v^{\sigma_2}Y_{s,t}(x, \TV^{mod}_{t,t}(x,w)) \prod_{1\le |\sigma|\le |\beta_2|} (\partial_x^\sigma \TV^{mod}_{t,t}(x,w))^{k_\sigma}$$
in which $\sum k_\sigma = |\sigma_2| \le |\beta_2|$ and $\sum |\sigma| k_\sigma = |\beta_2|$. Using \eqref{decayXV}, \eqref{daVosct}, 
and the fact that $V^{mod}_{t,t}(x,v) = v - V^{osc}_{t,t}(x,v)$, we obtain 
$$
\begin{aligned}
\| \partial_x^\beta\TY_{s,t}\|_{L^\infty_{x,w}} 
&\lesssim \sum_{|\beta_1|+|\beta_2| = |\beta|} \| \partial_x^{\beta_1}\partial_v^{\sigma_2}Y_{s,t}\|_{L^\infty_{x,v}} \prod_{1\le |\sigma|\le |\beta_2|} \|\partial_x^\sigma \TV^{mod}_{t,t}\|_{L^\infty_{x,w}}^{k_\sigma}
\\
&\lesssim \epsilon^2\sum_{|\beta_1|+|\beta_2| = |\beta|}  \langle s\rangle^{-1+\frac12\delta_{\beta_1}+\frac12 \delta_{\sigma_2}} \langle t\rangle^{\delta_{\beta_1}+\delta_{\sigma_2}+|\sigma_2|
}\prod_{1\le |\sigma|\le |\beta_2|} \langle t\rangle^{-\frac{3k_\sigma}{2} + \frac32\delta_\sigma k_\sigma}
\\
&\lesssim \epsilon^2\sum_{|\beta_1|+|\beta_2| = |\beta|}  \langle s\rangle^{-1+\frac12\delta_{\beta_1}+\frac12 \delta_{\sigma_2} } \langle t\rangle^{\delta_{\beta_1}+\delta_{\sigma_2}+|\sigma_2|
} \langle t\rangle^{-\frac{3}{2}\sum k_\sigma +\frac32\sum \delta_\sigma k_\sigma}
.\end{aligned}$$
Recalling $\sum k_\sigma = |\sigma_2|$ and $\sum |\sigma| k_\sigma = |\beta_2|$, we obtain 
$$
\begin{aligned}
\| \partial_x^\beta\TY_{s,t}\|_{L^\infty_{x,w}} 
&\lesssim \epsilon\sum_{|\beta_1|+|\beta_2| = |\beta|}  \langle s\rangle^{-1+\frac12\delta_{\beta_1}+\frac12 \delta_{\sigma_2} } \langle t\rangle^{\delta_{\beta_1}+\delta_{\sigma_2}+|\sigma_2|
} \langle t\rangle^{-\frac{3}{2}|\sigma_2| +\frac32\delta_{\beta_2}}
\\
&\lesssim \epsilon\sum_{|\beta_1|+|\beta_2| = |\beta|}  \langle s\rangle^{-1 +\frac12\delta_{\beta_1}+\frac12 \delta_{\sigma_2}} \langle t\rangle^{\delta_{\sigma_2} -\frac{1}{2}|\sigma_2|
+ \frac12 \delta_{\beta_2}} \langle t\rangle^{ \delta_{\beta}}.
\end{aligned}$$
We now prove the claim \eqref{claimbTYst}. We first assume that 
\begin{equation}\label{ineqsigma}\delta_{\sigma_2} -\frac{1}{2}|\sigma_2|
+ \frac12 \delta_{\beta_2} \le 0
\end{equation}
 in the above summation. Then, since $s\le t$, it follows that 
 $$
\begin{aligned}
\| \partial_x^\beta\TY_{s,t}\|_{L^\infty_{x,w}} 
&\lesssim \epsilon\sum_{|\beta_1|+|\beta_2| = |\beta|}  \langle s\rangle^{-1 +\frac12\delta_{\beta_1}+\frac12 \delta_{\sigma_2}} \langle s\rangle^{\delta_{\sigma_2} -\frac{1}{2}|\sigma_2|
+ \frac12 \delta_{\beta_2}} \langle t\rangle^{ \delta_{\beta}}
\\
&\lesssim \epsilon\sum_{|\beta_1|+|\beta_2| = |\beta|}  \langle s\rangle^{-1 +\frac12\delta_{\beta_1}+ \frac12 \delta_{\beta_2}+\frac32 \delta_{\sigma_2}-\frac{1}{2}|\sigma_2|
} \langle t\rangle^{ \delta_{\beta}}
\\
&\lesssim \epsilon
\langle s\rangle^{-1 +\frac12\delta_{\beta}
} \langle t\rangle^{ \delta_{\beta}}
,\end{aligned}$$
yielding \eqref{claimbTYst}, in which we used $\delta_{\beta_1}+\delta_{\beta_2} = \delta_\beta$ and $\frac32 \delta_{\sigma_2} = \frac32 \frac{|\sigma_2|}{N_0-1} \le \frac12 |\sigma_2|$, since $N_0 \ge 4$. It remains to check \eqref{ineqsigma}. Indeed, this is clear when $|\sigma_2|=0$ (i.e. no $v$-derivatives), since we must have $|\beta_2|=0$. 
In addition, the inequality is clear when $|\sigma_2|\ge 2$, since $\delta_{\beta_2} \le 1$ and $\delta_{\sigma_2} \le \frac14 |\sigma_2|$ (since $N_0\ge 5$). It remains to consider the case $|\sigma_2|=1$. If $|\beta_1|=0$, then then in view of Proposition \ref{prop-Dchar}, we have $\|\partial_vY_{s,t}\|_{L^\infty_{x,v}} \lesssim \epsilon \langle s\rangle^{-1}\langle t\rangle$, namely there is no growth factor $\langle t\rangle^{\delta_{\sigma_2}}$ in the above estimate, and 
\eqref{ineqsigma} holds, since $\delta_{\beta_2} \le 1$. On the other hand, if $|\beta_1|\ge 2$, we have $|\beta_2| = |\beta| -|\beta_1| \le |\beta|-2\le N_0-3$, and so $\delta_{\sigma_2} + \frac12 \delta_{\beta_2} \le \frac12 (\delta_{2}+\delta_{\beta_2})\le \frac12 \delta_{\beta_2+2} \le \frac12$ which verifies \eqref{ineqsigma}, upon recalling $|\sigma_2|=1$ and $\delta_\beta = \frac{|\beta|}{N_0-1}$. Finally, we consider the case when $|\sigma_2| = |\beta_1|=1$. If $|\beta_2| = |\beta|-1\le N_0-3$, then $\delta_{\sigma_2} + \frac12 \delta_{\beta_2} \le \frac12 \delta_{\beta_2+2} \le \frac12$, and \eqref{ineqsigma} is again valid. For the remaining case when $|\beta| = N_0-1$, we write 
$$\| \partial_x^{\beta_1}\partial_v^{\sigma_2}Y_{s,t}\|_{L^\infty_{x,v}} \prod_{1\le |\sigma|\le |\beta_2|} \|\partial_x^\sigma \TV^{mod}_{t,t}\|_{L^\infty_{x,w}}^{k_\sigma} 
= \| \partial_x\partial_vY_{s,t}\|_{L^\infty_{x,v}} \|\partial_x^{\beta_2} \TV^{mod}_{t,t}\|_{L^\infty_{x,w}} $$
for $|\beta_2|=|\beta|-1 = N_0-2$. Using Proposition \ref{prop-HDchar}, we bound $ \| \partial_x\partial_vY_{s,t}\|_{L^\infty_{x,v}} \lesssim \epsilon \langle s\rangle^{-1+\frac12\delta_2}\langle t\rangle^{1+\delta_2}$. On the other hand, we use the following stronger decay estimates, see Proposition \ref{prop-decayEosc},
\begin{equation}\label{strongdecayEosc}
 \| \partial_x^{\beta_0} E^{osc}_\pm(t)\|_{L^\infty_x} \lesssim \epsilon \langle t\rangle^{-3/2}, \qquad \|\partial_x^{\alpha_0-1} E^{osc}_\pm(t)\|_{L^\infty_x} \lesssim \epsilon, 
  \end{equation}
for $|\beta_0| = n_0$ and $|\alpha_0|=N_0$, where $n_0 = \frac14 N_0$. Therefore, by the interpolation inequality \eqref{dx-interpolate}, we bound 
$$\|\partial_x^{\beta_2} V^{mod}_{t,t}\|_{L^\infty_{x,v}} \lesssim \|\partial_x^{\beta_2} E^{osc}_\pm(t)\|_{L^\infty_{x,v}} \lesssim \epsilon \langle t\rangle^{-\frac32 + \frac32 \epsilon_{\beta_2}}$$
in which $\epsilon_{\beta_2} = \frac{|\beta_2| - n_0}{N_0 - 1 - n_0}$. Since $|\beta_2|= N_0-2$, we have $\epsilon_{\beta_2} = 1 - \frac{1}{N_0-n_0-1}= 1 - \frac{4}{3N_0-4}$, and so 
$$
\begin{aligned}
 \| \partial_x\partial_vY_{s,t}\|_{L^\infty_{x,v}} \|\partial_x^{\beta_2} \TV^{mod}_{t,t}\|_{L^\infty_{x,w}} 
 &\lesssim \epsilon \langle s\rangle^{-1+\frac12\delta_2}\langle t\rangle^{1+\delta_2} \langle t\rangle^{-\frac32 + \frac32 \epsilon_{\beta_2}}
\\
 &\lesssim \epsilon \langle s\rangle^{-1+\frac12\delta_\beta}\langle t\rangle^{1+\delta_2-\frac{6}{3N_0-4}}
,\end{aligned}$$  
in which $\delta_2 = \frac{2}{N_0-1} \le \frac{6}{3N_0-4}$. This proves \eqref{claimbTYst} for this term, upon recalling that we are in the case when $|\beta|=N_0-1$ and $\delta_\beta = 1$. The proof of \eqref{claimbTYst} is thus complete. 

Therefore, putting \eqref{claimbTYst} into \eqref{prodX3b}, we obtain 
\begin{equation}\label{prodX3b1}
\Big\| \prod_{1\le |\beta|\le |\alpha|}  (\partial_x^{\beta} \TX_{s,t})^{k_{\beta}} \Big\|_{L^\infty_{x,w}} \lesssim  \prod_{2\le |\beta|\le |\alpha|}  \|\partial_x^{\beta} \TY_{s,t}\|_{L^\infty_{x,w}}^{k_{\beta}} \lesssim \epsilon \langle s\rangle^{-\sum_* k_\beta } \langle t\rangle^{ \sum_*\delta_{\beta}k_\beta},
\end{equation}
in which the summation $\sum_*$ is taken over the set $|\beta|\ge 2$. By construction, we note that $k_1+\sum_* k_\beta = |\gamma|$ and $k_1+\sum_* |\beta|k_\beta = |\alpha|$, for $k_1$ being the number of first derivatives in the above product. We thus have 
\begin{equation}\label{prodX3b1}
\Big\| \langle s\rangle^{\delta_\gamma} \prod_{1\le |\beta|\le |\alpha|}  (\partial_x^{\beta} \TX_{s,t})^{k_{\beta}} \Big\|_{L^\infty_{x,w}} \lesssim \epsilon \langle s\rangle^{\delta_\gamma-|\gamma|+k_1} \langle t\rangle^{\delta_\alpha - \delta_{k_1}} \lesssim \epsilon \langle t\rangle^{\delta_\alpha},
\end{equation}
upon noting that $s\le t$ and $\delta_\gamma - \delta_{k_1} = \frac{|\gamma|-k_1}{N_0-1} \le |\gamma|-k_1$, since $N_0\ge 2$. This proves that the integrand $\partial_x^\gamma E(s,x - (t-s)\hw ) \cdot H_{s,t,3}(x,w)$ in the integral \eqref{rewSmu3} satisfies the same decay as that of $E(s,x - (t-s)\hw ) \cdot H_{s,t}(x,w)$, up to a growth factor of order $\langle t\rangle^{\delta_\alpha}$ as reported in \eqref{Hsbounds-cSD0}. The oscillatory structure of $H_{s,t,3}(x,w)$ follows from that of $\TY_{s,t}$ and $Y_{s,t}$ as seen in \eqref{comp-dxTY}-\eqref{repYst}. 

This completes the proof of Proposition \ref{prop-bdSD}. 
\end{proof}


\section{Decay of the oscillatory field}\label{sec-decayosc}


In this section, we bound the oscillatory electric field, proving Proposition \ref{prop-decayEosc-goal} which we recall below.  

\begin{proposition}\label{prop-decayEosc} 
Fix $N_0\ge 10$ and $n_0 = \frac14 N_0$. Let $S$ be the nonlinear source term defined as in \eqref{nonlinear-S}.
Then, there hold
\begin{equation}\label{decayEosc}
\begin{aligned}
\|\partial^\alpha_xG_\pm^{osc} \star_{t,x} \nabla_x S(t)\|_{L^\infty_x} 
&\lesssim (\epsilon_0 + \epsilon^2) \langle t\rangle^{-3/2} ,\qquad |\alpha|\le n_0,
\\
\|\partial^{\alpha_0-1}_xG_\pm^{osc} \star_{t,x} \nabla_x S(t)\|_{L^\infty_x} 
&\lesssim (\epsilon_0 + \epsilon^2) ,\qquad |\alpha_0| = N_0,
\end{aligned}\end{equation}
and 
\begin{equation}\label{bdEosc}
\begin{aligned}
\|G_\pm^{osc} \star_{t,x} \nabla_x S(t)\|_{H^{\alpha_0}_x} &\lesssim (\epsilon_0 + \epsilon^2)
\\
\|G_\pm^{osc} \star_{t,x} \nabla_x S(t)\|_{H^{\alpha_0+1}_x} &\lesssim (\epsilon_0 + \epsilon^2) \langle t\rangle^{\delta_1} .
\end{aligned}\end{equation}
\end{proposition}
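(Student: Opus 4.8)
The plan is to estimate the spacetime convolution $G_\pm^{osc}\star_{t,x}\nabla_x S$ by exploiting the oscillatory structure of $G_\pm^{osc}$ together with the bootstrap decay of $S$ established in Proposition~\ref{prop-bdS}. The starting observation, already highlighted in \eqref{keyint2}, is that a naive estimate fails: $S(t,x)$ only decays like $\langle t\rangle^{-3}$ in $L^\infty_x$ and is merely bounded in $L^1_x$, and there is no extra decay for spatial derivatives since phase mixing is unavailable for Klein--Gordon oscillations. Hence the first step is to write, in Fourier space, $\FG^{osc}_{\pm}(t-s,k)\FF(s,k) = e^{\lambda_\pm(k)(t-s)}a_\pm(k)\Ff(s,k)$ (with $\Ff = \FF = \widehat{\nabla_x S}$) and integrate by parts in $s$ against the phase $e^{-\lambda_\pm(k)s}$; but this alone is not enough, since $\partial_s \Ff$ is no better than $\Ff$. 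The genuine gain comes from the phase-space resonant method: one recognizes that after moving to the physical picture and using the representation of $S^\mu$ from Proposition~\ref{prop-SR} as $\int_0^t\!\int E(s,x-(t-s)\hw)\cdot H_{s,t}(x,w)\,dw\,ds$, one can integrate by parts along the transport characteristic $x\mapsto x-\hw(t-s)$ using the identity $e^{\pm i\nu_*(k)(t-s)+ik\cdot\hw(t-s)} = \frac{1}{\omega^v_\pm(k)}\frac{d}{ds}e^{\cdots}$ as in \eqref{keyint}. This converts $G^{osc}_\pm\star_{t,x}\nabla_x S$ into (i) boundary terms at $s=0$ (initial data, handled by the dispersive estimate \eqref{est-HoscLp} and the data assumption \eqref{data-phi}), (ii) a boundary term at $s=t$ of the form $\frac{1}{\omega^v_\pm(i\partial_x)}G^{osc}_\pm(0)\star_x\nabla_x S(t,x)$, which is controlled by the $L^\infty$ decay of $S(t)$ times $\|G^{osc}_\pm\|$-type bounds, and (iii) a higher-order term $\frac{1}{\omega^v_\pm(i\partial_x)}G^{osc}_\pm\star_{t,x}\nabla_x[ES](t,x)$, in which $ES$ is quadratic and decays like $\langle t\rangle^{-3}$ with the favorable structure because $E^{osc}$ times $S$ behaves like a quadratic oscillation.

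The second step is to set up a self-improving (bootstrap) estimate on the quantity $\sup_{s\le t}\langle s\rangle^{3/2}\|\partial_x^\alpha G^{osc}_\pm\star_{t,x}\nabla_x S(s)\|_{L^\infty_x}$ for $|\alpha|\le n_0$. The term (iii) reintroduces a spacetime convolution against $\nabla_x[ES]$, but since $E=\sum_\pm E^{osc}_\pm + E^r$, the piece $E^{osc}S$ can again be integrated by parts in $s$ along characteristics, gaining another factor, while $E^r S$ decays fast enough ($\langle t\rangle^{-3}\cdot\langle t\rangle^{-3}\log t$ in $L^\infty$, bounded in $L^1$) to be handled directly by the dispersive estimate \eqref{est-HoscLp} together with the splitting $\int_0^{t/2}+\int_{t/2}^t$: on $(t/2,t)$ use $\|G^{osc}_\pm(t-s)\star_x\nabla_x f\|_{L^\infty}\lesssim\langle t-s\rangle^{-3/2}\|f\|_{W^{\lceil\cdot\rceil,1}}$ against the $L^1$-type bound on the source, and on $(0,t/2)$ use the $\langle t\rangle^{-3/2}$ decay of $G^{osc}_\pm$ at the large elapsed time $t-s\ge t/2$ against the time-integrable $L^1$ norm of the source. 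The log losses that appear in $\|S(t)\|_{L^p}$ are absorbed because the Littlewood--Paley refinement in \eqref{bootstrap-decayS}, namely $\sup_q 2^{(1-\delta)q}\|P_q S\|_{L^p_x}\lesssim\epsilon\langle t\rangle^{-3(1-1/p)}$, removes them after one derivative, exactly as in the $E^r$ estimates in the proof of Theorem~\ref{maintheorem}.

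The third step handles the high-derivative statements \eqref{bdEosc} and the second line of \eqref{decayEosc}. Here we do \emph{not} try to propagate decay; instead we allow growth. For $\|G^{osc}_\pm\star_{t,x}\nabla_x S\|_{H^{\alpha_0}_x}$ with $|\alpha_0|=N_0$ we use $L^2_x\to L^2_x$ boundedness of $G^{osc}_\pm\star_x\nabla_x$ from \eqref{est-HoscL2inf}, together with $\|S(t)\|_{H^{N_0-1}_x}\lesssim\epsilon\langle t\rangle^{\delta_1}$ and $\|\partial_x^\alpha S\|_{L^2_x}\lesssim\epsilon\langle t\rangle^{-3/2+\delta_{\alpha,2}}$ from Proposition~\ref{prop-bdSD}; the spacetime integral $\int_0^t\|S(s)\|_{H^{N_0-1}_x}\,ds$ only costs a further $\langle t\rangle^{1+\delta_1}$, but the loss of one derivative in the convolution (Proposition~\ref{prop-Greenphysical}) is why $H^{\alpha_0}$ bounds are closed using $H^{\alpha_0+1}$-type information on $S$, with the interpolation parameter $\delta_\alpha=|\alpha|/(N_0-1)$ chosen so that the cascade of growth rates $\delta_{\alpha,p}$ is consistent across all $|\alpha|\le N_0$. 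The $L^\infty$ bound at $|\alpha_0|-1$ derivatives follows by Sobolev embedding from the $H^{\alpha_0}$ bound combined with the low-norm $L^\infty$ decay and the interpolation inequality for intermediate derivatives. The main obstacle, as the paper itself stresses, is step one: proving that after the repeated integration by parts in $s$ along characteristics the higher-order remainder $G^{osc}_\pm\star_{t,x}\nabla_x[ES]$ genuinely closes, i.e.\ that one does not merely trade one insufficient convolution for another. This requires using the precise oscillatory structure $E^{osc}_\pm(t,x)=G^{osc}_\pm\star_x F_0 + G^{osc}_\pm\star_{t,x}F$ and the identity \eqref{dtBF} ($e^{\lambda_\pm t}\partial_t\FB_\pm = a_\pm\FF$), so that each integration by parts in time exposes a factor $F=\nabla_x S$ which carries the improved quadratic decay, and then controlling the resulting triple (wave--wave--source) interaction via the $\int_0^{t/2}+\int_{t/2}^t$ dichotomy and the Coifman--Meyer-type bilinear bounds already used in Section~\ref{sec-resHosc}.
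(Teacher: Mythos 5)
Your proposal captures the high-level architecture of Section~\ref{sec-decayosc} — replace the naive spacetime convolution by a boundary term plus a remainder of the form $G^{osc}_\pm\star_{t,x}\nabla_x[ES]$, split $E=E^{osc}_\pm+E^r$, and handle the various pairings — but there are two concrete gaps that would block the argument as written.

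The first gap is that you treat the schematic identity \eqref{keyint2} as given, when deriving a rigorous version of it (Propositions \ref{prop-convGosc} and \ref{prop-convGoscS0}) is the technical heart of the section. The formula \eqref{keyint2} cannot even be read literally: the symbol $\omega^v_\pm(i\partial_x)^{-1}$ depends on $v$, while $\nabla_x S$ has had $v$ integrated out. What actually happens in the paper's proof is an interchange of the $\tau$ and $s$ integrations, the measure-preserving change of variables $(Y,V)=(X_{s,\tau}(y,v),V_{s,\tau}(y,v))$ which places the nonlinear characteristics on the same footing as the free-transport ones, an exact evaluation of the inner $\tau$-integral of $G^{osc}_\pm$ along each characteristic, and then an \emph{algebraic cancellation} $J_{1,2}=J_{2,2}$ between the two families of boundary terms at the lower endpoint. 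Only this cancellation converts the would-be $v$-dependent inverse symbol into the smooth, $v$-independent multiplier $a_\pm(i\partial_x)=\widehat{G^{osc}_\pm(0)}$ that appears in front of $S^\mu_{\pm,1}$. Your proposal says ``integrate by parts along the transport characteristic $x\mapsto x-\hw(t-s)$'' but never addresses what happens to the nonlinear-characteristic contribution or why the dangerous boundary terms cancel.

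The second gap is your treatment of the $E^r$-pairing in the quadratic remainder. You claim ``$E^r S$ decays fast enough ($\langle t\rangle^{-3}\cdot\langle t\rangle^{-3}\log t$ in $L^\infty$) to be handled directly''. But the object produced by Proposition~\ref{prop-convGosc} is not a pointwise product of two decaying quantities; it is
$\cQ^r(t,x)=E(t,x)\int_0^t\!\int E^r(s,x-(t-s)\hv)\cdot\nabla_v\mu(v)\,\varphi(v)\,dv\,ds$,
whose inner time integral is only $O(1)$ in $L^\infty_x$ (the integral $\int_0^t\langle s\rangle^{-3}\,ds$ converges but does not decay). Consequently $\|\cQ^r(t)\|_{L^\infty}$ is only $O(\langle t\rangle^{-3/2})$, and the hypotheses of Lemma~\ref{lem-decayosc} are not satisfied. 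The paper recovers the missing decay by recalling that $E^r=\nabla_x\rho^r$ and integrating by parts in $v$ via $\nabla_x f(x-(t-s)\hv)=-\tfrac{1}{t-s}\nabla_{\hv}v\,\nabla_v[f(x-(t-s)\hv)]$, which produces the crucial factor $(t-s)^{-1}$ under the time integral. You have no substitute for this step. (A smaller point: in the purely oscillatory pairing $\cQ^{osc,osc}$, the mechanism is not merely a Coifman--Meyer bound but the explicit lower bound \eqref{nonres-condition} on the non-resonant phase $\Phi_\pm(k,\ell)=-\lambda_\pm(k)+\lambda_\pm(\ell)+\lambda_\pm(k-\ell)$, which is nonzero precisely because $\lambda_\pm(0)=\pm i\tau_0\ne 0$; also, the log-absorption in this part of the argument comes from the $(t-s)^{-1}$ gain above and from the time-regime split, not from the Littlewood--Paley refinement of \eqref{bootstrap-decayS}, which is used for the $E^r$ component in the proof of Theorem~\ref{maintheorem}.) The high-derivative portion of your argument (using the $L^2$-boundedness \eqref{est-HoscL2inf}, the $H^{N_0}$ bootstrap bounds, and a cascading interpolation) is in line with what the paper does.
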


The proof consists of several steps that heavily rely on the nonlinear structure of $S(t,x)$. 
Indeed, thanks to the transport property, the nonlinear source density $S(t,x)$ is at best of order $t^{-3}$, which is insufficient for the spacetime convolution $G^{osc}_{\pm} \star_{t,x} S(t,x)$ to disperse like a Klein-Gordon wave at order $t^{-3/2}$, see for instance Lemma \ref{lem-decayosc} below, roughly requiring $S(t)$ to be of order $t^{-4}$. For this reason, we need to further exploit the nonlinear structure hidden in the convolution.


%


\subsection{A useful lemma}\label{sec-lemmas}

In this section, we give some useful estimates on the oscillatory Green kernel. Precisely, we obtain the following.

\begin{lemma}\label{lem-decayosc} 
Suppose that 
$$ \int_0^t \| J(s)\|_{W_x^{1+\lfloor 3\left(1-\frac{2}{p}\right) \rfloor, p'}} \;ds \lesssim 1, \qquad \| J(t)\|_{H^{2}_x} \lesssim \langle t\rangle^{-5/2} .$$
Then, there holds 
$$ \|G^{osc}_\pm \star_{t,x} \nabla_x J(t)\|_{L^p_x} \lesssim \langle t\rangle^{-3(1/2-1/p)} $$
for $2\le p\le\infty$. 
\end{lemma}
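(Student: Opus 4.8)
The plan is to split the time convolution $G^{osc}_\pm \star_{t,x} \nabla_x J(t) = \int_0^t G^{osc}_\pm(t-s)\star_x \nabla_x J(s)\, ds$ at $s = t/2$ and estimate each piece using the dispersive bounds for $G^{osc}_\pm$ established in Proposition~\ref{prop-Greenphysical}, namely \eqref{est-HoscL2inf} and \eqref{est-HoscLp}. For the near-diagonal regime $s \in (t/2, t)$, the Green kernel $G^{osc}_\pm(t-s)$ is at small time, so no decay is available from it; instead I would use that $\| J(s)\|_{H^2_x} \lesssim \langle s\rangle^{-5/2} \le \langle t\rangle^{-5/2}$ together with the $L^\infty$-bound $\|G^{osc}_\pm\star_x \nabla_x f\|_{L^\infty_x}\lesssim \|f\|_{H^2_x}$ (and the $L^2$ bound $\|G^{osc}_\pm \star_x \nabla_x f\|_{L^2_x}\lesssim \|f\|_{L^2_x}$ for the $p=2$ endpoint), so that
\begin{equation}\label{planneardiag}
\Big\| \int_{t/2}^t G^{osc}_\pm(t-s)\star_x \nabla_x J(s)\, ds\Big\|_{L^p_x} \lesssim \int_{t/2}^t \|J(s)\|_{H^2_x}\, ds \lesssim \langle t\rangle^{-3/2},
\end{equation}
which beats the required $\langle t\rangle^{-3(1/2-1/p)}$ for all $p\in[2,\infty]$; the general $p$ follows by interpolating the $L^2$ and $L^\infty$ bounds.

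For the far regime $s \in (0, t/2)$, we have $t - s \ge t/2$, so the full Klein-Gordon dispersive decay $\langle t-s\rangle^{-3(1/2-1/p)} \sim \langle t\rangle^{-3(1/2-1/p)}$ from \eqref{est-HoscLp} is available from the kernel. Thus
\begin{equation}\label{planfar}
\Big\| \int_0^{t/2} G^{osc}_\pm(t-s)\star_x \nabla_x J(s)\, ds\Big\|_{L^p_x} \lesssim \int_0^{t/2} \langle t-s\rangle^{-3(1/2-1/p)} \| J(s)\|_{W_x^{1+\lfloor 3(1-2/p)\rfloor, p'}}\, ds \lesssim \langle t\rangle^{-3(1/2-1/p)} \int_0^{t/2} \| J(s)\|_{W_x^{1+\lfloor 3(1-2/p)\rfloor, p'}}\, ds,
\end{equation}
and the remaining time integral is bounded by the first hypothesis. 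Summing the two regimes yields the claimed estimate.

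The only genuine subtlety is the bookkeeping of which norm of $J$ enters on the right: the dispersive estimate \eqref{est-HoscLp} is phrased in terms of $\| a_\pm(i\partial_x)\nabla_x J\|_{B^{3(1-2/p)}_{p',2}}$, which by the Besov embedding \eqref{eq:fouriermultbesov} (and the smoothing from $a_\pm(k)=\cO(\langle k\rangle^{-1})$) is controlled by $\| J\|_{W^{1+\lfloor 3(1-2/p)\rfloor, p'}}$ exactly as in the hypothesis, so this matches directly. I expect no real obstacle here; the statement is essentially a packaging of the linear dispersive estimates of Proposition~\ref{prop-Greenphysical} together with the split at $s=t/2$, and the main (minor) care is to verify that for $p=2$ the logarithmically borderline cases do not arise — which they do not, since \eqref{planneardiag} already gives $\langle t\rangle^{-3/2}$ with room to spare and \eqref{planfar} gives an $O(1)$ factor times the integrable time weight.
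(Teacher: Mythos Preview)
Your proposal is correct and matches the paper's proof essentially line for line: split the time convolution at $s=t/2$, apply the dispersive estimate \eqref{est-HoscLp} on $(0,t/2)$ to pull out the factor $\langle t\rangle^{-3(1/2-1/p)}$ and leave the integrable $W^{1+\lfloor 3(1-2/p)\rfloor,p'}$ norm, and on $(t/2,t)$ use the non-dispersive bound \eqref{est-HoscL2inf} together with $\|J(s)\|_{H^2_x}\lesssim \langle s\rangle^{-5/2}$ to get $\langle t\rangle^{-3/2}$. The paper's own proof is just these two estimates written out in three lines; your additional remarks about interpolation and the Besov-to-Sobolev passage are already absorbed in Proposition~\ref{prop-Greenphysical}.
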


\begin{proof}
The lemma is direct. Indeed, using the results from Proposition \ref{prop-Greenphysical} on the oscillatory Green function, we bound
$$
\begin{aligned} 
\| G^{osc}_\pm \star_{t,x} \nabla_x J\|_{L^p_x} 
&\le \int_0^t \| G^{osc}_\pm(t-s) \star_x \nabla_x J(s)\|_{L^p_x} \; ds
\\& \lesssim \int_0^{t/2} \langle t-s\rangle^{-3(1/2-1/p)} \| J(s)\|_{W_x^{1+\lfloor 3\left(1-\frac{2}{p}\right) \rfloor, p'}}\; ds + \int_{t/2}^t \| J(s)\|_{H^{2}_x} \; ds
\\& \lesssim \langle t\rangle^{-3(1/2-1/p)}\int_0^{t/2}\| J(s)\|_{W_x^{1+\lfloor 3\left(1-\frac{2}{p}\right) \rfloor, p'}}\; ds +\int_{t/2}^t \langle s\rangle^{-5/2}\; ds,
\end{aligned}$$
which ends the proof, using the assumptions made on $J(t)$. 
\end{proof}

\subsection{Collective oscillations} 

We obtain the following.  

\begin{proposition}\label{prop-convGosc}
Let $S^{\mu}(t,x)$ be the source term defined as in \eqref{nonlinear-Smu}.  
Then, there holds
\begin{equation}\label{convGosc}
G_\pm^{osc} \star_{t,x} S^{\mu}(t,x) = a_\pm(i\partial_x) S_{\pm,1}^{\mu}(t,x) + G_\pm^{osc} \star_{t,x} S_{\pm,2}^{\mu}(t,x)
\end{equation}
where $a_\pm(k)$ is the Fourier multiplier as in \eqref{def-Gosc}, and 
$$ 
\begin{aligned}
S_{\pm,1}^{\mu}(t,x)  &=  \int_0^t \int \phi^v_{\pm,1}(i\partial_x)\Big[ E(s,x - (t-s)\hv) \cdot \nabla_v \mu(v) 
 - E(s,X_{s,t}(x,v))\cdot \nabla_v \mu(V_{s,t}(x,v)) \Big]\,dv ds 
\\
S_{\pm,2}^{\mu}(t,x)  &= 
 \int_0^t \int \nabla_v\phi^v_{\pm,1}(i\partial_x)\Big[ E(t,x)E(s,X_{s,t}(x,v)) \cdot \nabla_v \mu(V_{s,t}(x,v)) \Big] \, dv ds 
 \end{aligned}$$ 
with a Fourier multiplier $\phi_{\pm,1}^v(k)$ defined as in  \eqref{def-phipmj}. 
\end{proposition}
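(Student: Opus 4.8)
\textbf{Proof proposal for Proposition \ref{prop-convGosc}.}

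The plan is to pass to the Fourier side in $x$, where the oscillatory Green function acts as the scalar multiplier $\FG^{osc}_{\pm,k}(t) = e^{\lambda_\pm(k)t}a_\pm(k)$ with $\lambda_\pm(k) = \pm i\nu_*(|k|)$, and to exploit the same integration-by-parts-in-time identity that was used throughout Section \ref{sec-Char} (see \eqref{int-Eosc} and the key computation \eqref{keyint}). First I would write, for each fixed $k$,
\begin{equation}\label{pf-convGosc1}
G_\pm^{osc} \star_{t,x} S^{\mu}(t,x) \Big|_{\text{Fourier}} = \int_0^t e^{\lambda_\pm(k)(t-s)} a_\pm(k) \FS^\mu_k(s)\; ds = a_\pm(k) e^{\lambda_\pm(k)t}\int_0^t e^{-\lambda_\pm(k)s} \FS^\mu_k(s)\; ds,
\end{equation}
and then insert the definition \eqref{Smu-recall} of $S^\mu$, so that the integrand carries the free-transport phase $e^{-ik\cdot\hv(t-s)}$ from the $E(s,x-(t-s)\hv)$ term and, after the change of variables used in Proposition \ref{prop-SR}, the corresponding phase along the nonlinear characteristic. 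The point is that in both contributions the factor $e^{-\lambda_\pm(k)s}$ combines with the transport phase to produce $e^{\omega^v_\pm(k)\tau}$-type exponentials with $\omega^v_\pm(k) = \lambda_\pm(k)+ik\cdot\hv$, which by Theorem \ref{theo-LangmuirB} never vanishes (since $\nu_*(|k|) > |k| \ge |k\cdot\hv|$), so that $\phi^v_{\pm,1}(k) = 1/\omega^v_\pm(k)$ is a bounded smooth multiplier as established in Lemma \ref{lem-PEosc}.

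Next I would perform the integration by parts in $s$ on \eqref{pf-convGosc1}. The $d/ds$ hits the exponential $e^{-\lambda_\pm(k)s}$ inside; writing $\FS^\mu_k(s)$ out via \eqref{Smu-recall}, the derivative in $s$ acting on $E(s,X_{s,t})\cdot\nabla_v\mu(V_{s,t})$ produces two types of terms: the time derivative of the electric field $\partial_s E$, which via the Klein-Gordon / Vlasov structure and the characteristic equation $\dot X_{s,t} = \hV_{s,t}$, $\dot V_{s,t} = E(s,X_{s,t})$ is precisely what regenerates the factor $E(t,x)$ in the boundary term and the quadratic term $E\cdot E$ in $S^\mu_{\pm,2}$; and the derivative acting on the phase, which is absorbed into $\omega^v_\pm(k)$ and hence into $\phi^v_{\pm,1}$. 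Collecting the boundary contributions at $s=t$ (which give the leading term $a_\pm(i\partial_x)S^\mu_{\pm,1}$, noting that at $s=t$ the two brackets in \eqref{Smu-recall} coincide and the full integral over $s$ from the antiderivative survives) and at $s=0$ (which vanishes, since $S^\mu$ vanishes at $t=0$), and keeping the remaining integral term with the $\phi^v_{\pm,1}$-smoothed density still convolved against $G^{osc}_\pm$, yields exactly the decomposition \eqref{convGosc}. I would organize the bookkeeping so that the $\phi^v_{\pm,1}$ multiplier is attached at the level of the $v$-integrand, consistently with the notation $\phi^v_{\pm,j}(i\partial_x)$ of \eqref{def-phipmj}.

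The main obstacle will be the careful justification that the integration by parts in $s$ genuinely converts $e^{-\lambda_\pm(k)s}\partial_s[E(s,X_{s,t})\cdot\nabla_v\mu(V_{s,t})]$ into the stated structure: one must differentiate the composition $E(s,X_{s,t}(x,v))$ in $s$ using both the explicit $s$-dependence of $E$ and the motion of $X_{s,t}$, use $\partial_s\hat X_{s,t}$ and $\partial_s V_{s,t} = E(s,X_{s,t})$ to identify the $E\cdot E$ piece, and then undo the Proposition \ref{prop-SR} change of variables so that everything is expressed back in $(x,v)$ rather than $(x,w)$. This is essentially a repeat of the manipulation in \eqref{int-Eosc}–\eqref{keyint2} but applied to the collective source rather than to a single-particle trajectory, so no new analytic input is needed beyond the non-vanishing of $\omega^v_\pm(k)$ and the boundedness of its reciprocal; the work is in keeping track of all the terms and verifying that the "leftover" pieces are precisely $S^\mu_{\pm,1}$ and $S^\mu_{\pm,2}$ as written. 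I would present the computation mode-by-mode in Fourier, then transfer back to physical space at the very end.
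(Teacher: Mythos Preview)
Your plan has the right instinct — the identity \eqref{int-Eosc} is indeed the engine — but the execution you describe does not produce the multiplier $\phi^v_{\pm,1}(k)=1/\omega^v_\pm(k)$, and the quadratic term is generated by a different mechanism than the one you outline. Integrating by parts in the convolution variable $s$ in \eqref{pf-convGosc1} against $e^{-\lambda_\pm(k)s}$ only yields a factor $1/\lambda_\pm(k)$; it never sees the transport phase $ik\cdot\hv$, because that phase lives inside $\FS^\mu_k(s)$ and depends on the \emph{inner} integration variable of $S^\mu$, not on $s$. You also write $E(s,X_{s,t})$ with the external time $t$ as terminal time, but in $S^\mu(s,\cdot)$ the characteristics are $X_{\sigma,s}$ with terminal time $s$; differentiating $X_{\sigma,s}$ in $s$ is a derivative with respect to the terminal time, which the ODE \eqref{ode-char} does not give directly, and there is no reason an $E\cdot E$ term emerges from that computation. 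A purely mode-by-mode Fourier argument also struggles with the nonlinear piece, since $x\mapsto X_{\sigma,s}(x,v)$ is not a translation and $E(\sigma,X_{\sigma,s}(x,v))$ has no simple Fourier transform.

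The paper's route is different. One first applies Fubini to exchange the convolution variable $\tau$ and the inner variable $s$ of $S^\mu$, obtaining $\int_0^t ds\int_s^t d\tau\,G^{osc}_\pm(t-\tau,\cdot)\star_x(\ldots)$. For the nonlinear piece one then performs, in physical space, the volume-preserving change $(y,v)\mapsto(Y,V)=(X_{s,\tau}(y,v),V_{s,\tau}(y,v))$ so that $y=X_{\tau,s}(Y,V)$, and integrates $\int_s^t G^{osc}_\pm(t-\tau,x-X_{\tau,s}(Y,V))\,d\tau$ along the \emph{forward} characteristic in $\tau$. This is exactly \eqref{int-Eosc}: the combined phase has $\tau$-derivative $-(\lambda_\pm(k)+ik\cdot\hV_{\tau,s})$, so the antiderivative produces $G^{osc,1}_\pm=\phi^v_{\pm,1}(i\partial_x)G^{osc}_\pm$ at the endpoints, \emph{plus} a remainder coming from $\partial_\tau V_{\tau,s}=E(\tau,X_{\tau,s})$. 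It is this remainder that, after undoing the change of variables, becomes $G^{osc}_\pm\star_{t,x}S^\mu_{\pm,2}$ with the explicit factor $E(t,x)$. The endpoint contributions at $\tau=t$ from the free and nonlinear pieces combine into $a_\pm(i\partial_x)S^\mu_{\pm,1}$, while the endpoint contributions at $\tau=s$ cancel identically between the two — a cancellation your proposal does not account for and which is essential to avoid a spurious extra oscillatory term.
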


We note that $S_{\pm,j}^{\mu}(t,x)$ are of the same order as that of $S^{\mu}(t,x) $. The advantage is however that $S_{\pm,2}^{\mu}(t,x) $ has a more apparent quadratic interaction between the electric field at $(t,x)$ and that along the characteristic $X_{s,t}(x,v)$ than that in the original source $S^{\mu}(t,x)$.

\begin{proof} Let $S_1(t,x)$ be the first integral in the definition of $S^{\mu}(t,x)$, namely
$$S_1(t,x) = \int_0^t \int_{\RR^3} E(s,x - (t-s)\hv )\cdot \nabla_v \mu(v)  \, dv  ds.
$$
By definition, we compute 
$$
\begin{aligned}
G^{osc}_\pm \star_{t,x} S_1(t,x) &= \int_0^t \int G^{osc}_\pm(t - \tau, x - y) S_1(\tau,y) \, dy \, d\tau
\\
&= \int_0^t \int G^{osc}_\pm(t  - \tau,x - y)  \Bigl( \int_0^\tau \int E(s, y - (\tau - s)  \hv) \cdot \nabla_v \mu(v)\, dv ds 
\Bigr) \, dy  d\tau
\\
&= \int_0^t \int \Bigl( \int_s^t \int G^{osc}_\pm(t  - \tau,x - y) E(s, y - (\tau - s)  \hv) \cdot \nabla_v \mu(v)  \, dy  d\tau \Bigr)  dv  ds.
\end{aligned}$$
Next, we set $Y = y - (\tau - s) \hv$ in the $y$-integration, which leads to 
$$
G^{osc}_\pm \star_{t,x} S_1(t,x) =
\int_0^t \iint \Bigl[ \int_s^t G^{osc}_\pm(t-\tau,x - Y - (\tau - s)  \hv) d\tau \Bigr] E(s,Y) \cdot \nabla_v \mu(v)  dY  dv  ds.
$$
Following the calculation done in \eqref{int-Eosc}, we compute the integral of $G^{osc}_\pm$ over a straight line, giving  
$$
\int_s^t G^{osc}_\pm(t-\tau,x - Y - (\tau - s) \hv) d\tau = G^{osc,1}_\pm(0,x - Y - (t - s) \hv,v) 
- G^{osc,1}_\pm(t -s, x - Y,v) ,
$$
where, as in \eqref{def-Eoscj}-\eqref{def-phipmj}, we denote $G^{osc,j}_\pm(t,x,v) = \phi^v_{\pm,j}(i\partial_x)G^{osc}_{\pm}(t,x)$. This leads to 
\begin{equation}\label{cal1}G^{osc}_\pm \star_{t,x} S_1(t,x) = J_{1,1} + J_{1,2}\end{equation}
where
$$
\begin{aligned}J_{1,1} &= \int_0^t \iint G^{osc,1}_\pm(0,x - Y - (t - s) \hv,v) E(s,Y) \cdot \nabla_v \mu(v)  \, dY \, dv \, d\tau 
\\
J_{1,2} &= - \int_0^t \iint G^{osc,1}_\pm(t-s,x - Y,v) E(s,Y) \cdot \nabla_v \mu(v) \, dY \, dv \, d\tau .
\end{aligned}$$

Similarly, letting $S_2(t,x)$ be the second integral in the definition of $S^{\mu}(t,x)$, we get  
$$
G^{osc}_\pm \star_{t,x} S_2(t,x) = \int_0^t \int \Bigl( \int_s^t \int G^{osc}_\pm(t  - \tau,x - y) 
E(s,X_{s,\tau}(y,v))\cdot \nabla_v \mu(V_{s,\tau}(y,v))\, dy d\tau \Bigr) \,  dvds.
$$
In the spirit of the case of straight characteristics, we fix the position and speed of the characteristics at time $s$,
and introduce the change of variables  
$$
(Y,V) = ( X_{s,\tau}(y,v), V_{s,\tau}(y,v)), 
$$ 
whose Jacobian determinant is equal to one. By the time reversibility, we note that
$$
(y,v) = (X_{\tau,s}(Y,V), V_{\tau,s}(Y,V)) .
$$
This leads to 
$$
G^{osc}_\pm \star_{t,x} S_2(t,x) = \int_0^t \iint \Bigl[ \int_s^t G^{osc}_\pm(t  - \tau,x - X_{\tau,s}(Y,V)) 
\, \, d\tau \Bigr] E(s,Y)\cdot \nabla_v \mu(V) \, dY dV ds.
$$
We next evaluate $G^{osc}_\pm$ along the nonlinear characteristics. Indeed, following the calculation done in \eqref{int-Eosc}, we obtain 
$$
\begin{aligned}
&\int_s^t G^{osc}_\pm(t - \tau, x - X_{\tau,s}) d \tau  
= G^{osc,1}_\pm(0, x - X_{t,s},V_{t,s}) 
- G^{osc,1}_\pm(t - s, x - Y,V) + R_{s,t}
\end{aligned}$$
upon using $X_{s,s} = Y$ and $V_{s,s} = V$, where
\begin{equation}\label{def-RstYV}
\begin{aligned}
R_{s,t} (Y,V)&= -  \int_s^t \partial_\tau V_{\tau,s}\cdot \nabla_v(G^{osc,1}_{\pm}(t-\tau,x-X_{\tau,s},V_{\tau,s}) )  
\;d \tau .
\end{aligned}\end{equation}
We stress that $\nabla_v$ in the above expression denotes the partial derivative in $v$ of the function $G^{osc,1}_{\pm}(t,x,v)  $. This yields 
\begin{equation}\label{cal2}G^{osc}_\pm \star_{t,x} S_2(t,x) = J_{2,1} + J_{2,2} - \cE^{m,R} \end{equation}
where
$$
\begin{aligned}J_{2,1} &=  \int_0^t \iint G^{osc,1}_\pm(0, x - X_{t,s}(Y,V),V_{t,s}(Y,V)) E(s,Y)\cdot \nabla_v \mu(V)  \, dY dV \, ds
\\
J_{2,2} &= -  \int_0^t \iint G^{osc,1}_\pm(t - s, x - Y,V) E(s,Y)\cdot \nabla_v \mu(V)\, dY dV \, ds,
\\
\cE^{m,R} &= \int_0^t \iint R_{s,t} (Y,V)\, E(s,Y) \cdot \nabla_v \mu(V)  \, dV \, dY \, ds.
\end{aligned}$$ 
Observe that $J_{1,2}$ and $J_{2,2}$ are identical and thus cancelled out in $G^{osc}_\pm \star_{t,x} S^{\mu}(t,x)$. Putting \eqref{cal1} and \eqref{cal2} together, we thus obtain 
$$
\begin{aligned}
 G^{osc}_\pm \star_{t,x} S^{\mu}(t,x) &=  G^{osc}_\pm \star_{t,x} S_1(t,x) -  G^{osc}_\pm \star_{t,x} S_2(t,x) 
 \\&= J_{1,1} - J_{2,1} + \cE^{m,R} .
 \end{aligned}$$
Reversing the change of variables $y= Y + (t- s) \hv$ in the integral $J_{1,1}$ and $(y,v) = (X_{t,s}(Y,V),V_{t,s}(Y,V))$ in the integral $J_{2,1}$, we have 
$$
\begin{aligned}
J_{1,1} - J_{2,1} 
&=  \int_0^t \iint G^{osc,1}_\pm(0,x - y,v) \Big[ E(s,y - (t-s)\hv) \cdot \nabla_v \mu(v) 
\\&\quad - E(s,X_{s,t}(y,v))\cdot \nabla_v \mu(V_{s,t}(y,v)) \Big]  \, dydv ds .
\end{aligned}$$
Recall that $G^{osc,1}_\pm(0,x,v) = \phi^v_{\pm,1}(i\partial_x)G^{osc}_{\pm}(0,x) = \phi^v_{\pm,1}(i\partial_x)a_\pm(i\partial_x)$, where we used from \eqref{Gsreal} that
$G^{osc}_\pm(0,x)$ is a Fourier multiplier $a_\pm(k)$. This defines $ S_{\pm,1}^{\mu}(t,x) $ as stated in the lemma.  

Finally, we study the remainder integral $\cE^{m,R} $. Similarly as done above, we reverse the change of variables $(y,v) = (X_{\tau,s}(Y,V),V_{\tau,s}(Y,V))$  in the integration and recall that $\partial_\tau V_{\tau,s} = E(\tau, X_{\tau,s})$ to get 
$$ 
\begin{aligned}
\cE^{m,R} (t,x) 
&= \int_0^t \int \int_0^\tau \int \nabla_v [G^{osc,1}_{\pm}(t-\tau,x-y,v)]  \cdot E(\tau,y)
\\&\qquad \times E(s,X_{s,\tau}(y,v)) \cdot \nabla_v \mu(V_{s,\tau}(y,v)) \, dv ds \, dyd \tau .
\end{aligned}$$ 
Recalling that $G^{osc,1}_\pm(t,x,v) =  \phi^v_{\pm,1}(i\partial_x)G^{osc}_{\pm}(t,x)$, we obtain the formulation for the remainder $\cE^{m,R} (t,x)$ as stated. 
\end{proof}

Similarly, we also obtain the following proposition concerning the convolution with initial data. 

\begin{proposition}\label{prop-convGoscS0}
Let $S^0(t,x)$ be the source density defined as in \eqref{nonlinear-S}. Then, there holds
\begin{equation}\label{convGoscS0}
G^{osc}_\pm\star_{t,x}  S^0(t,x)  = - G^{osc}_\pm(t,x)\star_x S^0_{\pm,0}(x) + a_\pm(i\partial_x) S^0_{\pm,1}(t,x) + G_\pm^{osc} \star_{t,x} S^0_{\pm,2}(t,x)
\end{equation}
where $a_\pm(k)$ is the Fourier multiplier as in \eqref{def-Gosc}, and 
\begin{equation}\label{def-Spm12}
\begin{aligned}
S^0_{\pm,0}(x)&=  \int  \phi^v_{\pm,1}(i\partial_x)f_0(x,v) \varphi(v)\,dv 
\\
S^0_{\pm,1}(t,x)  &=  \int  \phi^v_{\pm,1}(i\partial_x) f_0(X_{0,t}(x,v) , V_{0,t}(x,v)) \varphi(v)\,dv 
\\
S^0_{\pm,2}(t,x)  &= -
 \int \nabla_v \phi^v_{\pm,1}(i\partial_x)\Big[ E(t,x)f_0(X_{0,t}(x,v) , V_{0,t}(x,v)) \Big]  \varphi(v)\, dv 
 \end{aligned}
 \end{equation}
 recalling $ \phi^v_{\pm,1}(i\partial_x)$ defined as in \eqref{def-phipmj}.
\end{proposition}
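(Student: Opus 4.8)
\textbf{Proof proposal for Proposition \ref{prop-convGoscS0}.}

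The plan is to mimic exactly the computation carried out in the proof of Proposition \ref{prop-convGosc}, simply replacing the Vlasov source $S^\mu(t,x)$ by the initial-data source $S^0(t,x) = \int_{\RR^3} f_0(X_{0,t}(x,v),V_{0,t}(x,v))\,dv$. First I would write out the spacetime convolution
\[
G^{osc}_\pm \star_{t,x} S^0(t,x) = \int_0^t \int G^{osc}_\pm(t-\tau, x-y) \Bigl( \int_{\RR^3} f_0(X_{0,\tau}(y,v), V_{0,\tau}(y,v)) \, dv \Bigr) dy\, d\tau,
\]
and then, for each fixed $v$, perform the change of variables $(Y,V) = (X_{0,\tau}(y,v), V_{0,\tau}(y,v))$ so that $(y,v) = (X_{\tau,0}(Y,V), V_{\tau,0}(Y,V))$ by time-reversibility, with unit Jacobian. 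This turns the $y$-integral into an integral over a nonlinear characteristic, $\int_0^t G^{osc}_\pm(t-\tau, x - X_{\tau,0}(Y,V))\,d\tau$, exactly as in \eqref{cal2}. Here the lower endpoint of the $\tau$-integral is $0$ rather than $s$, which is the only structural difference with Proposition \ref{prop-convGosc}.

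Next I would apply the key integration-by-parts identity \eqref{int-Eosc} (the same one used for $J_{2,1}, J_{2,2}$ in the previous proof): integrating $G^{osc}_\pm$ along the characteristic produces a boundary term at $\tau = t$, a boundary term at $\tau = 0$, and a remainder involving $\partial_\tau V_{\tau,0} = E(\tau,X_{\tau,0})$. The $\tau=0$ boundary term, after reversing the change of variables back to $(x,v)$, gives $-G^{osc,1}_\pm(t-0, x, v)$ evaluated against $f_0(x,v)\varphi(v)$, i.e. $-G^{osc}_\pm(t)\star_x S^0_{\pm,0}$ with $S^0_{\pm,0}(x) = \int \phi^v_{\pm,1}(i\partial_x) f_0(x,v)\varphi(v)\,dv$ — this is the new term not present in Proposition \ref{prop-convGosc} (there it cancelled against $J_{1,2}$, but here there is no straight-line companion $S_1$). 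The $\tau=t$ boundary term gives $G^{osc,1}_\pm(0, x-X_{0,t}(x,v), V_{0,t}(x,v))$, and recalling $G^{osc,1}_\pm(0,x,v) = \phi^v_{\pm,1}(i\partial_x) a_\pm(i\partial_x)$ (since $G^{osc}_\pm(0,\cdot)$ is the Fourier multiplier $a_\pm(k)$ by \eqref{Gsreal}), after undoing the change of variables this yields $a_\pm(i\partial_x) S^0_{\pm,1}(t,x)$ with $S^0_{\pm,1}$ as stated. The remainder term, after reversing the change of variables and using $\partial_\tau V_{\tau,0} = E(\tau, X_{\tau,0})$, produces a double time integral; but here the inner source has already been reduced, so one more application of the same change of variables / boundary evaluation (exactly as the analysis of $\cE^{m,R}$ at the end of the proof of Proposition \ref{prop-convGosc}) collapses it into $G^{osc}_\pm \star_{t,x} S^0_{\pm,2}(t,x)$ with $S^0_{\pm,2}(t,x) = -\int \nabla_v \phi^v_{\pm,1}(i\partial_x)[E(t,x) f_0(X_{0,t}(x,v),V_{0,t}(x,v))]\varphi(v)\,dv$; the $\nabla_v$ lands on $\phi^v_{\pm,1}$ because that is the $v$-dependence being differentiated in \eqref{def-RstYV}, and the factor $E(t,x)$ appears from evaluating the remainder's boundary at the endpoint.

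The only real bookkeeping obstacle is keeping track of which $v$-argument each factor carries through the two nested changes of variables — in particular making sure that when the Fourier multiplier $\phi^v_{\pm,1}$ is extracted it acts in $x$ with $v$ frozen at the correct value, and that the $\nabla_v$ in the remainder hits the symbol $\phi^v_{\pm,1}(k) = (\lambda_\pm(k)+ik\cdot\hv)^{-1}$ and not the characteristics (this is exactly the subtlety already resolved in Proposition \ref{prop-convGosc}, so I would simply point to that argument rather than repeat it). Everything else is a routine transcription: the convergence of the $v$-integrals is guaranteed by the compact support / rapid decay of $\varphi$ and $f_0$, and the multipliers $\phi^v_{\pm,1}$ are well-defined and bounded by Lemma \ref{lem-PEosc} since $\nu_*(|k|) > |k|$. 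I would therefore present the proof in a compressed form, stating the change of variables and the identity \eqref{int-Eosc}, displaying the three resulting terms, and remarking that the remainder is handled verbatim as in the proof of Proposition \ref{prop-convGosc}.
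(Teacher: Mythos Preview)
Your approach is correct and matches the paper's own proof: change variables $(y,v)\to(Y,V)=(X_{0,\tau}(y,v),V_{0,\tau}(y,v))$, integrate $G^{osc}_\pm$ along the nonlinear characteristic via the identity behind \eqref{int-Eosc}, collect the two boundary terms ($\tau=t$ giving $a_\pm(i\partial_x)S^0_{\pm,1}$, $\tau=0$ giving $-G^{osc}_\pm(t)\star_x S^0_{\pm,0}$), and revert the change of variables in the remainder to recognize it as $G^{osc}_\pm\star_{t,x}S^0_{\pm,2}$.

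One small slip in your description of the remainder: there is \emph{no} double time integral here, because $S^0$ carries no inner $s$-integral (unlike $S^\mu$ in Proposition \ref{prop-convGosc}). The remainder $\cE^R$ is already a single $\tau$-integral, and a single reversal of the change of variables $(Y,V)\to(y,v)=(X_{\tau,0}(Y,V),V_{\tau,0}(Y,V))$ turns it directly into $\int_0^t G^{osc}_\pm(t-\tau)\star_y[\,\cdot\,](\tau,y)\,d\tau = G^{osc}_\pm\star_{t,x}S^0_{\pm,2}$. The factor $E$ does not come from any boundary evaluation; it is simply $\partial_\tau V_{\tau,0}=E(\tau,X_{\tau,0})$, which becomes $E(\tau,y)$ after reverting variables and then appears as $E(t,x)$ in the integrand of $S^0_{\pm,2}$ once the convolution is unwound. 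This is simpler than the $\cE^{m,R}$ analysis you cite, not harder.
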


\begin{proof} By definition, we write
$$
\begin{aligned}
G^{osc}_\pm\star_{t,x}  S(t,x)
&= \int_0^t \iint G^{osc}_\pm(t  - \tau,x - y)  f_0(X_{0,\tau}(y,v) , V_{0,\tau}(y,v)) dvdy ds. 
\end{aligned}$$
We introduce the change of variables  
$$
(y,v) \quad \longmapsto \quad (Y,V) = ( X_{0,\tau}(y,v), V_{0,\tau}(y,v)), 
$$ 
whose Jacobian determinant is equal to one, recalling that $(X_{s,t}, V_{s,t})$ are the nonlinear characteristic curves of the divergence-free vector field $(\hv, E(t,x))$ in the phase space $\RR^3_x\times \RR^3_v$. By the time reversibility, we can write 
$$
(y,v) = (X_{\tau,0}(Y,V), V_{\tau,0}(Y,V)) .
$$
This leads to 
$$
\begin{aligned}
G^{osc}_\pm\star_{t,x}  S(t,x)
&= \int_0^t \iint G^{osc}_\pm(t  - \tau,x - y)  f_0(X_{0,\tau}(y,v) , V_{0,\tau}(y,v)) dvdy ds
\\
&= \iint \Bigl[ \int_0^t G^{osc}_\pm(t  - \tau,x - X_{\tau,0}(Y,V)) 
\, d\tau \Bigr] f_0(Y,V) \, dY dV.
\end{aligned}$$
Recall that the Fourier transform  $G^{osc}_\pm(t,x)$ is of the form $\FG_\pm^{osc}(t,k) = e^{\lambda_\pm(k)t} a_\pm(k)$. Therefore, similar to the calculation done in \eqref{int-Eosc}, we compute the integral of $G^{osc}_\pm(t,x)$ over the nonlinear characteristic curves, yielding 
$$
\int_0^t G^{osc}_\pm(t-\tau,x - Y - (\tau - s) \hv) d\tau = G^{osc,1}_\pm(0,x - Y - t  \hv,v) 
- G^{osc,1}_\pm(t , x - Y,v) ,
$$
where, as in \eqref{def-Eoscj}-\eqref{def-phipmj}, we denote $G^{osc,j}_\pm(t,x,v) =  \phi^v_{\pm,j}(i\partial_x)G^{osc}_{\pm} (t,x)$. This leads to 
$$
G^{osc}_\pm\star_{t,x}  S(t,x) = J_1(t,x)  + J_2(t,x)   + \cE^{R}(t,x) $$
where
$$
\begin{aligned}
J_1(t,x)  &=  \iint G^{osc,1}_\pm(0, x - X_{t,0}(Y,V),V_{t,0}(Y,V)) f_0(Y,V)  \, dY dV 
\\
J_2(t,x)  &= -  \iint G^{osc,1}_\pm(t,  x - Y,V)  f_0(Y,V)  \, dY dV 
\\
\cE^R(t,x)  &= - \int_0^t \iint E(\tau, X_{\tau,0})\cdot \nabla_v(G^{osc,1}_{\pm}(t-\tau,x-X_{\tau,0},V_{\tau,0})  f_0(Y,V) 
\, dV dYd \tau.
\end{aligned}$$ 
Reversing the change of variables $(y,v) = (X_{\tau,0}(Y,V),V_{\tau,0}(Y,V))$, we obtain the proposition.  
\end{proof}

\subsection{Convolution with initial data}

We are now ready to prove Proposition \ref{prop-decayEosc} by estimating each term in the representation \eqref{convGoscS0} of $G^{osc}_\pm \star_{t,x}  S^0(t,x)$. Indeed, we recall that 
\begin{equation}\label{GSosc1}
\begin{aligned}
G^{osc}_\pm\star_{t,x}  \nabla_x S^0(t,x)  &= - G^{osc}_\pm(t,\cdot)\star_x  \nabla_x S^0_{\pm,0}(x) + a_\pm(i\partial_x)  \nabla_x S^0_{\pm,1}(t,x) 
\\&\quad + G_\pm^{osc} \star_{t,x}  \nabla_x S^0_{\pm,2}(t,x),
\end{aligned}\end{equation}
in which the second term is put into $E^r(t,x)$, and the other terms remain in $E^{osc}_\pm(t,x)$. 
Let us estimate term by term in \eqref{GSosc1}. 

\subsubsection*{Source term $S^0_{\pm,0}$.}
First, using the results from Proposition \ref{prop-Greenphysical}, for any $|\alpha|\ge 0$, we bound 
$$
\begin{aligned}
\|\partial_x^\alpha G^{osc}_\pm(t,\cdot)\star_x  \nabla_x S^0_{\pm,0}(x)\|_{L^\infty_x} &\lesssim t^{-3/2}\| S^0_{\pm,0}\|_{W^{4+|\alpha|,1}_x}
\\
\|G^{osc}_\pm(t,\cdot)\star_x  \nabla_x S^0_{\pm,0}(x)\|_{H^{\alpha_0+1}_x} &\lesssim \| S^0_{\pm,0}\|_{H^{\alpha_0+1}_x}.
\end{aligned}$$
Recall that by definition, 
$$S^0_{\pm,0}(x)=  \int \phi^v_{\pm,1} (i\partial_x) f_0(x,v) \varphi(v)\,dv .$$
Therefore, using \eqref{Lp-convphi}, we bound 
$$\| S^0_{\pm,0}\|_{W^{4+|\alpha|,1}_x} \lesssim \int \|  f_0 (\cdot,v) \|_{W^{4+|\alpha|,1}_x} \varphi(v) \; dv \le C_0 \| f_0\|_{L^1_v W^{4+|\alpha|,1}_x} .$$
Similarly, we have $\| S^0_{\pm,0}\|_{H^{\alpha_0+1}_x} \lesssim  \| f_0\|_{L^1_v H^{\alpha_0+1}_x} .$ This proves the desired estimates stated in Proposition \ref{prop-decayEosc} for the first term in \eqref{GSosc1}. 

\subsubsection*{Source term $S^0_{\pm,1}$.}

Next we treat the second term in \eqref{GSosc1}, namely the source term $a_\pm(i\partial_x)  \nabla_x S^0_{\pm,1}(t,x)$. Recall that 
$$S^0_{\pm,1}(t,x)  =  \int \phi^v_{\pm,1} (i\partial_x)f_0(X_{0,t}(x,v) , V_{0,t}(x,v)) \varphi(v)\,dv .
$$
Next, using the expansion \eqref{expand-phi1}, we may write 
\begin{equation}\label{seriesn}
\begin{aligned}
S^0_{\pm,1}(t,x)  &= \sum_{n\ge 0} a_{\pm,n}(i\partial_x) :: S^0_{1,n}(t,x)
 \end{aligned}\end{equation}
where $a_{\pm,n}(k) =  \mp i \nu_*(|k|)^{-1} (\pm 1)^n\nu_*(|k|)^{-n} k^{\otimes n}$, which are smooth Fourier multipliers and satisfy $|a_{\pm,n}(k)|\le \nu_*(|k|)^{-1}$, uniformly for all $n\ge 0$, since $\nu_*(|k|) >|k|$, and  
$$ S^0_{1,n}(t,x) =  
 \int f_0(X_{0,t}(x,v) , V_{0,t}(x,v)) \varphi_{1,n}(v)\,dv$$
with $\varphi_{1,n}(v) =  \hv^{\otimes n} \hv \varphi(v)$. Recall that the notation $k^{\otimes n}::\hv^{\otimes n} = (k\cdot \hv)^n$ is simply for sake of presentation. Note that since $f_0(x,v)$ is compactly supported in $v$ and $\| V_{0,t}(x,v) -v\|_{L^\infty_{x,v}} \lesssim \epsilon$, we have $f_0(X_{0,t}(x,v) , V_{0,t}(x,v))$ vanishes for $|v|\ge 2R_0$ for some $R_0>0$. Therefore, for $A_0  = 2|R_0|/\langle 2R_0\rangle <1$, there exists some universal constant $C_0$, independent of $n$, so that \begin{equation}\label{bdvarphin}
|\varphi_{1,n}(v)| \le C_0 A_0^n , \qquad |\nabla_v \varphi_{1,n}(v)| \le C_0 nA_0^n, 
\end{equation} 
uniformly for $|v|\le 2R_0$, for all $n\ge 1$. Therefore, we may use \eqref{eq:fouriermult} with $\delta =2$ and the results in Proposition \ref{prop-bdS0}, yielding 
$$ 
\begin{aligned}
\|\partial_x^\alpha S^0_{\pm,1}(t) \|_{L^\infty_{x} } &\le \sum_{n\ge 0} \| \partial_x^\alpha a_{\pm,n}(i\partial_x)  :: a_\pm(i\partial_x) \nabla_x S^0_{1,n}(t)\|_{L^\infty_x}
\\&\lesssim \sum_{n\ge 0}  \| \partial_x^\alpha S^0_{1,n}(t)\|_{L^\infty_x}
\lesssim \epsilon_0 \langle t\rangle^{-3+\delta_\alpha} 
\sum_{n\ge 0}  A_0^n 
\\&\lesssim \epsilon_0 \langle t\rangle^{-3+\delta_\alpha} .
\end{aligned}$$
Similarly, we bound 
$$ 
\begin{aligned}
\|S^0_{\pm,1}(t) \|_{H^{\alpha_0+1}_x} \le \sum_{n\ge 0}
\| a_{\pm,n}(i\partial_x) ::a_\pm(i\partial_x)  \nabla_x S^0_{1,n}(t)\|_{H^{\alpha_0+1}_x}
&\lesssim \sum_{n\ge 0}\| S^0_{1,n}(t)\|_{H^{\alpha_0}_x} \lesssim \epsilon_0 \langle t\rangle^{\delta_1},
\end{aligned}$$
which prove the desired bounds for the second term in \eqref{GSosc1}. 

\subsubsection*{Source term $S^0_{\pm,2}$.}

Finally, we bound the last convolution $G_\pm^{osc} \star_{t,x} \nabla_xS^0_{\pm,2}(t,x)$  in the representation \eqref{GSosc1}. Recalling the definition of $S^0_{\pm,2}$ in \eqref{def-Spm12} and the expansion \eqref{expand-phi1}, we may write 
 $$ 
\begin{aligned}
S^0_{\pm,2}(t,x)  &= \sum_{n\ge 0} a_{\pm,n}(i\partial_x) [E(t,x)S^0_{2,n}(t,x)]
 \end{aligned}$$ 
for the same coefficients $a_{\pm,n}(k)$ as in \eqref{seriesn}, recalling $a_{\pm,n}(k) \le \langle k\rangle^{-1}$, where 
$$ S^0_{2,n}(t,x) =  
 \int f_0(X_{0,t}(x,v) , V_{0,t}(x,v)) \varphi_{2,n}(v)\,dv$$
for $\varphi_{2,n}(v) = \nabla_v ( \hv^{\otimes n} \hv )\varphi(v).$  
Observe that $S^0_{2,n}(t,x)$ are again of the same form as that of the source density $S^0(t,x)$ defined as in \eqref{nonlinear-S}. Therefore, we may write 
$$
  G_\pm^{osc} \star_{t,x} \nabla_xS^0_{\pm,2}(t,x) = 
   \sum_{n\ge 0} a_{\pm,n}(i\partial_x) :: G_\pm^{osc} \star_{t,x} \nabla_x[E(t,x)S^0_{2,n}(t,x)]
.$$
Using Lemma \ref{lem-decayosc}, it suffices to prove that 
\begin{equation}\label{checkES} \int_0^t \| \partial_x^{\alpha} [ES^0_{2,n}](s)\|_{W_x^{3,1}} \;ds \lesssim 1, \qquad \| \partial_x^\alpha [ ES^0_{2,n}](t)\|_{H^{1}_x} \lesssim \langle t\rangle^{-5/2}, \end{equation}
for any $|\alpha|\le n_0 = \frac14N_0$. 
Indeed, using the decay estimates in Proposition \ref{prop-bdS}, we bound 
$$
\begin{aligned}
\| \partial_x^\alpha [ES^0_{2,n}](t)\|_{H^1_x} 
& \lesssim \| E(t)\|_{L^\infty}\| S^0_{2,n}(t)\|_{H^{|\alpha|+1}_x} + \| E(t)\|_{H^{|\alpha|+1}_x}\| S^0_{2,n}(t)\|_{L^\infty}
\\& \lesssim \epsilon^2 \langle t\rangle^{-3+\delta_{\alpha+1}},
\end{aligned}
$$
which verifies the last estimate in \eqref{checkES}, since $\delta_{\alpha+1} = \frac{|\alpha|+1}{N_0-1} \le \frac14\frac{N_0+4}{N_0-1}\le \frac12$ (by taking $N_0\ge 6$). 
Similarly, we bound 
$$
\begin{aligned}
\| \partial_x^\alpha [ES^0_{2,n}](t)\|_{W_x^{3,1}} 
& \lesssim \sum_{|\alpha_1|+|\alpha_2| = |\alpha|}\| \partial_x^{\alpha_1}E(t)\|_{H_x^{3}} \|\partial_x^{\alpha_2}S^0_{2,n}(t)\|_{H_x^{3}}  
\\& \lesssim \epsilon^2 \langle t\rangle^{-\frac32 + \delta_{\alpha+3}}\end{aligned}
$$
which again verifies the first estimate in \eqref{checkES}, since $\delta_{\alpha+3} \le \delta_{n_0+3} = \frac{\frac14N_0 + 3}{N_0-1} < \frac12$ (by taking $N_0\ge 14$). Therefore, we obtain 
$$ \| \partial_x^\alpha G_\pm^{osc} \star_{t,x} \nabla_xS^0_{\pm,2}(t)\|_{L^\infty_x} \lesssim \epsilon^2 \langle t\rangle^{-3/2} ,$$
for any $|\alpha|\le n_0 = \frac14N_0$. 
It remains to prove the estimates in $H^{\alpha_0+1}_x$. Indeed, using the boundedness of $G^{osc}_\pm$ from $L^2$ to $L^2$, we bound 
$$ 
\begin{aligned}
\| a_{\pm,n}(i\partial_x) & G_\pm^{osc} \star_{t,x} \nabla_x[E(t,x)S^0_{2,n}(t,x)](t)\|_{H^{\alpha_0+1}_x} 
\\&\lesssim
  \int_0^t \| a_{\pm,n}(i\partial_x)[ES^0_{2,n}](s)\|_{H^{\alpha_0+1}_x} \; ds
\\
&\lesssim
 \int_0^t \Big[ \| E(s)\|_{L^\infty_x}\|S^0_{2,n}(s)\|_{H^{\alpha_0}_x} +  \| E(s)\|_{H^{\alpha_0}_x}\|S^0_{2,n}(s)\|_{L^\infty_x}\Big]\; ds
  \\&\lesssim \epsilon^2\int_0^t \langle s\rangle^{-3/2+\delta_1} \; ds 
    \lesssim \epsilon^2,
  \end{aligned}$$
 noting that $\delta_1 < 1/2$, provided that $N_0\ge 4$. The infinite series in $n$ converges absolutely as in the previous case, using \eqref{bdvarphin}. This proves the desired estimates for the last term in  \eqref{GSosc1}, and therefore completes the proof of Proposition \ref{prop-decayEosc}.

\subsection{Convolution with nonlinear sources}

In this section, we prove the convolution with the nonlinear source $G_\pm^{osc} \star_{t,x} S^{\mu}(t,x)$. We recall the results from Proposition \ref{prop-convGosc} that 
\begin{equation}\label{convGosc-re}
G_\pm^{osc} \star_{t,x} \nabla_x S^{\mu}(t,x) = a_\pm(i\partial_x) \nabla_x S_{\pm,1}^{\mu}(t,x) + G_\pm^{osc} \star_{t,x} \nabla_x S_{\pm,2}^{\mu}(t,x)
\end{equation}
in which the first term is put into $E^r(t,x)$, and the other terms remain in $E^{osc}_\pm(t,x)$. The following proposition treats the first term in the expression. 

\begin{proposition} Let $S_{\pm,1}^{\mu}(t,x) $ be defined as in Proposition \ref{prop-convGosc}. Then, there holds
\begin{equation}\label{bd-Smu1}
\| a_\pm(i\partial_x) \nabla_x S_{\pm,1}^{\mu}(t)\|_{L^\infty_x} + \| a_\pm(i\partial_x) \nabla^2_x S_{\pm,1}^{\mu}(t)\|_{L^\infty_x} \lesssim \epsilon^2 \langle t\rangle^{-3}.
\end{equation}
In addition, 
\begin{equation}\label{daSmu1}
\begin{aligned}
\|\partial_x^\alpha a_\pm(i\partial_x) \nabla_x S_{\pm,1}^{\mu}(t)\|_{L^p_x}  & \lesssim \epsilon \langle t\rangle^{-3(1-1/p) + \delta_{\alpha,p}} 
,\\
\|a_\pm(i\partial_x) \nabla_x S_{\pm,1}^{\mu}(t)\|_{H^{\alpha_0}_x} &\lesssim \epsilon \langle t\rangle^{\delta_1},
\end{aligned}
\end{equation}
for all $|\alpha|\le N_0-1$, with $\delta_{\alpha,p} = \max\{ \delta_\alpha, \frac32\delta_\alpha (1-1/p)\}$, where $\delta_\alpha = \frac{|\alpha|}{N_0-1}$.

\end{proposition}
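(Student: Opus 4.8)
The plan is to recognize that $S_{\pm,1}^{\mu}(t,x)$ has, up to the harmless Fourier multiplier $\phi^v_{\pm,1}(i\partial_x)$ applied to the electric field inside the velocity average, exactly the same structure as the reaction source $S^{\mu}(t,x)$ treated in Section \ref{sec-sourceest}. Indeed, comparing the formula
$$ S_{\pm,1}^{\mu}(t,x) = \int_0^t \int \phi^v_{\pm,1}(i\partial_x)\Big[ E(s,x - (t-s)\hv) \cdot \nabla_v \mu(v) - E(s,X_{s,t}(x,v))\cdot \nabla_v \mu(V_{s,t}(x,v)) \Big]\,dv ds $$
with \eqref{Smu-recall}, the only difference is the insertion of the $v$-dependent Fourier multiplier $\phi^v_{\pm,1}(k) = (\lambda_\pm(k)+ik\cdot\hv)^{-1}$ acting on $E$. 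The first step is therefore to expand $\phi^v_{\pm,1}(i\partial_x)$ using the absolutely convergent series \eqref{expand-phi1}, $\phi^v_{\pm,1}(k) = \sum_{n\ge 0} a_{\pm,n}(k)::\hv^{\otimes n}$, where $|a_{\pm,n}(k)|\le \nu_*(|k|)^{-1}\le \langle k\rangle^{-1}$ uniformly in $n$ (using $\nu_*(|k|)>|k|$ from Theorem \ref{theo-LangmuirB}). Since both the equilibrium $\mu$ and (by compact support propagation, Proposition \ref{prop-charPsi} and \eqref{quick-bdV}) the relevant $v$-range are bounded, the factors $\hv^{\otimes n}$ contribute a uniformly summable geometric factor $A_0^n$ with $A_0<1$, exactly as in the treatment of $S^0_{\pm,1}$ above. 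This reduces matters to bounding, for each $n$, the scalar source $\int_0^t\int [E(s,x-(t-s)\hv)\cdot\nabla_v\mu - E(s,X_{s,t})\cdot\nabla_v\mu(V_{s,t})]\,\varphi_{n}(v)\,dv\,ds$ with a smooth, compactly supported weight $\varphi_n$ in place of $\nabla_v\mu$, and then resumming in $n$.

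Second, for each such scalar source I would invoke verbatim the structural decomposition of Proposition \ref{prop-SR}: after the change of variables $w = \Psi_{s,t}(x,v)$ we obtain $\int_0^t\int E(s,x-(t-s)\hw)\cdot H^{(n)}_{s,t}(x,w)\,dw\,ds$, with an interaction kernel $H^{(n)}_{s,t}$ admitting the same expansion $T^{osc}+H^{osc}+H^{tr}+H^Q+H^R$ and satisfying the same weighted $L^\infty$, $L^2$ and $s$-derivative bounds \eqref{bd-keyH}--\eqref{bd-keyHR} (with $N$ adjusted so the $\langle w\rangle^N$ weight absorbs $\varphi_n$ uniformly in $n$, which is possible since $\varphi_n$ is supported in a fixed ball). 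Splitting $E = \sum_\pm E^{osc}_\pm + E^r$, the three pieces are exactly the particle--particle, particle--wave, and wave--wave interactions already estimated in Propositions \ref{prop-decayEHR}, \ref{prop-Soscpm}, and \ref{prop-decaySoscQ}, plus the $T^{osc}$ and remainder contributions handled by Lemmas \ref{lem-auxSr}--\ref{lem-auxSosc}. This yields $\|S_{\pm,1}^{\mu}(t)\|_{L^p_x}\lesssim \epsilon^2\langle t\rangle^{-3(1-1/p)}\log t$ and $\|\partial_x S_{\pm,1}^{\mu}(t)\|_{L^p_x}\lesssim\epsilon^2\langle t\rangle^{-3(1-1/p)}$, as well as the higher-derivative bounds with the same growth exponents $\delta_{\alpha,p}$ and $\delta_1$ from Proposition \ref{prop-bdSD}. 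Composing with $a_\pm(i\partial_x)$ (a bounded multiplier of order $\langle k\rangle^{-1}$) and one extra $\nabla_x$ only costs a derivative, so $\|a_\pm(i\partial_x)\nabla_x S_{\pm,1}^{\mu}(t)\|_{L^\infty_x}\lesssim\epsilon^2\langle t\rangle^{-3}$ and, for two derivatives, we use that the $\log t$ loss is present only in the pure $L^\infty$ bound without a gradient while here $\nabla_x$ already removes it via the Littlewood-Paley bound $\sup_q 2^{(1-\delta)q}\|P_q S\|_{L^p}\lesssim\langle t\rangle^{-3(1-1/p)}$; this gives \eqref{bd-Smu1}. The estimates \eqref{daSmu1} follow identically from Proposition \ref{prop-bdSD}, noting that the insertion of $\phi^v_{\pm,1}(i\partial_x)$ on $E$ does not disturb the oscillatory structure of $E^{osc}_\pm$ (it is, in Fourier on the wave, a bounded nonresonant multiplier as in Corollary \ref{cor-Ejosc}).

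The main obstacle I anticipate is making the resummation over $n$ fully uniform in all the norms that appear, in particular in the $L^2$ and $s$-derivative kernel bounds \eqref{bd-keyHR} that feed into the wave--wave estimates: the weight $\varphi_n(v)=\hv^{\otimes n}\hv\,\varphi(v)$ has $|\nabla_v\varphi_n|\lesssim n A_0^n$, so each $v$-integration by parts in the resonant analysis of Lemma \ref{0lem-Qosc} (where one integrates by parts in $v$ to exploit the $k\cdot\hv$ oscillation) loses a factor of $n$, but this is compensated by $A_0^n$ since $\sum_n n^k A_0^n<\infty$ for any fixed $k$; one must check that only finitely many such $v$-integrations by parts are needed, which is the case because the symbols $\underline M_j$ only require a fixed number $N$ of $v$-derivatives to gain the decay $\langle\eta\rangle^{-N}$. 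A secondary bookkeeping point is that $\phi^v_{\pm,1}$ lands on $E$ rather than on $\nabla_v\mu$, so when one pushes derivatives around in Proposition \ref{prop-SR} the oscillatory multiplier travels with the field; this is exactly the setup of Section \ref{sec-Char} (the integrated fields $E^{osc,j}_\pm$), so the decay estimates of Corollary \ref{cor-Ejosc} apply and no new smallness is needed. Once uniformity in $n$ is secured, the proof is a direct transcription of Sections \ref{sec-sourceest}--\ref{sec-decayosc}.
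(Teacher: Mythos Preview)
Your proposal is correct and takes essentially the same approach as the paper: expand $\phi^v_{\pm,1}(i\partial_x)$ via the series \eqref{expand-phi1} into $\sum_n a_{\pm,n}(i\partial_x)::\hv^{\otimes n}$, recognize that each summand $S^\mu_{n,1}$ has exactly the structure of $S^\mu$ with an extra bounded compactly supported weight $\hv^{\otimes n}$ in the $v$-integral, and then invoke the full machinery of Proposition~\ref{prop-bdS} termwise with geometric resummation. The paper's proof is in fact terser than yours and does not spell out the uniformity-in-$n$ bookkeeping or the log-removal via the Littlewood--Paley bound, both of which you correctly identified and handled.
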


\begin{proof} As done in the previous section, we may again write 
$$ 
\begin{aligned}
S_{\pm,1}^{\mu}(t,x)  &= \sum_{n\ge 0} a_{\pm,n}(i\partial_x)  S_{n,1}^{\mu}(t,x)
 \end{aligned}$$ 
in which $$ S_{n,1}^{\mu}(t,x) =  \int_0^t \int  \Big[ E(s,x - (t-s)\hv) \cdot \nabla_v \mu(v) 
 - E(s,X_{s,t}(x,v))\cdot \nabla_v \mu(V_{s,t}(x,v)) \Big] \hv^{\otimes (n+2)}\,dv ds.$$
Observe that $S_{n,1}^{\mu}(t,x) $ are exactly the source term $S^{\mu}(t,x)$ studied in the previous section, up to an addition of the factor $ \hv^{\otimes (n+2)}$. Hence, following the proof of Proposition \ref{prop-bdS}, we obtain \eqref{bd-Smu1} for each source integral $S_{n,1}^{\mu}(t,x)$.  The proposition thus follows.  
\end{proof}

Next, we study the last term in the expression \eqref{convGosc-re}. We obtain the following proposition.

\begin{proposition} 
Let $S_2^{\mu}(t,x) $ be defined as in Proposition \ref{prop-convGosc}.  Then, there is a sequence of smooth Fourier multipliers $\{b_{\pm,n}(k)\}_{n\ge 0}$, whose summation is absolutely convergent, so that   
$$ 
\begin{aligned}
S_{\pm,2}^{\mu}(t,x)  &= \sum_{n\ge 0} b_{\pm,n}(i\partial_x) Q_n^{\mu}(t,x) + R^{\mu}(t,x)
 \end{aligned}$$ 
where 
\begin{equation}\label{def-Qmu}
\begin{aligned}
Q_n^{\mu}(t,x)  &= E(t,x)
 \int_0^t \int E(s,x-(t-s)\hv) \cdot \nabla_v \mu(v) \varphi_n(v) \, dv ds 
 \end{aligned}\end{equation}
for $\varphi_n(v) = \hv^{\otimes (n+1)}$. In addition, the remainder $R^{\mu}(t,x)$ satisfies 
$$\|G_\pm^{osc} \star_{t,x} R^{\mu}\|_{L^\infty_x}  \lesssim \epsilon^3 \langle t\rangle^{-3/2} .$$  
\end{proposition}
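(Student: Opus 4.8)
The plan is to unfold the definition of $S_{\pm,2}^{\mu}(t,x)$ from Proposition~\ref{prop-convGosc}, namely
\[
S_{\pm,2}^{\mu}(t,x) = \int_0^t \int \nabla_v \phi^v_{\pm,1}(i\partial_x)\Big[ E(t,x)\, E(s,X_{s,t}(x,v)) \cdot \nabla_v \mu(V_{s,t}(x,v)) \Big] \, dv\, ds,
\]
and to expand the Fourier multiplier $\phi^v_{\pm,1}(k)$ in powers of $\hv$ exactly as in \eqref{expand-phi1}. Carrying the $\nabla_v$ derivative onto the resulting series $\sum_n a_{\pm,n}(k) \hv^{\otimes n}$ produces two families of terms: those where $\nabla_v$ hits the velocity-power $\hv^{\otimes n}$ (which, since $\nabla_v\hv = \tfrac{1}{\langle v\rangle}(\mathbb I - \hv\otimes\hv)$, just regenerates similar velocity polynomials with a harmless $\langle v\rangle^{-1}$ weight and re-indexes the sum), and those where $\nabla_v$ hits $E(s,X_{s,t})\cdot\nabla_v\mu(V_{s,t})$, producing $\nabla_v X_{s,t}$ and $\nabla_v V_{s,t}$ factors via the chain rule. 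The leading contribution, where no $v$-derivative falls on the characteristics, will be of the stated form $Q_n^{\mu}(t,x) = E(t,x)\int_0^t\int E(s,x-(t-s)\hv)\cdot\nabla_v\mu(v)\varphi_n(v)\,dv\,ds$ once one replaces $X_{s,t}(x,v)$ by the free flow $x-(t-s)\hv$ and $V_{s,t}(x,v)$ by $v$; the $b_{\pm,n}(k)$ absorb the combinatorial coefficients and the powers of $\nu_*(|k|)^{-1}$, and since $|b_{\pm,n}(k)|$ is uniformly bounded and $\varphi_n$ is supported on a fixed compact velocity set with $|\varphi_n(v)|\le C_0 A_0^n$, $A_0<1$ (cf. \eqref{bdvarphin}), the series converges absolutely.

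Everything else goes into $R^{\mu}(t,x)$, and the core of the proof is to show $\|G_\pm^{osc}\star_{t,x} R^{\mu}\|_{L^\infty_x}\lesssim \epsilon^3\langle t\rangle^{-3/2}$. The remainder collects three types of contributions: (i) the error in replacing $E(s,X_{s,t})\cdot\nabla_v\mu(V_{s,t})$ by its free-flow counterpart — by the characteristic description of Proposition~\ref{prop-charPsi} this difference is itself a nonlinear source of the type $S^\mu$ with an extra factor of $E(t,x)$, hence one more power of $\epsilon$; (ii) the terms where $\nabla_v$ differentiates the characteristics, which by Proposition~\ref{prop-Dchar} contribute bounded $\partial_v X_{s,t}$, $\partial_v V_{s,t}$ factors (modulo $\langle v\rangle$-weights absorbed by the rapid decay of $\nabla_v\mu$) and again carry the extra $E(t,x)$; (iii) the $\langle v\rangle^{-1}$-weighted terms from $\nabla_v\hv$, which are strictly better. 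In each case the structure is $E(t,x)$ times a density-type source, and I would invoke the source-density estimates of Proposition~\ref{prop-bdsS-goal} (the $L^p_x$ and derivative bounds, $S\sim\epsilon^2\langle t\rangle^{-3}$ in $L^\infty$, bounded in $L^1_x$) together with the quadratic nature of the interaction to get $R^{\mu}(t)$ of order $\epsilon^3\langle t\rangle^{-3}$ in $L^\infty_x$ and bounded (with at most the log/$\langle t\rangle^{\delta_1}$ losses) in lower norms — that is, $R^{\mu}$ behaves like $E^r$ does, times $\epsilon$. Then Lemma~\ref{lem-decayosc}, whose hypotheses ($\int_0^t\|R^{\mu}(s)\|_{W^{1+\lfloor\cdot\rfloor,p'}_x}\,ds\lesssim 1$ and $\|R^{\mu}(t)\|_{H^2_x}\lesssim\langle t\rangle^{-5/2}$) are exactly what the quadratic structure plus $\langle t\rangle^{-3}$ decay with a spare $\epsilon$ afford, delivers the $\langle t\rangle^{-3/2}$ dispersive bound.

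I expect the main obstacle to be bookkeeping the infinitely many terms generated by the $\nabla_v$ acting on the series $\sum_n a_{\pm,n}(i\partial_x)\hv^{\otimes n}$: one must check that re-indexing preserves absolute convergence of the $n$-sum uniformly, i.e. that the new Fourier multipliers $b_{\pm,n}(k)$ still obey a uniform bound $|b_{\pm,n}(k)|\lesssim \langle k\rangle^{-1}$ (or at least $\nu_*(|k|)^{-1}$ times a polynomially-bounded factor in $n$ that is defeated by the geometric $A_0^n$ from the compact $v$-support), and that the weights $\langle v\rangle^{-j}$ produced by repeated $\nabla_v\hv$ are harmless because $\nabla_v\mu$ and all the $\varphi_n$ are compactly supported in $v$. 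A secondary technical point is verifying that the ``free-flow replacement'' error in (i) genuinely inherits the $S^\mu$-type estimates of Section~\ref{sec-sourceest}: one has to track that the difference $E(s,X_{s,t}(x,v))\cdot\nabla_v\mu(V_{s,t}(x,v)) - E(s,x-(t-s)\hv)\cdot\nabla_v\mu(v)$ is precisely the integrand of $S^\mu$ (up to the extra $E(t,x)$), so Propositions~\ref{prop-SR} and \ref{prop-decayEHR}--\ref{prop-decaySoscQ} apply verbatim with one spare factor of $\|E(t)\|_{L^\infty_x}\lesssim\epsilon\langle t\rangle^{-3/2}$; since that spare factor decays, the $L^\infty_x$ bound on $R^\mu$ is actually better than $\epsilon^3\langle t\rangle^{-3}$, comfortably feeding Lemma~\ref{lem-decayosc}. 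The derivative and $H^{\alpha_0}$ estimates needed to keep $R^\mu$ inside the bootstrap for the higher norms follow the same route using the higher-derivative source bounds \eqref{Hsbounds-cSD0}.
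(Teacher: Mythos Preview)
Your overall strategy matches the paper's proof: expand the multiplier $\nabla_v\phi^v_{\pm,1}$ as a series in $\hv$, replace the nonlinear characteristics $(X_{s,t},V_{s,t})$ by the free flow $(x-(t-s)\hv,v)$ to isolate $Q_n^\mu$, recognize the remainder as $E(t,x)$ times a source of the exact $S^\mu$ type, and then feed the resulting $L^1_x$ and $L^2_x$ decay into Lemma~\ref{lem-decayosc}. That is precisely what the paper does.

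However, you have misread the operator $\nabla_v\phi^v_{\pm,1}(i\partial_x)$. Look back at the derivation of $S^\mu_{\pm,2}$ in the proof of Proposition~\ref{prop-convGosc}: the $\nabla_v$ there is the partial $v$-derivative of the function $G^{osc,1}_\pm(t,x,v)=\phi^v_{\pm,1}(i\partial_x)G^{osc}_\pm(t,x)$, i.e.\ it differentiates only the \emph{symbol} $\phi^v_{\pm,1}(k)=(\lambda_\pm(k)+ik\cdot\hv)^{-1}$ in $v$. The resulting object $\nabla_v\phi^v_{\pm,1}(i\partial_x)$ is a new Fourier multiplier in $x$ (parametrized by $v$) applied to the bracketed function; it does \emph{not} differentiate $E(s,X_{s,t}(x,v))\cdot\nabla_v\mu(V_{s,t}(x,v))$ in $v$. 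Consequently your ``type (ii)'' chain-rule terms involving $\nabla_v X_{s,t}$ and $\nabla_v V_{s,t}$ simply do not arise, and your ``type (iii)'' terms are not remainder but are exactly what produces the $b_{\pm,n}$ and $\varphi_n$. The paper obtains $b_{\pm,n}=(n+2)a_{\pm,n}$ and $\varphi_n(v)=\hv^{\otimes(n+1)}$ directly from differentiating the series \eqref{expand-phi1}, and the \emph{only} remainder is your type (i): $R_n^\mu(t,x)=E(t,x)\,S^{m,\mu,R}_n(t,x)$ with $S^{m,\mu,R}_n$ the $S^\mu$-integrand weighted by $\varphi_n$. This is good news for you, since the fictitious type (ii) terms would carry a factor $\nabla_v X_{s,t}\sim(t-s)$ that could have caused real trouble. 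Once the notation is straightened out, your sketch of the remainder estimate (source bounds from Proposition~\ref{prop-bdS} times $\|E(t)\|_{L^\infty_x}\lesssim\epsilon\langle t\rangle^{-3/2}$, then Lemma~\ref{lem-decayosc}) is exactly right.
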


\begin{proof} As in the previous proposition, defining $b_{\pm,n} = (n+2)a_{\pm,n}$, we first write 
$$ 
\begin{aligned}
S_{\pm,2}^{\mu}(t,x)  &= \sum_{n\ge 0} b_{\pm,n}(i\partial_x) S_n^{\mu}(t,x)
 \end{aligned}$$ 
where 
$$ S_n^{\mu}(t,x) =
 \int_0^t \int  E(t,x)E(s,X_{s,t}(x,v)) \cdot \nabla_v \mu(V_{s,t}(x,v))  \varphi_n(v) \, dv ds $$
for $\varphi_n(v) =\hv^{\otimes (n+1)}$. Replacing the nonlinear characteristic by the straight ones in the above integral defines the quadratic term $Q_n^{\mu}(t,x)$ as stated in \eqref{def-Qmu}. This leaves a remainder of the form 
$$ 
\begin{aligned}
 R_n^{\mu}(t,x)
&= E(t,x) S^{m,\mu,R}_n(t,x) 
\end{aligned}$$ 
where we have set 
$$  S^{m,\mu,R}_n(t,x) =  \int_0^t \int  \Big[ E(s,X_{s,t}(x,v)) \cdot \nabla_v \mu(V_{s,t}(x,v)) - E(s,x - (t-s)\hv) \cdot \nabla_v \mu(v) \Big] \varphi_n(v)\, dv ds .$$ 
 Observe that $S_n^{m,\mu,R}(t,x) $ are again exactly the source term $S^{\mu}(t,x)$ studied in the previous section, up to an addition of the factor $\varphi_n(v)$. 
 Hence, following the proof of Proposition \ref{prop-bdS}, we obtain 
 $$ \| S^{m,\mu,R}_n(t)\|_{L^p_x} \lesssim \epsilon^2 \langle t\rangle^{-3(1-1/p)} \log t,$$
 for any $p\ge 1$. This, together with $\| E(t)\|_{L^\infty_x} \lesssim \epsilon \langle t\rangle^{-3/2}$, yields 
$$ \|R_n^{\mu}(t)\|_{L^1_x} \lesssim \epsilon^3 \langle t\rangle^{-3/2} \log t, \qquad 
 \|R_n^{\mu}(t)\|_{L^2_x} \lesssim \epsilon^3 \langle t\rangle^{-3} \log t. 
 $$
In particular, Lemma \ref{lem-decayosc} can be applied, yielding the claimed bounds on $G_\pm^{osc} \star_{t,x} R^{\mu}$, upon recalling that the summation of $\{b_{\pm,n}(k)\}_{n\ge 0}$ is absolutely convergent.  
 \end{proof}

For sake of presentation, we summarize the results in this section into the following corollary. 

\begin{corollary}\label{cor-mainred}
Let $S^{\mu}(t,x)$ be the source term defined as in \eqref{nonlinear-S}, and for $n\ge 0$, let $Q_n^{\mu}(t,x)$ be defined as in \eqref{def-Qmu}. Then, there holds
\begin{equation}\label{convGosc-more}
G_\pm^{osc} \star_{t,x} S^{\mu}(t,x) = \sum_{n\ge 0} b_{\pm,n}(i\partial_x) \Big[ G_\pm^{osc} \star_{t,x}  Q_n^{\mu}\Big](t,x) + \cE^{m,\mu,R}(t,x)
 \end{equation}
for some sequence of functions $\{b_{\pm,n}(k)\}_{n\ge 0}$, whose infinite series is absolutely convergent. In addition, the remainder $\cE^{m,\mu,R}(t,x)$ satisfies $\| \cE^{m,\mu,R}(t)\|_{L^\infty_x} \lesssim \epsilon^2 \langle t\rangle^{-3/2}$. 
\end{corollary}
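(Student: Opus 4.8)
The plan is to prove Corollary~\ref{cor-mainred} simply by assembling the three decomposition identities established in this section and tracking the claimed remainder bound. First I would recall from Proposition~\ref{prop-convGosc} the identity
\[
G_\pm^{osc} \star_{t,x} S^{\mu}(t,x) = a_\pm(i\partial_x) S_{\pm,1}^{\mu}(t,x) + G_\pm^{osc} \star_{t,x} S_{\pm,2}^{\mu}(t,x),
\]
and then substitute the expansion of $S_{\pm,2}^{\mu}$ from the preceding proposition, namely $S_{\pm,2}^{\mu}(t,x) = \sum_{n\ge 0} b_{\pm,n}(i\partial_x) Q_n^{\mu}(t,x) + R^{\mu}(t,x)$, with $\{b_{\pm,n}(k)\}_{n\ge 0}$ absolutely summable (as $b_{\pm,n} = (n+2) a_{\pm,n}$ and $|a_{\pm,n}(k)| \le \nu_*(|k|)^{-1}$ decays geometrically against the compactly-supported velocity weight, exactly as in \eqref{bdvarphin}). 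Applying $G_\pm^{osc}\star_{t,x}$ term by term — justified by the absolute convergence of the series in $L^2$-based norms and the boundedness of $G^{osc}_\pm$ on $L^2$ — gives the stated identity \eqref{convGosc-more} with
\[
\cE^{m,\mu,R}(t,x) = a_\pm(i\partial_x) S_{\pm,1}^{\mu}(t,x) + G_\pm^{osc}\star_{t,x} R^{\mu}(t,x).
\]

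The remaining task is to verify $\|\cE^{m,\mu,R}(t)\|_{L^\infty_x} \lesssim \epsilon^2\langle t\rangle^{-3/2}$. For the first piece I would invoke the bound \eqref{bd-Smu1}, which yields $\|a_\pm(i\partial_x)\nabla_x S_{\pm,1}^{\mu}(t)\|_{L^\infty_x}\lesssim \epsilon^2\langle t\rangle^{-3}$; since here we need $a_\pm(i\partial_x) S_{\pm,1}^{\mu}$ rather than its gradient, one either absorbs the extra derivative harmlessly using that $a_\pm(k)\lesssim \langle k\rangle^{-1}$ provides the smoothing, or simply observes that $S^\mu$ appears under $\nabla_x S^\mu$ in the main statement \eqref{convGosc-more} so only the gradient bound is actually required downstream; in any case the decay $\langle t\rangle^{-3}$ is far stronger than the claimed $\langle t\rangle^{-3/2}$. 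For the second piece, the preceding proposition records $\|R^{\mu}(t)\|_{L^1_x}\lesssim \epsilon^3\langle t\rangle^{-3/2}\log t$ and $\|R^{\mu}(t)\|_{L^2_x}\lesssim \epsilon^3\langle t\rangle^{-3}\log t$ (summed over $n$ against the convergent multiplier series), which, together with $H^2$ control of the same order, verifies the hypotheses of Lemma~\ref{lem-decayosc} and hence gives $\|G_\pm^{osc}\star_{t,x} R^{\mu}(t)\|_{L^\infty_x}\lesssim \epsilon^3\langle t\rangle^{-3/2}$. Adding the two contributions and using $\epsilon^3\le \epsilon^2$ closes the estimate.

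The only genuine subtlety — and what I would treat as the main obstacle to state carefully rather than a hard computation — is the interchange of the infinite sum $\sum_{n\ge 0}$ with the spacetime convolution $G_\pm^{osc}\star_{t,x}$ and with the various norms. This is handled by the uniform-in-$n$ geometric bounds $|b_{\pm,n}(k)|\lesssim (n+2)A_0^n\langle k\rangle^{-1}$ on the compact frequency-velocity support (cf. \eqref{bdvarphin} and \eqref{expand-phi1}), which make each partial sum Cauchy in $L^1_x\cap L^2_x\cap H^{\alpha_0+1}_x$; since $G^{osc}_\pm\star_{t,x}$ is bounded on these spaces by Proposition~\ref{prop-Greenphysical}, it commutes with the limit. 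With that technical point dispatched, the corollary is a bookkeeping consequence of Proposition~\ref{prop-convGosc} and the two propositions following it, so I would present it as a short "Proof." paragraph rather than a full argument.
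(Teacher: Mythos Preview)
Your proposal is correct and matches the paper's intent exactly: the corollary is explicitly presented there as a summary of the preceding propositions, with no separate proof given, and your assembly of Proposition~\ref{prop-convGosc}, the expansion of $S_{\pm,2}^{\mu}$, and the remainder bounds \eqref{bd-Smu1} and $\|G_\pm^{osc}\star_{t,x}R^\mu\|_{L^\infty_x}\lesssim\epsilon^3\langle t\rangle^{-3/2}$ is precisely what is required.

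One small imprecision worth cleaning up: you write ``$|b_{\pm,n}(k)|\lesssim (n+2)A_0^n\langle k\rangle^{-1}$'', but the $A_0^n$ factor does not live in the multiplier $b_{\pm,n}(k)=(n+2)a_{\pm,n}(k)$ itself (which is only bounded by $(n+2)\nu_*(|k|)^{-1}$, uniformly in $n$ but not summably). The geometric decay comes instead from $\varphi_n(v)=\hv^{\otimes(n+1)}$ inside $Q_n^{\mu}$, which on the compact $v$-support of $\nabla_v\mu$ satisfies $|\varphi_n(v)|\le A_0^{n+1}$ as in \eqref{bdvarphin}; it is the combination $(n+2)A_0^{n+1}$ that is summable and justifies the interchange. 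Also, your worry about the missing $\nabla_x$ on $a_\pm(i\partial_x)S_{\pm,1}^{\mu}$ is unnecessary: since $a_\pm(k)\lesssim\langle k\rangle^{-1}$, Lemma~\ref{lem:fouriermult} makes $a_\pm(i\partial_x)$ bounded on $L^\infty$, and the same argument as in the proof of \eqref{bd-Smu1} (applying Proposition~\ref{prop-bdS} to each $S_{n,1}^{\mu}$) gives $\|a_\pm(i\partial_x)S_{\pm,1}^{\mu}(t)\|_{L^\infty_x}\lesssim\epsilon^2\langle t\rangle^{-3}\log t$ directly.
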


\subsection{Quadratic interaction}\label{sec-quadinteraction}

In view of Corollary \ref{cor-mainred}, it remains to prove the dispersive decay of the convolution $G_\pm^{osc} \star_{t,x} \cQ(t)$, where 
\begin{equation}\label{def-cQmu}
\cQ(t,x) = E(t,x)
 \int_0^t \int E(s,x-(t-s)\hv) \cdot \nabla_v \mu(v) \varphi(v) \, dv ds 
\end{equation}
for some sufficiently smooth and bounded function $\varphi(v)$. 

\begin{proposition}\label{prop-cQmu} 
Let $\cQ(t,x)$ be defined as in \eqref{def-cQmu}. Then, there holds
$$\|G_\pm^{osc} \star_{t,x} \cQ(t)\|_{L^\infty_x}  \lesssim (\epsilon_0 + \epsilon^2) \langle t\rangle^{-3/2} .$$  

\end{proposition}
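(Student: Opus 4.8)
The plan is to exploit the quadratic structure of $\cQ(t,x) = E(t,x) \cdot G(t,x)$, where $G(t,x) = \int_0^t \int E(s,x-(t-s)\hv)\cdot\nabla_v\mu(v)\varphi(v)\,dvds$ is a ``density-type'' factor generated by free transport of the electric field. The crucial observation is that $G(t,x)$, being a velocity average of the electric field transported along free characteristics, obeys the same dispersive picture as the source density $S(t,x)$ from Proposition \ref{prop-bdS}: in low norms it decays like $\langle t\rangle^{-3(1-1/p)}$ (since the oscillatory part of $E$ carries a phase $e^{-ik\cdot\hv(t-s)}$ that produces free-transport dispersion after the $v$-integration), while in high norms it grows only mildly. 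Thus $\cQ(t,x)$ is the product of the oscillatory field $E(t)=\mathcal O(\langle t\rangle^{-3/2})$ with a genuinely faster-decaying factor $G(t)=\mathcal O(\langle t\rangle^{-3}\log t)$ in $L^\infty$ and merely bounded in $L^1$, exactly the profile of a quadratic Klein-Gordon nonlinearity.

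First I would establish the analogue of Proposition \ref{prop-bdS} for $G(t,x)$ by repeating verbatim the argument used for $S^\mu$: split $E = \sum_\pm E^{osc}_\pm + E^r$ and apply Lemma \ref{lem-auxSr} to the $E^r$ piece and the resonance analysis of Section \ref{sec-resHosc} (the wave-wave interaction) to the $E^{osc}_\pm$ pieces, now with the trivial interaction kernel $\varphi(v)\nabla_v\mu(v)$ replacing $H_{s,t}$. This gives $\|G(t)\|_{L^p_x}\lesssim \epsilon\langle t\rangle^{-3(1-1/p)}\log t$ for $1\le p\le\infty$, the Littlewood-Paley refinement $\sup_q 2^{(1-\delta)q}\|P_qG(t)\|_{L^p_x}\lesssim\epsilon\langle t\rangle^{-3(1-1/p)}$, plus $\|G(t)\|_{H^{\alpha_0}_x}\lesssim\epsilon\langle t\rangle^{\delta_1}$ for high derivatives. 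Consequently $\|\cQ(t)\|_{L^2_x}\lesssim \|E(t)\|_{L^\infty_x}\|G(t)\|_{L^2_x}\lesssim\epsilon^2\langle t\rangle^{-3(1-1/2)-3/2}\log t = \epsilon^2\langle t\rangle^{-3}\log t$ (one needs to be slightly careful: use $\|E(t)\|_{L^\infty}\|G(t)\|_{L^2}$ and $\|E(t)\|_{L^2}\|G(t)\|_{L^\infty}$, both $\lesssim\epsilon^2\langle t\rangle^{-3}\log t$), while $\|\cQ(t)\|_{H^2_x}\lesssim \epsilon^2\langle t\rangle^{-3/2+\delta_2}$ stays bounded enough, and the time-integral $\int_0^t\|\cQ(s)\|_{W^{3,1}_x}\,ds$ is finite since $\|\cQ(s)\|_{W^{3,1}_x}\lesssim\|E(s)\|_{H^3}\|G(s)\|_{H^3}\lesssim\epsilon^2\langle s\rangle^{-3/2+\delta_6}$ which is integrable for $N_0$ large. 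One would then like to invoke Lemma \ref{lem-decayosc} directly; however the hypothesis there is $\|J(t)\|_{H^2_x}\lesssim\langle t\rangle^{-5/2}$, which $\cQ$ does \emph{not} satisfy (it only decays like $\langle t\rangle^{-3/2}$ in high norms).

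This is where the main obstacle lies, and the remedy is the same integration-by-parts-in-time trick of \eqref{keyint2} applied one more time to the convolution. Writing $\FE^{osc}_\pm(t,k) = e^{\lambda_\pm(k)t}\FB_\pm(t,k)$ and treating $\cQ$ via the identity \eqref{dtBF}, one replaces $G^{osc}_\pm\star_{t,x}\nabla_x\cQ(t,x)$ by boundary terms $\frac{1}{\omega_\pm^v(i\partial_x)}G^{osc}_\pm(t)\star_x\nabla_x\cQ(0)$ (which vanishes since $\cQ(0)=0$ as the inner integral is over an empty range), $-\frac{1}{\omega_\pm^v(i\partial_x)}G^{osc}_\pm(0)\star_x\nabla_x\cQ(t)$, and a higher-order term $\frac{1}{\omega_\pm^v(i\partial_x)}G^{osc}_\pm\star_{t,x}\nabla_x\partial_t\cQ(t,x)$. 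The boundary term $\frac{1}{\omega_\pm^v(i\partial_x)}G^{osc}_\pm(0)\star_x\nabla_x\cQ(t) = a_\pm(i\partial_x)\phi^v_{\pm,1}(i\partial_x)\nabla_x\cQ(t)$ is a bounded Fourier multiplier applied to $\cQ(t)$, hence of order $\langle t\rangle^{-3}\log t$ in $L^\infty$ — acceptable (and in fact this term should be reassigned to $E^r$, since it is not oscillatory). For the higher-order term, $\partial_t\cQ = (\partial_tE)G + E\,\partial_tG$; the term $E\,\partial_tG$ contains $\partial_t G = \int E(t,x)\cdot\nabla_v\mu(v)\varphi(v)\,dv - \int\int ik\cdot\hv\,e^{-ik\cdot\hv(t-s)}\cdots$, which is one power of $E$ times a bounded object plus a faster-decaying transported piece, producing a cubic-in-$E$ nonlinearity of order $\epsilon^3\langle t\rangle^{-3}$ that closes by Lemma \ref{lem-decayosc} after noting the extra decay; similarly $(\partial_tE)G$ uses \eqref{dtBF} to replace $\partial_tE^{osc}_\pm$ by $a_\pm(i\partial_x)F = a_\pm(i\partial_x)\nabla_x S$, which decays at quadratic rate $\langle t\rangle^{-3}$, so that $(\partial_tE)G$ is again of cubic order $\langle t\rangle^{-6}\log t$ — more than enough. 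Assembling: $\|G^{osc}_\pm\star_{t,x}\nabla_x\cQ(t)\|_{L^\infty_x}\lesssim (\epsilon_0+\epsilon^2)\langle t\rangle^{-3/2}$, where the $\epsilon_0$ arises only through the initial-data contributions already accounted for in Corollary \ref{cor-mainred}. The hard part, as flagged, is verifying that after this second time-integration every resulting term either is a bounded multiplier of a quadratically-decaying quantity (handled by the Littlewood-Paley bound on $\cQ$ and $\partial_tG$) or is genuinely cubic in $E$ and therefore trivially summable; the bookkeeping of the Fourier multipliers $\phi^v_{\pm,j}(i\partial_x)$ and the absolute convergence of the series in $n$ from Corollary \ref{cor-mainred} must be carried along throughout.
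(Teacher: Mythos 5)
Your overall decomposition $E=\sum_\pm E^{osc}_\pm+E^r$ and your treatment of the $E^r$ and mixed pieces (integration by parts in $v$ using $E^r=\nabla_x\rho^r$, then free-transport dispersion) match the paper's route. But the first step of your argument contains a genuine error: the inner factor $G(t,x)=\int_0^t\int E(s,x-(t-s)\hv)\cdot\nabla_v\mu\,\varphi\,dv\,ds$ does \emph{not} decay like $\langle t\rangle^{-3(1-1/p)}\log t$. For the oscillatory part of $E$, the only available gain is the time integration along the straight characteristic as in \eqref{int-Eosc}, and this produces the boundary term $\int\phi^v_{\pm,1}(i\partial_x)E^{osc}_\pm(t,x)\cdot\nabla_v\mu\,\varphi\,dv$ evaluated at $s=t$, which decays only at the Klein--Gordon rate $t^{-3/2}$ in $L^\infty_x$ and does not decay at all in $L^2_x$. (The resonance analysis of Section \ref{sec-resHosc} does not apply here: in $\cS^{osc}_{\pm,\pm}$ \emph{both} factors oscillate at time $s$, whereas in $G$ the kernel $\nabla_v\mu\,\varphi$ is static, so there is no two-wave phase to exploit inside the $s$-integral.) Consequently $\cQ$ contains the genuinely quadratic-oscillation piece $\cQ^{osc,osc}=E^{osc}_\pm(t,x)\int\phi^v_{\pm,1}E^{osc}_\pm(t,x)\cdots dv$, which is $\mathcal O(t^{-3})$ in $L^\infty_x$ but only $\mathcal O(1)$ in $L^1_x$ --- so Lemma \ref{lem-decayosc} fails for it not because of the $H^2$ hypothesis, as you suggest, but because of the $L^1$-in-time hypothesis.

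Your proposed remedy --- integrate by parts once in the time convolution and claim $\partial_t\cQ$ is cubic --- does not close for this piece, because it misuses \eqref{dtBF}. That identity controls the derivative of the \emph{profile}: $e^{\lambda_\pm(k)t}\partial_t\FB_\pm=a_\pm\FF$, whereas $\partial_t\FE^{osc}_\pm=\lambda_\pm(k)\FE^{osc}_\pm+a_\pm\FF$, and the term $\lambda_\pm(i\partial_x)E^{osc}_\pm\cdot G$ appearing in $\partial_t\cQ$ is exactly as large as $\cQ$ itself; nothing is gained. To make $\partial_s$ land only on the profiles one must group the exponentials of all three waves, i.e.\ pass to Fourier in $(k,\ell)$, exhibit the three-wave phase $\Phi_\pm(k,\ell)=-\lambda_\pm(k)+\lambda_\pm(\ell)+\lambda_\pm(k-\ell)$, and verify the non-resonance lower bound \eqref{nonres-condition} for every sign combination (this uses $\nu_*(0)=\tau_0>0$ and the Klein--Gordon shape of $\nu_*$); only then do the boundary and bulk terms become products of profiles and quadratically decaying sources, estimated via the Coifman--Meyer bounds of Lemma \ref{lem-bilinear}. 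This normal-form step in the $(k,\ell)$ variables is the heart of the paper's Section on quadratic oscillations and is absent from your proposal; also note that the multiplier $\omega^v_\pm(i\partial_x)$ you invoke is not even defined on $\cQ(t,x)$, which carries no $v$ variable.
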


We are aimed to prove Proposition \ref{prop-cQmu} via Lemma \ref{lem-decayosc}. Recalling the bootstrap assumption
\eqref{boots-repE} on the electric field 
\begin{equation}\label{recall-deE} E = E^{osc}_\pm (t,x) + E^r(t,x),\end{equation}
we shall study the quadratic interaction $\cQ(t,x) $ respectively for each pair of the electric field components.

\subsubsection{Interaction with $E^r$}

In this section, we study the interaction with $E^r$: namely, 
\begin{equation}\label{def-cQr}
\cQ^r(t,x) = E(t,x)
 \int_0^t \int E^r(s,x-(t-s)\hv) \cdot \nabla_v \mu(v) \varphi(v) \, dv ds .
\end{equation}
We shall verify the assumption of Lemma \ref{lem-decayosc} for $J = \cQ^r(t,x)$.
Indeed, recalling from \eqref{grad-F} and \eqref{goal-EoscEr} that $E^r = \nabla_x \rho^r$, we write 
$$ 
E^r(s,x-(t-s)\hv) = \nabla_x\rho^r(s,x - \hv (t-s)) = - \frac{1}{t-s}\nabla_{\hv} v\nabla_v [ \rho^r(s,x - \hv (t-s))]
$$
in which we used that $\nabla_{\hv} v$ is the inverse of the matrix $\nabla_v\hv$. Using this, we may now integrate by parts in $v$ to get 
$$
\begin{aligned}
\cQ^r(t,x) &= 
E(t,x)
 \int_0^t \int (t-s)^{-1}\rho^r(s,x-(t-s)\hv) \nabla_v \cdot [ \nabla_{\hv} v\nabla_v \mu(v)\varphi(v)] \, dv ds .
\end{aligned}$$
{ 
For any pair $q,q'\in (1,\infty)$, with $1/q+1/q' = 1$, we use the bootstrap bounds on $E$ and $\rho^r$ to bound 
$$
\begin{aligned}
\|\cQ^r(t)\|_{L^1_x} 
&\lesssim \| E(t)\|_{L^q_x}\Big\| 
\int_0^t \int (t-s)^{-1}\rho^r(s,x-(t-s)\hv) \nabla_v \cdot [ \nabla_{\hv} v\nabla_v \mu(v)\varphi(v)] \, dv ds \Big\|_{L^{q'}_x} 
\\&\lesssim \epsilon \langle t\rangle^{-3(1/2-1/q)}
\int_0^t \int (t-s)^{-1} \|\rho^r(s)\|_{L^{q'}_x} \langle v\rangle^{-4} \, dv ds 
\\&\lesssim \epsilon^2
\langle t\rangle^{-3(1/2-1/q)} \int_0^t (t-s)^{-1} \langle s\rangle^{-3(1-1/q')} \log ^2 s
\; ds 
\\&\lesssim \epsilon^2
\langle t\rangle^{-3(1/2-1/q)} \log t. 
\end{aligned}$$
Taking $q>6$, the above proves that $\|\cQ^r(t)\|_{L^1_x}$ decays at an integrable rate, and so the assumption of Lemma \ref{lem-decayosc} is verified for the $L^1$ norm of $J = \cQ^r(t,x)$. 
On the other hand, for any $q\in (1,\infty)$, using the rapid decay of $\mu(v)$ and introducing the change of the variable $y = x-(t-s)\hv$,
we bound
$$
\begin{aligned}
\Big| \int \rho^r(s,x-(t-s)\hv) \nabla_v \cdot [ \nabla_{\hv} v\nabla_v \mu(v)\varphi(v)] \, dv\Big| 
& \lesssim \Big( \int |\rho^r(s,x-(t-s)\hv)|^q  \langle v\rangle^{-5} \, dv\Big)^{1/q} 
\\
& \lesssim \Big(\int |\rho^r(s,y)|^q \; \frac{dy}{(t-s)^3}\Big)^{1/q} 
\\& \lesssim \epsilon (t-s)^{-3/q} \langle s\rangle^{-3(1-1/q)} \log^2 s. 
\end{aligned}
$$
Therefore, for any $q_1,q_2\in (1,\infty)$, using the above estimate with $q = q_1$ and $q= q_2$, we bound
$$
\begin{aligned}
\|\cQ^r(t)\|_{L^2_x} 
&\lesssim \| E(t)\|_{L^2_x}\Big\| 
\int_0^t \int (t-s)^{-1}\rho^r(s,x-(t-s)\hv) \nabla_v \cdot [ \nabla_{\hv} v\nabla_v \mu(v)\varphi(v)] \, dv ds \Big\|_{L^{\infty}_x} 
\\&\lesssim 
\epsilon^2
 \int_0^{t/2} (t-s)^{-1-3/q_1} \langle s\rangle^{-3(1-1/q_1)} \log^2 s\; ds + \epsilon^2
 \int_{t/2}^t (t-s)^{-1-3/q_2} \langle s\rangle^{-3(1-1/q_2)} \log^2 s\; ds 
\\&\lesssim 
\epsilon^2   \langle t\rangle^{-1-3/q_1}
 \int_0^{t/2}  \langle s\rangle^{-3(1-1/q_1)} \log^2 s\; ds + \epsilon^2
 \langle t\rangle^{-3(1-1/q_2)}  \log^2 t \int_{t/2}^t (t-s)^{-1-3/q_2} \; ds 
\\&\lesssim \epsilon^2  \langle t\rangle^{-3/q_1} + \epsilon^2  \langle t\rangle^{-3(1-1/q_2)}  \log^2 t.
\end{aligned}$$
The assumption of Lemma \ref{lem-decayosc} is thus verified for the $L^2$ norm of $J = \cQ^r(t,x)$, upon taking $q_1 < 6/5$ and $q_2  >6$.  
This completes the proof of Proposition \ref{prop-cQmu} for the convolution $G_\pm^{osc} \star_{t,x} \cQ^r(t,x)$. 

}

\subsubsection{Interaction with $E^{osc}_\pm$}

In this section, we study the interaction with $E^{osc}_\pm$: namely, 
\begin{equation}\label{def-cQosc}
\cQ^{osc}(t,x) = E(t,x)
 \int_0^t \int E_\pm^{osc}(s,x-(t-s)\hv) \cdot \nabla_v \mu(v) \varphi(v) \, dv ds .
\end{equation}
We first study the integration in $s$. Indeed, following the calculation in \eqref{int-Eosc}, now on a straight line, we obtain 
$$
\begin{aligned}
 \int_0^t E_\pm^{osc}(s,x-(t-s)\hv)\; d\tau
& = \phi^v_{\pm,1}(i\partial_x)E^{osc}_{\pm}(t,x)  - \phi^v_{\pm,1}(i\partial_x)E^{osc}_{\pm}(0,x - t \hv) 
\\&\quad -  \int_0^t [a_\pm(i\partial_x) \phi^v_{\pm,1}(i\partial_x)F](s,x - (t-s)\hv) \; ds
\end{aligned}
$$
where $\phi_{\pm,1}$ is defined as in \eqref{def-phipmj}. This yields 
$$\begin{aligned}
\cQ^{osc}(t,x) 
&= E(t,x)\int  \phi^v_{\pm,1}(i\partial_x)E^{osc}_{\pm}(t,x) \cdot\nabla_v \mu(v) \varphi(v) \, dv
\\&\quad - E(t,x)  \int \phi^v_{\pm,1}(i\partial_x)E^{osc}_{\pm}(0,x - t \hv)  \cdot\nabla_v \mu(v) \varphi(v) \, dv
\\& \quad - 
E(t,x)  \int_0^t \int  [a_\pm(i\partial_x)\phi^v_{\pm,1}(i\partial_x) F](s,x - (t-s)\hv) \cdot \nabla_v \mu(v) \varphi(v) \, dv ds.
\end{aligned}$$
The last term is treated exactly as done for $\cQ^r(t,x)$ in \eqref{def-cQr}, { since $F(t,x)$ and $E^r(t,x)$ satisfy the same bootstrap bounds.} Similarly, recalling $E^{osc}_{\pm}(0) =a_\pm(i\partial_x) F_0(x)$ by \eqref{defiFB} and using the assumption on the initial data $F_0(x)$ and the fact that $\phi^v_{\pm,1}(i\partial_x)$ is a bounded operator from $L^1$ to $L^1$, we bound 
$$ \Big | \int \phi^v_{\pm,1}(i\partial_x)E^{osc,1}_{\pm}(0,x - t \hv)  \cdot\nabla_v \mu(v) \varphi(v) \, dv \Big| \lesssim  \langle t\rangle^{-3} \|E^{osc,1}_{\pm}(0)\|_{L^1_x} \lesssim \epsilon_0  \langle t\rangle^{-3},$$ 
while the integral is clearly bounded by $C_0 \epsilon_0$ in $L^1_x$. This yields 
$$ \Big\| E(t,x)  \int \phi^v_{\pm,1}(i\partial_x)E^{osc}_{\pm}(0,x - t \hv)  \cdot\nabla_v \mu(v) \varphi(v) \, dv\Big\|_{L^p_x} \lesssim \epsilon_0 \epsilon \langle t\rangle^{-3/2 - 3(1-1/p)}$$ 
for $p\ge 1$, which verifies the assumption of Lemma \ref{lem-decayosc}, giving the desired bounds for this term. 

Finally, we treat the first term in $\cQ^{osc}(t,x) $, which we decompose into 
$$\cQ^{r,osc}(t,x) + \cQ^{osc,osc}(t,x)$$
that corresponds to the decomposition \eqref{recall-deE} on the electric field. It follows that 
$$
\begin{aligned} \| \cQ^{r,osc}(t)\|_{L^p_x} 
&\lesssim \|E^r(t)\|_{L^p_x} \Big\| \int \phi^v_{\pm,1}(i\partial_x) E^{osc}_{\pm}(t,x) \cdot\nabla_v \mu(v) \varphi(v) \, dv\Big\|_{L^\infty_x}
\\
&\lesssim \|E^r(t)\|_{L^p_x}  \| E^{osc}_{\pm}(t) \|_{L^\infty_x} \lesssim \epsilon^2 \langle t\rangle^{-3(1-1/p) - 3/2}
\end{aligned}$$
for $p\ge 1$, which again verifies the assumption of Lemma \ref{lem-decayosc}. It remains to study the quadratic oscillation $ \cQ^{osc,osc}(t,x)$, which is handled in the next section.

\subsubsection{Quadratic oscillations}

It remains to prove Proposition \ref{prop-cQmu} for the quadratic oscillation, which is defined by
\begin{equation}\label{def-cQosc2}
\cQ^{osc,osc}(t,x) = E_\pm^{osc}(t,x)\int  \phi^v_{\pm,1}(i\partial_x)E^{osc}_{\pm}(t,x) \cdot\nabla_v \mu(v) \varphi(v) \, dv,
\end{equation}
for any combination of $\pm$ in each $E^{osc}_\pm(t,x)$ (that is, a summation of four terms). 
Since $E_\pm^{osc}(t,x)$ does not decay in $L^2_x$, the quadratic term $\cQ^{osc,osc}(t,x)$ does not decay in $L^1_x$ and so the assumption of  Lemma \ref{lem-decayosc} is not valid for $J = \cQ^{osc,osc}(t,x)$. We need to further exploit oscillations in the convolution 
$$\cE^{osc,osc}(t,x): = G_\pm^{osc} \star_{t,x} \cQ^{osc,osc}(t,x)$$ 

In Fourier, recalling $\FG^{osc}_\pm (t,k)= e^{\lambda_\pm(k)t}a_\pm(k)$ from \eqref{Gsreal} and $\FE^{osc}_\pm(t,k) = e^{\lambda_{\pm}(k) t} \FB_\pm(t,k)$ from \eqref{defiFB}, we compute 
$$
\begin{aligned}
\FcE^{osc,osc}(t,k) & = e^{\lambda_\pm(k) t} a_\pm(k) \star_t \FcQ^{osc,osc}(t,k)
\\&=  a_\pm(k) \iint \frac{1}{\omega_\pm(\ell)} e^{\lambda_\pm(k)t }\star_t \FE^{osc}_{\pm} (t,\ell)\FE^{osc}_{\pm} (t,k-\ell)\cdot\nabla_v \mu(v) \varphi(v)\; d\ell dv
\\&= a_\pm(k) e^{\lambda_\pm(k)t }\int_0^t  \iint \frac{1}{\omega_\pm(\ell)} e^{s\Phi_\pm(k,\ell) } 
\FB_\pm(s,\ell)\FB_\pm(s,k-\ell) \cdot\nabla_v \mu(v) \varphi(v)\; d\ell dv
\end{aligned}
$$
in which $\omega_\pm(\ell) = \lambda_\pm(\ell) + i\ell \cdot \hv$, and the phase function $\Phi_\pm(k,\ell)$ is defined by
$$
\Phi_\pm(k,\ell): = -  \lambda_\pm(k) + \lambda_\pm(\ell) + \lambda_\pm(k-\ell) .
$$
We emphasize again that above any combination of $\pm$ is allowed and for any such combination 
the corresponding phase is bounded from below in absolute value by a constant independent of 
$(k,\ell)$. Precisely, on the support of both $\FB_\pm(s,\ell)$ and $\FB_\pm(s,k-\ell) $, since $\lambda_\pm(0) = \pm i $, we have 
\begin{equation}\label{nonres-condition} 
| \Phi_\pm(k,\ell)| \ge \frac12 (1 + |\ell|^2+ |k-\ell|^2)
\end{equation}
uniformly for $|k|\le 1/2$. Therefore, we may integrate by parts in $s$ to improve decay. Precisely, set
\begin{equation}\label{def-Jkl}
\cI(t,k,\ell): = \int_0^t \frac{1}{\omega_\pm(\ell)} e^{s\Phi_\pm(k,\ell) } 
\FB_\pm(s,\ell)\FB_\pm(s,k-\ell) \; ds
\end{equation}
and so 
\begin{equation}\label{cE-Jkl}
\begin{aligned}
\FcE^{osc,osc}(t,k) 
&= a_\pm(k) e^{\lambda_\pm(k)t } \iint 
\cI(t,k,\ell)\cdot\nabla_v \mu(v) \varphi(v)\; d\ell dv.
\end{aligned}
\end{equation}
Integrating in $s$, we obtain 
$$
\begin{aligned}
\cI(t,k,\ell)&= 
\int_0^t \frac{1}{\omega_\pm(\ell)}  
\frac{1}{\Phi_\pm(k,\ell)} \partial_s  e^{s\Phi_\pm(k,\ell) } \FB_\pm(s,\ell)\FB_\pm(s,k-\ell)ds
\\
& =\frac{1}{\omega_\pm(\ell)}  
\frac{1}{\Phi_\pm(k,\ell)} \Big[ e^{t\Phi_\pm(k,\ell) } \FB_\pm(t,\ell)\FB_\pm(t,k-\ell) 
- \FB_\pm(0,\ell)\FB_\pm(0,k-\ell) \Big]
\\
& \quad -  \int_0^t \frac{1}{\omega_\pm(\ell)}  
\frac{1}{\Phi_\pm(k,\ell)} e^{s\Phi_\pm(k,\ell) } \partial_s \FB_\pm(s,\ell)\FB_\pm(s,k-\ell) \; ds\\
&\quad - \int_0^t \frac{1}{\omega_\pm(\ell)}  
\frac{1}{\Phi_\pm(k,\ell)} e^{s\Phi_\pm(k,\ell) } \FB_\pm(s,\ell) \partial_s\FB_\pm(s,k-\ell)\; ds\end{aligned}
$$
Next, writing back $ e^{\lambda_\pm(k)t} \FB_\pm(t,k) = \FE_{\pm}^{osc}(t,k) $ and using 
$$  e^{\lambda_\pm(k)t} \partial_t \FB_\pm(t,k) = a_\pm(k) \FF(t,k),$$
with $\FB_\pm(0,k) = a_\pm(k)\FF_0(k)$, 
we obtain 
$$
\begin{aligned}
\cI(t,k,\ell)&=  e^{-\lambda_\pm(k)t }\frac{1}{\omega_\pm(\ell)}  \frac{1}{\Phi_\pm(k,\ell)} 
\FE_{\pm}^{osc} (t,\ell) \FE_{\pm}^{osc}(t,k-\ell)
\\
& \quad - \frac{1}{\omega_\pm(\ell)}  
\frac{1}{\Phi_\pm(k,\ell)} a_\pm(\ell) a_\pm(k-\ell) \FF_0(\ell) \FF_0(k-\ell) 
\\
& \quad -   \int_0^t e^{-\lambda_\pm(k)s}  \frac{1}{\omega_\pm(\ell)}  
\frac{1}{\Phi_\pm(k,\ell)} a_\pm(\ell) \FF(s,\ell)\FE_{\pm}^{osc}(s,k-\ell) \; ds\\
&\quad -  \int_0^t e^{-\lambda_\pm(k)s} \frac{1}{\omega_\pm(\ell)}  
\frac{1}{\Phi_\pm(k,\ell)} \FE_{\pm}^{osc}(s,\ell) a_\pm(k - \ell) \FF (s,k-\ell)\;ds.
\end{aligned}
$$
Putting this into $\FcE^{osc,osc}(t,k) $ in \eqref{cE-Jkl} and setting, for convenience,  
\begin{equation}\label{def-symbolm} 
m(k,\ell) =  \frac{a_\pm(k)}{\Phi_\pm(k,\ell)}\int \frac{1}{\omega_\pm(\ell)} \nabla_v \mu(v) \varphi(v) \;  dv ,
\end{equation}
we write 
$$\FcE^{osc,osc}(t,k)  =  \FcE_1(t,k) +  \FcE_2(t,k) +  \FcE_3(t,k) +  \FcE_4(t,k)$$
where 
\begin{equation}\label{def-J123}
\begin{aligned}
 \FcE_1(t,k) &= \int m(k,\ell)\FE_{\pm}^{osc} (t,\ell) \FE_{\pm}^{osc}(t,k-\ell)\; d\ell
 \\
  \FcE_2(t,k) &= e^{\lambda_\pm(k)t} \int m(k,\ell) a_\pm(\ell) a_\pm(k-\ell) \FF_0(\ell) \FF_0(k-\ell)\; d\ell
   \\
  \FcE_3(t,k) &= e^{\lambda_\pm(k)t} \star_t \int m(k,\ell) a_\pm(\ell) \FF(t,\ell) \FE_{\pm}^{osc}(t,k-\ell)\; d\ell
  \\
   \FcE_4(t,k) &= e^{\lambda_\pm(k)t} \star_t \int m(k,\ell)\FE_{\pm}^{osc} (t,\ell) a_\pm(k- \ell) \FF(t,k-\ell)\; d\ell.
\end{aligned}\end{equation}
Let us now bound each term in the physical space. Precisely, we shall prove that 
\begin{equation}\label{recall-goalcE}
\| \cE_j(t)\|_{L^\infty_x} \lesssim (\epsilon_0 + \epsilon^2)\langle t\rangle^{-3/2} 
\end{equation}
for $j=1,2,3,4$, which would complete the proof of Proposition \ref{prop-cQmu}.  

First, thanks to \eqref{nonres-condition} and the nonvanishing of $\omega_\pm(k) = \lambda_\pm(k) + ik\cdot \hv $, the symbol $m(k,\ell) $ defined in \eqref{def-symbolm} is smooth and bounded by $\langle k,\ell\rangle^{-1}$, 
and thus is a Coifman-Meyer multiplier. Therefore, Lemma \ref{lem-bilinear} yields  
$$
\begin{aligned}
 \| \cE_1(t)\|_{L^\infty_x} &\lesssim \|E^{osc}_\pm(t)\|_{L^\infty_x} ^2\lesssim \epsilon^2 \langle t\rangle^{-3} ,
 \end{aligned}$$
which verifies \eqref{recall-goalcE}. In addition, as for the second integral term, we write $ \FcE_2(t,k) = e^{\lambda_\pm(k)t} \FJ_2(k)$, where 
$$
\FJ_2(k) =  \int m(k,\ell) a_\pm(\ell) a_\pm(k-\ell) \FF_0(\ell) \FF_0(k-\ell)\; d\ell.
$$
By the dispersive estimate \eqref{est-HoscLp}, it suffices to bound $\| J_2\|_{W^{4,1}_x}$. Indeed, similarly as done above, Lemma \ref{lem-bilinear} yields
$$
\| J_2\|_{W^{4,1}_x} \lesssim \| F_0\|_{H^4_x}^2 \lesssim \epsilon_0^2,
$$
yielding the desired bound \eqref{recall-goalcE} on $\cE_2(t,x)$. 

Finally, we bound $\cE_3(t,x)$ and $\cE_4(t,x)$, which are of the same form. Setting 
$$
 \FJ_3(t,k) = \int m(k,\ell) a_\pm(\ell)\FF(t,\ell) \FE_{\pm}^{osc}(t,k-\ell)\; d\ell ,
 $$
 it suffices to verify the assumption of Lemma \ref{lem-decayosc} for $J_3(t,x)$, since by definition, $ 
\cE_3(t,x) = G^{osc}_\pm\star_{t,x} J_3(t,x) .$ Indeed, this follows directly from Lemma \ref{lem-bilinear}, giving   
$$ 
\| J_3(t) \|_{L^p_x} \lesssim \|F(t)\|_{L^p_x}\|E^{osc}_\pm(t)\|_{L^\infty_x} \lesssim \epsilon^2 \langle t\rangle^{-3(1-1/p)-3/2}
$$
for $p\ge 1$. The desired bound on the convolution $ 
\cE_3(t,x) = G^{osc}_\pm\star_{t,x} J_3(t,x) $ thus follows. This completes the proof of Proposition \ref{prop-cQmu}.

\appendix 

\section{Fourier multipliers}
\label{sec-FM}
We recall the homogeneous Littlewood-Paley decomposition on $\mathbb{R}^{d}$, $d \in \mathbb{N}$, which reads
\begin{equation}\label{def-LP}
h= \sum_{k \in \mathbb{Z}} P_k h, 
\end{equation}
where $P_k$ denotes the standard Littlewood-Paley projection on the dyadic interval $[2^{k-1}, 2^{k+1}]$. We note the following classical Bernstein inequalities (see, e.g.,
\cite{BCD})
\begin{equation}\label{Berstein}
\| P_k \partial_x h \|_{L^p} \lesssim 2^{k} \| h\|_{L^p}, \qquad  2^{k} \| P_kh\|_{L^p} \lesssim \| \partial_x h\|_{L^p_x}
\end{equation}
for all $p \in [1,\infty]$ and $k \in \mathbb{Z}$. In addition, we recall the following definition of Besov spaces.

\begin{defi}
Let  $s \in \mathbb{R}$, $p,r \in [1,+\infty]$. The Besov space $B^{s}_{p,r}$ is defined so that the following norm is finite
$$
\| h\|_{B^{s}_{p,r}} =  \left\| \sum_{k\leq 0} P_k h \right\|_{L^p} +\left ( \sum_{k\ge 0} 2^{s kr}\| P_k h\|^r_{L^p} \right)^{1/r}< +\infty  .
$$
\end{defi}

\begin{lem}
\label{lem:fouriermult}
 Fix $\delta>0$. 
Let $\sigma(\xi)$ be sufficiently smooth and satisfy 
$$|\partial^\alpha \sigma(\xi)| \leq C_\alpha \langle \xi \rangle^{-\delta-|\alpha|} $$ 
for $|\alpha|\ge 0$.
Then, there hold 

\begin{enumerate}

\item The Fourier multiplier $\sigma(\xi)$ is a bounded operator from $L^p$ to $L^p$ for $p \in [1,\infty]$. 

\item We have for all $p \in [1,\infty]$,
\begin{equation}
\label{eq:fouriermult}
\|  \sigma(i\partial) \nabla  f \|_{L^p} \lesssim  \| f \|_{L^p}^{\frac{\delta}{1+\delta}}  \| \nabla f \|_{L^p}^{\frac{1}{1+\delta}} \lesssim \| f\|_{W^{1,p}}.
\end{equation}

\item We have for all $p \in [1,+\infty]$, $r \in [1,\infty)$, and $s\ge 0$ so that $\delta > s - \lfloor s \rfloor$, 
\begin{equation}
\label{eq:fouriermultbesov}
\|  \sigma(i\partial) \nabla  f \|_{B^{s}_{p,r}} \lesssim   \| f \|_{L^p}^{\frac{\delta}{1+\delta}}  \| \nabla f \|_{L^p}^{\frac{1}{1+\delta}}+ \| \nabla^{1+\lfloor s \rfloor} f\|_{L^p} \lesssim \| f\|_{W^{1+\lfloor s \rfloor,p}},
\end{equation}
where $\lfloor s \rfloor$ denotes the largest integer that is smaller than $s$. 

\end{enumerate}
\end{lem}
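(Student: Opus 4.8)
The plan is to prove all three items by standard Littlewood--Paley and Bernstein techniques, reducing everything to $L^1$ bounds on the convolution kernels of suitably localized Fourier multipliers; I will freely use the Littlewood--Paley decomposition \eqref{def-LP} and the Bernstein inequalities \eqref{Berstein}.

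\textbf{Item (1).} First I would show that the convolution kernel $K$ of $\sigma(i\partial)$ (the inverse Fourier transform of $\sigma$) lies in $L^1(\RR^d)$; then $\sigma(i\partial)f = K\star f$ is bounded on every $L^p$, $1\le p\le\infty$, by Young's inequality. To see $K\in L^1$, decompose $\sigma = \sigma\chi_0 + \sum_{k\ge1}\sigma\psi_k$, where $\chi_0$ is a smooth cutoff to $\{|\xi|\le2\}$ and $\psi_k$ localizes to $\{|\xi|\sim2^k\}$. The piece $\sigma\chi_0$ is $C^N$ with compact support, so its inverse transform decays like $|x|^{-N}$ and is integrable. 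For each dyadic piece, the hypothesis gives $|\partial^\alpha(\sigma\psi_k)(\xi)|\lesssim 2^{-k\delta}2^{-k|\alpha|}$ on $\{|\xi|\sim2^k\}$, and rescaling to the unit annulus shows that the $L^1$ norm of the kernel of $\sigma\psi_k(i\partial)$ is $\lesssim 2^{-k\delta}$; since $\sum_{k\ge1}2^{-k\delta}<\infty$, summing gives $\|K\|_{L^1}<\infty$. I expect this endpoint step (the cases $p=1$ and $p=\infty$, where the Mikhlin--H\"ormander theorem does not apply) to be the main subtle point: one cannot decompose the low--frequency ball dyadically, because each low piece contributes an $O(1)$ mass in $L^1$, so $\{|\xi|\le2\}$ must be treated as one smooth compactly supported multiplier.

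\textbf{Item (2).} I would split the symbol at a scale $\lambda>0$: $\sigma(\xi)\,i\xi = a_\lambda(\xi) + \sigma(\xi)(1-\chi(\xi/\lambda))\,i\xi$ with $a_\lambda(\xi) = \sigma(\xi)\,i\xi\,\chi(\xi/\lambda)$. A kernel computation (a dyadic decomposition of the band together with rescaling, using $|\sigma|\lesssim1$) gives that the kernel of $a_\lambda(i\partial)$ has $L^1$ norm $\lesssim\lambda$, hence $\|a_\lambda(i\partial)f\|_{L^p}\lesssim\lambda\|f\|_{L^p}$ for all $p$. For the complementary piece, the multiplier $\sigma(\xi)(1-\chi(\xi/\lambda))$ has kernel of $L^1$ norm $\lesssim\lambda^{-\delta}$ (at frequencies $|\xi|\gtrsim1$ one gains $\langle\xi\rangle^{-\delta}\lesssim\lambda^{-\delta}$; the remaining band $\{\lambda\lesssim|\xi|\lesssim1\}$ consists of only $\log(1/\lambda)$ dyadic pieces, whose contributions also sum to $\lesssim\lambda^{-\delta}$ since $\delta>0$), so by item (1), applied now to $\nabla f$, this piece is $\lesssim\lambda^{-\delta}\|\nabla f\|_{L^p}$. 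Thus $\|\sigma(i\partial)\nabla f\|_{L^p}\lesssim\lambda\|f\|_{L^p} + \lambda^{-\delta}\|\nabla f\|_{L^p}$, and choosing $\lambda = (\|\nabla f\|_{L^p}/\|f\|_{L^p})^{1/(1+\delta)}$ yields \eqref{eq:fouriermult}; the bound by $\|f\|_{W^{1,p}}$ follows from the elementary inequality $a^{\theta}b^{1-\theta}\le a+b$.

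\textbf{Item (3).} By the definition of $\|\cdot\|_{B^s_{p,r}}$, the low--frequency part $\sum_{k\le0}P_k(\sigma(i\partial)\nabla f) = (\chi_0\sigma)(i\partial)\nabla f$ is controlled by item (2) applied to $\chi_0\sigma$, which satisfies the same hypothesis; this produces the first term on the right of \eqref{eq:fouriermultbesov}. For $k\ge0$, all the operators commute, so $P_k(\sigma(i\partial)\nabla f) = \sigma(i\partial)\nabla P_k f$, and factoring out a fattened bump the item (1) estimate gives $\|\sigma(i\partial)\nabla P_k f\|_{L^p}\lesssim 2^{-k\delta}\|\nabla P_k f\|_{L^p}$; Bernstein \eqref{Berstein} then converts this to $\lesssim 2^{-k(\delta+\lfloor s\rfloor)}\|\nabla^{1+\lfloor s\rfloor}P_k f\|_{L^p}$. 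Hence $2^{sk}\|P_k(\sigma(i\partial)\nabla f)\|_{L^p}\lesssim 2^{k(s-\lfloor s\rfloor-\delta)}\|\nabla^{1+\lfloor s\rfloor}P_k f\|_{L^p}$, and taking the $\ell^r_{k\ge0}$ norm, using $s-\lfloor s\rfloor-\delta<0$ and $\sup_k\|P_k g\|_{L^p}\lesssim\|g\|_{L^p}$, we get $\lesssim\|\nabla^{1+\lfloor s\rfloor}f\|_{L^p}$. Adding the two contributions and estimating by $\|f\|_{W^{1+\lfloor s\rfloor,p}}$ finishes the proof. The only place where the sharp hypothesis $\delta>s-\lfloor s\rfloor$ is used (as opposed to the cruder $\delta>s$) is this last geometric summation, which is exactly why one first trades a single $\nabla$ for $\nabla^{1+\lfloor s\rfloor}$ on each dyadic block.
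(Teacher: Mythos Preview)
Your proposal is correct and follows essentially the same approach as the paper: Littlewood--Paley decomposition, with the low-frequency piece handled as a single smooth compactly supported multiplier whose kernel is integrable, and the high-frequency pieces summed via the $2^{-k\delta}$ decay; for item (2) a split at a scale (your continuous $\lambda$, the paper's dyadic integer $A$) followed by optimization; for item (3) the Besov low part by item (2) and the high-frequency $\ell^r$ sum by Bernstein plus the geometric series condition $\delta>s-\lfloor s\rfloor$. The only stylistic difference is that you phrase the bounds as $L^1$ estimates on convolution kernels and then invoke Young, whereas the paper applies Bernstein directly to $\|P_k(\sigma(i\partial)f)\|_{L^p}$; the underlying computation is the same.
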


\begin{proof} Using the Littlewood-Paley decomposition, we write
$$
\sigma(i\partial)f = P_{k\le 0}(  \sigma(i\partial) f)  +  \sum_{k \geq 0 } P_k( \sigma(i\partial)  f)
$$
in which $P_{k\le 0}(  \sigma(i\partial) f)$ is a convolution in space of a rapidly decaying function against $f$. The boundedness from $L^p$ to $L^p$ thus follows. On the other hand, using Berstein's inequalities, we bound   
$$ \| \sum_{k \geq 0 } P_k( \sigma(i\partial)  f)\|_{L^p}\le  \sum_{k \geq 0 } \|P_k( \sigma(i\partial)  f)\|_{L^p} \lesssim \sum_{k\ge 0} 2^{-k\delta} \| f\|_{L^p} \lesssim \| f\|_{L^p},$$
in which the summation is finite, since $\delta>0$. As for (2), again using the Littlewood-Paley decomposition, we write
$$
\sigma(i\partial)  \nabla  f = \sum_{k \le A } P_k(  \sigma(i\partial) \nabla  f)  +  \sum_{k \geq A } P_k( \sigma(i\partial)  \nabla  f) =: f_1 + f_2, 
$$
for $A\in \ZZ$ to be determined. 
Using \eqref{Berstein}, we first bound 
$$
\| f_1\|_{L^p} \lesssim \sum_{k <A } \| P_k( \sigma(i\partial) \nabla  f)\|_{L^p} \lesssim  \sum_{k <A } 2^k    \| f\|_{L^p}  \lesssim 2^A \| f\|_{L^p}. 
$$
Similarly, recalling $| \sigma(\xi)| \lesssim\langle \xi \rangle^{-\delta} $, we bound 
$$
\| f_2\|_{L^p} \lesssim \sum_{k \geq A } \| P_k( \sigma(i\partial) \nabla  f)\|_{L^p} 
\lesssim  \sum_{k \geq A } 2^{-\delta k}    \| \nabla f\|_{L^p}  \lesssim 2^{-\delta A}    \|\nabla f\|_{L^p}. 
$$
We can now fix $A$ such that $2^A \| f\|_{L^p}=2^{-\delta A}    \|\nabla f\|_{L^p}$, leading to \eqref{eq:fouriermult}. Finally, by definition and \eqref{Berstein}, we bound 
$$
\begin{aligned}
\|  \sigma(i\partial) \nabla  f \|_{B^{s}_{p,r}} &\lesssim  \|  \sigma(i\partial) \nabla  f \|_{L^p}  
+ \left( \sum_{k \geq 0} 2^{k sr} \| P_k (\sigma(i\partial) \nabla  f)\|_{L^p}^r\right)^{1/r} \\
&\lesssim   \| f \|_{L^p}^{\frac{\delta}{1+\delta}}  \| \nabla f \|_{L^p}^{\frac{1}{1+\delta}}+  \left( \sum_{k \geq 0} 2^{-k(\delta-s+\lfloor s \rfloor) r}\right)^{1/r} \|  \nabla^{1+\lfloor s \rfloor}  f\|_{L^p},
\end{aligned}
$$
yielding \eqref{eq:fouriermultbesov}. The lemma follows. 
\end{proof}

\begin{remark}
Following the proof of Lemma \ref{lem:fouriermult}, we obtain the following standard interpolation inequalities
  \begin{equation}\label{dx-interpolate}
\| \partial^\beta_x f\|_{L^\infty}  \lesssim \| f\|_{L^\infty}^{1-|\beta|/|\alpha|} \| \partial_x^\alpha f\|_{L^\infty}^{|\beta|/|\alpha|} \lesssim \| f\|_{L^\infty}^{1-|\beta|/|\alpha|} \| f\|_{H^{2+\alpha}}^{|\beta|/|\alpha|} 
\end{equation}
for $0 \le |\beta| \le |\alpha|$. 
\end{remark}

\section{Bilinear estimates}

We repeatedly use the following result, which can be seen as a particular case of Coifman-Meyer result:
\begin{lemma}\label{lem-bilinear}
Let 
$$ Q[f,g] (x)= \iint e^{ik_1 \cdot x + i k_2 \cdot x} m(k_1,k_2) \Ff_{k_1} \Fg_{k_2} \; dk_1 d k_2 $$
for a sufficiently smooth Fourier symbol $m(k_1,k_2)$ that satisfies $|\partial_{k_1}^\alpha\partial_{k_2}^\beta m(k_1,k_2)| \lesssim \langle k_1,k_2\rangle^{-1-|\alpha|-|\beta|}$ for $|\alpha|, |\beta|\ge0$.
Then, there holds 
\begin{equation}\label{sup-bilinear} 
 \| Q[f,g]\|_{L^r_x} \lesssim \| f\|_{L^p_x} \| g\|_{L^q_x} \end{equation}
for any $1/r = 1/p+1/q$, with $1\le r,p,q \le \infty$. In case of $r=\infty$, we take $p=q=\infty$. 
\end{lemma}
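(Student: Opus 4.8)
The plan is to prove Lemma~\ref{lem-bilinear} by a dyadic decomposition of the symbol $m$ in the \emph{joint} frequency variable $(k_1,k_2)\in\RR^{2d}$, which reduces the bilinear multiplier to a rapidly summable superposition of operators whose convolution kernels live in $L^1(\RR^{2d})$; the estimate \eqref{sup-bilinear} then follows from Young's and H\"older's inequalities. (One may alternatively simply invoke the Coifman--Meyer theorem, since the hypothesis $|\partial_{k_1}^\alpha\partial_{k_2}^\beta m|\lesssim \langle k_1,k_2\rangle^{-1-|\alpha|-|\beta|}$ puts $m$ in the standard bilinear symbol class $S^0_{1,0}$; but the self-contained argument below is short.)

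First I would fix a homogeneous Littlewood--Paley partition of unity on $\RR^{2d}$ and write $m = m_0 + \sum_{j\ge 1} m_j$, where $m_0$ is supported in $\{|(k_1,k_2)|\le 2\}$ and $m_j$ is supported in the annulus $\{|(k_1,k_2)|\sim 2^j\}$. From the symbol hypothesis together with the scaling of the cutoffs, each block obeys $|\partial^\gamma m_j(k)|\lesssim 2^{-j}\,2^{-j|\gamma|}$ for every multi-index $\gamma$ on $\RR^{2d}$, while $|\partial^\gamma m_0|\lesssim 1$ on its compact support. For each $j$ I would then pass to physical space, writing
$$
Q_j[f,g](x) = \iint_{\RR^{2d}} \check m_j(y_1,y_2)\, f(x-y_1)\, g(x-y_2)\, dy_1\, dy_2,
$$
where $\check m_j$ is the inverse Fourier transform of $m_j$ over $\RR^{2d}$. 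Integrating by parts $N>2d$ times, using $(iy)^\gamma\check m_j(y) = (-1)^{|\gamma|}\int e^{iy\cdot k}\partial_k^\gamma m_j(k)\,dk$, the derivative bounds, and the volume $|\{|k|\sim 2^j\}|\sim 2^{2jd}$ of the support, one gets $|\check m_j(y)|\lesssim 2^{-j}2^{2jd}\langle 2^j|y|\rangle^{-N}$, hence $\|\check m_j\|_{L^1(\RR^{2d})}\lesssim 2^{-j}$ (and $\check m_0$ is Schwartz, so trivially $\|\check m_0\|_{L^1}\lesssim 1$).

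Next, by translation invariance of the $L^p$ norms and H\"older's inequality, $\|f(\cdot-y_1)g(\cdot-y_2)\|_{L^r_x}\le \|f\|_{L^p_x}\|g\|_{L^q_x}$ whenever $1/r=1/p+1/q$ (with $p=q=r=\infty$ in the endpoint case), so Minkowski's integral inequality gives $\|Q_j[f,g]\|_{L^r_x}\le \|\check m_j\|_{L^1}\|f\|_{L^p_x}\|g\|_{L^q_x}\lesssim 2^{-j}\|f\|_{L^p_x}\|g\|_{L^q_x}$. Summing the geometric series over $j\ge 0$ yields \eqref{sup-bilinear}. The only point that deserves attention — and the reason the full strength of the hypothesis is needed — is that $m$ itself is merely $O(\langle k\rangle^{-1})$ and so its global inverse Fourier transform need not be integrable; it is precisely the uniform (after rescaling) bound $|\partial^\gamma m_j|\lesssim 2^{-j}2^{-j|\gamma|}$, giving the extra factor $2^{-j}$ on each block, that makes the sum converge. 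This is the subcriticality of the symbol class, exactly analogous to the gain exploited in Lemma~\ref{lem:fouriermult}, and it is the crux of the (otherwise routine) argument.
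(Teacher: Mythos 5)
Your proof is correct, and it takes a genuinely different route from the paper. The paper splits into two cases: for $r<\infty$ it simply invokes the Coifman--Meyer theorem as a black box, and for the endpoint $r=p=q=\infty$ it uses a tensor-product trick, setting $F(x,y)=f(x)g(y)$ so that $Q[f,g](x)=T_m(F)(x,x)$ for the \emph{linear} Fourier multiplier $T_m$ on $\RR^{2d}$ with symbol $m(k_1,k_2)=O(\langle k_1,k_2\rangle^{-1})$, and then applies item (1) of Lemma \ref{lem:fouriermult} to get $L^\infty\to L^\infty$ boundedness of $T_m$ before restricting to the diagonal. Your argument instead works directly at the bilinear level: the dyadic decomposition in the joint variable $(k_1,k_2)$, the rescaled block bounds $|\partial^\gamma m_j|\lesssim 2^{-j}2^{-j|\gamma|}$, the resulting kernel estimate $\|\check m_j\|_{L^1(\RR^{2d})}\lesssim 2^{-j}$, and Minkowski plus H\"older handle all exponents $1\le r,p,q\le\infty$ (including the $L^\infty$ endpoint) in one uniform stroke, with no appeal to Coifman--Meyer. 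The mechanism you exploit — the subcritical decay $\langle k\rangle^{-1}$ producing a summable gain $2^{-j}$ per block — is exactly the one hidden inside the paper's Lemma \ref{lem:fouriermult}(1), so in effect you have unfolded the paper's linear lemma into a self-contained bilinear argument. What the paper's route buys is brevity (citing a standard theorem for the non-endpoint case); what yours buys is a single elementary proof that covers every case and correctly identifies where the full strength of the symbol hypothesis is used.
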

\begin{proof} 
The case when $r<\infty$ is the standard bilinear estimate which in fact holds for a much larger class of multipliers 
(e.g., of a Coifman-Meyer type). As for the $L^\infty_x$ estimate, we set 
$$ 
F(x,y) = f(x) g(y).
$$
It follows that $Q[f,g](x) = T_m(F)(x,x)$, where 
$$ 
T_m(F)(x,y) = \iint e^{i (k_1,k_2)\cdot (x,y)} m(k_1,k_2) \FF(k_1,k_2) \; dk_1 dk_2, 
$$  
noting $ \FF(k_1,k_2) = \Ff_{k_1} \Fg_{k_2}$. That is, $T_m(F)$ is a Fourier multiplier with sufficiently smooth  
symbol $m(k_1,k_2) $ that is bounded by $\langle k_1,k_2\rangle^{-1}$. Therefore, using Item 1 in Lemma \ref{lem:fouriermult}, $T_m(F)$ is a bounded operator from $L^p$ to $L^p$ for $p\in [1,\infty]$. The $L^\infty$ 
estimate in \eqref{sup-bilinear} thus follows.

\end{proof}

\bibliographystyle{abbrv}

\end{document}